\def\includegraphics{}
\newtheorem{theorem}{Theorem}[section]
\newtheorem{lemma}[theorem]{Lemma}
\newtheorem{proposition}[theorem]{Proposition}
\newtheorem{corollary}[theorem]{Corollary}
\theoremstyle{definition}
\newtheorem{definition}[theorem]{Definition}
\newtheorem{remark}[theorem]{Remark}
\numberwithin{equation}{section}
\newcommand{\legendre}[2]{\genfrac{(}{)}{}{}{#1}{#2}}
\title[Convolution Sums entailing mixed Divisor Functions for a Class of Levels]
{Evaluation of Convolution Sums entailing mixed Divisor Functions for a Class of Levels
}
\author{Eb\'{e}n\'{e}zer Ntienjem}
\address{
Centre for Research in Algebra and Number Theory \\
School of Mathematics and Statistics\\
Carleton University\\
1125 Colonel By Drive\\
Ottawa, Ontario, K1S 5B6, Canada}
\email{ebenezer.ntienjem@carleton.ca;ntienjem@gmail.com}
\keywords{
Sums of Divisors; Dedekind eta function; Convolution Sums; Modular Forms; 
Dirichlet Characters; Eisenstein forms; Cusp Forms; Octonary quadratic Forms; 
Number of Representations
}
\subjclass[2010]{11A25, 11F11, 11F20, 11E20, 11E25, 11F27}
\def \eN{\mbox{E.~Ntienjem}}
\def \waS{\mbox{W.~A.~Stein}}
\def \mN{\mbox{M.~Newman}}
\def \gL{\mbox{G.~Ligozat}}
\def \gK{\mbox{G.~K\"{o}hler}}
\def \ljpK{\mbox{L.~J.~P.~Kilford}}
\def \aA{\mbox{A.~Alaca}}
\def \sA{\mbox{\c{S}.~Alaca}}
\def \aP{\mbox{A.~Pizer}}
\def \ksW{\mbox{K.~S.~Williams}}
\def \jgH{\mbox{J.~G.~Huard}}
\def \gaL{\mbox{G.~A.~Lomadze}}
\def \sR{\mbox{S.~Ramanujan}}
\def \zmO{\mbox{Z.~M.~Ou}}
\def \bkS{\mbox{B.~K.~Spearman}}
\def \exwX{\mbox{E.~X.~W.~Xia}}
\def \oxmY{\mbox{O.~X.~M.~Yao}}
\def \nK{\mbox{N.~Koblitz}}
\def \tM{\mbox{T.~Miyake}}
\def \zSA{\mbox{Z.~Selcuk Aygin}}
\def \dbL{\mbox{D.~B.~Lahiri}}
\def \fU{\mbox{F.~Uygul}}
\def \hE{\mbox{H.~Eser}}
\def \bK{\mbox{B.~K\"{o}kl\"{u}ce}}
\def \nC{\mbox{N.~Cheng}}
\def \N{\mbox{$\EuFrak{N}$}}
\def \E{\mbox{$E$}}
\def \S{\mbox{$S$}}
\def \M{\mbox{$M$}}
\begin{document}


\begin{abstract}
Let $0< n,\alpha,\beta\in\mathbb{N}$ be such that $\gcd{(\alpha,\beta)}=1$. 
We carry out the evaluation of the convolution sums $\underset{\substack{
 {(k,l)\in\mathbb{N}^{2}} \\ {\alpha\,k+\beta\,l=n}
} }{\sum}\sigma(k)\sigma_{3}(l)$ and 
$\underset{\substack{
		{(k,l)\in\mathbb{N}^{2}} \\ {\alpha\,k+\beta\,l=n}
} }{\sum}\sigma_{3}(k)\sigma(l)$ 
for all levels $\alpha\beta\in\mathbb{N}$, by using in particular modular forms. 
We next apply convolution sums belonging to this class of levels 
to determine formulae for the number of representations of a positive 
integer $n$ by the quadratic forms in twelve variables  
$\underset{i=1}{\overset{12}{\sum}}x_{i}^{2}$ when the level 
$\alpha\beta\equiv 0\pmod{4}$, and 
$\underset{i=1}{\overset{6}{\sum}}\,(\,x_{2i-1}^{2}+ x_{2i-1}x_{2i} + x_{2i}^{2}\,)$  
when the level $\alpha\beta\equiv 0\pmod{3}$.  
Our approach is then illustrated by explicitly evaluating the convolution sum for 
$\alpha\beta=3$, $4$, $6$, $7$, $8$, $9$, $12$, $14$, $15$, $16$, $18$, $20$, $21$, $27$, $32$.
These convolution sums are then applied to determine explicit formulae for 
the number of representations of a positive integer $n$ by quadratic forms 
in twelve variables.
\end{abstract}

\maketitle


\section{Introduction} \label{introduction}
We denote in the following by $\mathbb{N}$ the set of natural numbers,  
$\mathbb{N}^{*}$ the set of natural numbers without zero, i.e.,  $\mathbb{N}\setminus\{0\}$, 
$\mathbb{Z}$ the set of integers, 
$\mathbb{Q}$ the set of rational numbers, 
$\mathbb{R}$ the set of real numbers. 
and $\mathbb{C}$ the set of complex numbers.

Suppose that $n\in\mathbb{N}$ and $k\in\mathbb{N}$. Then define the sum $\sigma_{k}(n)$ of the $k^{\text{th}}$ powers of the 
positive divisors of $n$ by  
\begin{equation} \label{def-sigma_k-n}
 \sigma_{k}(n) = \sum_{0<\delta|n}\delta^{k}.
 \end{equation} 
We set $\sigma_{k}(0)=0$ for each $k\in\mathbb{N}$.  
It is obvious from the definition that $\sigma_{k}(m)=0$ for all 
$m\notin\mathbb{N}$.  
We  write $d(n)$ and $\sigma(n)$ as a 
shorthand for $\sigma_{0}(n)$ and $\sigma_{1}(n)$, respectively. 

Assume that the positive integers $\alpha\leq\beta$ are given and that 
$i,j\in\mathbb{N}^{*}$. Then we define 
the convolution sums as follows 
	\begin{equation} \label{def-convolution_sum}
	W_{(\alpha, \beta)}^{2i-1,2j-1}(n) =  \sum_{\substack{
			{(k,l)\in\mathbb{N}^{2}} \\ {\alpha\,k+\beta\,l=n}
	} }\sigma_{2i-1}(k)\sigma_{2j-1}(l).
	\end{equation}
We set $W_{(\alpha,\beta)}^{2i-1,2j-1}(n)=0$ if for all 
$(k,l)\in\mathbb{N}^{2}$ it holds that $\alpha\,k+\beta\,l\neq n$.

 
In this paper, we evaluate the convolution sums 
\begin{alignat*}{5}
W_{(\alpha, \beta)}^{2i-1,2j-1}(n),& \quad 
W_{(\alpha, \beta)}^{2j-1,2i-1}(n), & \quad 
W_{(\beta, \alpha)}^{2i-1,2j-1}(n) & \quad 
\text{and} & \quad 
W_{(\beta, \alpha)}^{2j-1,2i-1}(n)
\end{alignat*}
 for the level $\alpha\beta\in\mathbb{N}$ whenever $i,j\in\mathbb{N}^{*}$ are such that $i+j=3$. 
We let  
\begin{equation*}  \label{introd-eqn-N}
\N=\{\,2^{\nu}\mho\,|\,\nu\in\{0,1,2,3\} \text{ and $\mho$ is a   
	finite product of distinct odd primes}\,\}. 
\end{equation*}    
Then these convolution sums 
are evaluated for the levels $\alpha\beta\in\N$ and $\alpha\beta\in\mathbb{N}^{*}\setminus\N$.

So far known convolution sums whose evaluation involved mixed divisor functions 
$\sigma_{2i-1}(n)$ and $\sigma_{2j-1}(n)$, where $i,j\in\mathbb{N}^{*}$ are 
such that $i+j=3$, are displayed in the following table. 
\begin{longtable}{|r|r|r|r|} \hline 
	\textbf{Level $\alpha\beta$}  &  $\mathbf{i+j}$  &  \textbf{Authors}   &  \textbf{References}  \\ \hline
	1  & $1+2, 2+1$ & \dbL,\ \sR\  & \cite{lahiri1946,lahiri1947,ramanujan} \\ \hline
	~  & ~ & \sA\ \& \fU\ \& \ksW, &  ~  \\
~  & ~ & \jgH\ \& \zmO\ \&   &  \cite{alaca-uygul-williams-2012,cheng-williams-2005,huardetal}   \\
	2  & $1+2$ & \bkS\ \& \ksW,   &  ~ \\ 
	~  & ~ &  \nC\ \& \ksW\   & ~ \\ \hline
	~ & ~ & \nC\ \& \ksW,   & ~ \\
	2  & $2+1$ &  \jgH\ \& \zmO\ \&  & \cite{cheng-williams-2005,huardetal} \\ 
 ~  & ~ & \bkS\ \& \ksW  & ~ \\ \hline
	3  & $1+2$ & \oxmY\ \&  \exwX  & \cite{yao-xia-2014} \\ \hline
	4  & $1+2,2+1$ & \nC\ \& \ksW\   & \cite{cheng-williams-2005} \\ \hline
	6  & $1+2$ & \bK\   & \cite{kokluce-2017} \\ \hline
	12 &  $1+2$ & \bK\ \& \hE\   & \cite{kokluce-eser-2017} \\ \hline
	
	\caption{Known convolution sums $W_{(\alpha, \beta)}^{2i-1,2j-1}(n)$  
		of level $\alpha\beta$} 
	\label{introduction-table-1}
\end{longtable}

The evaluation of these convolution sums, especially for a class of levels is new. 
  
We then apply the result for this class of levels to determine the convolution sums for
\begin{itemize}
 \item $\alpha\beta=7$, $8$, $9$, $14$, $15$, $16$, $18$, $20$, $21$, $27$, $32$ when $(i,j)=(1,2)$; and 
 \item $\alpha\beta=3$, $4$, $6$, $7$, $8$, $9$, $12$, $14$, $15$, $16$, $18$, $20$, $21$, $27$, $32$ when $(i,j)=(2,1)$.
\end{itemize}  
Again, these explicit convolution sums have not been evaluated as yet. 

These convolution sums are applied to establish explicit formulae for the 
number of representations of a positive integer $n$ by the quadratic forms 
in twelve variables 
\begin{equation} \label{introduction-eq-1}
a\,\sum_{i=1}^{4}\,x_{i}^{2} + b\,\sum_{i=5}^{12}\,x_{i}^{2} ,
\end{equation}
 
\begin{equation} \label{introduction-eq-2}
	a_{1}\,\sum_{i=1}^{8}\,x_{i}^{2} + b_{1}\,\sum_{i=9}^{12}\,x_{i}^{2},
\end{equation}

\begin{multline} \label{introduction-eq-3}
c\,\sum_{j=1}^{2}\,(\,x_{2j-1}^{2} + x_{2j-1}x_{2j} + x_{2j}^{2}\,) + 
d\,\sum_{j=3}^{6}\,(\,x_{2j-1}^{2} + x_{2j-1}x_{2j} + x_{2j}^{2}\,)
\end{multline}
and 
\begin{multline} \label{introduction-eq-4}
	c_{1}\,\sum_{j=1}^{4}\,(\,x_{2j-1}^{2} + x_{2j-1}x_{2j} + x_{2j}^{2}\,) + d_{1}\,\sum_{j=5}^{6}\,(\,x_{2j-1}^{2} + x_{2j-1}x_{2j} + x_{2j}^{2}\,)
\end{multline}
where $(a,b),(a_{1},b_{1}),(c,d),(c_{1},d_{1})\in \mathbb{N}^{*}\times\mathbb{N}^{*}$. 

Known formulae for the number of representations of a positive integer $n$ 
by the quadratic forms 
\begin{itemize}
	\item (\hyperref[introduction-eq-1]{\ref*{introduction-eq-1}}) and 
	(\hyperref[introduction-eq-2]{\ref*{introduction-eq-2}}) 
	are given by \bK\ \& \hE\ \cite{kokluce-eser-2017} and by 
	\aA\ \& \sA\ \& \zSA\ \cite{2016arXiv160309412A} when $(a,b)=(a_{1},b_{1})=(1,3)$.
	\item (\hyperref[introduction-eq-3]{\ref*{introduction-eq-3}}) 
are given by \oxmY\ \& \exwX\ \cite{yao-xia-2014} when $(c,d)=(1,1)$.
\end{itemize}
 
Based on this structure of the level $\alpha\beta$,
we provide a method to determine all pairs $(a,b)\in\mathbb{N}^{*}\times\mathbb{N}^{*}$ 
that are necessary for the determination 
of the formulae for the number of representations of a positive integer 
by the quadratic forms (\hyperref[introduction-eq-1]{\ref*{introduction-eq-1}}). 
Then we determine explicit formulae for the number of representations 
of a positive integer $n$ by the quadratic forms 
(\hyperref[introduction-eq-1]{\ref*{introduction-eq-1}})--
(\hyperref[introduction-eq-4]{\ref*{introduction-eq-4}})
whenever $\alpha\beta$ has the above form and is such that 
$\alpha\beta\equiv 0\pmod{4}$ or $\alpha\beta\equiv 0\pmod{3}$. 

We have obtained the results displayed in this paper by using Software for symbolic scientific computation 
which is composed of the open source software packages 
\emph{GiNaC}, \emph{Maxima}, \emph{REDUCE}, \emph{SAGE} and the commercial 
software package \emph{MAPLE}.


\section{Essential Background Knowledge} \label{modularForms}

\subsection{Modular Forms} \label{modForms}

Let $\mathbb{H}=\{ z\in \mathbb{C}~ | ~\text{Im}(z)>0\}$, 
be the upper half-plane 
and let  $\Gamma=G=\text{SL}_{2}(\mathbb{R})= \{\,\left(\begin{smallmatrix} a
    & b \\ c & d \end{smallmatrix}\right)\,\mid\, a,b,c,d\in\mathbb{R} 
\text{ and } ad-bc=1\,\}$  
be the group of $2\times 2$-matrices. Let $N\in\mathbb{N}^{*}$. Then  
\begin{eqnarray*}
\Gamma(N) & = \bigl\{~\left(\begin{smallmatrix} a & b \\ c &
  d \end{smallmatrix}\right)\in\text{SL}_{2}(\mathbb{Z})~ |
  ~\left(\begin{smallmatrix} a & b \\ c &
 d\end{smallmatrix}\right)\equiv\left(\begin{smallmatrix} 1 & 0 \\ 0 & 1 
 \end{smallmatrix}\right) \pmod{N} ~\bigr\}
\end{eqnarray*}
is a subgroup of $G$ and is called the \emph{principal congruence subgroup of 
level N}. A subgroup $H$ of $G$ is called a \emph{congruence subgroup of 
level N} if it contains $\Gamma(N)$.

For our purposes the following congruence subgroup is relevant:
 \begin{align*}
\Gamma_{0}(N) & = \bigl\{~\left(\begin{smallmatrix} a & b \\ c &
  d \end{smallmatrix}\right)\in\text{SL}_{2}(\mathbb{Z})~ | ~
   c\equiv 0 \pmod{N} ~\bigr\}. 
\end{align*}
Let $k,N\in\mathbb{N}$ and let $\Gamma'\subseteq\Gamma$ be a congruence 
subgroup of level $N$. 
Let $k\in\mathbb{Z}, \gamma\in\text{SL}_{2}(\mathbb{Z})$ and $f : 
\mathbb{H}\cup\mathbb{Q}\cup\{\infty\} \rightarrow 
\mathbb{C}\cup\{\infty\}$. 
We denote by 
$f^{[\gamma]_{k}}$ the function whose value at $z$ is $(cz+d)^{-
k}f(\gamma(z))$, i.e., $f^{[\gamma]_{k}}(z)=(cz+d)^{-k}f(\gamma(z))$. 
We use the definition of modular forms according to \nK\ \cite[p.\ 108]{koblitz-1993}.

Let us denote by $\M_{2k}(\Gamma')$ the set of modular forms of weight $2k$ 
for $\Gamma'$,  by $\S_{2k}(\Gamma')$ the set of cusp forms of 
weight $2k$ for $\Gamma'$ and by $\E_{2k}(\Gamma')$ the set of Eisenstein forms. 
The sets $\M_{2k}(\Gamma'),\,\S_{2k}(\Gamma')$ and $\E_{2k}(\Gamma')$ 
are vector spaces over $\mathbb{C}$. 
Therefore, $\M_{2k}(\Gamma_{0}(N))$ is the space of 
modular forms of weight $2k$ for 
$\Gamma_{0}(N)$, $\S_{2k}(\Gamma_{0}(N))$ is the space of 
cusp forms of weight $2k$ for $\Gamma_{0}(N)$, 
and $\E_{2k}(\Gamma_{0}(N))$ is the space of Eisenstein forms. 
The decomposition of the space of modular forms as a direct sum 
of the space generated by the Eisenstein series and the space of cusp 
forms, i.e.,  
$\M_{2k}(\Gamma_{0}(N))=\E_{2k}(\Gamma_{0}(N))\oplus\S_{2k}(\Gamma_{0}(N))$, 
is well-known; see for example 
\waS\ 's book (on line version)
\cite[p.~81]{wstein}. 

We assume in this paper that $k\in\mathbb{N}^{*}$ and that $\chi$ and
$\psi$ are primitive Dirichlet characters with conductors $L$ and $R$,
respectively. \waS\ \cite[p.~86]{wstein} has noted that  
\begin{equation} \label{Eisenstein-gen}
E_{2k,\chi,\psi}(q) = C_{0} + \underset{n=1}{\overset{\infty}{\sum}}\,\biggl(
\underset{d|n}{\sum}\,\psi(d)\chi(\frac{n}{d})\,d^{2k-1}\,\biggr)q^{n}, 
\end{equation}
where 
\begin{equation*}
C_{0} = \begin{cases}
          0 & \text{ if } L >1 \\
          -\frac{B_{2k,\chi}}{4k} & \text{ if } L=1
       \end{cases}
\end{equation*}
and $B_{2k,\chi}$ are the generalized Bernoulli numbers. 
Theorems 5.8 and 5.9 in Section 5.3 of \waS\ \cite[p.~86]{wstein} are 
then applicable.

If the primitive Dirichlet characters $\chi$ and $\psi$ are 
trivial, then their conductors $L$ and $R$ are one, respectively. Therefore, 
(\hyperref[Eisenstein-gen]{\ref*{Eisenstein-gen}}) 
can be normalized and then given as follows: 
$E_{2k}(q) = 1 - \frac{4k}{B_{2k}}\,\underset{n=1}{\overset{\infty}{\sum}}\,
\sigma_{2k-1}(n)\,q^{n}$. This will be the case whenever the level 
$\alpha\beta$ belongs to $\N$.

Let $k,m,n\in\mathbb{N}^{*}$ be such that $m$ is a positive divisor of $n$. 
We then use \tM\ \cite[Lemma 2.1.3, p.\ 41]{miyake1989} to conclude that 
\begin{equation}  \label{basis-cusp-eqn}
\M_{2k}(\Gamma_{0}(m))\subset\M_{2k}(\Gamma_{0}(n)). 
\end{equation} 
It also follows that the same inclusion relation holds for the bases, 
the space of Eisenstein forms of weight $2k$ and the spaces of cusp 
forms of weight $2k$.

\subsection{Eta Quotients}  \label{etaFunctions}

On the upper half-plane $\mathbb{H}$, the Dedekind $\eta$-function, $\eta(z)$,  
is defined by
$\eta(z) = e^{\frac{2\pi i z}{24}}\overset{\infty}{\underset{n=1}{\prod}}(1 - e^{2\pi i n z})$.
Let us set $q=e^{2\pi i z}$. Then it follows that 
\begin{equation*}
\eta(z) = q^{\frac{1}{24}}\overset{\infty}{\underset{n=1}{\prod}}(1 - q^{n}) = q^{\frac{1}{24}} F(q),\quad 
\text{where } F(q)=\overset{\infty}{\underset{n=1}{\prod}} (1 - q^{n}).
\end{equation*}

Let $j,N\in\mathbb{N}^{*}$ and $e_{j}\in\mathbb{Z}$. Following  
\gK \cite[p.\ 31]{koehler} an \emph{eta product} or \emph{eta quotient}, $f(z)$,  
is a finite product of eta functions, which is of the form
$\underset{j | N}{\overset{}{\prod}}\eta(jz)^{e_{j}}$,
wherein $N$ is the \emph{level} of the eta product.
If $2k=\frac{1}{2}\underset{j=1}{\overset{\kappa}{\sum}}e_{j}$, 
then an eta quotient $f(z)$ behaves like a modular form of weight 
$2k$ on $\Gamma_{0}(N)$ with some multiplier system.

Note that eta function, eta quotient and eta product will use 
interchangeably as synonyms.

\ljpK\ \cite[p.~99]{kilford}  and \gK\ \cite[Cor.\ 2.3, p.~37]{koehler} have 
given a formulation of the following theorem which is a result of the work
of \mN\ \cite{newman_1957,newman_1959} and \gL\ \cite{ligozat_1975}. 
This theorem will be effectively used 
to exhaustively determine $\eta$-quotients, $f(z)$, which 
belong to $\M_{2k}(\Gamma_{0}(N))$, and especially those $\eta$-quotients 
which are in $\S_{2k}(\Gamma_{0}(N))$. 

\begin{theorem}[\mN\ and \gL\ ] \label{ligozat_theorem} 
Let $N\in \mathbb{N}^{*}$, $D(N)$ be the set of all positive divisors of 
$N$, $\delta\in D(N)$ and $r_{\delta}\in\mathbb{Z}$. Let furthermore  
$f(z)=\overset{}{\underset{\delta\in D(N)}{\prod}}\eta^{r_{\delta}}(\delta z)$ 
be an eta quotient. If the following four conditions are satisfied

\begin{alignat*}{2}
\text{\textbf{(i)}} \> \overset{}{\underset{\delta\in D(N)}{\sum}}\delta\,r_{\delta} 
	\,\equiv 0 \pmod{24}, \qquad
\text{\textbf{(ii)}} \>  \overset{}{\underset{\delta\in D(N)}{\sum}}\frac{N}{\delta} 
	\,r_{\delta}\, \equiv 0 \pmod{24}, \\
\text{\textbf{(iii)}} \>  \overset{}{\underset{\delta\in D(N)}
{\prod}}\delta^{r_{\delta}}\> \text{ is a square in } \mathbb{Q}, \qquad
\text{\textbf{(iv)}} \>\> \forall d\in D(N)\> \text{ it holds } 
	\overset{}{\underset{\delta\in D(N)}{\sum}}\frac{\gcd{(\delta,d)}^{2}}	
	{\delta}\,r_{\delta} \geq 0,  
\end{alignat*}

then $f(z)\in\M_{2k}(\Gamma_{0}(N))$, where 
$2k=\frac{1}{2}\overset{}{\underset{\delta\in D(N)}{\sum}}r_{\delta}$. 

Moreover, the $\eta$-quotient $f(z)$ is an element of $\S_{2k}(\Gamma_{0}(N))$ 
if {\textbf{(iv)}} is replaced by
 
{\textbf{(iv')}} $\forall d\in D(N)$ \text{ it holds } 
$\overset{}{\underset{\delta\in D(N)}{\sum}}\frac{\gcd{(\delta,d)}^{2}}	
{\delta} r_{\delta} > 0$.
\end{theorem}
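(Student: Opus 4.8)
The plan is to deduce everything from the transformation behaviour of a single factor $\eta(\delta z)$ under $\Gamma_{0}(N)$, and then to split the argument into two independent parts: (a) showing that conditions (i)--(iii) force the multiplier system of $f$ to be trivial, so that $f$ transforms as an honest modular form of integer weight $2k=\frac12\sum_{\delta\in D(N)}r_{\delta}$ with trivial character on $\Gamma_{0}(N)$; and (b) showing that condition (iv) guarantees holomorphy of $f$ at every cusp, while (iv') upgrades this to vanishing at every cusp, giving the cusp-form conclusion. On $\mathbb{H}$ itself $f$ is automatically holomorphic and nonvanishing, being a finite product of powers of $\eta(\delta z)$, so all the content is in the transformation law and the cusp behaviour.

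First I would recall the classical transformation of the Dedekind eta function: for $\gamma=\left(\begin{smallmatrix} a & b \\ c & d\end{smallmatrix}\right)\in\mathrm{SL}_{2}(\mathbb{Z})$ with $c>0$ one has $\eta(\gamma z)=\varepsilon(\gamma)\,(cz+d)^{1/2}\,\eta(z)$, where $\varepsilon(\gamma)$ is the standard eta-multiplier, expressible through Dedekind sums (equivalently through a Jacobi symbol). Given $\gamma\in\Gamma_{0}(N)$ and $\delta\mid N$, I would conjugate the scaling matrix: since $\delta\mid N\mid c$, the matrix $\gamma_{\delta}=\left(\begin{smallmatrix}\delta & 0\\ 0 & 1\end{smallmatrix}\right)\gamma\left(\begin{smallmatrix}\delta & 0\\ 0 & 1\end{smallmatrix}\right)^{-1}=\left(\begin{smallmatrix} a & b\delta \\ c/\delta & d\end{smallmatrix}\right)$ again lies in $\mathrm{SL}_{2}(\mathbb{Z})$, so that $\eta(\delta\gamma z)=\eta(\gamma_{\delta}(\delta z))=\varepsilon(\gamma_{\delta})\,(cz+d)^{1/2}\,\eta(\delta z)$. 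Raising to the power $r_{\delta}$ and multiplying over all $\delta\in D(N)$, the automorphy factor collects to $(cz+d)^{\frac12\sum_{\delta}r_{\delta}}=(cz+d)^{2k}$, confirming the weight, and the residual root-of-unity factor is $\prod_{\delta}\varepsilon(\gamma_{\delta})^{r_{\delta}}$. The additive (Dedekind-sum) part of this product has exponent governed modulo $24$ by $\sum_{\delta}\delta\,r_{\delta}$ and $\sum_{\delta}\tfrac{N}{\delta}\,r_{\delta}$, so conditions (i) and (ii) annihilate it; condition (iii), that $\prod_{\delta}\delta^{r_{\delta}}$ is a rational square, kills the remaining quadratic Jacobi-symbol character. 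Together they show the multiplier is trivial on all of $\Gamma_{0}(N)$.

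Next I would treat the cusps via Ligozat's order formula: the order of vanishing of $f$ at the cusp $\tfrac{a}{c}$ of $\Gamma_{0}(N)$ (with $c\mid N$) equals $\frac{N}{24}\sum_{\delta\in D(N)}\frac{\gcd(c,\delta)^{2}}{\gcd(c,N/c)\,c\,\delta}\,r_{\delta}$, a positive multiple of $\sum_{\delta\in D(N)}\frac{\gcd(c,\delta)^{2}}{\delta}\,r_{\delta}$. I would derive this by expanding $\eta(\delta z)=q^{\delta/24}\prod_{n\geq1}(1-q^{n\delta})$ at the scaled cusp and reading off the leading $q$-exponent after normalising by the cusp width $\gcd(c,N/c)$. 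Letting $d$ (equivalently the denominator $c$) run through all of $D(N)$ and requiring each such order to be nonnegative is precisely condition (iv), yielding $f\in\M_{2k}(\Gamma_{0}(N))$; imposing the strict inequality (iv') makes every cusp order positive, which is exactly the condition for $f\in\S_{2k}(\Gamma_{0}(N))$.

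The hard part is the bookkeeping in the transformation law: correctly tracking the eta-multiplier $\varepsilon(\gamma_{\delta})$ through the conjugation and verifying that (i)--(iii) reduce $\prod_{\delta}\varepsilon(\gamma_{\delta})^{r_{\delta}}$ to $1$ for \emph{every} $\gamma\in\Gamma_{0}(N)$, not merely for the generators $T$ and $S$. The cusp-order computation is conceptually routine once the width $\gcd(c,N/c)$ is handled, but pinning down the exact constant in Ligozat's formula is where normalisation and sign errors are most likely to intrude. Since the result is the combined work of \mN\ and \gL\ and is quoted here from \ljpK\ and \gK, I would follow Köhler's derivation for the multiplier reduction and Ligozat's for the cusp orders rather than reprove the eta transformation law from scratch.
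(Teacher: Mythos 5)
The paper never proves this theorem: it is stated as a quoted result of Newman and Ligozat, in the formulation given by Kilford and K\"{o}hler, and is used downstream as a black box. So there is no internal proof to compare your argument against; what can be said is that your sketch correctly reconstructs the standard argument from exactly those sources. Holomorphy and nonvanishing on $\mathbb{H}$ are indeed automatic; the conjugation $\gamma_{\delta}=\left(\begin{smallmatrix}\delta & 0\\ 0 & 1\end{smallmatrix}\right)\gamma\left(\begin{smallmatrix}\delta & 0\\ 0 & 1\end{smallmatrix}\right)^{-1}$ is legitimate precisely because $\delta\mid N\mid c$, and it yields the stated weight $\frac{1}{2}\sum_{\delta}r_{\delta}$; conditions (i)--(iii) trivialize the product of eta multipliers, with (i) and (ii) disposing of the $24$th-root-of-unity (Dedekind-sum) part and (iii) of the residual Jacobi-symbol character; and Ligozat's order formula at a cusp $a/c$ with $c\mid N$, whose sign is that of $\sum_{\delta}\gcd(c,\delta)^{2}r_{\delta}/\delta$, converts (iv) into holomorphy at every cusp and (iv') into vanishing there, using that every cusp of $\Gamma_{0}(N)$ has a representative with denominator dividing $N$. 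The two places you flag as delicate are the right ones: the multiplier identity must hold for all of $\Gamma_{0}(N)$, not just for generators of $\mathrm{SL}_{2}(\mathbb{Z})$, and the cusp-width normalization $\gcd(c,N/c)$ is where constants go wrong. Since you defer both computations to K\"{o}hler and Ligozat rather than carrying them out, your proposal remains a sketch --- but that is strictly more than the paper itself provides, and it is the proof its cited sources give.
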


\subsection{Convolution Sums $\mathbf{W_{(\boldsymbol{\upalpha, \upbeta})}^{2i-1,2j-1}(n)}$ }
\label{convolutionSumsEqns}

Given $i,j,\alpha,\beta\in\mathbb{N}^{*}$ such that $i+j=3$, and  
$\alpha\leq\beta$, let the convolution sums be defined  by 
(\hyperref[def-convolution_sum]{\ref*{def-convolution_sum}}).
Suppose that $\gcd{(\alpha,\beta)}= \delta >1$ for some 
$\delta\in\mathbb{N}^{*}$. 
Then there exist 
$\alpha_{1}, \beta_{1}\in\mathbb{N}^{*}$ such that 
$\gcd{(\alpha_{1},\beta_{1})}=1,\ \alpha=\delta\,\alpha_{1}$ and 
$\beta =\delta\,\beta_{1}$. Hence,  
\begin{equation}  	\label{convolutionSumsEqns-gcd}
\begin{split}
	W_{(\alpha,\beta)}^{2i-1,2j-1}(n) = & \sum_{\substack{
			{(k,l)\in\mathbb{N}^{2}} \\ {\alpha\,k+\beta\,l=n}
		}} \sigma_{2i-1}(k)\sigma_{2j-1}(l)   \\ 
		=  &\sum_{\substack{
				{(k,l)\in\mathbb{N}^{2}} \\ {\delta\,\alpha_{1}\,k+\delta\,\beta_{1}\,l=n}
			}}\sigma_{2i-1}(k)\sigma_{2j-1}(l) \\ 
			= & \, W_{(\alpha_{1},\beta_{1})}^{2i-1,2j-1}(\frac{n}{\delta}). 
\end{split}
		\end{equation}
Therefore, we may simply assume that $\gcd{(\alpha,\beta)}=1$. 
As an immediate consequence of the definition, we note that
\begin{itemize}
	\item $W_{(\alpha,\beta)}^{2i-1,2j-1}(n)=W_{(\beta,\alpha)}^{2i-1,2j-1}(n)$ whenever $i=j$ holds; and 
\item $W_{(\alpha,\beta)}^{2i-1,2j-1}(n)=W_{(\beta,\alpha)}^{2j-1,2i-1}(n)$ whenever $\alpha=\beta$ holds. By 
(\hyperref[convolutionSumsEqns-gcd]{\ref*{convolutionSumsEqns-gcd}}) it is sufficient to consider only the case $\alpha=\beta=1$. 
We then apply the commutativity of the multiplication, 
namely, $\sigma(k)\sigma_{3}(n-k)=\sigma_{3}(n-k)\sigma(k)$, to yield 
\begin{equation} \label{convolsum1_3=convolsum3_1}
W_{(1, 1)}^{1,3}(n) =  \sum_{\substack{
		{(k,l)\in\mathbb{N}^{2}} \\ {k+l=n}
} }\sigma(k)\sigma_{3}(l) = \underset{k=0}{\overset{n}{\sum}
}\sigma(k)\sigma_{3}(n-k) = 
\sum_{\substack{
		{(k,l)\in\mathbb{N}^{2}} \\ {k+l=n}
} }\sigma_{3}(k)\sigma(l) = W_{(1, 1)}^{3,1}(n).
\end{equation}
\end{itemize} 

We assume that the primitive Dirichlet characters $\chi$ and $\psi$ 
\begin{enumerate}
	\item are trivial whenever $\alpha\beta\in\N$ holds. 
	\item are such that $\chi=\psi$ and that $\chi$ is a 
	Legendre-Jacobi-Kronecker symbol otherwise. 
\end{enumerate}
Then the following Eisenstein series hold: 
\begin{align}  
E_{2}(q) = 1-24\,\sum_{n=1}^{\infty}\sigma(n)q^{n}, 
\label{evalConvolClass-eqn-3} \\
E_{4}(q) = 1+240\,\sum_{n=1}^{\infty}\sigma_{3}(n)q^{n}, 
\label{evalConvolClass-eqn-4a} \\
E_{6}(q) = 1-504\,\sum_{n=1}^{\infty}\sigma_{5}(n)q^{n}, 
\label{evalConvolClass-eqn-6} 
\end{align}
\begin{align}  \label{evalConvolClass-eqn-4}
\begin{split}
E_{2k,\chi}(q^{\lambda}) & = E_{2k}(q^{\lambda})\otimes\chi  \\
 & = \chi(\lambda)\,\biggl( C_{0} + \sum_{n=1}^{\infty}\,\chi(n)\,\sigma_{2k-1}(n)\,q^{\lambda\,n} \,\biggr) \\
 & = \chi(\lambda)\,C_{0} + 
\sum_{n=1}^{\infty}\,\chi(\lambda\,n)\,\sigma_{2k-1}(n)\,q^{\lambda\,n}, \\
 & = \chi(\lambda)\,C_{0} + 
\sum_{n=1}^{\infty}\,\chi(n)\,\sigma_{2k-1}(\frac{n}{\lambda})\,q^{n}, 
\end{split}
\end{align}
where $\lambda\in\mathbb{N}^{*}$, 
\begin{equation*}
	C_{0} = \begin{cases}
		0 & \text{ if } L >1 \\
		-\frac{B_{2k,\chi}}{4k} & \text{ if } L=1
	\end{cases}
\end{equation*}
and $B_{2k,\chi}$ are the specially generalized Bernoulli numbers. 

Note that $E_{4}(q)$ and $E_{6}(q)$ are special cases of 
(\hyperref[Eisenstein-gen]{\ref*{Eisenstein-gen}}) or
(\hyperref[evalConvolClass-eqn-4]{\ref*{evalConvolClass-eqn-4}}) and hold if 
$\alpha\beta\in\N$. 
We state two relevant results for the sequel of this work.

\begin{lemma}  \label{evalConvolClass-lema-1}
Let $\alpha,\beta\in\mathbb{N}^{*}$. Then 
\begin{equation*}
(\,\alpha\, E_{2}(q^{\alpha}) - E_{2}(q)\,)\,E_{4}(q^{\beta})\in
\M_{6}(\Gamma_{0}(\alpha\beta))
\end{equation*}
and 
\begin{equation*}
 E_{4}(q^{\alpha})\,(\, E_{2}(q) - \beta\,E_{2}(q^{\beta})\,)\in
\M_{6}(\Gamma_{0}(\alpha\beta)).
\end{equation*}
\end{lemma}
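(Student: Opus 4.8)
The plan is to factor each claimed weight-$6$ object as a product of a weight-$2$ piece and a weight-$4$ piece, show that each piece is modular on an appropriate $\Gamma_{0}$-group, and then combine them using the inclusion \eqref{basis-cusp-eqn} together with the multiplicativity of modular forms (recall $\M_{k_{1}}(\Gamma)\cdot\M_{k_{2}}(\Gamma)\subseteq\M_{k_{1}+k_{2}}(\Gamma)$). Concretely, for the first assertion I would show that $\phi_{\alpha}(z):=\alpha E_2(q^{\alpha})-E_2(q)$ lies in $\M_2(\Gamma_{0}(\alpha))$ and that $E_4(q^{\beta})$ lies in $\M_4(\Gamma_{0}(\beta))$; then \eqref{basis-cusp-eqn} upgrades these to $\M_2(\Gamma_{0}(\alpha\beta))$ and $\M_4(\Gamma_{0}(\alpha\beta))$, since $\alpha\mid\alpha\beta$ and $\beta\mid\alpha\beta$, and the product of a weight-$2$ and a weight-$4$ form on the common group $\Gamma_{0}(\alpha\beta)$ is a weight-$6$ form, which is exactly the desired conclusion.

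The weight-$4$ factor is the easy part. The series $E_4$ is a holomorphic modular form of weight $4$ on $\mathrm{SL}_2(\mathbb{Z})=\Gamma_{0}(1)$, and the scaling operator $f(z)\mapsto f(mz)$ maps $\M_k(\Gamma_{0}(N))$ into $\M_k(\Gamma_{0}(mN))$; taking $N=1$, $m=\beta$ gives $E_4(q^{\beta})=E_4(\beta z)\in\M_4(\Gamma_{0}(\beta))$, and likewise $E_4(q^{\alpha})\in\M_4(\Gamma_{0}(\alpha))$ for the second assertion.

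The genuinely delicate point is the weight-$2$ factor, since $E_2$ is itself only quasimodular. Here I would invoke the transformation law of $E_2$: there is a universal constant $\kappa$, independent of $\gamma$, with
\[
E_2\!\left(\frac{az+b}{cz+d}\right)=(cz+d)^{2}E_2(z)+\kappa\,c\,(cz+d),\qquad \begin{pmatrix} a & b \\ c & d\end{pmatrix}\in\mathrm{SL}_2(\mathbb{Z}),
\]
whose anomalous term is proportional to $c(cz+d)$. For $\gamma=\left(\begin{smallmatrix} a & b \\ c & d\end{smallmatrix}\right)\in\Gamma_{0}(\alpha)$ write $c=\alpha c'$ with $c'\in\mathbb{Z}$; then $\gamma'=\left(\begin{smallmatrix} a & \alpha b \\ c' & d\end{smallmatrix}\right)\in\mathrm{SL}_2(\mathbb{Z})$ and a direct check gives $\alpha\,\gamma z=\gamma'(\alpha z)$ and $c'(\alpha z)+d=cz+d$, whence
\[
\alpha E_2(\alpha\,\gamma z)=\alpha(cz+d)^{2}E_2(\alpha z)+\kappa\,\alpha c'(cz+d)=\alpha(cz+d)^{2}E_2(\alpha z)+\kappa\,c\,(cz+d).
\]
Subtracting $E_2(\gamma z)=(cz+d)^{2}E_2(z)+\kappa\,c(cz+d)$, the two anomalous terms cancel exactly, leaving $\phi_{\alpha}(\gamma z)=(cz+d)^{2}\phi_{\alpha}(z)$. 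Thus $\phi_{\alpha}$ is weight-$2$ invariant under $\Gamma_{0}(\alpha)$; since $E_2$ is holomorphic on $\mathbb{H}$ and the $q$-expansion of $\phi_{\alpha}$ is holomorphic at $\infty$ (with the analogous boundedness at the remaining cusps), $\phi_{\alpha}\in\M_2(\Gamma_{0}(\alpha))$. This cancellation of the non-holomorphic anomaly is the main obstacle; once it is secured, everything else is bookkeeping.

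Assembling these facts yields the first statement. The second follows by the identical argument with the roles of $\alpha$ and $\beta$ interchanged: $E_2(q)-\beta E_2(q^{\beta})=-\phi_{\beta}(z)\in\M_2(\Gamma_{0}(\beta))\subseteq\M_2(\Gamma_{0}(\alpha\beta))$ while $E_4(q^{\alpha})\in\M_4(\Gamma_{0}(\alpha))\subseteq\M_4(\Gamma_{0}(\alpha\beta))$, so their product lies in $\M_6(\Gamma_{0}(\alpha\beta))$. The degenerate cases $\alpha=1$ or $\beta=1$ are immediate, since then $\phi_{\alpha}\equiv 0$ or the relevant $E_4$-factor is already of level $1$.
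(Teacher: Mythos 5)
Your proof is correct, and its skeleton is the same as the paper's: write each product as a weight-$2$ factor times a weight-$4$ factor, show the first lies in $\M_{2}(\Gamma_{0}(\alpha))$ and the second in $\M_{4}(\Gamma_{0}(\beta))$, lift both to $\Gamma_{0}(\alpha\beta)$ via the inclusion (\ref{basis-cusp-eqn}), and multiply. The genuine difference is in how the weight-$2$ modularity is established: the paper takes $\alpha E_{2}(q^{\alpha})-E_{2}(q)\in\M_{2}(\Gamma_{0}(\alpha))$ as a black box, citing Stein's Theorems 5.8 and 5.9, whereas you prove it directly from the quasimodular transformation law of $E_{2}$, checking via the substitution $c=\alpha c'$ and your auxiliary matrix $\gamma'$ that $\alpha\,\gamma z=\gamma'(\alpha z)$ and that the two anomalous terms $\kappa\,c\,(cz+d)$ cancel in the difference; that computation is correct. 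What your route buys is self-containedness: it exposes exactly why the non-holomorphic anomaly of $E_{2}$ disappears in this particular combination, which the citation hides; your explicit appeal to $\M_{k_{1}}(\Gamma)\cdot\M_{k_{2}}(\Gamma)\subseteq\M_{k_{1}+k_{2}}(\Gamma)$ is also cleaner than the paper's corresponding step (which is phrased as a chain of inclusions $\M_{2}(\Gamma_{0}(\alpha\beta))\subseteq\M_{6}(\Gamma_{0}(\alpha\beta))$ that is not literally correct), and your disposal of the degenerate case $\alpha=1$ is cleaner than the paper's garbled sentence. What the paper's route buys is brevity, plus the fact that Stein's theorems settle holomorphy at \emph{all} cusps in one stroke; in your argument that point is only asserted (``analogous boundedness at the remaining cusps''), and while it is standard — $\Gamma_{0}(\alpha)$-invariance together with the moderate growth of $E_{2}$ gives it — a fully self-contained write-up should include a sentence justifying it.
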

\begin{proof} We will only prove the first part since the second 
part can be shown similarly. 

If $\alpha>0$, then trivially 
$0=(\,\alpha\, E_{k}(q^{\alpha}) - \alpha\,E_{k}(q^{\alpha})\, )\in \M_{k}
(\Gamma_{0}(\alpha))$ and there is nothing to prove. 
Therefore, we may suppose that $\alpha\neq 1$ in the sequel. 
We apply the result proved by W.~A.~Stein \cite[Thrms 5.8, 5.9, p.~86]{wstein} 
to deduce that   
$(\,\alpha\, E_{2}(q^{\alpha}) - E_{2}(q)\,)\in \M_{2}(\Gamma_{0}(\alpha))\subseteq 
\M_{2}(\Gamma_{0}(\alpha\beta))\subseteq 
\M_{6}(\Gamma_{0}(\alpha\beta))$ and  
$ E_{4}(q^{\beta}) \in \M_{4}(\Gamma_{0}(\beta))\subseteq
\M_{4}(\Gamma_{0}(\alpha\beta))$. Therefore, we obtain 
$(\,\alpha\, E_{2}(q^{\alpha}) - E_{2}(q)\,)\,E_{4}(q^{\beta}) \in
\M_{6}(\Gamma_{0}(\alpha\beta))$.
\end{proof}
The following identity is proven for $\alpha =1$ by \dbL \cite[p.\,149]{lahiri1946}. 
	For all $\alpha\in\mathbb{N}^{*}$, it holds that
	\begin{align} \label{Lahiri-ident-01}
	E_{2}(q^{\alpha})\,E_{4}(q^{\alpha}) & =  1 + 720\,\sum_{n\geq 1} \frac{n}{\alpha}\,\sigma_{3}(\frac{n}{\alpha})\, q^{n} - 504\,\sum_{n\geq 1} \sigma_{5}(\frac{n}{\alpha})\, q^{n}. 
	\end{align}
The following identity is shown by \sR \cite[Table IV, p.\,168]{ramanujan}. 
	For all $n\in\mathbb{N}^{*}$, it holds that
	\begin{align} \label{Ramanujan-ident-1-3}
	W_{(1,1)}^{1,3}(n) &  = \frac{21}{240}\,\sigma_{5}(n) + \frac{1}{24}\,(1 - 3n)\,\sigma_{3}(n) - \frac{1}{240}\,\sigma(n). 
	\end{align}
We then apply (\hyperref[convolutionSumsEqns-gcd]{\ref*{convolutionSumsEqns-gcd}}) 
and  (\hyperref[convolsum1_3=convolsum3_1]{\ref*{convolsum1_3=convolsum3_1}})
to deduce that  
	for all $\alpha,n\in\mathbb{N}^{*}$ 
	\begin{align} \label{Ramanujan-ident-1-3-gen}
	W_{(\alpha,\alpha)}^{3,1}(n) = W_{(\alpha,\alpha)}^{1,3}(n)  &  = \frac{21}{240}\,\sigma_{5}(\frac{n}{\alpha}) + \frac{1}{24}\,(1 - 3\,\frac{n}{\alpha})\,\sigma_{3}(\frac{n}{\alpha}) - \frac{1}{240}\,\sigma(\frac{n}{\alpha}). 
	\end{align}

When we make use of the just proven identity,  
we obviously infer that for all $\alpha\in\mathbb{N}^{*}$ 
\begin{align} \label{ntienjem-ident-01}
E_{4}(q^{\alpha})\,E_{2}(q^{\alpha}) = E_{2}(q^{\alpha})\,E_{4}(q^{\alpha}) & =  1 + 720\,\sum_{n\geq 1} \frac{n}{\alpha}\,\sigma_{3}(\frac{n}{\alpha})\, q^{n} - 504\,\sum_{n\geq 1} \sigma_{5}(\frac{n}{\alpha})\, q^{n}. 
\end{align}

\begin{theorem} \label{convolutionSum_a_b}
Let $\alpha,\beta\in\mathbb{N}^{*}$ be such that 
$\gcd(\alpha,\beta)=1$. Then
\begin{multline}
 (\,\alpha\, E_{2}(q^{\alpha}) -  E_{2}(q)\,)\,E_{4}(q^{\beta})  = 
 \alpha - 1 
    + \sum_{n=1}^{\infty}\biggl(\, 24\,\sigma(n) - 24\,\alpha\,\sigma(\frac{n}{\alpha}) \\ 
    + 240\,\alpha\,\sigma_{3}(\frac{n}{\beta}) 
    - 240\,\sigma_{3}(\frac{n}{\beta})
    + 24\times 240\,W_{(1,\beta)}^{1,3}(n) 
    -  24\times 240\,\alpha\, W_{(\alpha,\beta)}^{1,3}(n)\,\biggr)q^{n} 
    \label{evalConvolClass-eqn-11}
\end{multline}
and 
\begin{multline}
E_{4}(q^{\alpha})\,(\, E_{2}(q) - \beta\,E_{2}(q^{\beta})\,) = 
 1 - \beta  
+ \sum_{n=1}^{\infty}\biggl(\, 24\,\beta\,\sigma(\frac{n}{\beta}) -  24\,\sigma(n)   \\ 
+  240\,\sigma_{3}(\frac{n}{\alpha})  
- 240\,\beta\,\sigma_{3}(\frac{n}{\alpha}) - 24\times 240\, W_{(\alpha,1)}^{3,1}(n)
+ 24\times 240\,\beta\, W_{(\alpha,\beta)}^{3,1}(n)\,\biggr)q^{n}. 
\label{evalConvolClass-eqn-11-2}
\end{multline}
\end{theorem}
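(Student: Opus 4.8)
The plan is to treat both displayed identities purely as formal $q$-expansion identities and to derive them by forming the Cauchy product of two explicitly known power series. No appeal to the theory of modular forms is needed here: although Lemma \ref{evalConvolClass-lema-1} tells us the two products lie in $\M_{6}(\Gamma_{0}(\alpha\beta))$, the present statement only asserts a coefficientwise equality, so it is a matter of multiplying series and collecting terms. I note at the outset that the coprimality hypothesis $\gcd(\alpha,\beta)=1$ plays no role in the formal manipulation itself; it merely keeps the convolution sums in their normalized form in the sense of \eqref{convolutionSumsEqns-gcd}.

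First I would expand each factor using \eqref{evalConvolClass-eqn-3} and \eqref{evalConvolClass-eqn-4a} together with the elementary reindexing $\sum_{n\geq 1}\sigma_{2k-1}(n)\,q^{\lambda n}=\sum_{n\geq 1}\sigma_{2k-1}(n/\lambda)\,q^{n}$, valid under the paper's convention that $\sigma_{2k-1}(n/\lambda)=0$ unless $\lambda\mid n$. For the first identity this yields $\alpha E_{2}(q^{\alpha})-E_{2}(q)=(\alpha-1)+\sum_{n\geq 1}\bigl(24\,\sigma(n)-24\,\alpha\,\sigma(n/\alpha)\bigr)q^{n}$ and $E_{4}(q^{\beta})=1+240\sum_{n\geq 1}\sigma_{3}(n/\beta)\,q^{n}$.

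Next I would multiply these two series. Writing the product as $\sum_{n\geq 0}c_{n}q^{n}$ with $c_{n}=\sum_{k=0}^{n}a_{k}b_{n-k}$, the constant term reproduces $\alpha-1$. For $n\geq 1$ I would separate the two boundary contributions $k=0$ and $k=n$, which give $240(\alpha-1)\sigma_{3}(n/\beta)$ and $24\,\sigma(n)-24\,\alpha\,\sigma(n/\alpha)$ respectively, from the interior double sum $\sum_{k=1}^{n-1}$. The decisive step is to recognize the interior sums as convolution sums: in $\sum_{k=1}^{n-1}\sigma(k)\sigma_{3}((n-k)/\beta)$ the factor $\sigma_{3}((n-k)/\beta)$ vanishes unless $n-k=\beta l$, so the constraint $k+\beta l=n$ with $k,l\geq 1$ identifies this with $W_{(1,\beta)}^{1,3}(n)$; likewise $\sum_{k=1}^{n-1}\sigma(k/\alpha)\sigma_{3}((n-k)/\beta)$ becomes $W_{(\alpha,\beta)}^{1,3}(n)$ after writing $k=\alpha k'$ and $n-k=\beta l$. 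Collecting the coefficients and splitting $240(\alpha-1)\sigma_{3}(n/\beta)=240\,\alpha\,\sigma_{3}(n/\beta)-240\,\sigma_{3}(n/\beta)$ then yields exactly \eqref{evalConvolClass-eqn-11}.

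The second identity follows by the identical procedure applied to $E_{4}(q^{\alpha})=1+240\sum_{n\geq 1}\sigma_{3}(n/\alpha)\,q^{n}$ and $E_{2}(q)-\beta E_{2}(q^{\beta})=(1-\beta)+\sum_{n\geq 1}\bigl(24\,\beta\,\sigma(n/\beta)-24\,\sigma(n)\bigr)q^{n}$; here the two interior sums are identified with $W_{(\alpha,1)}^{3,1}(n)$ and $W_{(\alpha,\beta)}^{3,1}(n)$, producing \eqref{evalConvolClass-eqn-11-2}. There is no genuine analytic difficulty in this argument; the only real work, and the place where sign and index errors can creep in, is the careful bookkeeping of the Cauchy product---keeping track of the two constant terms, the two boundary terms, and the precise summation ranges so that the interior double sums match the definition \eqref{def-convolution_sum} of $W_{(\alpha,\beta)}^{2i-1,2j-1}(n)$ exactly.
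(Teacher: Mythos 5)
Your proposal is correct and follows essentially the same route as the paper: the paper's proof is likewise a purely formal $q$-series computation in which the products $E_{4}(q^{\alpha})\,E_{2}(q)$ and $\beta\,E_{4}(q^{\alpha})\,E_{2}(q^{\beta})$ are expanded by the Cauchy product and the cross terms are recognized as $W_{(\alpha,1)}^{3,1}(n)$ and $W_{(\alpha,\beta)}^{3,1}(n)$ via the convention $\sigma_{k}(x)=0$ for $x\notin\mathbb{N}$. The only difference is organizational: the paper distributes the difference and expands the two resulting products separately, whereas you first collect $\alpha\,E_{2}(q^{\alpha})-E_{2}(q)$ into a single series and then perform one Cauchy product, separating boundary from interior terms.
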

\begin{proof} We observe that 
\begin{multline}
(\,\alpha\,E_{2}(q^{\alpha}) -  E_{2}(q)\,)\,E_{4}(q^{\beta}) = 
 \alpha\,E_{2}(q^{\alpha})\,E_{4}(q^{\beta}) - E_{2}(q)\,E_{4}(q^{\beta}) 
\label{evalConvolClass-eqn-7}
\end{multline}
and 
\begin{multline}
E_{4}(q^{\alpha})\,(\,E_{2}(q) - \beta\,E_{2}(q^{\beta})\,) = 
 E_{4}(q^{\alpha})\,E_{2}(q) - \beta\,E_{4}(q^{\alpha})\,E_{2}(q^{\beta}) 
\label{evalConvolClass-eqn-7a}
\end{multline}
For the sake of simplicity, we only prove (\hyperref[evalConvolClass-eqn-11-2]{\ref*{evalConvolClass-eqn-11-2}}) 
since the other can be proven similarly.    
\begin{equation}
 E_{4}(q^{\alpha})\,E_{2}(q) = 1-24\,\sum_{n=1}^{\infty}\sigma(n)\,q^{n}  
+ 240\,\sum_{n=1}^{\infty}\sigma_{3}(\frac{n}{\alpha})\,q^{n}
- 24\times 240\,\sum_{n=1}^{\infty}\,W_{(\alpha,1)}^{3,1}(n)\,q^{n}
\label{evalConvolClass-eqn-5}
\end{equation}
and 
\begin{multline}
\beta\,E_{4}(q^{\alpha})\,E_{2}(q^{\beta})  = \beta  
 - 24\,\beta\,\sum_{n=1}^{\infty}\sigma(\frac{n}{\beta})\,q^{n}  \\
 + 240\,\beta\,\sum_{n=1}^{\infty}\sigma_{3}(\frac{n}{\alpha})\,q^{n}
 - 24\times 240\,\beta\,\sum_{n=1}^{\infty}\,W_{(\alpha,\beta)}^{3,1}(n)\,q^{n}.
  \label{evalConvolClass-eqn-8}
\end{multline}
We then put (\hyperref[evalConvolClass-eqn-5]{\ref*{evalConvolClass-eqn-5}}) 
and (\hyperref[evalConvolClass-eqn-8]{\ref*{evalConvolClass-eqn-8}})  
together to obtain
\begin{multline*}
E_{4}(q^{\alpha})\,(\, E_{2}(q) - \beta\,E_{2}(q^{\beta})\,)  = 
 1 - \beta  
+ \sum_{n=1}^{\infty}\biggl(\, 24\,\beta\,\sigma(\frac{n}{\beta}) -  24\,\sigma(n)   \\ 
+  240\,\sigma_{3}(\frac{n}{\alpha})  
- 240\,\beta\,\sigma_{3}(\frac{n}{\alpha}) - 24\times 240\, W_{(\alpha,1)}^{3,1}(n)
+ 24\times 240\,\beta\, W_{(\alpha,\beta)}^{3,1}(n)\,\biggr)q^{n} 
\end{multline*}
as asserted. 
\end{proof}
The following corollary discusses special cases of \autoref{convolutionSum_a_b}, 
which are essential for the determination of $W_{(\alpha,\beta)}^{2i-1,2j-1}(n)$.
\begin{corollary} \label{convolutionSum_sp}
Let $\alpha,\beta\in\mathbb{N}^{*}$ be such that 
$\gcd(\alpha,\beta)=1$. 
\begin{description}
	\item[If $\alpha\neq 1$ and $\beta=1$, then] 
	\begin{multline}
	(\,\alpha\, E_{2}(q^{\alpha}) -  E_{2}(q)\,)\,E_{4}(q)  = 
  \alpha - 1 
+ \sum_{n=1}^{\infty}\biggl(\, - 24\,\alpha\,\sigma(\frac{n}{\alpha})
+ 240\,\alpha\,\sigma_{3}(n)  \\ 
- 720\,n\,\sigma_{3}(n) + 504\,\sigma_{5}(n)  
-  24\times 240\,\alpha\, W_{(\alpha,1)}^{1,3}(n)\,\biggr)q^{n}. 
	\label{evalConvolClass-eqn-sp-1}
	\end{multline}
	\item[If $\beta\neq 1$ and $\alpha=1$, then] 
	\begin{multline}
E_{4}(q)\,(\, E_{2}(q) - \beta\,E_{2}(q^{\beta})\,) = 
1 - \beta  
+ \sum_{n=1}^{\infty}\biggl(\, 24\,\beta\,\sigma(\frac{n}{\beta}) -  240\,\beta\,\sigma_{3}(n)   \\ 
+  720\,n\,\sigma_{3}(n)  
- 504\,\sigma_{5}(n)
+ 24\times 240\,\beta\, W_{(1,\beta)}^{3,1}(n)\,\biggr)q^{n}. 
	\label{evalConvolClass-eqn-sp-2}
	\end{multline}
	\item[If $\beta=\alpha\neq 1$, then] 
		\begin{multline}
	(\,\beta\, E_{2}(q^{\beta}) - E_{2}(q)\,)\,E_{4}(q^{\beta})  = 
\beta - 1 
+ \sum_{n=1}^{\infty}\biggl(\, 24\,\sigma(n) - 240\,\sigma_{3}(\frac{n}{\beta}) \\ 
+ 720\,n\,\sigma_{3}(\frac{n}{\beta}) - 504\,\beta\,\sigma_{5}(\frac{n}{\beta})
+ 24\times 240\,W_{(1,\beta)}^{1,3}(n) \,\biggr)q^{n} 
	\label{evalConvolClass-eqn-sp-3}
	\end{multline}
	and
	\begin{multline}
E_{4}(q^{\alpha})\,(\, E_{2}(q) - \alpha\,E_{2}(q^{\alpha})\,) = 
1 - \alpha  
+ \sum_{n=1}^{\infty}\biggl(\, - 24\,\sigma(n) +  240\,\sigma_{3}(\frac{n}{\alpha})   \\ 
-  720\,n\,\sigma_{3}(\frac{n}{\alpha})  
+ 504\,\alpha\,\sigma_{5}(\frac{n}{\alpha})
- 24\times 240\, W_{(\alpha,1)}^{3,1}(n)\,\biggr)q^{n}. 
	\label{evalConvolClass-eqn-sp-4}
	\end{multline}
\end{description}	
\end{corollary}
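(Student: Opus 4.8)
The plan is to obtain each of the four identities by specializing the two general expansions \eqref{evalConvolClass-eqn-11} and \eqref{evalConvolClass-eqn-11-2} of \autoref{convolutionSum_a_b} to the prescribed values of $\alpha$ and $\beta$, and then to eliminate every convolution sum whose two frequencies coincide by means of Ramanujan's closed form \eqref{Ramanujan-ident-1-3} (equivalently the product identity \eqref{ntienjem-ident-01}). No new analytic input is needed beyond these three ingredients; the work is purely a power-series bookkeeping.

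First, for the case $\alpha\neq 1$, $\beta=1$, I would set $\beta=1$ in \eqref{evalConvolClass-eqn-11}. Then $\sigma_{3}(\frac{n}{\beta})=\sigma_{3}(n)$, the mixed convolution $W_{(\alpha,1)}^{1,3}(n)$ is left untouched, and the term $24\times 240\,W_{(1,1)}^{1,3}(n)$ becomes a genuine self-convolution. Substituting \eqref{Ramanujan-ident-1-3} and multiplying through by $24\times 240$ turns this block into $504\,\sigma_{5}(n)+240\,\sigma_{3}(n)-720\,n\,\sigma_{3}(n)-24\,\sigma(n)$, after which collecting the coefficients of $\sigma(n)$, $\sigma(\frac{n}{\alpha})$, $\sigma_{3}(n)$, $n\,\sigma_{3}(n)$ and $\sigma_{5}(n)$ yields \eqref{evalConvolClass-eqn-sp-1}. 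In particular the two $\sigma(n)$-contributions, namely $+24$ from \eqref{evalConvolClass-eqn-11} and $-24$ from the $-\frac{1}{240}\sigma(n)$ term of Ramanujan's identity, cancel, which is the consistency check that no $\sigma(n)$-term survives. The case $\beta\neq 1$, $\alpha=1$ is entirely parallel: I would start from \eqref{evalConvolClass-eqn-11-2}, use the commutativity $W_{(1,1)}^{3,1}(n)=W_{(1,1)}^{1,3}(n)$ to recognize the self-convolution, apply \eqref{Ramanujan-ident-1-3}, and collect terms to obtain \eqref{evalConvolClass-eqn-sp-2}.

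The case $\alpha=\beta\neq 1$ must be handled separately, because it violates the hypothesis $\gcd(\alpha,\beta)=1$ of \autoref{convolutionSum_a_b} and the general formulas are therefore unavailable. Instead I would begin from the factorizations \eqref{evalConvolClass-eqn-7} and \eqref{evalConvolClass-eqn-7a} with $\alpha=\beta$ and expand the two resulting products independently. The diagonal product $E_{2}(q^{\beta})\,E_{4}(q^{\beta})$ is replaced by its closed form \eqref{ntienjem-ident-01}, which after multiplication by $\beta$ contributes the constant $\beta$ together with $720\,n\,\sigma_{3}(\frac{n}{\beta})$ and $-504\,\beta\,\sigma_{5}(\frac{n}{\beta})$; the mixed product $E_{2}(q)\,E_{4}(q^{\beta})$ expands, just as in the derivation of \eqref{evalConvolClass-eqn-5}, into $\sigma(n)$-, $\sigma_{3}(\frac{n}{\beta})$- and a single surviving convolution term $W_{(1,\beta)}^{1,3}(n)$. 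Subtracting the two expansions and collecting terms produces \eqref{evalConvolClass-eqn-sp-3}; the mirror computation built on $E_{4}(q^{\alpha})\,E_{2}(q)$ and $E_{2}(q^{\alpha})\,E_{4}(q^{\alpha})$ produces \eqref{evalConvolClass-eqn-sp-4}.

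The only genuine difficulty is arithmetic bookkeeping: one must carry the factor $24\times 240$ against the three rational coefficients $\frac{21}{240}$, $\frac{1}{24}$, $-\frac{1}{240}$ of Ramanujan's identity and verify that, once combined with the $\sigma$- and $\sigma_{3}$-terms already present in \eqref{evalConvolClass-eqn-11} and \eqref{evalConvolClass-eqn-11-2}, the coefficients reorganize exactly into the stated right-hand sides and that the spurious $\sigma(n)$-terms vanish. This is routine but error-prone, so the proof reduces to carefully executing these substitutions and collections.
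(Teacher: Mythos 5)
Your proposal is correct, and for the first two cases it coincides with the paper's own proof: the paper likewise specializes \autoref{convolutionSum_a_b} to $\beta=1$ (respectively $\alpha=1$) and then removes the self-convolution $W_{(1,1)}^{1,3}(n)$ via \eqref{Lahiri-ident-01} or \eqref{Ramanujan-ident-1-3-gen}; your term-by-term cancellations (the $\pm 24\,\sigma(n)$ pair and the $\pm 240\,\sigma_{3}(n)$ pair) check out exactly. The only divergence is the case $\beta=\alpha\neq 1$. There the paper simply substitutes $\beta=\alpha$ into \eqref{evalConvolClass-eqn-11-2} and then applies \eqref{Ramanujan-ident-1-3-gen} to the resulting term $W_{(\alpha,\alpha)}^{3,1}(n)$ --- formally this invokes \autoref{convolutionSum_a_b} outside its stated hypothesis $\gcd(\alpha,\beta)=1$, a liberty that is harmless only because the proof of that theorem is pure power-series multiplication and never uses coprimality. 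You flagged exactly this defect and instead returned to the factorizations \eqref{evalConvolClass-eqn-7} and \eqref{evalConvolClass-eqn-7a}, expanding the mixed product as in the derivation of \eqref{evalConvolClass-eqn-5} and replacing the diagonal product by \eqref{ntienjem-ident-01}. Your route is the more scrupulous one: it amounts to re-running the theorem's proof in the degenerate case, and it makes explicit why the paper's formal substitution nevertheless yields the correct identities \eqref{evalConvolClass-eqn-sp-3} and \eqref{evalConvolClass-eqn-sp-4}. The two computations are otherwise identical in their bookkeeping.
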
 
\begin{proof} We only prove the following cases as the others can be proved similarly.
	By \autoref{convolutionSum_a_b}  
	\begin{description}   
		\item[if $\alpha\neq 1$ and $\beta=1$]  then 
		\begin{multline*}
		(\,\alpha\, E_{2}(q^{\alpha}) - E_{2}(q)\,)\,E_{4}(q)  = 
  \alpha - 1
+ \sum_{n=1}^{\infty}\biggl(\, - 24\,\alpha\,\sigma(\frac{n}{\alpha})
+ 240\,\alpha\,\sigma_{3}(n)  \\ 
- 720\,n\,\sigma_{3}(n) + 504\,\sigma_{5}(n)  
-  24\times 240\,\alpha\, W_{(\alpha,1)}^{1,3}(n)\,\biggr)q^{n} 
		\end{multline*}
	when we also make use of either 
	(\hyperref[Lahiri-ident-01]{\ref*{Lahiri-ident-01}} or 
	(\hyperref[Ramanujan-ident-1-3-gen]{\ref*{Ramanujan-ident-1-3-gen}}).
	\item[if $\beta=\alpha\neq 1$]  then 
	\begin{multline*}
E_{4}(q^{\alpha})\,(\, E_{2}(q) - \alpha\,E_{2}(q^{\alpha})\,) = 
1 - \alpha  
+ \sum_{n=1}^{\infty}\biggl(\, - 24\,\sigma(n) +  240\,\sigma_{3}(\frac{n}{\alpha})   \\ 
-  720\,n\,\sigma_{3}(\frac{n}{\alpha})  
+ 504\,\alpha\,\sigma_{5}(\frac{n}{\alpha})
- 24\times 240\, W_{(\alpha,1)}^{3,1}(n)\,\biggr)q^{n} 
\end{multline*}
	when we additionally apply  
(\hyperref[Ramanujan-ident-1-3-gen]{\ref*{Ramanujan-ident-1-3-gen}}).
\end{description}
\end{proof}

From (\hyperref[evalConvolClass-eqn-sp-3]{\ref*{evalConvolClass-eqn-sp-1}}
and (\hyperref[evalConvolClass-eqn-sp-4]{\ref*{evalConvolClass-eqn-sp-2}},
and (\hyperref[evalConvolClass-eqn-sp-3]{\ref*{evalConvolClass-eqn-sp-3}}
and (\hyperref[evalConvolClass-eqn-sp-4]{\ref*{evalConvolClass-eqn-sp-4}}  
it immediately follows 
\begin{lemma} \label{convolutionSum-lema-sp}
Let $1<\alpha\in\mathbb{N}^{*}$. Then 
	\begin{align*}
 (\,\alpha\, E_{2}(q^{\alpha}) - E_{2}(q)\,)\,E_{4}(q) = 
-\,E_{4}(q)\,(\, E_{2}(q) - \alpha\,E_{2}(q^{\alpha})\,) \\
\text{and} \\
 (\,\alpha\, E_{2}(q^{\alpha}) - E_{2}(q)\,)\,E_{4}(q^{\alpha}) = 
-\,E_{4}(q^{\alpha})\,(\, E_{2}(q) - \alpha\,E_{2}(q^{\alpha})\,).
\end{align*}
\end{lemma}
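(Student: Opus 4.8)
The plan is to recognize that both asserted identities are purely formal consequences of the commutativity of multiplication in the ring of $q$-series, so that in principle no analytic input about $E_{2}$ or $E_{4}$ is required. For the first identity I would set $A = \alpha\,E_{2}(q^{\alpha}) - E_{2}(q)$ and $B = E_{4}(q)$, observe that $E_{2}(q) - \alpha\,E_{2}(q^{\alpha}) = -A$, and conclude
\[
-\,E_{4}(q)\,(\,E_{2}(q) - \alpha\,E_{2}(q^{\alpha})\,) = -\,B\,(-A) = B\,A = A\,B = (\,\alpha\,E_{2}(q^{\alpha}) - E_{2}(q)\,)\,E_{4}(q),
\]
which is exactly the claim. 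The second identity follows in the identical manner with $B$ replaced by $E_{4}(q^{\alpha})$. Under this reading the lemma is immediate and there is no real obstacle.

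However, to stay faithful to the route signposted just before the statement --- namely deriving it ``from \eqref{evalConvolClass-eqn-sp-1} and \eqref{evalConvolClass-eqn-sp-2}'' and ``from \eqref{evalConvolClass-eqn-sp-3} and \eqref{evalConvolClass-eqn-sp-4}'' --- I would instead equate the two $q$-expansions supplied by \autoref{convolutionSum_sp}. Taking \eqref{evalConvolClass-eqn-sp-1} for the left-hand side of the first identity and \eqref{evalConvolClass-eqn-sp-2} specialized to $\beta = \alpha$ for the right-hand side, I would match the series term by term: the constant terms $\alpha - 1$ and $-(1-\alpha)$ agree, and, after factoring out the overall sign, the $\sigma$-, $\sigma_{3}$-, $n\sigma_{3}$- and $\sigma_{5}$-coefficients of each $q^{n}$ coincide verbatim, leaving only the convolution-sum terms to be reconciled.

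The one non-formal step --- the ``hard part,'' such as it is --- is therefore to justify the symmetry $W_{(\alpha,1)}^{1,3}(n) = W_{(1,\alpha)}^{3,1}(n)$ (and, for the second identity, $W_{(1,\alpha)}^{1,3}(n) = W_{(\alpha,1)}^{3,1}(n)$). I would establish these by the index swap $(k,l)\mapsto(l,k)$ together with the commutativity $\sigma(k)\,\sigma_{3}(l) = \sigma_{3}(l)\,\sigma(k)$, exactly as was done for $W_{(1,1)}^{1,3}(n) = W_{(1,1)}^{3,1}(n)$ earlier in this subsection: the constraint $\alpha k + l = n$ becomes $k' + \alpha l' = n$ under the swap, converting one convolution sum into the other. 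Once these symmetries are in hand the coefficientwise comparison closes at once, and the same bookkeeping applied to \eqref{evalConvolClass-eqn-sp-3} and \eqref{evalConvolClass-eqn-sp-4} (again at $\beta = \alpha$) yields the second identity. I would most likely present the short formal argument in the main text and relegate the $q$-expansion verification to a remark, since the former makes the overall sign structure transparent while the latter merely re-derives it.
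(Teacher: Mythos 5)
Your proposal is correct, and your first argument is cleaner than what the paper itself does. The paper obtains the lemma by comparing the $q$-expansions \eqref{evalConvolClass-eqn-sp-1} with \eqref{evalConvolClass-eqn-sp-2} and \eqref{evalConvolClass-eqn-sp-3} with \eqref{evalConvolClass-eqn-sp-4} (``it immediately follows''), which is exactly your second route; and, as you correctly observe, that comparison only closes once one knows $W_{(\alpha,1)}^{1,3}(n)=W_{(1,\alpha)}^{3,1}(n)$ and $W_{(1,\alpha)}^{1,3}(n)=W_{(\alpha,1)}^{3,1}(n)$ --- a point the paper glosses over, since it establishes these symmetries only \emph{afterwards} (Lemma \ref{convolutionSum-coro-sp}), as a consequence of this very lemma together with Lemma \ref{convolution-lemma_a_b}. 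Your index-swap proof of the symmetry keeps your version of that route non-circular, because it is independent of the lemma, but be aware that it inverts the paper's direction of inference: the paper argues lemma $\Rightarrow$ symmetry, you argue symmetry $\Rightarrow$ lemma, so if your route were spliced into the paper, Lemma \ref{convolutionSum-coro-sp} would become redundant rather than a corollary. Your formal argument --- writing $A=\alpha\,E_{2}(q^{\alpha})-E_{2}(q)$ and $B=E_{4}(q)$ or $E_{4}(q^{\alpha})$ and noting $AB=-B(-A)$ in the commutative ring of formal power series --- needs none of this machinery, works for every $\alpha$, and is the proof worth keeping; the expansion comparison is then best relegated, as you suggest, to a remark, since its genuine content is not the lemma itself but the convolution-sum symmetry it encodes.
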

	

\section{Evaluating  $W_{(\alpha,\beta)}^{1,3}(n)$ and $W_{(\alpha,\beta)}^{3,1}(n)$, where 
$\alpha\beta\in\mathbb{N}^{*}$}
\label{convolution_alpha_beta}

We carry out an explicit formula for the convolution sums $W_{(\alpha,\beta)}^{1,3}(n)$ and $W_{(\alpha,\beta)}^{3,1}(n)$ wherein  
$\alpha\beta\in\mathbb{N}^{*}$. 

\subsection{Bases of $\E_{6}(\boldsymbol{\Upgamma}_{0}(\boldsymbol{\upalpha\upbeta}))$ and $\S_{6}(\boldsymbol{\Upgamma}_{0}(\boldsymbol{\upalpha\upbeta}))$}   
\label{convolution_alpha_beta-bases}

Let $\EuScript{D}(\alpha\beta)$ denote the set of all positive divisors of 
$\alpha\beta$.

In terms of theta series, \aP\ \cite{pizer1976} has discussed the existence of a basis of the 
space of cusp forms of weight $2k\in\mathbb{N}^{*}$ for 
$\Gamma_{0}(\alpha\beta)$ when $\alpha\beta$ is not a perfect square. 
We suppose in the sequel that the weight $2k\in\mathbb{N}^{*}$ and we then 
apply the dimension formulae in \tM 's book  
\cite[Thrm 2.5.2,~p.~60]{miyake1989} or \waS 's book  
\cite[Prop.\ 6.1, p.\ 91]{wstein} to conclude that 
\begin{itemize} 
 \item  for the space of Eisenstein forms, 
 \begin{equation} \label{dimension-Eisenstein}
  \text{dim}(\E_{2k}(\Gamma_{0}(\alpha\beta)))=\underset{d|\alpha\beta}{\sum}
\,\varphi(\gcd(d,\frac{\alpha\beta}{d}))=m_{E},
\end{equation}
 where $m_{E}\in\mathbb{N}^{*}$	and $\varphi$ is the Euler's totient function. 
 
 We observe that, if $\alpha\beta\in\N$, then
 \begin{equation} \label{dimension-Eisenstein-special}
 \text{dim}(\E_{2k}(\Gamma_{0}(\alpha\beta)))=\underset{\delta|\alpha\beta}{\sum}\varphi
 (\gcd (\delta,\frac{\alpha\beta}{\delta}))=\underset{\delta|\alpha\beta}{\sum}1 = \sigma_{0}(\alpha\beta)=d(\alpha\beta).
 \end{equation}  
\item for the space of cusp forms,   
$\text{dim}(\S_{2k}(\Gamma_{0}(\alpha\beta)))=m_{S}$, where 
$m_{S}\in\mathbb{N}$. 
\end{itemize} 
We use \autoref{ligozat_theorem} $(i)-(iv')$ to exhaustively determine 
as many elements of the space $\S_{2k}(\Gamma_{0}(\alpha\beta))$ 
as possible. From these elements of the space 
$\S_{2k}(\Gamma_{0}(\alpha\beta))$ we select relevant ones for the 
purpose of the determination of a basis of this space. The proof of the 
following theorem provides a method to effectively determine such a basis.  

The so-determined basis of the vector space of cusp forms is in general not unique. 
However, due to the change of basis which is an automorphism, it is sufficient 
to only consider this basis for our purpose. 

Let $\EuScript{C}$ denote the set of primitive Dirichlet characters 
$\chi(n)=\legendre{m}{n}$ as assumed in 
(\hyperref[evalConvolClass-eqn-4]{\ref*{evalConvolClass-eqn-4}}), 
where $m,n\in\mathbb{Z}$ and $\legendre{m}{n}$ is the Legendre-Jacobi-Kronecker symbol. 
Let $D_{\chi}(\alpha\beta)$
denote the 
subset of $\EuScript{D}(\alpha\beta)$ associated with the character $\chi$. 

Let $i,\kappa$ be natural numbers. The expression of a natural number in 
the form  $\underset{i=1}{\overset{\kappa}{\prod}} p_{i}^{e_{i}}$, where 
$p_{i}$ is a prime and $e_{i}$ is in $\mathbb{N}^{*}$, modulo a permutation 
of the primes $p_{i}$, is standard. In the following we will use this form 
to express a level $\alpha\beta\in\mathbb{N}^{*}\setminus\N$.

We observe that the selection of an applicable primitive Dirichlet character 
is not obvious. Let us assume that the levels are $2\cdot 3^{2}$ and $2^{4}$. Then the 
primitive Dirichlet characters $\legendre{-3}{n}$ and $\legendre{-4}{n}$ will 
not be good candidates for the determination of a basis of 
$\E_{2k}(\Gamma_{0}(2\cdot 3^{2}))$ and $\E_{2k}(\Gamma_{0}(2^{4}))$, respectively.

\begin{definition}  \label{def-annihilator}
  Let $i,\kappa\in\mathbb{N}^{*}$ and $n\in\mathbb{N}$. Let 
  $C\in\mathbb{Z}$ be fixed. Suppose that the level
  $\alpha\beta\in\mathbb{N}^{*}\setminus\N$ is fixed and of the form 
	$\underset{i=1}{\overset{\kappa}{\prod}} p_{i}^{e_{i}}$, where $p_{i}$ is 
	a prime number and $e_{i}$ is in $\mathbb{N}^{*}$. 
	We say that the primitive Dirichlet character $\chi(n)=\legendre{C}{n}$ \emph{annihilates} 
	$\E_{6}(\Gamma_{0}(\alpha\beta))$ or is an \emph{annihilator} of 
	$\E_{6}(\Gamma_{0}(\alpha\beta))$ if for some $1\leq j\leq\kappa$ we have $1<p_{j}^{e_{j}}\in\mathbb{N}^{*}\setminus\N$ and  $M_{\chi}(q^{\delta})$ vanishes for all $1<\delta$ positive divisor of $p_{j}^{e_{j}}$. 
	
	A set $\mathcal{C}$ of primitive Dirichlet characters \emph{annihilates} 
	$\E_{6}(\Gamma_{0}(\alpha\beta))$ or is an \emph{annihilator} of $\E_{6}(\Gamma_{0}(\alpha\beta))$ if each 
	$\chi(n)\in\mathcal{C}$ is an annihilator of $\E_{6}(\Gamma_{0}(\alpha\beta))$.
\end{definition}

To illustrate the above definition, suppose that $\alpha\beta=36$ and the primitive Dirichlet characters is $\chi(n)=\legendre{-3}{n}$. Then $C=-3$ so that $|C|$is a positive divisor of $9=3^{2}$.
Since it holds that 
	\begin{equation*} 
\chi(n)=\legendre{-3}{n}= \begin{cases}
	-1 & \text{ if } n\equiv 2\pmod{3}, \\
0 & \text{ if } \text{gcd}(3,n) \neq 1, \\
1 & \text{ if } n\equiv 1\pmod{3} \\ 
\end{cases}
\end{equation*} 
and 
	\begin{equation*} 
\sigma_{3}(\frac{n}{\delta})= \begin{cases}
0 & \text{ if } \frac{n}{\delta}\notin\mathbb{N}_{0}, \\
\text{nonzero} &  \text{gcd}(3,n) \neq 1,  
\end{cases}
\end{equation*}
for all  $1<\delta\in\EuScript{D}(9)$, we apply 
(\hyperref[evalConvolClass-eqn-4]{\ref*{evalConvolClass-eqn-4}}) 
to deduce that 
\[ E_{6,\chi}(q^{\delta}) =  
\sum_{n=1}^{\infty}\,\chi(n)\,\sigma_{5}(\frac{n}{\delta})\,q^{n}= 0. \]
Therefore, the primitive Dirichlet characters  $\chi(n)=\legendre{-3}{n}$ is an annihilator of $\E_{6}(\Gamma_{0}(36))$.

The following theorem provides a strong criteria when selecting a primitive 
Dirichlet character for a given level $\alpha\beta\in\mathbb{N}^{*}\setminus\N$. 
\begin{theorem} \label{theor-annihilator}
Let $i,\kappa$ be in $\mathbb{N}^{*}$. Let $C\in\mathbb{Z}$ be fixed. 
Let $\chi$ be a primitive Dirichlet character with conductor $|C|>1$ and 
let the level $\alpha\beta\in\mathbb{N}^{*}\setminus\N$ be fixed and of the form 
$\underset{i=1}{\overset{\kappa}{\prod}} p_{i}^{e_{i}}$, where $p_{i}$ is 
a prime number and $e_{i}$ is in $\mathbb{N}^{*}$. Suppose 
furthermore that $p_{j}^{e_{j}}\in\mathbb{N}^{*}\setminus\N$ is a positive 
divisor of $\alpha\beta$ for some $1\leq j\leq\kappa$. If the conductor  
$|C|$ is a positive divisor of $p_{j}$ and hence of the level $\alpha\beta$, then 
$\chi(n)=\legendre{C}{n}$, for all $n\in\mathbb{N}$, is an annihilator of 
$\E_{6}(\Gamma_{0}(\alpha\beta))$. 
\end{theorem}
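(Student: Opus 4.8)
The plan is to unwind Definition~\ref{def-annihilator} and reduce the claim to a coefficient-by-coefficient vanishing statement for the twisted Eisenstein series. By that definition, to prove that $\chi(n)=\legendre{C}{n}$ annihilates $\E_6(\Gamma_0(\alpha\beta))$ it suffices to produce one index $j$ with $1<p_j^{e_j}\in\mathbb{N}^{*}\setminus\N$ for which the twisted series $M_\chi(q^{\delta})=E_{6,\chi}(q^{\delta})$ vanishes identically for every positive divisor $\delta>1$ of $p_j^{e_j}$; the hypothesis supplies exactly such a $j$. The first thing I would record is that, since $|C|>1$ is a positive divisor of the \emph{prime} $p_j$, necessarily $|C|=p_j$. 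Hence the Kronecker symbol $\chi=\legendre{C}{\cdot}$ has prime conductor $p_j$, so $\chi(n)=0$ whenever $\gcd(n,p_j)\neq 1$, i.e.\ whenever $p_j\mid n$ (exactly the behaviour illustrated for $\legendre{-3}{n}$ in the example following Definition~\ref{def-annihilator}).

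Next I would substitute into the closed form (\ref{evalConvolClass-eqn-4}) with weight $2k=6$. Because $\chi$ is nontrivial with conductor $L=|C|=p_j>1$, the constant term $C_0$ there equals $0$, so for every $\delta\in\mathbb{N}^{*}$ one has
\[
E_{6,\chi}(q^{\delta})=\sum_{n=1}^{\infty}\chi(n)\,\sigma_{5}\!\left(\tfrac{n}{\delta}\right)q^{n}.
\]
Proving the theorem thus reduces to checking that each coefficient $\chi(n)\,\sigma_{5}(n/\delta)$ is zero, for every $n\geq 1$ and every divisor $\delta>1$ of $p_j^{e_j}$.

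The core of the argument is then a short divisibility dichotomy. Fix such a $\delta$; since $\delta>1$ divides the prime power $p_j^{e_j}$, we have $p_j\mid\delta$. For the coefficient $\chi(n)\,\sigma_{5}(n/\delta)$ to be nonzero one needs $\sigma_{5}(n/\delta)\neq 0$, which by the conventions on $\sigma_{5}$ fixed after (\ref{def-sigma_k-n}) forces $n/\delta$ to be a positive integer, hence $\delta\mid n$. But then $p_j\mid\delta\mid n$, so $p_j\mid n$ and therefore $\chi(n)=0$. Hence every coefficient vanishes, $E_{6,\chi}(q^{\delta})=0$ for all divisors $\delta>1$ of $p_j^{e_j}$, and this is precisely the condition of Definition~\ref{def-annihilator}; so $\chi$ annihilates $\E_6(\Gamma_0(\alpha\beta))$.

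I do not anticipate a serious obstacle: the statement is essentially an unwinding of definitions resting on two elementary facts---that a character of prime conductor $p_j$ kills every multiple of $p_j$, and that $n\mapsto\sigma_{5}(n/\delta)$ is supported on multiples of $\delta$. The only points that merit care are the notational identification of $M_\chi(q^{\delta})$ in Definition~\ref{def-annihilator} with the weight-$6$ twisted Eisenstein series $E_{6,\chi}(q^{\delta})$ of (\ref{evalConvolClass-eqn-4}), and the observation that the implication $|C|\mid p_j,\ |C|>1\Rightarrow|C|=p_j$ genuinely uses the primality of $p_j$; it would fail if one only assumed $|C|\mid p_j^{e_j}$ with $e_j>1$, which is why the hypothesis is stated for $p_j$ rather than for $p_j^{e_j}$.
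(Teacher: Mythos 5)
Your proposal is correct and follows essentially the same route as the paper's own proof: both reduce to showing $E_{6,\chi}(q^{\delta})=0$ coefficient-by-coefficient via (\hyperref[evalConvolClass-eqn-4]{\ref*{evalConvolClass-eqn-4}}) with $C_{0}=0$, the vanishing of the Kronecker symbol on multiples of the conductor, and the support of $\sigma_{5}(n/\delta)$ on multiples of $\delta$. If anything, you spell out the final divisibility chain $p_{j}\mid\delta\mid n\Rightarrow\chi(n)=0$ more explicitly than the paper, which compresses it into ``putting altogether.''
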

\begin{proof} 
	Suppose that $\alpha\beta\in\mathbb{N}^{*}\setminus\N$ is fixed and 
of the form 
$\underset{i=1}{\overset{\kappa}{\prod}} p_{i}^{e_{i}}$, where $p_{i}$ is 
a prime number and $e_{i}$ is in $\mathbb{N}^{*}$. As an immediate consequence of 
the structure of $\alpha\beta$ there exists $1\leq j\leq\kappa$ such that 
$p_{j}^{e_{j}}\in\mathbb{N}^{*}\setminus\N$ is a positive 
divisor of $\alpha\beta$. Since the conductor $|C|$ 
is a positive divisor of the level $p_{j}$, the existence of 
$1<\delta\in\EuScript{D}(p_{j}^{e_{j}})$ is given. 
It is well-known that for each $1\leq f\leq e_{j}$ it 
holds that $p_{j}^{f}$ is a positive divisor of $p_{j}^{e_{j}}$. 
On the other hand, it holds that 
\begin{equation} 
\legendre{C}{n}= \begin{cases}
0 & \text{ if } \text{gcd}(|C|,n) \neq 1, \\
\text{nonzero} & \text{ otherwise}. 
\end{cases}
\end{equation}
For each $1<\delta\in\EuScript{D}(p_{j}^{e_{j}})$ it holds that 
$\text{gcd}(|C|,\delta) \neq 1$. 
Since the conductor of $\chi$ is greater than one, we have $C_{0}=0$.  Now we apply  
(\hyperref[evalConvolClass-eqn-4]{\ref*{evalConvolClass-eqn-4}}) 
to infer that 
\[ E_{6,\chi}(q^{\delta}) =  
\sum_{n=1}^{\infty}\,\chi(n)\,\sigma_{5}(\frac{n}{\delta})\,q^{n}. \]
Since it also holds that 
\begin{equation} 
\sigma_{5}(\frac{n}{\delta})= \begin{cases}
0 & \text{ if } \frac{n}{\delta}\notin\mathbb{N}^{*}, \\
\text{nonzero} & \text{ otherwise}, 
\end{cases}
\end{equation}
we obtain the stated result by simply putting altogether; that is 
$E_{6,\chi}(q^{\delta})=0$ for all $1<\delta\in\EuScript{D}(p_{j}^{e_{j}})$.
\end{proof} 

If $\alpha\beta\in\N$ holds, then the primitive Dirichlet characters 
are trivial. Therefore, the set $\EuScript{C}$ is empty. Hence, the case 
where $\alpha\beta\in\N$ holds is a special case of the following theorem. 
\begin{theorem} \label{basisCusp_a_b}
\begin{enumerate}
\item[\textbf{(a)}] Let $\EuScript{C}$ be a set of 
primitive Dirichlet characters which 
is not an annihilator of $\E_{6}(\Gamma_{0}(\alpha\beta))$. 
Then the set $\EuScript{B}_{E}=\{\, E_{6}(q^{t})\,\mid\, t\in\EuScript{D}
(\alpha\beta)\,\}\cup\underset{\chi\in\EuScript{C}}{\bigcup}\{\, 
E_{6,\chi}(q^{t})\mid t\in D_{\chi}(\alpha\beta)\,\}$ 
is a basis of $\E_{6}(\Gamma_{0}(\alpha\beta))$.
\item[\textbf{(b)}] Let $1\leq i\leq m_{S}$ be positive integers, 
$\delta\in D(\alpha\beta)$ and $(r(i,\delta))_{i,\delta}$ be a 
table of the powers of\  $\eta(\delta\,z)$. Let furthermore  
$\EuFrak{B}_{\alpha\beta,i}(q)=\underset{\delta|\alpha\beta}{\prod}\eta^{r(i,\delta)}(\delta\,z)$ 
be selected elements of $\S_{6}(\Gamma_{0}(\alpha\beta))$. 
Then the set $\EuScript{B}_{S}=\{\, \EuFrak{B}_{\alpha\beta,i}(q)\,\mid\,
1\leq i\leq m_{S}\,\}$ is a basis of $\S_{6}(\Gamma_{0}(\alpha\beta))$. 
\item[\textbf{(c)}] The set $\EuScript{B}_{M}=\EuScript{B}_{E}\cup\EuScript{B}_{S}$ 
constitutes a basis of $\M_{6}(\Gamma_{0}(\alpha\beta))$. 
\end{enumerate}
\end{theorem}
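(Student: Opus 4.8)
The plan is to prove the three claims (a), (b), (c) by a dimension-counting argument combined with a linear-independence verification, leveraging the direct sum decomposition $\M_{6}(\Gamma_{0}(\alpha\beta))=\E_{6}(\Gamma_{0}(\alpha\beta))\oplus\S_{6}(\Gamma_{0}(\alpha\beta))$ recalled in the background section. For part (a), I would first observe that each proposed generator lies in the correct space: the forms $E_{6}(q^{t})$ for $t\mid\alpha\beta$ belong to $\E_{6}(\Gamma_{0}(\alpha\beta))$ by the inclusion (\hyperref[basis-cusp-eqn]{\ref*{basis-cusp-eqn}}) applied to $E_{6}(q)\in\E_{6}(\Gamma_{0}(1))$, and each twisted form $E_{6,\chi}(q^{t})$ lies in $\E_{6}(\Gamma_{0}(\alpha\beta))$ by the theory of Eisenstein series with characters from \waS\ \cite[Thrms 5.8, 5.9, p.~86]{wstein}, using that $t\in D_{\chi}(\alpha\beta)$ is chosen so the level of the twisted series divides $\alpha\beta$. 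The key point is that because $\EuScript{C}$ is \emph{not} an annihilator of $\E_{6}(\Gamma_{0}(\alpha\beta))$, none of the twisted generators $E_{6,\chi}(q^{t})$ collapses to zero, so the proposed set $\EuScript{B}_{E}$ consists of genuinely nonzero forms.

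The heart of part (a) is then to show that $\EuScript{B}_{E}$ is both linearly independent and has the right cardinality to span. I would match the cardinality of $\EuScript{B}_{E}$ against the dimension formula (\hyperref[dimension-Eisenstein]{\ref*{dimension-Eisenstein}}), namely $\dim\E_{6}(\Gamma_{0}(\alpha\beta))=\sum_{d\mid\alpha\beta}\varphi(\gcd(d,\tfrac{\alpha\beta}{d}))=m_{E}$; the number of divisors $t\in\EuScript{D}(\alpha\beta)$ contributes the untwisted part, and the character-twisted contributions indexed by $\chi\in\EuScript{C}$ and $t\in D_{\chi}(\alpha\beta)$ are precisely designed to make up the difference $m_{E}-d(\alpha\beta)$. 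Linear independence can be established by examining the $q$-expansions: the leading exponents and the distinct character values $\legendre{m}{n}$ separate the generators, so a vanishing linear combination forces all coefficients to vanish (one compares coefficients of $q^{n}$ for $n$ in suitable residue classes modulo the conductors). Since $m_{E}$ linearly independent elements of an $m_{E}$-dimensional space form a basis, $\EuScript{B}_{E}$ is a basis.

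For part (b), I would argue that \autoref{ligozat_theorem}, conditions $(i)$–$(iv')$, guarantees each $\EuFrak{B}_{\alpha\beta,i}(q)=\prod_{\delta\mid\alpha\beta}\eta^{r(i,\delta)}(\delta z)$ is a genuine cusp form in $\S_{6}(\Gamma_{0}(\alpha\beta))$, the weight condition $2k=\tfrac{1}{2}\sum_{\delta}r(i,\delta)=6$ fixing the weight. With $m_{S}=\dim\S_{6}(\Gamma_{0}(\alpha\beta))$ from the Miyake--Stein dimension formulae, it suffices to select $m_{S}$ of these eta quotients that are linearly independent; independence is again checked by comparing the initial $q$-expansion coefficients (the orders of vanishing at the cusp $\infty$ and the first several Fourier coefficients), arranging the leading terms into an upper-triangular pattern. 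Part (c) is then immediate: since $\EuScript{B}_{E}$ and $\EuScript{B}_{S}$ are bases of the two summands in the direct sum decomposition, their union $\EuScript{B}_{M}=\EuScript{B}_{E}\cup\EuScript{B}_{S}$ is a basis of $\M_{6}(\Gamma_{0}(\alpha\beta))$.

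The main obstacle I anticipate is part (b): \autoref{ligozat_theorem} only provides candidate cusp forms and there is no guarantee a priori that enough eta quotients satisfying $(i)$–$(iv')$ exist to reach $m_{S}$ linearly independent forms, nor that the selected ones are independent. In practice this is the step that must be carried out case-by-case (which is why the statement phrases the $(r(i,\delta))$ as a \emph{selected} table rather than an explicit formula), and verifying linear independence of the chosen eta quotients via their $q$-expansions is the genuinely computational core of the argument. The secondary delicate point in part (a) is confirming that the character-twisted Eisenstein forms indexed by $D_{\chi}(\alpha\beta)$ exactly account for the surplus dimension $m_{E}-d(\alpha\beta)$ and remain independent from the untwisted $E_{6}(q^{t})$; this relies essentially on the non-annihilator hypothesis together with the orthogonality of the Dirichlet characters involved.
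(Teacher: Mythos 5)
Your proposal is correct and follows essentially the same route as the paper's own proof: membership of the generators via \waS 's Eisenstein theorems \cite[Thrms 5.8, 5.9, p.~86]{wstein} and \autoref{ligozat_theorem}, linear independence in (a) and (b) by equating coefficients of $q^{n}$ to obtain a homogeneous linear system with nonsingular coefficient matrix, and (c) immediately from the decomposition $\M_{6}(\Gamma_{0}(\alpha\beta))=\E_{6}(\Gamma_{0}(\alpha\beta))\oplus\S_{6}(\Gamma_{0}(\alpha\beta))$. The one substantive difference is in part (b): the obstacle you flag as unresolved --- that the selected eta quotients may fail to be independent when the triangular pattern of leading degrees breaks down --- is exactly the point the paper addresses with an explicit replacement procedure (if the matrix $(\EuFrak{b}_{\alpha\beta,i}(n))$ is singular, the offending $\EuFrak{B}_{\alpha\beta,l}(q)$ is swapped for another eta quotient produced by the exhaustive search of \autoref{ligozat_theorem} and the determinant is recomputed, the process terminating by finiteness), which is what the word ``selected'' in the statement implicitly encodes.
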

\begin{remark} \label{basis-remark}
\begin{enumerate}
\item[(r1)] Each $\EuFrak{B}_{\alpha\beta,i}(q)$ can be expressed in the form 
$\underset{n=1}{\overset{\infty}{\sum}}\EuFrak{b}_{\alpha\beta,i}(n)q^{n}$, where $1\leq i\leq m_{S}$ and 
for each $n\geq 1$ the coefficient $\EuFrak{b}_{\alpha\beta,i}(n)$ is an integer. 
\item[(r2)] 
If we divide the sum that results from \autoref{ligozat_theorem} $(iv')$, when $d=N$, 
by $24$, then we obtain the smallest positive degree of $q$ in 
$\EuFrak{B}_{\alpha\beta,i}(q)$.
\end{enumerate}
\end{remark}
Since the existence of a basis of the space of cusp forms in terms 
of theta series has been proved for all square-free 
levels by \aP\ \cite{pizer1976} and since \mN\ and \gL,\  
see \autoref{ligozat_theorem} $(i)-(iv')$, have determined a method to find as 
many theta series belonging to the space of cusp forms as possible, the proof of this 
theorem is essentially restricted to show that the selected elements of 
the space of modular forms of the given level are linearly independent. 
\begin{proof} 
\begin{enumerate} 
\item[(a)] 
\waS\ \cite[Thrms 5.8 and 5.9, p.~86]{wstein} has shown that for each 
$t$ positive divisor of $\alpha\beta$ it holds that 
$E_{6}(q^{t})$ is in $\M_{6}(\Gamma_{0}(t))$. 

Let $\iota$ be the absolute difference between 
$\text{dim}(\E_{6}(\Gamma_{0}(\alpha\beta)))$ and the cardinality of $D(\alpha\beta)$. 

If $\iota = 0$, then $\alpha\beta$ is in $\N$ and hence $\EuScript{C}=\emptyset$ and $\EuScript{B}_{E}=\{\, E_{6}(q^{t})\,\mid\, t\in\EuScript{D}
(\alpha\beta)\,\}$ is linearly independent. Since the dimension of $\E_{6}(\Gamma_{0}(\alpha\beta))$ is finite, it follows that it is a 
basis of $\E_{6}(\Gamma_{0}(\alpha\beta))$.

Suppose now that $\iota > 0$. Since $\E_{6}(\Gamma_{0}(t))$ is a 
vector space and the set $\EuScript{C}$ of primitive Dirichlet 
characters does not annihilates $\E_{6}(\Gamma_{0}(\alpha\beta))$, it then 
holds for each Legendre-Jacobi-Kronecker symbol 
$\chi\in\EuScript{C}$ and $t\in D_{\chi}(\alpha\beta)$ that $0\neq E_{6,\chi}(q^{t})$ is in $\E_{6}(\Gamma_{0}(t))$.
Since the dimension of $\E_{6}(\Gamma_{0}(\alpha\beta))$ is finite, 
it suffices to show that $\EuScript{B}_{E}$ is linearly independent. 
Suppose that for each $\chi\in\EuScript{C},s\in D_{\chi}(\alpha\beta)$ we have 
$z(\chi)_{s}\in\mathbb{C}$ and  
that for each $t|\alpha\beta$ we have $x_{t}\in\mathbb{C}$.  Then 
\begin{equation*}
\underset{t|\alpha\beta}{\sum}x_{t}\,E_{6}(q^{t}) + \underset{\chi\in\EuScript{C}}
{\sum}\biggl(\,\underset{s\in D_{\chi}(\alpha\beta)}{\sum}z(\chi)_{s} E_{6,\chi}(q^{s})\,\biggr)=0. 
\end{equation*}
We recall that $\chi$ is a Legendre-Jacobi-Kronecker symbol; therefore, for all 
$0\neq a\in\mathbb{Z}$ it holds that $\legendre{a}{0}=0$. Since 
the primitive Dirichlet characters $\chi$ is not trivial and has 
a conductor $L$ which we may assume greater than one, we can deduce 
that $C_{0}=0$ in (\hyperref[evalConvolClass-eqn-4]{\ref*{evalConvolClass-eqn-4}}). 
Then we equate the coefficients of 
$q^{n}$ for $n\in D(\alpha\beta)\cup\underset{\chi\in\EuScript{C}}{\bigcup}
\{\,s| s\in D_{\chi}(\alpha\beta)\,\}$ 
to obtain the homogeneous system of linear equations in $m_{E}$ unknowns:
\begin{equation*}
\underset{u|\alpha\beta}{\sum}\,\sigma_{5}(\frac{t}{u})x_{u} + 
\underset{\chi\in\EuScript{C}}
{\sum}\,\underset{v\in D_{\chi}(\alpha\beta)}{\sum}\,\chi(t)\sigma_{5}
(\frac{t}{v})Z(\chi)_{v} = 0,\qquad t\in D(\alpha\beta).
\end{equation*}
The determinant of the matrix of this homogeneous system of linear equations 
is not zero. Hence, the unique solution is 
$x_{t}=z(\chi)_{s}=0$ for all $t\in D(\alpha\beta)$ and for all 
$\chi\in\EuScript{C},s\in D_{\chi}(\alpha\beta)$. So, the set 
$\EuScript{B}_{E}$ is linearly independent and hence  
is a basis of $\E_{6}(\Gamma_{0}(\alpha\beta))$.

\item[(b)] We show that each $\EuFrak{B}_{\alpha\beta,i}(q)$, where $1\leq i\leq m_{S}$,
is in the space $\S_{6}(\Gamma_{0}(\alpha\beta))$. 
This is obviously the case  
since $\EuFrak{B}_{\alpha\beta,i}(q), 1\leq i\leq m_{S},$ are obtained using an 
exhaustive search which applies items (i)--(iv$\prime$) in 
\autoref{ligozat_theorem}. 

Since the dimension of $\S_{6}(\Gamma_{0}(\alpha\beta))$ is finite, it 
suffices to show that the set $\EuScript{B}_{S}$
is linearly independent. Suppose that $x_{i}\in\mathbb{C}$ and 
$\underset{i=1}{\overset{m_{S}}{\sum}}\,x_{i}\,\EuFrak{B}_{\alpha\beta,i}(q)=0$. 
Then $\underset{i=1}{\overset{m_{S}}{\sum}}\,x_{i}\,\EuFrak{B}_{\alpha\beta,i}(q)
= \underset{n=1}{\overset{\infty}{\sum}}(\,\underset{i=1}{\overset{m_{S}}{\sum}}\,x_{i}\,\EuFrak{b}_{\alpha\beta,i}(n)\,)q^{n} = 0$
which gives the homogeneous system of $m_{S}$ linear equations in $m_{S}$ unknowns:
\begin{equation}  \label{basis-cusp-eqn-sol}
\underset{i=1}{\overset{m_{S}}{\sum}}\EuFrak{b}_{\alpha\beta,i}(n)\,x_{i}= 0,\qquad 
1\leq n\leq m_{S}.
\end{equation}
Two cases arise:
\begin{description}
\item[The smallest degree of $\EuFrak{B}_{\alpha\beta,i}(q)$ is $i$ for each 
$1\leq i\leq m_{S}$] Then the square matrix which corresponds to 
this homogeneous system of $m_{S}$ linear equations is triangular with $1$'s on the 
diagonal. Hence, the determinant of that matrix is $1$ and 
so the unique solution is $x_{i}=0$ for all $1\leq i\leq m_{S}$. 
\item[The smallest degree of $\EuFrak{B}_{\alpha\beta,i}(q)$ is $i$ for  
$1\leq i< m_{S}$] Let $n'$ be the largest positive integer such that 
$1\leq i\leq n'< m_{S}$. Let 
$\EuScript{B}_{S}'=\{\,\EuFrak{B}_{\alpha\beta,i}(q)\,\mid\,1\leq i\leq n'\,\}$ and 
$\EuScript{B}_{S}''=\{\,\EuFrak{B}_{\alpha\beta,i}(q)\,\mid\,n'< i\leq m_{S}\,\}$. Then 
$\EuScript{B}_{S}=\EuScript{B}_{S}'\cup\EuScript{B}_{S}''$ and we may consider 
$\EuScript{B}_{S}$ as an ordered set. By the case above, the set 
$\EuScript{B}_{S}'$ is linearly 
independent. Hence, the linear independence of the set $\EuScript{B}_{S}$ depends 
on that of the set $\EuScript{B}_{S}''$.
Let $A=(\EuFrak{b}_{\alpha\beta,i}(n))$ be the $m_{S}\times m_{S}$ matrix 
in (\hyperref[basis-cusp-eqn-sol]{\ref*{basis-cusp-eqn-sol}}). 
If $\text{det}(A)\neq 0$, then $x_{i}=0$ for all $1\leq i\leq m_{S}$ and we are 
done. Suppose that $\text{det}(A)= 0$. Then for 
some $n'< l\leq m_{S}$ there exists $\EuFrak{B}_{\alpha\beta,l}(q)$ which is causing 
the system of equations to be inconsistent. We substitute $\EuFrak{B}_{\alpha\beta,l}(q)$ with, say 
    $\EuFrak{B}_{\alpha\beta,l}'(q)$, which does not occur in $\EuScript{B}_{S}$ and 
    compute the determinant of the new matrix $A$. 
    Since there are finitely many 
    $\EuFrak{B}_{\alpha\beta,l}(q)$ with $n'< l\leq m_{S}$ that may cause the 
    system of linear equations to be inconsistent and finitely many elements of 
    $\S_{6}(\Gamma_{0}(\alpha\beta))\setminus \EuScript{B}_{S}$, 
    the procedure will terminate with a consistent system of linear equations.  
Hence, we will find a set of elements of $\S_{6}(\Gamma_{0}(\alpha\beta))$ which is linearly independent.
\end{description}
Therefore, the set $\{\,\EuFrak{B}_{\alpha\beta,i}(q)\mid 1\leq i\leq m_{S}\,\}$ 
is linearly independent and hence  
is a basis of $\S_{6}(\Gamma_{0}(\alpha\beta))$.

\item[(c)] Since $\M_{6}(\Gamma_{0}(\alpha\beta))=\E_{6}(\Gamma_{0}(\alpha\beta))\oplus 
\S_{6}(\Gamma_{0}(\alpha\beta))$, the result follows from (a) and (b).
\end{enumerate} 
\end{proof}

If the level $\alpha\beta$ belongs to the class $\N$, then 
\autoref{basisCusp_a_b} (a) is provable by induction on the set of positive 
divisors of $\alpha\beta$; see for example \eN\ \cite{ntienjem2016b}. Note that  each positive divisor of $\alpha\beta$ is in the class $\N$ 
whenever the level $\alpha\beta$ belongs to the class $\N$.  
This nice property does not hold in general if 
the level $\alpha\beta$ belongs to the class $\mathbb{N}\setminus\N$. For 
example $18$ is an element of the class $\mathbb{N}\setminus\N$; however, $6$ 
which is a positive divisor of $18$ does not belong to $\mathbb{N}\setminus\N$. 

The proof of \autoref{basisCusp_a_b}(b) provides us with an effective method to 
determine a basis of the space of cusp forms of weight $0<2k\in\mathbb{N}$
and of level $\alpha\beta$ whenever $\alpha\beta$ belongs to $\mathbb{N}^{*}$.

\subsection{Evaluating the convolution sums $\mathbf{W_{(\boldsymbol{\upalpha,\upbeta})}^{1,3}(n)}$ and 
$\mathbf{W_{(\boldsymbol{\upalpha,\upbeta})}^{3,1}(n)}$}
\label{convolution_alpha_beta-gen}

We recall that it is sufficient to assume that the primitive Dirichlet 
character $\chi$ is not trivial since the case $\chi$ trivial can be 
concluded as an immediate corollary. 

\begin{lemma} \label{convolution-lemma_a_b}
Let $\alpha,\beta\in\mathbb{N}$ be such that $\text{gcd}(\alpha,\beta)=1$. 
Let furthermore $\EuScript{B}_{M}=\EuScript{B}_{E}\cup\EuScript{B}_{S}$ be a 
basis of $\M_{6}(\Gamma_{0}(\alpha\beta))$. Then there exist
$X_{\delta},Z(\chi)_{s}, Y_{j}\in\mathbb{C}, where 1\leq j\leq m_{S}, 
\chi\in\EuScript{C}, s\in D_{\chi}(\alpha\beta)\text{ and } \delta|\alpha\beta$, such that  
\begin{multline}
(\,\alpha\,E_{2}(q^{\alpha}) - E_{2}(q)\,)\, E_{4}(q^{\beta})
 = \sum_{\delta|\alpha\beta}X_{\delta}  
   +  \sum_{n=1}^{\infty}\biggl( 
 - 504\,\sum_{\delta|\alpha\beta}\sigma_{5}(\frac{n}{\delta})X_{\delta}  \\
 + \sum_{\chi\in\EuScript{C}}\,\sum_{s\in D_{\chi}(\alpha\beta)}\,\sigma_{5}(\frac{n}{s})\,
Z(\chi)_{s} 
 + \sum_{j=1}^{m_{S}}\,\EuFrak{b}_{\alpha\beta,j}(n)\,Y_{j}\biggr)\,q^{n}.
\label{convolution_a_b-eqn-0}
\end{multline}
and 
\begin{multline}
E_{4}(q^{\alpha})\,(\, E_{2}(q) - \beta\,E_{2}(q^{\beta})\,) 
= \sum_{\delta|\alpha\beta}X_{\delta}  
+  \sum_{n=1}^{\infty}\biggl( 
 - 504\,\sum_{\delta|\alpha\beta}\sigma_{5}(\frac{n}{\delta})X_{\delta}  \\
+ \sum_{\chi\in\EuScript{C}}\,\sum_{s\in D_{\chi}(\alpha\beta)}\,\sigma_{5}(\frac{n}{s})\,
Z(\chi)_{s} 
+ \sum_{j=1}^{m_{S}}\,\EuFrak{b}_{\alpha\beta,j}(n)\,Y_{j}\biggr)\,q^{n}.
\label{convolution_a_b-eqn-1}
\end{multline}
These also hold for the special cases $\beta=1\land \alpha\neq 1$, $\>\alpha = 1\land \beta\neq 1$ and $\beta=\alpha\neq 1$.
\end{lemma}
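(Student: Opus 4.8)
The plan is to realise each left-hand side as an element of $\M_6(\Gamma_0(\alpha\beta))$ and then to read off its coordinates in the basis supplied by \autoref{basisCusp_a_b}. First I would invoke \autoref{evalConvolClass-lema-1}, which asserts precisely that $(\,\alpha E_2(q^\alpha) - E_2(q)\,)\,E_4(q^\beta)$ and $E_4(q^\alpha)\,(\,E_2(q) - \beta E_2(q^\beta)\,)$ both lie in $\M_6(\Gamma_0(\alpha\beta))$. Since $\gcd(\alpha,\beta)=1$, \autoref{basisCusp_a_b} gives that $\EuScript{B}_M=\EuScript{B}_E\cup\EuScript{B}_S$ is a basis of this space, so each of the two forms admits a unique expansion
\[
\sum_{\delta|\alpha\beta} X_{\delta}\, E_6(q^{\delta}) + \sum_{\chi\in\EuScript{C}}\sum_{s\in D_{\chi}(\alpha\beta)} Z(\chi)_{s}\, E_{6,\chi}(q^{s}) + \sum_{j=1}^{m_S} Y_{j}\, \EuFrak{B}_{\alpha\beta,j}(q)
\]
with complex coefficients $X_{\delta},Z(\chi)_{s},Y_{j}$. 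This already yields the existence assertion; what remains is to expand this combination into a $q$-series and match it term by term against the desired identities.

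Next I would substitute the $q$-expansions of the three families of basis elements. From (\hyperref[evalConvolClass-eqn-6]{\ref*{evalConvolClass-eqn-6}}) one has $E_6(q^{\delta}) = 1 - 504\sum_{n\geq 1}\sigma_5(\frac{n}{\delta})\,q^{n}$; from (\hyperref[evalConvolClass-eqn-4]{\ref*{evalConvolClass-eqn-4}}), using that every $\chi\in\EuScript{C}$ is a nontrivial Legendre-Jacobi-Kronecker symbol of conductor $L>1$ so that $C_0=0$, one has $E_{6,\chi}(q^{s}) = \sum_{n\geq 1}\chi(n)\sigma_5(\frac{n}{s})\,q^{n}$; and by item (r1) of \autoref{basis-remark} each cusp form is $\EuFrak{B}_{\alpha\beta,j}(q) = \sum_{n\geq 1}\EuFrak{b}_{\alpha\beta,j}(n)\,q^{n}$. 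Only the $E_6(q^{\delta})$ contribute a constant term, producing $\sum_{\delta|\alpha\beta}X_{\delta}$, while collecting the coefficient of $q^{n}$ for $n\geq 1$ from the three sums yields exactly the bracketed expression in (\hyperref[convolution_a_b-eqn-0]{\ref*{convolution_a_b-eqn-0}}). The identity (\hyperref[convolution_a_b-eqn-1]{\ref*{convolution_a_b-eqn-1}}) is obtained by the identical argument, the only change being which of the two forms of \autoref{evalConvolClass-lema-1} is expanded.

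Finally, for the special cases $\beta=1\land\alpha\neq 1$, $\alpha=1\land\beta\neq 1$ and $\beta=\alpha\neq 1$, I would observe that the relevant forms still belong to $\M_6(\Gamma_0(\alpha\beta))$: their effective level is a divisor of $\alpha\beta$, so the inclusion (\hyperref[basis-cusp-eqn]{\ref*{basis-cusp-eqn}}) places them back in the ambient space, and the same basis expansion applies verbatim. Their explicit shapes are already recorded in \autoref{convolutionSum_sp} together with the relations of \autoref{convolutionSum-lema-sp}, so no new computation is needed beyond identifying the coefficients. The step I expect to demand the most care is the bookkeeping of the character twists: one must verify that the conductor condition forces $C_0=0$ for every $\chi\in\EuScript{C}$ so that no spurious constant term is introduced, and that the index sets $D_{\chi}(\alpha\beta)$ appearing here are precisely those produced by the basis of \autoref{basisCusp_a_b}, so that the coefficient of $q^{n}$ assembles correctly.
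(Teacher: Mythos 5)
Your proposal is correct and follows essentially the same route as the paper: membership in $\M_{6}(\Gamma_{0}(\alpha\beta))$ via \autoref{evalConvolClass-lema-1}, expansion in the basis $\EuScript{B}_{M}$ of \autoref{basisCusp_a_b}, and term-by-term identification of the $q$-expansions (including the observation that $C_{0}=0$ for the nontrivial characters, so only the $E_{6}(q^{\delta})$ contribute constant terms). The only divergence is in the special cases, where the paper equates the generic expansion with the formulas of \autoref{convolutionSum_sp} and solves an $m_{E}+m_{S}$ linear system to pin down the coefficient values, whereas you argue existence directly via the level-inclusion (\hyperref[basis-cusp-eqn]{\ref*{basis-cusp-eqn}}); for the existence claim actually asserted by the lemma, your shorter argument suffices.
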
  
\begin{proof} For the sake of simplicity, we only give the proof for 
	(\hyperref[convolution_a_b-eqn-1]{\ref*{convolution_a_b-eqn-1}}).
	The other cases can be proven in a similar way. 
	
That $E_{4}(q^{\alpha})\,(\, E_{2}(q) - \beta\,E_{2}(q^{\beta})\,) \in
\M_{6}(\Gamma_{0}(\alpha\beta))$ follows from  
\hyperref[evalConvolClass-lema-1]{Lemma \ref*{evalConvolClass-lema-1}}.  
Hence, by 
\autoref{basisCusp_a_b}\,(c), there exist 
$X_{\delta},Z(\chi)_{s}, Y_{j}\in\mathbb{C}, 
1\leq j\leq m_{S}, \chi\in\EuScript{C}, s\in D(\chi)\text{ and }
\delta$ is a divisor of $\alpha\beta$, such that  
\begin{multline*}
E_{4}(q^{\alpha})\,(\, E_{2}(q) - \beta\,E_{2}(q^{\beta})\,)  =
\sum_{\delta|\alpha\beta}X_{\delta}\,E_{6}(q^{\delta}) + \sum_{\chi\in\EuScript{C}}\,
\sum_{s\in D_{\chi}(\alpha\beta)}Z(\chi)_{s}\,E_{6,\chi}(q^{s}) \\
+ \sum_{j=1}^{m_{S}}Y_{j}\,\EuFrak{B}_{\alpha\beta,j}(q)    
= \sum_{\delta|\alpha\beta}X_{\delta} 
+ \sum_{n=1}^{\infty}\biggl(\, 
 - 504\,\sum_{\delta|\alpha\beta}\,\sigma_{5}(\frac{n}{\delta})\,X_{\delta}  \\
 + \sum_{\chi\in\EuScript{C}}\,\sum_{s\in D_{\chi}(\alpha\beta)}
 \,\chi(n)\,\sigma_{5}(\frac{n}{s})\,Z(\chi)_{s} 
  +  \sum_{j=1}^{m_{S}}\,\EuFrak{b}_{\alpha\beta,j}(n)\,Y_{j}\, \biggr)\,q^{n}. 
 \end{multline*} 
 We now consider the special cases $\beta=1\land \alpha\neq 1$,
 $\>\alpha = 1\land \beta\neq 1$ and $\beta=\alpha\neq 1$. We give the proof
 for just one case. 
 We equate the right hand side of 
(\hyperref[convolution_a_b-eqn-0]{\ref*{convolution_a_b-eqn-0}}) 
with that of 
(\hyperref[evalConvolClass-eqn-sp-3]{\ref*{evalConvolClass-eqn-sp-3}})
to obtain
\begin{multline*}
 - 504\,\sum_{\delta|\alpha\beta}X_{\delta}\,\sigma_{5}(\frac{n}{\delta}) + 
  \sum_{\chi\in\EuScript{C}}\biggl(\,\sum_{s\in D_{\chi}(\alpha\beta)}\,\chi(n)\,\sigma_{5}
  (\frac{n}{s})\,Z(\chi)_{s}\,\biggr)  
 + \sum_{j=1}^{m_{S}}Y_{j}\,\EuFrak{b}_{\alpha\beta,j}(n) \\ 
  =  
 24\,\sigma(n) - 240\,\sigma_{3}(\frac{n}{\beta}) \\ 
+ 720\,n\,\sigma_{3}(\frac{n}{\beta}) - 504\,\beta\,\sigma_{5}(\frac{n}{\beta})
+ 24\times 240\,W_{(1,\beta)}^{1,3}(n). 
\end{multline*}
We then take the coefficients of $q^{n}$ such that $n$ is in $D(\alpha\beta)$
and $1\leq n\leq m_{S}$, but as many as the unknown, $X_{1},\ldots,X_{\alpha\beta}$,   
$Z(\chi)_{s}$ for all $\chi\in\EuScript{C}, s\in D(\chi)$,   
and $Y_{1},\ldots,Y_{m_{S}}$, to obtain 
a system of $m_{E}+m_{S}$ linear equations whose unique solution determines 
the values of the unknowns.
Hence, we obtain the result.
\end{proof}

For the following theorem, let for the sake of simplicity 
$X_{\delta},Z(\chi)_{s}$ and
$Y_{j}$ stand for their values obtained in the previous lemma.
\begin{theorem} \label{convolution_a_b}
Let $n$ be a positive integer. Then 
\begin{description}
	\item[Case $\beta=1$ and $\alpha\neq 1$] 
\begin{align}  \label{convolution_a_1-1_3}
 W_{(\alpha,1)}^{1,3}(n)   =\,  & 
    \frac{7}{80\,\alpha}\,(1 + X_{1})\,\sigma_{5}(n) 
 + \frac{1}{24}\,\sigma_{3}(n) - \frac{1}{8\,\alpha}\,n\,\sigma_{3}(n) 
  -   \frac{1}{240}\,\sigma(\frac{n}{\alpha})  \notag \\ &
 + \frac{7}{80\,\alpha}\,\sum_{\substack{{\delta|\alpha}\\{\delta\neq 1} }}\,
  X_{\delta}\,\sigma_{5}(\frac{n}{\delta})  
   - \frac{1}{5760\,\alpha}\,\sum_{j=1}^{m_{S}}\,Y_{j}\,\EuFrak{b}_{\alpha,j}(n)  \notag  \\ &
     - \frac{1}{5760\,\alpha}\,\sum_{\chi\in\EuScript{C}}\,\sum_{s\in 
    	D_{\chi}(\alpha)}Z(\chi)_{s}\,\sigma_{5}(\frac{n}{s})  
\end{align}
	\item[Case $\beta\neq 1$ and $\alpha=1$] 
\begin{align} \label{convolution_1_b-3_1}
W_{(1,\beta)}^{3,1}(n)   = \, & 
\frac{7}{80\,\beta}\,(\,1 - X_{1}\,)\,\sigma_{5}(n) 
+ \frac{1}{24}\,\sigma_{3}(n) - \frac{1}{8\,\beta}\,n\,\sigma_{3}(n) 
- \frac{1}{240}\,\sigma(\frac{n}{\beta})  \notag \\ &
- \frac{7}{80\,\beta}\,\sum_{\substack{{\delta|\beta}\\{\delta\neq 1} }}\,
X_{\delta}\,\sigma_{5}(\frac{n}{\delta})  
+ \frac{1}{5760\,\beta}\,\sum_{j=1}^{m_{S}}\,Y_{j}\,\EuFrak{b}_{\beta,j}(n)  \notag  \\ &
+ \frac{1}{5760\,\beta}\,\sum_{\chi\in\EuScript{C}}\,\sum_{s\in 
	D_{\chi}(\beta)}\,Z(\chi)_{s}\,\sigma_{5}(\frac{n}{s})  
\end{align}
	\item[Case $\beta=\alpha\neq 1$] 
\begin{align} \label{convolution_1_b-1_3}
W_{(1,\beta)}^{1,3}(n)   =\,  & 
\frac{7}{80}\,(\,\beta - X_{\beta}\,)\,\sigma_{5}(\frac{n}{\beta}) 
+ \frac{1}{24}\,\sigma_{3}(\frac{n}{\beta}) - \frac{1}{8}\,n\,\sigma_{3}(\frac{n}{\beta}) - \frac{1}{240}\,\sigma(n) 
  \notag \\ &
- \frac{7}{80}\,\sum_{\substack{{\delta|\beta}\\{\delta\neq\beta} }}\,
X_{\delta}\,\sigma_{5}(\frac{n}{\delta})  
+ \frac{1}{5760}\,\sum_{j=1}^{m_{S}}\,Y_{j}\,\EuFrak{b}_{\beta,j}(n)  \notag  \\ &
+ \frac{1}{5760}\,\sum_{\chi\in\EuScript{C}}\,\sum_{s\in 
	D_{\chi}(\beta)}\,Z(\chi)_{s}\,\sigma_{5}(\frac{n}{s})  
\end{align}
and
\begin{align} \label{convolution_a_1-3_1}
W_{(\alpha,1)}^{3,1}(n)   = \, & 
\frac{7}{80}\,(\,\alpha + X_{\alpha}\,)\,\sigma_{5}(\frac{n}{\alpha}) 
+ \frac{1}{24}\,\sigma_{3}(\frac{n}{\alpha}) - \frac{1}{8}\,n\,\sigma_{3}(\frac{n}{\alpha}) - \frac{1}{240}\,\sigma(n) 
\notag \\ &
+ \frac{7}{80}\,\sum_{\substack{{\delta|\beta}\\{\delta\neq\alpha} }}\,
X_{\delta}\,\sigma_{5}(\frac{n}{\delta})  
- \frac{1}{5760}\,\sum_{j=1}^{m_{S}}\,Y_{j}\,\EuFrak{b}_{\beta,j}(n)  \notag  \\ &
- \frac{1}{5760}\,\sum_{\chi\in\EuScript{C}}\,\sum_{s\in 
	D_{\chi}(\beta)}\,Z(\chi)_{s}\,\sigma_{5}(\frac{n}{s})  
\end{align}
\end{description}
\end{theorem}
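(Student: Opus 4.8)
The plan is to derive each of the four identities by \emph{equating two already-available expansions of a single modular form} and then solving a scalar equation for the convolution sum. For every case the relevant product of Eisenstein series has been expanded twice: once \emph{explicitly}, in \autoref{convolutionSum_sp}, where the $q^n$-coefficient is written out in terms of $\sigma$, $\sigma_3$, $n\,\sigma_3$, $\sigma_5$ and a single occurrence of the target sum $W$; and once \emph{in the basis} $\EuScript{B}_M$ of $\M_6(\Gamma_0(\cdot))$, in \hyperref[convolution-lemma_a_b]{Lemma \ref*{convolution-lemma_a_b}}, whose coefficients $X_\delta$, $Z(\chi)_s$, $Y_j$ are the unique constants fixed there. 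Thus for the case $\beta=1$, $\alpha\neq 1$ I would match \eqref{evalConvolClass-eqn-sp-1} against \eqref{convolution_a_b-eqn-0}; for $\alpha=1$, $\beta\neq 1$, match \eqref{evalConvolClass-eqn-sp-2} against \eqref{convolution_a_b-eqn-1}; and for the diagonal case $\beta=\alpha\neq 1$, match \eqref{evalConvolClass-eqn-sp-3} and \eqref{evalConvolClass-eqn-sp-4} against the corresponding diagonal specializations of \eqref{convolution_a_b-eqn-0} and \eqref{convolution_a_b-eqn-1}.

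Since both expansions represent the same element of $\M_6(\Gamma_0(\cdot))$, their $q^n$-coefficients agree for every $n\geq 1$. In each case this yields one linear equation in which $W$ occurs with the nonzero coefficient $24\times 240=5760$ --- multiplied by $\alpha$ when $\alpha\neq1,\beta=1$, and by $\beta$ when $\alpha=1,\beta\neq1$. I would then solve for $W$ by transposing every other term and dividing through by $5760$ (resp. $5760\,\alpha$, $5760\,\beta$); reducing the resulting rational coefficients via $504/5760=7/80$, $240/5760=1/24$, $720/5760=1/8$ and $24/5760=1/240$ produces precisely the fractions displayed in \eqref{convolution_a_1-1_3}--\eqref{convolution_a_1-3_1}. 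The constant terms $\alpha-1$, $1-\beta$, etc., play no role here, as only the coefficients with $n\geq1$ determine $W$.

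The one genuinely delicate step --- and where I expect every sign slip to occur --- is the handling of the distinguished Eisenstein term. In the basis expansion the sum $-504\,\sum_{\delta}\sigma_5(n/\delta)\,X_\delta$ ranges over \emph{all} divisors, and its term with $\delta=1$ (respectively $\delta=\beta$ or $\delta=\alpha$ in the diagonal case) has the same $\sigma_5$-argument as the naked $\sigma_5$ term already present on the explicit side. I would isolate that one divisor and absorb it into the naked term, which is exactly what produces the grouped factors $(1+X_1)$, $(1-X_1)$, $(\beta-X_\beta)$ and $(\alpha+X_\alpha)$; the remaining Eisenstein sum is then correctly restricted to $\delta\neq 1$ (resp. $\delta\neq\beta$, $\delta\neq\alpha$). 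The signs attached to the residual Eisenstein, character and cusp-form sums are inherited from the sign of the naked $\sigma_5$ coefficient in \autoref{convolutionSum_sp}, and they flip between the two diagonal formulae because, by \hyperref[convolutionSum-lema-sp]{Lemma \ref*{convolutionSum-lema-sp}}, the two diagonal forms are negatives of one another; consequently their basis coefficients are negatives as well, and one checks that \eqref{convolution_1_b-1_3} and \eqref{convolution_a_1-3_1} then deliver the same value, consistent with $W_{(1,\beta)}^{1,3}(n)=W_{(\beta,1)}^{3,1}(n)$. No new idea beyond this careful linear bookkeeping is needed, since the existence and uniqueness of $X_\delta$, $Z(\chi)_s$, $Y_j$ are already guaranteed by \hyperref[convolution-lemma_a_b]{Lemma \ref*{convolution-lemma_a_b}} via the basis property of \autoref{basisCusp_a_b}.
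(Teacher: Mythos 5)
Your proposal is correct and follows essentially the same route as the paper: the paper's proof likewise equates the basis expansion of \hyperref[convolution-lemma_a_b]{Lemma \ref*{convolution-lemma_a_b}} with the explicit expansion of \autoref{convolutionSum_sp} coefficient-by-coefficient (it writes this out only for the diagonal case, identity (\hyperref[convolution_a_1-3_1]{\ref*{convolution_a_1-3_1}}), declaring the rest similar) and then solves the resulting linear relation for $W$. Your additional bookkeeping --- the reductions $504/5760=7/80$, etc., the absorption of the $\delta=1$ (resp.\ $\delta=\alpha,\beta$) Eisenstein term into the grouped factors $(1\pm X_{1})$, $(\beta-X_{\beta})$, $(\alpha+X_{\alpha})$, and the sign consistency of the two diagonal formulae via \hyperref[convolutionSum-lema-sp]{Lemma \ref*{convolutionSum-lema-sp}} --- is exactly the detail the paper compresses into ``the proof is straightforward.''
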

\begin{proof}  The proof is straightforward. We only prove the case $\alpha=\beta\neq 1$ 
	and in particular the identity (\hyperref[convolution_a_1-3_1]{\ref*{convolution_a_1-3_1}}).
	
	We equate the right hand side of 
(\hyperref[convolution_a_b-eqn-1]{\ref*{convolution_a_b-eqn-1}}) with that of  
(\hyperref[evalConvolClass-eqn-sp-4]{\ref*{evalConvolClass-eqn-sp-4}}) to yield 
\begin{multline*}
1 - \alpha  
+ \sum_{n=1}^{\infty}\biggl(\, - 24\,\sigma(n) +  240\,\sigma_{3}(\frac{n}{\alpha})   \\ 
-  720\,n\,\sigma_{3}(\frac{n}{\alpha})  
+ 504\,\alpha\,\sigma_{5}(\frac{n}{\alpha})
- 24\times 240\, W_{(\alpha,1)}^{3,1}(n)\,\biggr)q^{n}  =  
\sum_{\delta|\alpha}X_{\delta}  
+  \sum_{n=1}^{\infty}\biggl( 
- 504\,\sum_{\delta|\alpha}\,X_{\delta}\,\sigma_{5}(\frac{n}{\delta})  \\
+ \sum_{\chi\in\EuScript{C}}\,\sum_{s\in D_{\chi}(\alpha)}\,Z(\chi)_{s}\,\sigma_{5}(\frac{n}{s}) 
+ \sum_{j=1}^{m_{S}}\,Y_{j}\,\EuFrak{b}_{\alpha,j}(n)\,\biggr)\,q^{n}.
\end{multline*} 
We then obtain  
 \begin{multline*} 
 24\times 240\, W_{(\alpha,1)}^{3,1}(n) =  - 24\,\sigma(n) + 240\,\sigma_{3}(\frac{n}{\alpha})   
-  720\,n\,\sigma_{3}(\frac{n}{\alpha})  
+ 504\,\alpha\,\sigma_{5}(\frac{n}{\alpha})  \\ 
 + 504\,\sum_{\delta|\beta}\,X_{\delta}\,\sigma_{5}(\frac{n}{\delta}) 
- \sum_{\chi\in\EuScript{C}}\,\sum_{s\in D_{\chi}(\alpha\beta)}\,Z(\chi)_{s} \,\sigma_{5}(\frac{n}{s})   
- \sum_{j=1}^{m_{S}}\,Y_{j}\,\EuFrak{b}_{\beta,j}(n)
\end{multline*}
We then solve for $W_{(1,\beta)}^{3,1}(n)$ to obtain the stated result.
\end{proof}

 \begin{remark}
  We immediately observe that in the identity
  \begin{itemize}
    \item  
    (\hyperref[convolution_a_1-1_3]{\ref*{convolution_a_1-1_3}})\quad the part\quad $\frac{1}{24}\,\sigma_{3}(n) - \frac{1}{8\,\alpha}\,n\,\sigma_{3}(n) 
  -   \frac{1}{240}\,\sigma(\frac{n}{\alpha})$
    \item  
    (\hyperref[convolution_1_b-3_1]{\ref*{convolution_1_b-3_1}})\quad the part\quad  $\frac{1}{24}\,\sigma_{3}(n) - \frac{1}{8\,\beta}\,n\,\sigma_{3}(n) 
- \frac{1}{240}\,\sigma(\frac{n}{\beta})$
    \item  
    (\hyperref[convolution_1_b-1_3]{\ref*{convolution_1_b-1_3}})\quad the part\quad  $\frac{1}{24}\,\sigma_{3}(\frac{n}{\beta}) - \frac{1}{8}\,n\,\sigma_{3}(\frac{n}{\beta}) - \frac{1}{240}\,\sigma(n)$
    \item 
    (\hyperref[convolution_a_1-3_1]{\ref*{convolution_a_1-3_1}})\quad the part\quad  $\frac{1}{24}\,\sigma_{3}(\frac{n}{\alpha}) - \frac{1}{8}\,n\,\sigma_{3}(\frac{n}{\alpha}) - \frac{1}{240}\,\sigma(n)$
  \end{itemize}
  relies each only on $n$, $\alpha$ and $\beta$.  
The basis of the modular space $\M_{6}(\Gamma_{0}(\alpha\beta))$ is not involved. 
\end{remark}

By \hyperref[convolutionSum-lema-sp]{Lemma \ref*{convolutionSum-lema-sp}} and by  \hyperref[convolution-lemma_a_b]{Lemma \ref*{convolution-lemma_a_b}} we obtain the following
\begin{lemma} \label{convolutionSum-coro-sp}
Let $1<\alpha\in\mathbb{N}^{*}$. Then
\begin{align*} 
W_{(\alpha,1)}^{1,3}(n) = W_{(1,\alpha)}^{3,1}(n)
 \quad \text{and} \quad
W_{(1,\alpha)}^{1,3}(n) = W_{(\alpha,1)}^{3,1}(n).
\end{align*} 
\end{lemma}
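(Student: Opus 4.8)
The plan is to read the two asserted equalities directly off the pair of Eisenstein-product identities in \autoref{convolutionSum-lema-sp} by comparing the coefficient of $q^{n}$ on each side. The key enabling fact is that \autoref{convolution-lemma_a_b} guarantees that every product of Eisenstein series occurring in \autoref{convolutionSum-lema-sp} lies in $\M_{6}(\Gamma_{0}(\alpha))$, and hence admits the explicit $q$-expansion recorded in \autoref{convolutionSum_sp}. In each such expansion the coefficient of $q^{n}$ is a fixed combination of $\sigma$, $\sigma_{3}$, $n\sigma_{3}$ and $\sigma_{5}$ terms together with a single convolution-sum term. Since \autoref{convolutionSum-lema-sp} says that one product is the negative of the other, equating $q^{n}$-coefficients forces all the $\sigma$-type contributions to cancel, leaving only the two convolution sums, which must therefore agree.

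Concretely, for the first identity I would use that $(\,\alpha\,E_{2}(q^{\alpha}) - E_{2}(q)\,)\,E_{4}(q)$ has the $q^{n}$-coefficient \eqref{evalConvolClass-eqn-sp-1}, whose convolution term is $-24\times 240\,\alpha\,W_{(\alpha,1)}^{1,3}(n)$, while $E_{4}(q)(\,E_{2}(q) - \alpha\,E_{2}(q^{\alpha})\,)$ has the $q^{n}$-coefficient \eqref{evalConvolClass-eqn-sp-2} with the role of $\beta$ played by $\alpha$, whose convolution term is $+24\times 240\,\alpha\,W_{(1,\alpha)}^{3,1}(n)$. Because the right-hand side of the identity is $-E_{4}(q)(\,E_{2}(q)-\alpha\,E_{2}(q^{\alpha})\,)$, multiplying the latter expansion by $-1$ flips every sign; the constant terms then agree (both are $\alpha-1$) and, term by term, the $\sigma(\tfrac{n}{\alpha})$, $\sigma_{3}(n)$, $n\,\sigma_{3}(n)$ and $\sigma_{5}(n)$ contributions coincide. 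What survives is $-24\times 240\,\alpha\,W_{(\alpha,1)}^{1,3}(n) = -24\times 240\,\alpha\,W_{(1,\alpha)}^{3,1}(n)$, and dividing by $-24\times 240\,\alpha\neq 0$ gives $W_{(\alpha,1)}^{1,3}(n)=W_{(1,\alpha)}^{3,1}(n)$.

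The second equality follows in exactly the same way from the second identity of \autoref{convolutionSum-lema-sp}: the left-hand product $(\,\alpha\,E_{2}(q^{\alpha}) - E_{2}(q)\,)\,E_{4}(q^{\alpha})$ is expanded by \eqref{evalConvolClass-eqn-sp-3} with $\beta=\alpha$, carrying the term $+24\times 240\,W_{(1,\alpha)}^{1,3}(n)$, and the right-hand product $-E_{4}(q^{\alpha})(\,E_{2}(q)-\alpha\,E_{2}(q^{\alpha})\,)$ is $-1$ times \eqref{evalConvolClass-eqn-sp-4}, carrying the term $+24\times 240\,W_{(\alpha,1)}^{3,1}(n)$. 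Once more the constant $\alpha-1$ and all $\sigma$-type terms cancel, and dividing the surviving identity by $24\times 240$ yields $W_{(1,\alpha)}^{1,3}(n)=W_{(\alpha,1)}^{3,1}(n)$.

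The only genuine effort is the sign-and-term bookkeeping in these coefficient comparisons, and I expect that to be the main (though routine) obstacle: one must verify that, after the global sign flip, every non-convolution term matches so that precisely the convolution sums are isolated. Equivalently -- and this is what the reference to \autoref{convolution-lemma_a_b} points toward -- one may expand both products in the basis $\EuScript{B}_{M}$ of $\M_{6}(\Gamma_{0}(\alpha))$; linear independence of that basis (\autoref{basisCusp_a_b}) then shows that the two coefficient systems $(X_{\delta},Z(\chi)_{s},Y_{j})$ differ only by an overall sign, and substituting this sign relation into the formulae of \autoref{convolution_a_b} makes \eqref{convolution_a_1-1_3} coincide with \eqref{convolution_1_b-3_1}, and \eqref{convolution_1_b-1_3} coincide with \eqref{convolution_a_1-3_1}.
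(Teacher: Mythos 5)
Your proposal is correct and is essentially the paper's own argument: the paper obtains this lemma precisely by combining the negation identities of Lemma~\ref{convolutionSum-lema-sp} with the expansions (\ref{evalConvolClass-eqn-sp-1})--(\ref{evalConvolClass-eqn-sp-4}), under which the constant and all $\sigma$-type terms cancel and only the convolution sums survive. Your coefficient-by-coefficient comparison simply makes explicit what the paper leaves as a one-line citation of those two lemmas.
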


\begin{theorem} \label{convolutionSum-theor-sp}
	Let $\alpha,\beta\in\mathbb{N}^{*}$ be such that 
	$\gcd(\alpha,\beta)=1$. Then
	\begin{align*} 
	W_{(\alpha,\beta)}^{1,3}(n) = W_{(\beta,\alpha)}^{3,1}(n)
	\quad \text{and} \quad
	W_{(\beta,\alpha)}^{1,3}(n) = W_{(\alpha,\beta)}^{3,1}(n).
	\end{align*} 
\end{theorem}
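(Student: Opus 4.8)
The plan is to obtain both equalities from a single combinatorial symmetry of the convolution sum combined with the commutativity of multiplication; the machinery of modular forms developed above is not needed. First I would return to the defining formula (\ref{def-convolution_sum}), writing
\[
W_{(\alpha,\beta)}^{2i-1,2j-1}(n)=\sum_{\substack{(k,l)\in\mathbb{N}^{2}\\ \alpha k+\beta l=n}}\sigma_{2i-1}(k)\,\sigma_{2j-1}(l),
\]
and observe that the transposition $\tau:(k,l)\mapsto(l,k)$ is a bijection of $\mathbb{N}^{2}$ which sends the solution set of $\beta k+\alpha l=n$ bijectively onto the solution set of $\alpha k+\beta l=n$.

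Second, I would apply $\tau$ to reindex
\[
W_{(\beta,\alpha)}^{2j-1,2i-1}(n)=\sum_{\substack{(k,l)\in\mathbb{N}^{2}\\ \beta k+\alpha l=n}}\sigma_{2j-1}(k)\,\sigma_{2i-1}(l),
\]
and rewrite each image summand by commutativity, $\sigma_{2j-1}(l)\,\sigma_{2i-1}(k)=\sigma_{2i-1}(k)\,\sigma_{2j-1}(l)$ --- exactly the interchange already used in (\ref{convolsum1_3=convolsum3_1}). This yields the general symmetry
\[
W_{(\alpha,\beta)}^{2i-1,2j-1}(n)=W_{(\beta,\alpha)}^{2j-1,2i-1}(n),
\]
valid for every $\alpha,\beta\in\mathbb{N}^{*}$ and all $i,j$; note that coprimality is not actually needed for this step and is carried over only from the statement of the theorem.

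Finally I would specialize. Taking $(i,j)=(1,2)$, hence $(2i-1,2j-1)=(1,3)$, gives the first asserted identity $W_{(\alpha,\beta)}^{1,3}(n)=W_{(\beta,\alpha)}^{3,1}(n)$; interchanging the names $\alpha$ and $\beta$ in the same identity (equivalently, taking $(i,j)=(2,1)$) gives $W_{(\beta,\alpha)}^{1,3}(n)=W_{(\alpha,\beta)}^{3,1}(n)$, the second asserted identity. Since the entire argument is a change of summation variable, there is no genuine obstacle; the only point demanding care is to perform the two swaps simultaneously --- interchanging the moduli $\alpha\leftrightarrow\beta$ and the exponents $2i-1\leftrightarrow 2j-1$ --- so that the linear constraint and the ordering of the two divisor functions are matched consistently on both sides.
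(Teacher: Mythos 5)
Your proof is correct, and it takes a genuinely different and much more elementary route than the paper's. You obtain the result as a pure change of summation variable: the transposition $(k,l)\mapsto(l,k)$ carries the solution set of $\beta k+\alpha l=n$ bijectively onto that of $\alpha k+\beta l=n$, and commutativity of multiplication then yields the general symmetry $W_{(\alpha,\beta)}^{2i-1,2j-1}(n)=W_{(\beta,\alpha)}^{2j-1,2i-1}(n)$ for all moduli and exponents; as you note, $\gcd(\alpha,\beta)=1$ is never used, so the theorem's hypothesis is superfluous for this statement. In effect you show that the general symmetry is exactly as immediate as the two special cases ($i=j$, and $\alpha=\beta$) that the paper records right after (\ref{convolutionSumsEqns-gcd}). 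The paper's own proof is quite different: it expands $(\alpha E_{2}(q^{\alpha})-E_{2}(q))\,E_{4}(q^{\beta})$ by Theorem~\ref{convolutionSum_a_b}, rewrites the same product as $-E_{4}(q^{\beta})\,(E_{2}(q)-\alpha E_{2}(q^{\alpha}))$ and expands that via (\ref{evalConvolClass-eqn-11-2}) with the roles of $\alpha$ and $\beta$ exchanged, then compares coefficients of $q^{n}$, cancels $W_{(1,\beta)}^{1,3}(n)$ against $W_{(\beta,1)}^{3,1}(n)$ using Lemma~\ref{convolutionSum-coro-sp} (itself obtained from Lemma~\ref{convolutionSum-lema-sp} and Lemma~\ref{convolution-lemma_a_b}, i.e.\ from the modular-forms machinery), and divides by $\alpha$; the second identity is proved symmetrically. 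Your route buys brevity, full generality (arbitrary $\alpha,\beta,i,j$), and logical independence: the theorem could be stated and proved immediately after the definition (\ref{def-convolution_sum}), before any modular forms appear, and would then be available as an a priori fact rather than as a consequence of the evaluation framework. The paper's route buys no additional strength, though it is uniform with the rest of the paper's derivations and serves as a consistency check on Theorem~\ref{convolutionSum_a_b}. The one point in your argument requiring care, performing the swap of moduli and the swap of exponents simultaneously, is precisely the point you flag and execute correctly.
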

\begin{proof}  Let $\alpha,\beta\in\mathbb{N}^{*}$ be such that 
$\gcd(\alpha,\beta)=1$. Then we have 
\begin{multline*}
 (\,\alpha\, E_{2}(q^{\alpha}) -  E_{2}(q)\,)\,E_{4}(q^{\beta})  = 
\alpha - 1 
+ \sum_{n=1}^{\infty}\biggl(\, 24\,\sigma(n) - 24\,\alpha\,\sigma(\frac{n}{\alpha}) \\ 
+ 240\,\alpha\,\sigma_{3}(\frac{n}{\beta}) 
- 240\,\sigma_{3}(\frac{n}{\beta})
+ 24\times 240\,W_{(1,\beta)}^{1,3}(n) 
-  24\times 240\,\alpha\, W_{(\alpha,\beta)}^{1,3}(n)\,\biggr)q^{n} \\
 = 1 - \alpha + \sum_{n=1}^{\infty}\biggl(\, - 24\,\sigma(n) + 24\,\alpha\,\sigma(\frac{n}{\alpha}) \\ 
 - 240\,\alpha\,\sigma_{3}(\frac{n}{\beta}) 
 + 240\,\sigma_{3}(\frac{n}{\beta})
 - 24\times 240\,W_{(1,\beta)}^{1,3}(n) 
 +  24\times 240\,\alpha\, W_{(\alpha,\beta)}^{1,3}(n)\,\biggr)q^{n} \\
= E_{4}(q^{\beta})\,(\, E_{2}(q)  - \,\alpha\, E_{2}(q^{\alpha})\,) 
 = 1 - \alpha  + \sum_{n=1}^{\infty}\biggl(\, - 24\,\sigma(n) + 24\,\alpha\,\sigma(\frac{n}{\alpha}) \\ 
- 240\,\alpha\,\sigma_{3}(\frac{n}{\beta}) 
+ 240\,\sigma_{3}(\frac{n}{\beta})
- 24\times 240\,W_{(\beta,1)}^{3,1}(n) 
+  24\times 240\,\alpha\, W_{(\beta,\alpha)}^{3,1}(n)\,\biggr)q^{n} 
\end{multline*}
and 
\begin{multline*}
E_{4}(q^{\alpha})\,(\, E_{2}(q) - \beta\,E_{2}(q^{\beta})\,) = 
1 - \beta  
+ \sum_{n=1}^{\infty}\biggl(\, 24\,\beta\,\sigma(\frac{n}{\beta}) -  24\,\sigma(n)   \\ 
+  240\,\sigma_{3}(\frac{n}{\alpha})  
- 240\,\beta\,\sigma_{3}(\frac{n}{\alpha}) - 24\times 240\, W_{(\alpha,1)}^{3,1}(n)
+ 24\times 240\,\beta\, W_{(\alpha,\beta)}^{3,1}(n)\,\biggr)q^{n} \\
= \beta - 1  
+ \sum_{n=1}^{\infty}\biggl(\, - 24\,\beta\,\sigma(\frac{n}{\beta}) +  24\,\sigma(n)   \\ 
-  240\,\sigma_{3}(\frac{n}{\alpha})  
+ 240\,\beta\,\sigma_{3}(\frac{n}{\alpha}) + 24\times 240\, W_{(\alpha,1)}^{3,1}(n)
- 24\times 240\,\beta\, W_{(\alpha,\beta)}^{3,1}(n)\,\biggr)q^{n} \\
= (\,\beta\,E_{2}(q^{\beta}) -  E_{2}(q)\,)\,E_{4}(q^{\alpha}) 
= \beta - 1  
+ \sum_{n=1}^{\infty}\biggl(\, - 24\,\beta\,\sigma(\frac{n}{\beta}) +  24\,\sigma(n)   \\ 
-  240\,\sigma_{3}(\frac{n}{\alpha})  
+ 240\,\beta\,\sigma_{3}(\frac{n}{\alpha}) + 24\times 240\, W_{(1,\alpha)}^{1,3}(n)
- 24\times 240\,\beta\, W_{(\beta,\alpha)}^{1,3}(n)\,\biggr)q^{n} 
\end{multline*}
Now we consider $W_{(1,\beta)}^{1,3}(n)$ and $W_{(\alpha,1)}^{3,1}(n)$; 
then apply 
\hyperref[convolutionSum-coro-sp]{Corollary \ref*{convolutionSum-coro-sp}}. 
When we then compare both side in each case,  
we obtain the stated result. 
\end{proof}

We now have the prerequisite to determine a formula for the number of representations of a positive integer $n$ by a quadratic form.


\section{Number of Representations of a positive Integer for this Class 
	of Levels}
\label{representations_a_b-c_d}

We discuss in this section the determination of formulae for the number of 
representations of a positive integer by the quadratic forms 
(\hyperref[introduction-eq-1]{\ref*{introduction-eq-1}}) -- 
(\hyperref[introduction-eq-4]{\ref*{introduction-eq-4}}).

\subsection{Representations of a positive Integer by the Quadratic Form  (\hyperref[introduction-eq-1]{\ref*{introduction-eq-1}}) and 
	(\hyperref[introduction-eq-2]{\ref*{introduction-eq-2}})
}\label{representations_a_b}

We determine formulae for the number of representations of a positive integer by 
the quadratic forms (\hyperref[introduction-eq-1]{\ref*{introduction-eq-1}}) and 
(\hyperref[introduction-eq-2]{\ref*{introduction-eq-2}}).

\subsubsection{Formulae for the Number of Representations by 
	(\hyperref[introduction-eq-1]{\ref*{introduction-eq-1}}) and 
	(\hyperref[introduction-eq-2]{\ref*{introduction-eq-2}})
}\label{represent_a_b}

Let $n\in\mathbb{N}$ and let the number of representations
of $n$ by the quadratic form  $\underset{i=1}{\overset{12}{\sum}}x_{i}^{2}$ be the combination of 
\[ r_{4}(n)=\text{card}(\{(x_{1},x_{2}, x_{3},x_{4})\in\mathbb{Z}^{4}~|~ n\, =\, \sum_{i=1}^{4}\,x_{i}^{2}\,\}) \]
and 
\[ r_{8}(n)=\text{card}(\{(x_{1},x_{2},\ldots,x_{7},x_{8})\in\mathbb{Z}^{8}~|~ n\, =\, \sum_{i=1}^{8}\,x_{i}^{2}\,\}). \]
It follows from the definitions that $r_{4}(0) =r_{8}(0) = 1$.  
For all $n\in\mathbb{N}^{*}$ the Jacobi's identities $r_{4}(n)$ and
$r_{8}(n)$ are 
\begin{equation}
r_{4}(n) = 8\,\sigma(n) - 32\,\sigma(\frac{n}{4}) \label{representations-eqn-4-1}
\end{equation}
and 
\begin{equation}
	r_{8}(n) = 16\,\sigma_{3}(n) - 32\,\sigma_{3}(\frac{n}{2}) +  256\,\sigma_{3}(\frac{n}{4}), \label{representations-eqn-4-2}
\end{equation}
respectively, and an arithmetic proof of the identity 
(\hyperref[representations-eqn-4-1]{\ref*{representations-eqn-4-1}}) 
is given by \ksW \cite{williams2001} and that of the identity 
(\hyperref[representations-eqn-4-2]{\ref*{representations-eqn-4-2}}) is 
provided by \gaL\ \cite{lomadze}. 

Now, let the number of representations of $n$ by the quadratic form 
(\hyperref[introduction-eq-1]{\ref*{introduction-eq-1}}) or 
(\hyperref[introduction-eq-2]{\ref*{introduction-eq-2}}) be   
\begin{multline*}
N_{(a_{1},b_{1})}^{4i,4j}(n) 
=\text{card}
(\{(x_{1},x_{2},\ldots,x_{11},x_{12})\in\mathbb{Z}^{12}~|~
n = a_{1}\,\sum_{i=1}^{4}\,x_{i}^{2}  + b_{1}\,\sum_{i=5}^{12}\,x_{i}^{2} \}),
\end{multline*}
where $a_{1},b_{1},i,j\in\mathbb{N}^{*}$ such that $i+j=3$ 
indicates the number of variables used in the representation of $n$. 
In particular we have   
\begin{multline*}
	N_{(a,b)}^{4,8}(n) 
	=\text{card}
	(\{(x_{1},x_{2},\ldots,x_{11},x_{12})\in\mathbb{Z}^{12}~|~
	n = a\,\sum_{i=1}^{4}\,x_{i}^{2}  + b\,\sum_{i=5}^{12}\,x_{i}^{2} \}),
\end{multline*}
and 
\begin{multline*}
N_{(a_{1},b_{1})}^{8,4}(n) 
=\text{card}
(\{(x_{1},x_{2},\ldots,x_{11},x_{12})\in\mathbb{Z}^{12}~|~
n = a_{1}\,\sum_{i=1}^{8}\,x_{i}^{2} +
  b_{1}\,\sum_{i=9}^{12}\,x_{i}^{2} \}),
\end{multline*}
where $a,b,a_{1},b_{1}\in\mathbb{N}^{*}$. 

It immediately follows from the definition of $N_{(a,b)}^{4,8}(n)$ and 
$N_{(a_{1},b_{1})}^{8,4}(n)$ that if 
$a,b,a_{1},b_{1}\in\mathbb{N}^{*}$ are such that $\gcd(a,b)=d>1$ and 
$\gcd(a_{1},b_{1})=d_{1}>1$ for some 
$d,d_{1}\in\mathbb{N}^{*}$, then $N_{(a,b)}^{4,8}(n)= N_{(\frac{a}{d},\frac{b}{d})}^{1,3}(\frac{n}{d})$ and 
$N_{(a_{1},b_{1})}^{8,4}(n)= N_{(\frac{a_{1}}{d_{1}},\frac{b_{1}}{d_{1}})}^{8,4}(\frac{n}{d_{1}})$. Therefore, one can simply assume 
that $a,b,a_{1},b_{1}\in\mathbb{N}^{*}$ are relatively prime. 

We then derive the following result:
\begin{theorem} \label{representations-theor_a_b}
	Let $n\in\mathbb{N}$ and let $a,b,a_{1},b_{1}\in\mathbb{N}^{*}$ be relatively prime. Then  
	\begin{align*}
		N_{(a,b)}^{4,8}(n)  = ~ & 
		8\,\sigma(\frac{n}{a}) - 32\,\sigma(\frac{n}{4a}) + 16\,\sigma_{3}(\frac{n}{b}) - 32\,\sigma_{3}(\frac{n}{2b}) 
		+ 256\sigma_{3}(\frac{n}{4b})   \\ & 
		+ 128\, W_{(a,b)}^{1,3}(n) - 256\, W_{(a,2b)}^{1,3}(n) + 2048\, W_{(a,4b)}^{1,3}(n) - 512\, W_{(4a,b)}^{1,3}(n)      \\ & 
		+ 1024\, W_{(2a,b)}^{1,3}(\frac{n}{2})     
		- 8192\, W_{(a,b)}^{1,3}(\frac{n}{4}) 
	\end{align*}
	and
	\begin{align*}
N_{(a_{1},b_{1})}^{8,4}(n)  = ~ & 
8\,\sigma(\frac{n}{b_{1}}) - 32\,\sigma(\frac{n}{4b_{1}}) 
+ 16\,\sigma_{3}(\frac{n}{a_{1}}) - 32\,\sigma_{3}(\frac{n}{2a_{1}}) 
+ 256\sigma_{3}(\frac{n}{4a_{1}})   \\ & 
+ 128\, W_{(a_{1},b_{1})}^{3,1}(n) - 512\, W_{(a_{1},4b_{1})}^{3,1}(n) - 256\, W_{(2a_{1},b_{1})}^{3,1}(n)  + 1024\, W_{(a_{1},2b_{1})}^{3,1}(\frac{n}{2})       \\ & 
   + 2048\, W_{(4a_{1},b_{1})}^{3,1}(n) - 8192\, W_{(a_{1},b_{1})}^{3,1}(\frac{n}{4}) 
\end{align*}
\end{theorem}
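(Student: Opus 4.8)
The plan is to realize $N_{(a,b)}^{4,8}(n)$ through a theta--series generating function. Write $\vartheta(q)=\sum_{m\in\mathbb{Z}}q^{m^{2}}$, so that the coefficient of $q^{n}$ in $\vartheta(q)^{m}$ counts the representations of $n$ as a sum of $m$ squares. Since the quadratic form \eqref{introduction-eq-1} splits as $a\sum_{i=1}^{4}x_{i}^{2}$ plus $b\sum_{i=5}^{12}x_{i}^{2}$, its representation numbers are generated by
\[
\sum_{n=0}^{\infty} N_{(a,b)}^{4,8}(n)\,q^{n} = \vartheta(q^{a})^{4}\,\vartheta(q^{b})^{8}.
\]

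First I would expand each factor using Jacobi's identities. Substituting $q\mapsto q^{a}$ in \eqref{representations-eqn-4-1} and $q\mapsto q^{b}$ in \eqref{representations-eqn-4-2}, and using the convention $\sigma_{k}(x)=0$ for $x\notin\mathbb{N}^{*}$, gives $\vartheta(q^{a})^{4}=1+\sum_{n\geq 1}\bigl(8\,\sigma(\tfrac{n}{a})-32\,\sigma(\tfrac{n}{4a})\bigr)q^{n}$ and $\vartheta(q^{b})^{8}=1+\sum_{n\geq 1}\bigl(16\,\sigma_{3}(\tfrac{n}{b})-32\,\sigma_{3}(\tfrac{n}{2b})+256\,\sigma_{3}(\tfrac{n}{4b})\bigr)q^{n}$. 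Denoting the respective coefficients by $A(k)$ and $B(l)$ (with $A(0)=B(0)=1$), the Cauchy product yields $N_{(a,b)}^{4,8}(n)=\sum_{k+l=n}A(k)B(l)$. Peeling off the two boundary contributions $k=0$ and $l=0$ produces precisely the divisor--function part $8\,\sigma(\tfrac{n}{a})-32\,\sigma(\tfrac{n}{4a})+16\,\sigma_{3}(\tfrac{n}{b})-32\,\sigma_{3}(\tfrac{n}{2b})+256\,\sigma_{3}(\tfrac{n}{4b})$ recorded on the first line of the theorem.

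The crux is identifying the remaining inner sum $\sum_{k+l=n,\,k,l\geq 1}A(k)B(l)$, which expands into the six products obtained by pairing each of the two $\sigma$-terms of $A$ with each of the three $\sigma_{3}$-terms of $B$. A typical piece $\sum_{k+l=n}\sigma(\tfrac{k}{\alpha})\,\sigma_{3}(\tfrac{l}{\beta})$ is nonzero only when $\alpha\mid k$ and $\beta\mid l$; the substitution $k=\alpha k'$, $l=\beta l'$ then recasts it as a sum over $\alpha k'+\beta l'=n$, that is, as $W_{(\alpha,\beta)}^{1,3}(n)$ directly from the definition \eqref{def-convolution_sum}. This converts four of the six pieces into $128\,W_{(a,b)}^{1,3}(n)$, $-256\,W_{(a,2b)}^{1,3}(n)$, $2048\,W_{(a,4b)}^{1,3}(n)$, and $-512\,W_{(4a,b)}^{1,3}(n)$. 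The two remaining pieces produce $1024\,W_{(4a,2b)}^{1,3}(n)$ and $-8192\,W_{(4a,4b)}^{1,3}(n)$, in which both indices carry a manifest common factor; applying the reduction \eqref{convolutionSumsEqns-gcd} with $\delta=2$ and $\delta=4$ respectively rewrites these as $1024\,W_{(2a,b)}^{1,3}(\tfrac{n}{2})$ and $-8192\,W_{(a,b)}^{1,3}(\tfrac{n}{4})$. Summing the six pieces reproduces the second line of the asserted formula.

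The companion formula for $N_{(a_{1},b_{1})}^{8,4}(n)$ follows by the identical argument applied to $\vartheta(q^{a_{1}})^{8}\,\vartheta(q^{b_{1}})^{4}$; here the single $\sigma$-factor now occupies the second slot, so each piece is read off as $W_{(\cdot,\cdot)}^{3,1}$ rather than $W_{(\cdot,\cdot)}^{1,3}$, and the same two pieces pick up the common factors $2$ and $4$ that shift the argument to $\tfrac{n}{2}$ and $\tfrac{n}{4}$. I expect no conceptual obstacle; the only delicate step is the bookkeeping in the last stage, namely tracking which of the six substitutions extracts a common factor from the linear constraint and therefore moves the argument of the convolution sum from $n$ to $n/2$ or $n/4$, with \eqref{convolutionSumsEqns-gcd} providing exactly the justification for those shifts.
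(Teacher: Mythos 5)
Your proof is correct and follows essentially the same route as the paper's: both write the representation number as a convolution of $r_{4}$ and $r_{8}$ via Jacobi's identities (\ref{representations-eqn-4-1})--(\ref{representations-eqn-4-2}), peel off the boundary terms, expand the product into six pieces, identify each piece as a convolution sum $W^{1,3}_{(ua,vb)}(n)$ (resp.\ $W^{3,1}$) by the same substitution argument, and reduce the two pieces carrying common factors to $W^{1,3}_{(2a,b)}(\frac{n}{2})$ and $W^{1,3}_{(a,b)}(\frac{n}{4})$. Your theta-series packaging of the counting step is only a cosmetic difference, and your explicit appeal to (\ref{convolutionSumsEqns-gcd}) for those last two reductions is, if anything, slightly more careful than the paper, which leaves that step implicit.
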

\begin{proof} It suffices to prove the identity $N_{(a,b)}^{4,8}(n)$ 
	since the other can be proved similarly or by using \autoref{convolutionSum-theor-sp}. 
	
	We have 
	\begin{multline*}
		N_{(a,b)}^{4,8}(n)  = \sum_{\substack{
				{(l,m)\in\mathbb{N}^{2}} \\ {a\,l+b\,m=n}
			}}r_{4}(l)r_{8}(m) 
			= r_{4}(\frac{n}{a})r_{8}(0) + r_{4}(0)r_{8}(\frac{n}{b}) 
			+ \sum_{\substack{
					{(l,m)\in\mathbb{N}^{*}\times\mathbb{N}^{*}} \\ {a\,l+b\,m=n}
				}}r_{4}(l)r_{8}(m).
			\end{multline*}
			We make use of (\hyperref[representations-eqn-4-1]{\ref*{representations-eqn-4-1}})
			to obtain 
			\begin{multline*}
				N_{(a,b)}^{4,8}(n)  = 8\,\sigma(\frac{n}{a}) - 32\,\sigma(\frac{n}{4a}) + 16\,\sigma_{3}(\frac{n}{b}) - 32\,\sigma_{3}(\frac{n}{2b}) + 256\sigma_{3}(\frac{n}{4b})  \\
				+ \sum_{\substack{
						{(l,m)\in\mathbb{N}^{*}\times\mathbb{N}^{*}} \\ {al+bm=n}
					}} (8\,\sigma(l) - 32\,\sigma(\frac{l}{4}) )(16\,\sigma_{3}(m) - 32\,\sigma_{3}(\frac{m}{2}) +  256\,\sigma_{3}(\frac{m}{4})). 
				\end{multline*}
				We know that 
				\begin{multline*}
					(8\,\sigma(l) - 32\,\sigma(\frac{l}{4}))(16\,\sigma_{3}(m) - 32\,\sigma_{3}(\frac{m}{2}) +  256\,\sigma_{3}(\frac{m}{4}))  = \\
					128\,\sigma(l)\sigma_{3}(m) - 256\,\sigma(l)\sigma_{3}(\frac{m}{2}) 
					+ 2048\,\sigma(l)\sigma_{3}(\frac{m}{4})  \\
					- 512\,\sigma(\frac{l}{4})\sigma_{3}(m) + 1024\,\sigma(\frac{l}{4})\sigma_{3}(\frac{m}{2}) 
					- 8192\,\sigma_{3}(\frac{l}{4})\sigma_{3}(\frac{m}{4}) 
				\end{multline*}
				In the sequel of this proof, we assume that the evaluation of 
				\begin{equation*}
					W_{(a,b)}^{1,3}(n) = \sum_{\substack{
							{(l,m)\in\mathbb{N}^{*}\times\mathbb{N}^{*}} \\ {al+bm=n}
						}}\sigma(l)\sigma_{3}(m),
					\end{equation*}
					$W_{(a,2b)}^{1,3}(n),\>W_{(a,4b)}^{1,3}(n),\>
					W_{(4a,b)}^{1,3}(n),\>W_{(4a,2b)}^{1,3}(n)$ and $W_{(4a,4b)}^{1,3}(n)$ are known. 
					
					Let $u,v\in\mathbb{N}^{*}$ be given and $f,g:\mathbb{N}\mapsto\mathbb{N}$ be  
					injective functions such that $f(n)=u\cdot n$ and 
					$g(n)=v\cdot n$ for each $n\in\mathbb{N}$.
					
					When we simultaneously apply the functions 
					$f$ and $g$ with $l$ and $m$ as argument, respectively, we derive  
					\begin{equation*}
						W_{(a,b)}^{1,3}(n) = \sum_{\substack{
								{(l,m)\in\mathbb{N}^{*}\times\mathbb{N}^{*}} \\ {al+bm=n}
							}}\sigma(\frac{l}{u})\sigma_{3}(\frac{m}{v}) 
							= \sum_{\substack{
									{(l,m)\in\mathbb{N}^{*}\times\mathbb{N}^{*}} \\ {ua\,l+vb\,m=n}
								}}\sigma(l)\sigma_{3}(m) = W_{(ua,vb)}^{1,3}(n)
							\end{equation*}
								We set $(u,v) = (1,1), (1,2), (1,4), (4,1),(4,2),(4,4)$ accordingly and then  
									finally put all these evaluations together to obtain the stated result 
									for $N_{(a,b)}^{4,8}(n)$. 
								\end{proof}
								
								From this proof, one immediately observe that a formula for the number 
								of representations of a positive integer $n$  by the quadratic forms 
								(\hyperref[introduction-eq-1]{\ref*{introduction-eq-1}}) and (\hyperref[introduction-eq-2]{\ref*{introduction-eq-2}}) depends on the 
								evaluated convolution Sums for some given levels $ab, 2ab$ and $4ab$ with $a,b\in\mathbb{N}^{*}$ relatively prime. 
								
								Based on this observation, we only take into consideration those levels $\alpha\beta$ which are multiple of $4$; 
							    that is $\alpha\beta\equiv 0\pmod{4}$. 
								
								\subsubsection{Determination of all relevant $\mathbf{(a,b)\in\mathbb{N}^{*}\times\mathbb{N}^{*}}$ for $N_{(a,b)}(n)$ for a given $\alpha\beta\in\mathbb{N}^{*}$} 
								\label{determine_a_b}
								We carry out a method to determine all pairs $(a,b)\in\mathbb{N}^{*}\times\mathbb{N}^{*}$ which 
								are necessary for the determination of $N_{(a,b)}(n)$ for a given level 
								$\alpha\beta\in\mathbb{N}^{*}$ such that $\alpha\beta\equiv 0\pmod{4}$ holds. 
								
								Let $\Lambda=\frac{\alpha\beta}{4}=2^{\nu-2}\mho$, 
								$P_{4}=\{p_{0}=2^{\nu-2}\}\cup\underset{j>1}{\bigcup}\{ p_{j}\,\mid\, p_{j}
								\text{ is a prime divisor of } \mho\,\}$ and  
								$\EuScript{P}(P_{4})$ be the power set of $P_{4}$. Then for each $Q\in\EuScript{P}(P_{4})$ 
								we define $\mu(Q)=\underset{p\in Q}{\prod}p$. We set $\mu(Q)=1$ if $Q$ is an 
								empty set. Let now 
								\begin{multline*}
									\Omega_{4}=\{(\mu(Q_{1}),\mu(Q_{2}))~|~\text{ there exist } Q_{1},Q_{2}\in\EuScript{P}(P_{4}) 
									\text{ such that }  \\ 
									\gcd{(\mu(Q_{1}),\mu(Q_{2}))}=1\,\text{ and } 
									\mu(Q_{1})\,\mu(Q_{2})=\Lambda\,\}.
								\end{multline*}
								Observe that $\Omega_{4}\neq\emptyset$ since $(1,\Lambda)\in\Omega_{4}$.
								
								To illustrate our method, suppose that $\alpha\beta=2^{3}\cdot 3\cdot 5$. Then 
								$\Lambda=2\cdot 3\cdot 5$, $P_{4}=\{2,3,5\}$ and 
								$\Omega_{4}=\{(1,30),(2,15),(3,10),(5,6)\}$.
								\begin{proposition} \label{representation-prop-1}
									Suppose that the level $\alpha\beta\in\mathbb{N}^{*}$ and $\alpha\beta\equiv 0\pmod{4}$. 
									Furthermore, suppose that $\Omega_{4}$ is 
									defined as above. Then for all $n\in\mathbb{N}$ the set $\Omega_{4}$ 
									contains all pairs $(a,b)\in\mathbb{N}^{*}\times\mathbb{N}^{*}$ such that $N_{(a,b)}(n)$ can 
									be obtained by applying $W_{(\alpha,\beta)}(n)$ and some other evaluated 
									convolution sums.
								\end{proposition}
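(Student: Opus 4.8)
The plan is to reduce the proposition to a purely number-theoretic statement about coprime factorizations of $\Lambda$, using \autoref{representations-theor_a_b} to single out which convolution sums genuinely enter a formula for $N_{(a,b)}(n)$. First I would read off from \autoref{representations-theor_a_b} that $N_{(a,b)}^{4,8}(n)$ is a fixed linear combination of the sums $W_{(a,b)}^{1,3}$, $W_{(a,2b)}^{1,3}$, $W_{(a,4b)}^{1,3}$, $W_{(4a,b)}^{1,3}$, $W_{(2a,b)}^{1,3}$ and $W_{(a,b)}^{1,3}$, whose levels are $ab$, $2ab$ and $4ab$. The largest level occurring is $4ab$, and it occurs only in the top terms $W_{(a,4b)}^{1,3}$ and $W_{(4a,b)}^{1,3}$; every other summand sits at a proper divisor of $4ab$ and, by the inclusion (\ref{basis-cusp-eqn}), its evaluation is subsumed once the sums of level $4ab$ are known. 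Consequently, a pair $(a,b)$ is one for which $N_{(a,b)}(n)$ can be obtained from $W_{(\alpha,\beta)}(n)$, together with sums of strictly smaller level that are already evaluated, exactly when $4ab=\alpha\beta$, i.e. when $ab=\frac{\alpha\beta}{4}=\Lambda$.

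Next I would invoke the reduction (\ref{convolutionSumsEqns-gcd}), together with its evident analogue for $N_{(a,b)}^{4,8}$, to assume $\gcd(a,b)=1$ without loss of generality. The relevant pairs are then precisely the ordered coprime factorizations $\Lambda=a\,b$. Writing $\Lambda=2^{\nu-2}\mho$ with $\mho$ a product of distinct odd primes, unique factorization forces each maximal prime-power divisor of $\Lambda$ to lie entirely in $a$ or entirely in $b$: the whole $2$-part $2^{\nu-2}$ is assigned to one side, and each odd prime $p\mid\mho$ is assigned to one side. These maximal prime-power factors are exactly the elements of $P_{4}$, so such an $a$ equals $\mu(Q_{1})$ and the matching $b$ equals $\mu(Q_{2})$ for disjoint subsets $Q_{1},Q_{2}\in\EuScript{P}(P_{4})$ with $Q_{1}\cup Q_{2}=P_{4}$; conversely, disjointness is equivalent to $\gcd(\mu(Q_{1}),\mu(Q_{2}))=1$ and $Q_{1}\cup Q_{2}=P_{4}$ is equivalent to $\mu(Q_{1})\mu(Q_{2})=\Lambda$. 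This is exactly the membership condition defining $\Omega_{4}$, whence every relevant pair lies in $\Omega_{4}$, as claimed.

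The step I expect to demand the most care is the first, namely making precise that $4ab=\alpha\beta$ is forced by the requirement that $W_{(\alpha,\beta)}(n)$ actually be used. I would argue that if $4ab$ were a proper divisor of $\alpha\beta$, then no term in \autoref{representations-theor_a_b} would sit at level $\alpha\beta$, so $W_{(\alpha,\beta)}$ would be irrelevant to $N_{(a,b)}$; whereas if $4ab$ failed to divide $\alpha\beta$, the top sum at level $4ab$ could not be recovered from data at level $\alpha\beta$ via (\ref{basis-cusp-eqn}). The indispensable factor of $4$ is intrinsic: it is produced by Jacobi's identities (\ref{representations-eqn-4-1}) and (\ref{representations-eqn-4-2}), whose terms $\sigma(\frac{n}{4})$ and $\sigma_{3}(\frac{n}{4})$ quadruple the base level $ab$. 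This is precisely why the hypothesis $\alpha\beta\equiv 0\pmod 4$ is imposed and why $2^{\nu-2}$ enters $P_{4}$ as a single indivisible block rather than being split across the two coordinates. Once this reduction is secured, the remaining coprime-factorization bookkeeping is routine and the assertion follows.
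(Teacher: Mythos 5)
Your proposal is correct, but it takes a genuinely different route from the paper. The paper proves \hyperref[representation-prop-1]{Proposition \ref*{representation-prop-1}} by induction on the structure of the level: it writes out the cases $\alpha\beta=2^{\nu}p_{2}$ and $\alpha\beta=2^{\nu}p_{2}p_{3}$ explicitly, lists $\Omega_{4}$ in each case, and then argues maximality by supposing a competing set $\Omega_{4}'$ and checking that any $(e,f)\in\Omega_{4}'\setminus\Omega_{4}$ with $ef=\Lambda$ and $\gcd(e,f)=1$ is forced to be one of the listed pairs; the crucial identification $\alpha\beta=4ab$ is simply asserted, citing the observation made after the proof of \autoref{representations-theor_a_b}. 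You do two things differently. First, you actually justify the reduction to $ab=\Lambda$: you isolate $4ab$ as the top level occurring in \autoref{representations-theor_a_b} (through $W_{(a,4b)}^{1,3}$ and $W_{(4a,b)}^{1,3}$), note that all other terms live at proper divisors of $4ab$ and are subsumed via the inclusion (\hyperref[basis-cusp-eqn]{\ref*{basis-cusp-eqn}}), and argue that $4ab$ can neither properly divide nor fail to divide $\alpha\beta$ if $W_{(\alpha,\beta)}$ is to be both used and sufficient. Second, you replace the induction entirely by a direct unique-factorization argument: after reducing to $\gcd(a,b)=1$ via (\hyperref[convolutionSumsEqns-gcd]{\ref*{convolutionSumsEqns-gcd}}), the coprime ordered factorizations of $\Lambda=2^{\nu-2}\mho$ biject with ordered partitions of $P_{4}$ into disjoint subsets, which is verbatim the membership condition defining $\Omega_{4}$. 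What your route buys is uniformity and completeness: the paper's induction is only written out for at most two odd primes, whereas your bijection handles any number of prime factors in one stroke, and it makes explicit the front-end step the paper leaves implicit. What the paper's route buys is that its $\Omega_{4}$-versus-$\Omega_{4}'$ maximality discussion mirrors the phrase ``contains all pairs'' in the statement directly, and its low-prime cases double as worked examples that feed the subsequent corollaries.
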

								\begin{proof}
									We prove this by induction on the structure of the level $\alpha\,\beta$.
									
									Suppose that $\alpha\beta=2^{\nu}p_{2}$, where $\nu\in\{2, 3\}$ and $p_{2}$ is 
									an odd prime. Then by the above definitions we have $\Lambda=2^{\nu-2}p_{2}$, 
									$P_{4}=\{\,2^{\nu-2},p_{2}\,\}$, 
									$$\EuScript{P}(P_{4})=\{\,\emptyset,\{2^{\nu-2}\},\{p_{2}\},\{\,2^{\nu-2},p_{2}\}\,\},$$
									and $\Omega_{4}=\{\,(1,2^{\nu-2}p_{2}),(2^{\nu-2},p_{2}) \,\}$.
									
									Following the observation made at the end of the proof of 
									\autoref{representations-theor_a_b}, we note that 
									$\alpha\beta=4ab=2^{\nu}p_{2}$. Hence, $ab=2^{\nu-2}p_{2}$ which leads  immediately to $N_{(a,b)}(n)$. 
									
									We show that $\Omega_{4}$ is the largest such set. 
									Assume now that there exist another set, say $\Omega'_{4}$, which results 
									from the above definitions. Then there are two cases.
									\begin{description} 
										\item[Case $\Omega'_{4}\subseteq\Omega_{4}$] There is nothing to show. 
										So, we are done.
										\item[Case $\Omega_{4}\subset\Omega'_{4}$] Let
										$(e,f)\in\Omega_{4}'\setminus\Omega_{4}$.  Since $ef=2^{\nu-2}p_{2}$ and 
										$\gcd{(e,f)}=1$, we must
										have either $(e,f)=(1,2^{\nu-2}p_{2})$ or $(e,f)=(2^{\nu-2},p_{2})$. So, 
										$(e,f)\in\Omega_{4}$. Hence, $\Omega_{4}=\Omega'_{4}$.
									\end{description} 
									Suppose now that $\alpha\beta=2^{\nu}p_{2}p_{3}$, where $\nu\in\{2, 3\}$ and 
									$p_{2},p_{3}$ are distinct odd primes. Then by the induction hypothesis and by
									the above definitions we have essentially  
									$$\Omega_{4}=\{\,(1,2^{\nu-2}p_{2}p_{3}),(2^{\nu-2},p_{2}p_{3}),(2^{\nu-2}p_{2},p_{3}),(2^{\nu-2}p_{3},p_{2})
									\,\}.$$ 
									One notes that 
									$\alpha\beta=4ab=2^{\nu}p_{2}p_{3}$. Hence, $ab=2^{\nu-2}p_{2}p_{3}$ which immediately gives $N_{(a,b)}(n)$. 
									
									Again, we show that $\Omega_{4}$ is the largest such set. 
									Suppose that there exist another set, say $\Omega'_{4}$, which 
									results from the above definitions. Two cases arise.
\begin{description} 
\item[Case $\Omega'_{4}\subseteq\Omega_{4}$] There is nothing to prove. 
So, we are done.
\item[Case $\Omega_{4}\subset\Omega'_{4}$] Let
$(e,f)\in\Omega_{4}'\setminus\Omega_{4}$.  Since $ef=2^{\nu-2}p_{2}p_{3}$ and
$\gcd{(e,f)}=1$, we must
have $(e,f)=(1,2^{\nu-2}p_{2}p_{3})$ or $(e,f)=(2^{\nu-2},p_{2}p_{3})$ or 
$(e,f)=(2^{\nu-2}p_{2},p_{3})$ or $(e,f)=(2^{\nu-2}p_{3},p_{2})$. 
So, $(e,f)\in\Omega_{4}$. Hence, $\Omega_{4}=\Omega'_{4}$.
\end{description} 
\end{proof}
We then deduce in conjunction with \autoref{convolutionSum-theor-sp} the following: 
\begin{corollary} \label{representations-corol_a_b}
Let $n\in\mathbb{N}$, $\alpha\beta\in\mathbb{N}^{*}$ with $\alpha\beta\equiv 0\pmod{4}$ and $\Omega_{4}$ be determined as above. Then for each $(a,b),(a_{1},b_{1})\in\Omega_{4}$ we have  
	\begin{align*}
	N_{(a,b)}^{4,8}(n)\, =\,N_{(b,a)}^{8,4}(n)\, =\, & 
	8\,\sigma(\frac{n}{a}) - 32\,\sigma(\frac{n}{4a}) + 16\,\sigma_{3}(\frac{n}{b}) - 32\,\sigma_{3}(\frac{n}{2b}) 
	+ 256\sigma_{3}(\frac{n}{4b})   \\ & 
        + 128\, W_{(a,b)}^{1,3}(n) - 256\, W_{(a,2b)}^{1,3}(n) + 2048\, W_{(a,4b)}^{1,3}(n)     \\ & 
       - 512\, W_{(4a,b)}^{1,3}(n)  
	+ 1024\, W_{(2a,b)}^{1,3}(\frac{n}{2})     
	- 8192\, W_{(a,b)}^{1,3}(\frac{n}{4}) 
\end{align*}
and
\begin{align*}
	N_{(a_{1},b_{1})}^{8,4}(n)\, =\,N_{(b_{1},a_{1})}^{4,8}(n)\, =\, & 
	8\,\sigma(\frac{n}{b_{1}}) 
	- 32\,\sigma(\frac{n}{4b_{1}}) 
	+ 16\,\sigma_{3}(\frac{n}{a_{1}}) 
	- 32\,\sigma_{3}(\frac{n}{2a_{1}})   \\ & 
	+ 256\sigma_{3}(\frac{n}{4a_{1}}) 
	+ 128\, W_{(a_{1},b_{1})}^{3,1}(n) 
	- 512\, W_{(a_{1},4b_{1})}^{3,1}(n)     \\ & 
	- 256\, W_{(2a_{1},b_{1})}^{3,1}(n)  
	 + 1024\, W_{(a_{1},2b_{1})}^{3,1}(\frac{n}{2})  
	+ 2048\, W_{(4a_{1},b_{1})}^{3,1}(n)     \\ & 
	- 8192\, W_{(a_{1},b_{1})}^{3,1}(\frac{n}{4}) 
\end{align*}
\end{corollary}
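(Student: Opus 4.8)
The plan is to read off both displayed identities from \autoref{representations-theor_a_b}, supplying in addition the two central equalities $N_{(a,b)}^{4,8}(n)=N_{(b,a)}^{8,4}(n)$ and $N_{(a_{1},b_{1})}^{8,4}(n)=N_{(b_{1},a_{1})}^{4,8}(n)$. First I would fix a pair $(a,b)\in\Omega_{4}$. By the very definition of $\Omega_{4}$ one has $\gcd(a,b)=1$ and $ab=\Lambda=\frac{\alpha\beta}{4}$, so $a$ and $b$ are relatively prime positive integers and \autoref{representations-theor_a_b} applies to them verbatim, producing the $W^{1,3}$-formula for $N_{(a,b)}^{4,8}(n)$; applying the same theorem to the admissible pair $(b,a)$ produces the $W^{3,1}$-formula for $N_{(b,a)}^{8,4}(n)$. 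The $\sigma$- and $\sigma_{3}$-parts of these two formulae already coincide after the substitution $(a_{1},b_{1})\mapsto(b,a)$, so everything reduces to reconciling their convolution-sum parts.

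For the central equality itself there are two routes. The conceptual route is a bijection of solution sets: sending $(x_{1},\dots,x_{4},x_{5},\dots,x_{12})$ to $(x_{5},\dots,x_{12},x_{1},\dots,x_{4})$ is a bijection of $\mathbb{Z}^{12}$ that carries the constraint $n=a\sum_{i=1}^{4}x_{i}^{2}+b\sum_{i=5}^{12}x_{i}^{2}$ onto $n=b\sum_{i=1}^{8}x_{i}^{2}+a\sum_{i=9}^{12}x_{i}^{2}$, whence $N_{(a,b)}^{4,8}(n)=N_{(b,a)}^{8,4}(n)$ at the level of cardinalities. The computational route, which is the one signalled by the phrase \emph{in conjunction with} \autoref{convolutionSum-theor-sp}, matches the convolution-sum parts term by term: each summand $W^{1,3}$ occurring in the $N_{(a,b)}^{4,8}(n)$ formula is turned into the corresponding $W^{3,1}$ summand of the $N_{(b,a)}^{8,4}(n)$ formula, with identical numerical coefficient, by a single instance of $W_{(\alpha,\beta)}^{1,3}(n)=W_{(\beta,\alpha)}^{3,1}(n)$; for example $W_{(a,b)}^{1,3}(n)=W_{(b,a)}^{3,1}(n)$, $W_{(a,4b)}^{1,3}(n)=W_{(4b,a)}^{3,1}(n)$ and $W_{(4a,b)}^{1,3}(n)=W_{(b,4a)}^{3,1}(n)$, and likewise for the arguments $\frac{n}{2}$ and $\frac{n}{4}$.

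The point that genuinely needs care is the coprimality hypothesis of \autoref{convolutionSum-theor-sp} along the computational route: for $(a,b)\in\Omega_{4}$ exactly one entry may carry the factor $2^{\nu-2}$ and hence be even, so arguments such as $(a,2b)$ or $(2a,b)$ need not be coprime. In those cases I would first reduce the sum by means of (\hyperref[convolutionSumsEqns-gcd]{\ref*{convolutionSumsEqns-gcd}}), e.g.\ $W_{(2a',2b)}^{1,3}(n)=W_{(a',b)}^{1,3}(\frac{n}{2})$ with $a=2a'$, and only then invoke \autoref{convolutionSum-theor-sp} on the now-coprime pair; the paired $W^{3,1}$ term reduces in exactly the same way, so the matching survives. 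This bookkeeping is the only real obstacle, and it is entirely avoided by the bijection route, which I would therefore adopt as the primary argument, keeping the term-by-term comparison as the consistency check that the two theorems agree.

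Finally, the second pair of identities is obtained by the identical argument with the roles of the blocks of four and eight variables interchanged, and \autoref{representation-prop-1} guarantees that $\Omega_{4}$ already contains every relevant pair $(a,b)$ for a level $\alpha\beta\equiv 0\pmod{4}$, so the corollary covers all admissible cases.
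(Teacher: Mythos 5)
Your proposal is correct, and its primary route is genuinely different from the paper's. The paper offers no written proof beyond the phrase that the corollary is deduced ``in conjunction with'' \autoref{convolutionSum-theor-sp}; that is, the intended argument is precisely your computational route: substitute $(a_{1},b_{1})=(b,a)$ into the second formula of \autoref{representations-theor_a_b}, observe that the $\sigma$- and $\sigma_{3}$-parts agree, and convert each $W^{1,3}$ summand into the matching $W^{3,1}$ summand with the same coefficient via $W_{(\alpha,\beta)}^{1,3}(n)=W_{(\beta,\alpha)}^{3,1}(n)$. Your block-swap bijection is a more elementary justification of the central equalities $N_{(a,b)}^{4,8}(n)=N_{(b,a)}^{8,4}(n)$: it operates directly on the defining cardinalities, needs no coprimality hypothesis, and therefore bypasses the real subtlety you isolate, namely that \autoref{convolutionSum-theor-sp} assumes $\gcd(\alpha,\beta)=1$, while the pairs $(a,2b)$, $(2a,b)$, $(a,4b)$, $(4a,b)$ occurring in \autoref{representations-theor_a_b} fail to be coprime whenever $8\mid\alpha\beta$, since then one member of each pair in $\Omega_{4}$ carries the even factor $2^{\nu-2}$ (for example $\alpha\beta=8$ gives $(a,b)=(1,2)$ and the non-coprime pair $(2a,b)=(2,2)$). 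Your repair along the computational route --- first reduce with (\hyperref[convolutionSumsEqns-gcd]{\ref*{convolutionSumsEqns-gcd}}) to a coprime pair with rescaled argument, note that the paired $W^{3,1}$ term reduces in exactly the same way, and only then invoke \autoref{convolutionSum-theor-sp} --- is exactly the bookkeeping needed to make the paper's one-line deduction rigorous. What each approach buys: the bijection gives an unconditional, self-contained proof that the two representation numbers coincide, while the term-by-term matching verifies that the two formulae of \autoref{representations-theor_a_b} are mutually consistent, which is what the displayed identities of the corollary assert once both theorems are in place; and your appeal to \autoref{representation-prop-1} correctly disposes of the claim that $\Omega_{4}$ exhausts the admissible pairs. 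Retaining both arguments, as you do, is sound.
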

From this corollary it follows that if $a=b=a_{1}=b_{1}$, then $N_{(a,a)}^{4,8}(n) = N_{(a,a)}^{8,4}(n)$, especially 
$N_{(1,1)}^{4,8}(n) = N_{(1,1)}^{8,4}(n)$.							

								
\subsection{Representations of a positive Integer by the Quadratic Form (\hyperref[introduction-eq-3]{\ref*{introduction-eq-3}}) and (\hyperref[introduction-eq-4]{\ref*{introduction-eq-4}})
}\label{representations_c_d}
								
We now determine formulae for the number of representations of a positive 
integer by the octonary quadratic forms (\hyperref[introduction-eq-3]{\ref*{introduction-eq-3}}) and (\hyperref[introduction-eq-4]{\ref*{introduction-eq-4}}).
								
\subsubsection{Formulae for the Number of Representations by 
(\hyperref[introduction-eq-3]{\ref*{introduction-eq-3}}) and (\hyperref[introduction-eq-4]{\ref*{introduction-eq-4}})
								}\label{represent_c_d}
								
Let $n\in\mathbb{N}$ and let $s_{4}(n)$ and $s_{8}(n)$denote the number of representations
of $n$ by the quaternary quadratic form  $\underset{i=1}{\overset{2}{\sum}}\,(\,x_{2i-1}^{2}+ x_{2i-1}x_{2i} + x_{2i}^{2}\,)$, that is,   
\begin{multline*}
s_{4}(n)=\text{card}(\{(x_{1},x_{2},x_{3},x_{4})\in\mathbb{Z}^{4}~|~  
n = \,\sum_{i=1}^{2}\,(\,x_{2i-1}^{2}+ x_{2i-1}x_{2i} + x_{2i}^{2}\,)  
\})
\end{multline*}
and the quadratic form  $\underset{i=1}{\overset{4}{\sum}}\,(\,x_{2i-1}^{2}+ x_{2i-1}x_{2i} + x_{2i}^{2}\,)$, that means,   
\begin{multline*}
s_{8}(n)=\text{card}(\{(x_{1},x_{2},\ldots,x_{7},x_{8})\in\mathbb{Z}^{8}~|~  
n = \,\sum_{i=1}^{4}\,(\,x_{2i-1}^{2}+ x_{2i-1}x_{2i} + x_{2i}^{2}\,)\, \}),
\end{multline*}
respectively.
It is obvious that $s_{4}(0) = s_{8}(0) =1$. \jgH\ et al.\ \cite{huardetal}, \gaL\ \cite{lomadze} and \ksW\ \cite[Thrm 17.3, p.\ 225]{williams2011} have 
proved that for all $n\in\mathbb{N}^{*}$ 
\begin{equation}
s_{4}(n) = 12\,\sigma(n) - 36\,\sigma(\frac{n}{3}). \label{representations-eqn-c_d-1}
\end{equation}
 \gaL\ \cite{lomadze} has proved that for all $n\in\mathbb{N}^{*}$ 
\begin{equation}
s_{8}(n) = 24\,\sigma_{3}(n) + 216\,\sigma_{3}(\frac{n}{3}). \label{representations-eqn-c_d-2}
\end{equation}
Now, let the number of representations of $n$ by the quadratic form 
(\hyperref[introduction-eq-3]{\ref*{introduction-eq-3}}) be 
\begin{multline*}
R_{(c,d)}^{4,8}(n) =\text{card}
(\{(x_{1},x_{2},\ldots,x_{11},x_{12})\in\mathbb{Z}^{12}~|~
n = c\,\sum_{i=1}^{2}\,(\,x_{2i-1}^{2}+ x_{2i-1}x_{2i} + x_{2i}^{2}\,)    \\  
+ d\,\sum_{i=3}^{6}\,(\,x_{2i-1}^{2}+ x_{2i-1}x_{2i} + x_{2i}^{2}\,)\, \}),
\end{multline*}
and that of the quadratic form 
(\hyperref[introduction-eq-4]{\ref*{introduction-eq-4}}) be 
\begin{multline*}
R_{(c_{1},d_{1})}^{8,4}(n) =\text{card}
(\{(x_{1},x_{2},\ldots,x_{11},x_{12})\in\mathbb{Z}^{12}~|~
n = c_{1}\,\sum_{i=1}^{4}\,(\,x_{2i-1}^{2}+ x_{2i-1}x_{2i} + x_{2i}^{2}\,)   \\
  + d_{1}\,\sum_{i=5}^{6}\,(\,x_{2i-1}^{2}+ x_{2i-1}x_{2i} + x_{2i}^{2}\,)\, \}),
\end{multline*}							
where $c,d,c_{1},d_{1}\in\mathbb{N}^{*}$.
								
From these definitions suppose that 
$c,d,c_{1},d_{1}\in\mathbb{N}^{*}$ are such that $\gcd(c,d)= e>1$ and $\gcd(c{1},d_{1})= e_{1}>1$for some 
$e,e_{1}\in\mathbb{N}^{*}$. Then $R_{(c,d)}^{4,8}(n)= R_{(\frac{c}{e},\frac{d}{e})}^{4,8}(\frac{n}{e})$ and $R_{(c_{1},d_{1})}^{8,4}(n)= R_{(\frac{c_{1}}{e_{1}},\frac{d_{1}}{e_{1}})}^{8,4}(\frac{n}{e_{1}})$, respectively. Hence, one can simply assume that 
$c,d,c_{1},d_{1}\in\mathbb{N}^{*}$ are relatively prime.
								
We infer the following 
\begin{theorem} \label{representations-theor-c_d}
Let $n\in\mathbb{N}$ and $c,d,c_{1},d_{1}\in\mathbb{N}^{*}$ be relatively prime. Then  
\begin{align*}
R_{(c,d)}^{4,8}(n) =  & 
12\,\sigma(\frac{n}{c}) - 36\,\sigma(\frac{n}{3c}) + 24\,\sigma_{3}(\frac{n}{d}) +
216\,\sigma_{3}(\frac{n}{3d}) + 288\, W_{(c,d)}^{1,3}(n) \\ & 
+ 2592\, W_{(c,3d)}^{1,3}(n) 
- 864\,W_{(3c,d)}^{1,3}(n) -  7776\,W_{(c,d)}^{1,3}(\frac{n}{3}) 
\end{align*}
and 
\begin{align*}
R_{(c_{1},d_{1})}^{8,4}(n) =  & 
24\,\sigma_{3}(\frac{n}{c_{1}}) + 216\,\sigma_{3}(\frac{n}{3c_{1}}) + 12\,\sigma(\frac{n}{d_{1}}) -
36\,\sigma(\frac{n}{3d_{1}}) + 288\, W_{(c_{1},d_{1})}^{3,1}(n) \\ & 
- 864\, W_{(c_{1},3d_{1})}^{3,1}(n) 
+ 2592\,W_{(3c_{1},d_{1})}^{3,1}(n) - 7776\,W_{(c_{1},d_{1})}^{3,1}(\frac{n}{3}) 
\end{align*}
								\end{theorem}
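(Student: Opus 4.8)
The plan is to mirror, almost verbatim, the argument already used for \autoref{representations-theor_a_b}, replacing the sum-of-squares counts $r_4,r_8$ by the counts $s_4,s_8$ attached to the ternary form $x^2+xy+y^2$. First I would start from the defining convolution
\[
R_{(c,d)}^{4,8}(n)=\sum_{\substack{(l,m)\in\mathbb{N}^2\\ cl+dm=n}} s_4(l)\,s_8(m),
\]
and peel off the two boundary contributions coming from $l=0$ and $m=0$. Since $s_4(0)=s_8(0)=1$, these contribute exactly $s_4(\tfrac{n}{c})+s_8(\tfrac{n}{d})$, leaving a sum over $(l,m)\in\mathbb{N}^{*}\times\mathbb{N}^{*}$.

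Next I would substitute the known evaluations $s_4(n)=12\,\sigma(n)-36\,\sigma(\tfrac{n}{3})$ of \eqref{representations-eqn-c_d-1} and $s_8(n)=24\,\sigma_3(n)+216\,\sigma_3(\tfrac{n}{3})$ of \eqref{representations-eqn-c_d-2}. The boundary terms then become precisely the closed-form $\sigma$- and $\sigma_3$-expressions appearing in the statement, while expanding the product $s_4(l)\,s_8(m)$ produces four double sums with coefficients $12\cdot 24=288$, $12\cdot 216=2592$, $-36\cdot 24=-864$ and $-36\cdot 216=-7776$. To each double sum I would apply the rescaling device from the earlier proof: for the injective substitutions $l\mapsto ul$, $m\mapsto vm$ one has $\sum_{cl+dm=n}\sigma(\tfrac{l}{u})\sigma_3(\tfrac{m}{v})=W_{(uc,vd)}^{1,3}(n)$. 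Taking $(u,v)=(1,1),(1,3),(3,1),(3,3)$ turns the four sums into $W_{(c,d)}^{1,3}(n)$, $W_{(c,3d)}^{1,3}(n)$, $W_{(3c,d)}^{1,3}(n)$ and $W_{(3c,3d)}^{1,3}(n)$, respectively.

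The only point needing a small extra observation is the diagonal term: since $\gcd(c,d)=1$ we have $\gcd(3c,3d)=3$, so the reduction \eqref{convolutionSumsEqns-gcd} collapses $W_{(3c,3d)}^{1,3}(n)$ to $W_{(c,d)}^{1,3}(\tfrac{n}{3})$, exactly as the statement records. Collecting the pieces yields the first identity. For $R_{(c_1,d_1)}^{8,4}(n)$ I would run the identical computation with the roles of $s_4$ and $s_8$ interchanged, so that the inner factor is $s_8(l)\,s_4(m)$; the four double sums are then $W^{3,1}$ sums (with coefficients $288,-864,2592,-7776$ in the order $W^{3,1}_{(c_1,d_1)},W^{3,1}_{(c_1,3d_1)},W^{3,1}_{(3c_1,d_1)},W^{3,1}_{(c_1,d_1)}(\tfrac{n}{3})$), and the $\gcd$-reduction again produces the $\tfrac{n}{3}$ term. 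Alternatively, and more economically, one may simply invoke \autoref{convolutionSum-theor-sp} to transport each $W^{1,3}$ evaluation to the corresponding $W^{3,1}$ one. I do not anticipate a genuine obstacle: the substance is entirely the bookkeeping of the four coefficients and the scaling indices, and the sole place where care is required is applying the $\gcd$-reduction with the correct argument $\tfrac{n}{3}$ in the diagonal term.
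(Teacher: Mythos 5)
Your proposal is correct and follows essentially the same route as the paper: peel off the $l=0$, $m=0$ boundary terms, substitute the closed forms for $s_{4}$ and $s_{8}$, expand the product into four double sums with coefficients $288$, $2592$, $-864$, $-7776$, convert each via the rescaling substitution $(u,v)=(1,1),(1,3),(3,1),(3,3)$, and collapse the diagonal term by the $\gcd$-reduction (\hyperref[convolutionSumsEqns-gcd]{\ref*{convolutionSumsEqns-gcd}}). The only difference is cosmetic — the paper writes out $R_{(c_{1},d_{1})}^{8,4}(n)$ in detail and cites \autoref{convolutionSum-theor-sp} for the other identity, whereas you spell out $R_{(c,d)}^{4,8}(n)$; in fact your treatment of the diagonal term $W_{(3c,3d)}^{1,3}(n)=W_{(c,d)}^{1,3}(\frac{n}{3})$ is stated more explicitly than in the paper's proof.
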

\begin{proof} It suffices to prove the identity $R_{(c_{1},d_{1})}^{8,4}(n)$ since 
	the other can be proved similarly or by applying \autoref{convolutionSum-theor-sp}. 
	
	We have 
	\begin{equation*}
          \begin{split}
	R_{(c_{1},d_{1})}^{8,4}(n) & = \sum_{\substack{
			{(l,m)\in\mathbb{N}^{2}} \\ {c_{1}\,l+d_{1}\,m=n}
	}}s_{8}(l)s_{4}(m) \\
	& = s_{8}(\frac{n}{c_{1}})s_{4}(0) + s_{8}(0)s_{4}(\frac{n}{d_{1}}) 
	+ \sum_{\substack{
			{(l,m)\in\mathbb{N}^{*}\times\mathbb{N}^{*}} \\ {c_{1}\,l+d_{1}\,m=n}
	}}s_{8}(l)s_{4}(m). \\
          \end{split}
	\end{equation*}
	We make use of (\hyperref[representations-eqn-4-1]{\ref*{representations-eqn-4-1}}) 
	and (\hyperref[representations-eqn-4-2]{\ref*{representations-eqn-4-2}})
	to obtain 
	\begin{multline*}
	R_{(c_{1},d_{1})}^{8,4}(n)  = 24\,\sigma_{3}(\frac{n}{c_{1}}) + 216\,\sigma_{3}(\frac{n}{3c_{1}}) + 12\,\sigma(\frac{n}{d_{1}}) - 36\,\sigma(\frac{n}{3d_{1}})   \\
	+ \sum_{\substack{
			{(l,m)\in\mathbb{N}^{*}\times\mathbb{N}^{*}} \\ {c_{1}l+d_{1}m=n}
	}} (24\,\sigma_{3}(l) + 216\,\sigma_{3}(\frac{l}{3}) )(12\,\sigma(m) - 36\,\sigma(\frac{m}{3})). 
	\end{multline*}
	We know that 
	\begin{multline*}
	(24\,\sigma_{3}(l) + 216\,\sigma_{3}(\frac{l}{3}) )(12\,\sigma(m) - 36\,\sigma(\frac{m}{3})) = \\
	288\,\sigma_{3}(l)\sigma(m) - 864\,\sigma_{3}(l)\sigma(\frac{m}{3}) 
	+ 2592\,\sigma_{3}(\frac{l}{3})\sigma(m)  
- 7776\,\sigma(\frac{l}{3})\sigma_{3}(\frac{m}{3}) 
	\end{multline*}
	In the sequel of this proof, we assume that the evaluation of 
	\begin{equation*}
	W_{(c_{1},d_{1})}^{3,1}(n) = \sum_{\substack{
			{(l,m)\in\mathbb{N}^{*}\times\mathbb{N}^{*}} \\ {c_{1}l+d_{1}m=n}
	}}\sigma_{3}(l)\sigma(m),
	\end{equation*}
	$W_{(c_{1},3d_{1})}^{3,1}(n),\>W_{(3c_{1},d_{1})}^{3,1}(n)\>$
	and $W_{(3c_{1},3d_{1})}^{3,1}(n)$ are known. 
	
	Let $u,v\in\mathbb{N}^{*}$ be given and $f,g:\mathbb{N}\mapsto\mathbb{N}$ be  
	injective functions such that $f(n)=u\cdot n$ and 
	$g(n)=v\cdot n$ for each $n\in\mathbb{N}$.
	
	When we simultaneously apply the functions 
	$f$ and $g$ with $l$ and $m$ as argument, respectively, we derive  
	\begin{equation*}
	W_{(c_{1},d_{1})}^{3,1}(n) = \sum_{\substack{
			{(l,m)\in\mathbb{N}^{*}\times\mathbb{N}^{*}} \\ {c_{1}l+d_{1}m=n}
	}}\sigma_{3}(\frac{l}{u})\sigma(\frac{m}{v}) 
	= \sum_{\substack{
			{(l,m)\in\mathbb{N}^{*}\times\mathbb{N}^{*}} \\ {uc_{1}\,l+vd_{1}\,m=n}
	}}\sigma_{3}(l)\sigma(m) = W_{(uc_{1},vd_{1})}^{1,3}(n)
	\end{equation*}
	We set $(u,v) = (1,1), (1,3), (3,1),(3,3)$ accordingly and then  
	finally put all these evaluations together to obtain the stated result 
	for $R_{(c_{1},d_{1})}^{8,4}(n)$. 
\end{proof}
																	
	From this proof, we note that a formula for the number 
	of representations of a positive integer $n$  by the  quadratic form 
	(\hyperref[introduction-eq-3]{\ref*{introduction-eq-3}}) and (\hyperref[introduction-eq-4]{\ref*{introduction-eq-4}}) depends on the 
		evaluated convolution Sums for some given levels $c_{1}d_{1}$ and $3c_{1}d_{1}$ with $c_{1},d_{1}\in\mathbb{N}^{*}$ relatively prime. 
																				
	As a consequence, we do consider only the levels $\alpha\beta$   
	 which are divisible by $3$; that is $\alpha\beta\equiv 0\pmod{3}$. 
																		
\subsubsection{Determination of all relevant $\mathbf{(c,d)\in\mathbb{N}^{*}\times\mathbb{N}^{*}}$ for $R_{(c,d)}(n)$ for a given level  $\alpha\beta\in\mathbb{N}^{*}$} 
\label{determine_c_d}
	The following method determine all pairs $(c,d)\in\mathbb{N}^{*}\times\mathbb{N}^{*}$ necessary for
the determination of $R_{(c,d)}(n)$ for a given $\alpha\beta\in\mathbb{N}^{*}$ 
belonging to the above class. The following method is quasi similar to the one 
used in \hyperref[determine_a_b]{Subsection \ref*{determine_a_b}}. 
																		
Let $\Delta=\frac{\alpha\beta}{3}=\frac{2^{\nu}\mho}{3}$. Let 
$P_{3}=\{p_{0}=2^{\nu}\}\cup\underset{j>2}{\bigcup}\{ p_{j}\,\mid\,p_{j}\text{ is a prime divisor of } \mho\,\}$. 
Let $\EuScript{P}(P_{3})$ be the power set of $P_{3}$. Then for each 
$Q\in\EuScript{P}(P_{3})$ 
we define $\mu(Q)=\underset{p\in Q}{\prod}p$. We set $\mu(Q)=1$ if	$Q$ is an empty set.
Let now $\Omega_{3}$ be defined in a similar way as $\Omega_{4}$ in 
\hyperref[determine_a_b]{Subsection \ref*{determine_a_b}}; 
	however, with $\Delta$ instead of $\Lambda$, i.e., 
\begin{multline*}
\Omega_{3}=\{(\mu(Q_{1}),\mu(Q_{2}))~|~\text{ there exist } Q_{1},Q_{2}\in\EuScript{P}(P_{3}) 
\text{ such that }  \\ 
\gcd{(\mu(Q_{1}),\mu(Q_{2}))}=1\,\text{ and } 
\mu(Q_{1})\,\mu(Q_{2})=\Delta\,\}.
\end{multline*}
Note that $\Omega_{3}\neq\emptyset$ since $(1,\Delta)\in\Omega_{3}$.
																		
As an example, suppose again that $\alpha\beta=2^{3}\cdot 3\cdot 5$. Then 
$\Delta=2^{3}\cdot 5$, $P_{3}=\{2^{3},5\}$ and $\Omega_{3}=\{(1,40),(5,8)\}$.
	\begin{proposition} \label{representation-prop-2}
		Suppose that the level $\alpha\beta\in\mathbb{N}^{*}$ and $\alpha\beta\equiv 0\pmod{3}$. Suppose in addition that $\Omega_{3}$ 
		be defined as above. Then for all $n\in\mathbb{N}$ the set $\Omega_{3}$ 
		contains all pairs $(c,d)\in\mathbb{N}^{*}\times\mathbb{N}^{*}$ such that $R_{(c,d)}(n)$ can be obtained by applying $W_{(\alpha,\beta)}(n)$ and some other evaluated convolution sums.
  \end{proposition}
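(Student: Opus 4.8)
The plan is to follow, \emph{mutatis mutandis}, the argument used for \autoref{representation-prop-1}, proceeding by induction on the structure of the level $\alpha\beta$ but replacing the factor $4$ throughout by the factor $3$ and the quantity $\Lambda$ by $\Delta$. First recall from the proof of \autoref{representations-theor-c_d} that the formula for $R_{(c,d)}(n)$ is expressed entirely through the convolution sums $W_{(uc,vd)}^{3,1}(n)$ with $(u,v)\in\{(1,1),(1,3),(3,1),(3,3)\}$, that is, through convolution sums of the two levels $cd$ and $3cd$ (and, by \autoref{convolutionSum-theor-sp}, equivalently through the corresponding $W^{1,3}$ sums). Since the convolution sum $W_{(\alpha,\beta)}(n)$ of level $\alpha\beta$ has been evaluated, a pair $(c,d)$ is admissible precisely when $3cd=\alpha\beta$, i.e.\ when $cd=\frac{\alpha\beta}{3}=\Delta$, together with $\gcd(c,d)=1$. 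Hence it suffices to prove that $\Omega_{3}$ is exactly the set of coprime factorisations $cd=\Delta$, listed up to the order of the two coordinates.

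For the base case I would take $\alpha\beta=2^{\nu}\cdot 3\cdot p_{3}$, where $\nu\in\mathbb{N}$ and $p_{3}$ is an odd prime with $p_{3}\neq 3$. Then the definitions give $\Delta=2^{\nu}p_{3}$, $P_{3}=\{\,2^{\nu},p_{3}\,\}$, $\EuScript{P}(P_{3})=\{\,\emptyset,\{2^{\nu}\},\{p_{3}\},\{2^{\nu},p_{3}\}\,\}$ and therefore $\Omega_{3}=\{\,(1,2^{\nu}p_{3}),(2^{\nu},p_{3})\,\}$. Because $\alpha\beta=3cd=2^{\nu}\cdot 3\cdot p_{3}$ forces $cd=2^{\nu}p_{3}=\Delta$, each of these pairs immediately yields $R_{(c,d)}(n)$ via \autoref{representations-theor-c_d}. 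To see that $\Omega_{3}$ is the largest such set, suppose a further set $\Omega_{3}'$ arises from the same definitions: if $\Omega_{3}'\subseteq\Omega_{3}$ there is nothing to prove, while if $\Omega_{3}\subsetneq\Omega_{3}'$ and $(e,f)\in\Omega_{3}'\setminus\Omega_{3}$, then $ef=2^{\nu}p_{3}$ with $\gcd(e,f)=1$ forces $(e,f)=(1,2^{\nu}p_{3})$ or $(e,f)=(2^{\nu},p_{3})$, so $(e,f)\in\Omega_{3}$, a contradiction; hence $\Omega_{3}=\Omega_{3}'$.

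For the inductive step, assume the statement for all levels whose odd part has $k$ distinct prime divisors and adjoin one further distinct odd prime $p$, so that $\Delta$ acquires exactly one additional prime-power atom. By the induction hypothesis together with the definition of $\Omega_{3}$, every admissible pair is obtained by distributing the atoms of $P_{3}$---namely $2^{\nu}$ and the maximal odd prime powers dividing $\Delta$---wholly into one of the two coordinates; the condition $\gcd(\mu(Q_{1}),\mu(Q_{2}))=1$ is then exactly the requirement that no atom is split, while $\mu(Q_{1})\,\mu(Q_{2})=\Delta$ records $cd=\Delta$. Since $\alpha\beta=3cd$ again gives $cd=\Delta$, each such pair yields $R_{(c,d)}(n)$, and the same two-case argument as above (with $\Omega_{3}'\subseteq\Omega_{3}$ trivial, and any $(e,f)\in\Omega_{3}'\setminus\Omega_{3}$ necessarily a coprime splitting of $\Delta$ and hence already in $\Omega_{3}$) establishes maximality.

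The main obstacle is the maximality half of the equivalence, together with the correct reading of the atoms of $P_{3}$ when $\alpha\beta$ is divisible by a higher power of $3$ (for instance $\alpha\beta=27$, where $3$ survives in $\Delta=9$): there the elements of $P_{3}$ must be interpreted as the \emph{maximal} prime powers dividing $\Delta$ rather than as bare primes, so that a product $\mu(Q)$ can realise a full prime power and the coprimality constraint coincides with ``no atom is split.'' With this reading the correspondence between $\Omega_{3}$ and the coprime factorisations $cd=\Delta$ is a bijection, and the proof concludes exactly as for \autoref{representation-prop-1}.
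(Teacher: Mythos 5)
Your proof is correct and takes essentially the same route as the paper: the paper's own proof consists solely of the remark that it is ``similar to the proof of Proposition~\ref{representation-prop-1}'', and your argument is exactly that induction on the structure of the level with $4$ replaced by $3$, $\Lambda$ by $\Delta$, and the admissibility criterion $\alpha\beta=4ab$ replaced by $\alpha\beta=3cd$, followed by the same two-case maximality comparison with a hypothetical $\Omega_{3}'$. Your closing remark that the atoms of $P_{3}$ must be read as \emph{maximal} prime powers dividing $\Delta$ when $9\mid\alpha\beta$ (e.g.\ $\alpha\beta=27$, $\Delta=9$) is a sensible clarification of a point the paper's definition glosses over, but it does not change the structure of the argument.
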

\begin{proof} 
Similar to the proof of 
\hyperref[representation-prop-1]{Proposition \ref*{representation-prop-1}}.
\end{proof}
																		
We then infer in collaboration with \autoref{convolutionSum-theor-sp} the following: 
\begin{corollary} \label{representations-corol_c_d}
	Let $n\in\mathbb{N}$, $\alpha\beta\in\mathbb{N}^{*}$ with $\alpha\beta\equiv 0\pmod{3}$ and $\Omega_{3}$ be determined as above. Then for each $(c,d),(c_{1},d_{1})\in\Omega_{3}$ we obtain  
	\begin{align*}
	R_{(c,d)}^{4,8}(n)\, =\,R_{(d,c)}^{8,4}(n)\, =\, & 
	12\,\sigma(\frac{n}{c}) - 36\,\sigma(\frac{n}{3c}) + 24\,\sigma_{3}(\frac{n}{d}) +
	216\,\sigma_{3}(\frac{n}{3d}) + 288\, W_{(c,d)}^{1,3}(n) \\ & 
	+ 2592\, W_{(c,3d)}^{1,3}(n) 
	- 864\,W_{(3c,d)}^{1,3}(n) -  7776\,W_{(c,d)}^{1,3}(\frac{n}{3}) 
\end{align*}
and 
\begin{align*}
	R_{(c_{1},d_{1})}^{8,4}(n)\, =\,R_{(d_{1},c_{1})}^{4,8}(n)\, =\, & 
	24\,\sigma_{3}(\frac{n}{c_{1}}) 
	+ 216\,\sigma_{3}(\frac{n}{3c_{1}}) 
	+ 12\,\sigma(\frac{n}{d_{1}}) 
	- 36\,\sigma(\frac{n}{3d_{1}}) \\ & 
	+ 288\, W_{(c_{1},d_{1})}^{3,1}(n) 
	- 864\, W_{(c_{1},3d_{1})}^{3,1}(n) 
	+ 2592\,W_{(3c_{1},d_{1})}^{3,1}(n) \\ &  
	- 7776\,W_{(c_{1},d_{1})}^{3,1}(\frac{n}{3}) 
\end{align*}
\end{corollary}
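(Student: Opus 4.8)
The plan is to derive both displayed formulae directly from \autoref{representations-theor-c_d} and then reconcile the two via \autoref{convolutionSum-theor-sp}, using the arithmetic structure of $\Omega_3$ to guarantee that the required coprimality hypotheses hold. First I would fix a pair $(c,d)\in\Omega_3$ and record the relevant divisibility facts: since $\alpha\beta=2^{\nu}\mho$ with $3\mid\mho$ and $\Delta=\alpha\beta/3=cd$ carries no factor of $3$, we have $\gcd(c,d)=1$ together with $3\nmid c$ and $3\nmid d$; consequently $\gcd(3c,d)=\gcd(c,3d)=1$ as well. This is exactly what licenses every application of \autoref{convolutionSum-theor-sp} in the step below, and it is where membership in $\Omega_3$ (via \autoref{representation-prop-2}) does its work.

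Next I would write $R_{(c,d)}^{4,8}(n)$ using the first identity of \autoref{representations-theor-c_d} verbatim; this is already the right-hand side claimed in the corollary, so nothing needs to be done there. Then I would substitute $(c_{1},d_{1})=(d,c)$ into the second identity of \autoref{representations-theor-c_d} to obtain $R_{(d,c)}^{8,4}(n)$ as a combination of $\sigma$, $\sigma_{3}$ terms and the four sums $W_{(d,c)}^{3,1}(n)$, $W_{(d,3c)}^{3,1}(n)$, $W_{(3d,c)}^{3,1}(n)$ and $W_{(d,c)}^{3,1}(\tfrac{n}{3})$. A quick check confirms that the $\sigma(\tfrac{n}{c})$, $\sigma(\tfrac{n}{3c})$, $\sigma_{3}(\tfrac{n}{d})$, $\sigma_{3}(\tfrac{n}{3d})$ contributions already coincide with those in $R_{(c,d)}^{4,8}(n)$.

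The crux is then a term-by-term rewriting of the four convolution sums. By \autoref{convolutionSum-theor-sp}, which gives $W_{(x,y)}^{1,3}=W_{(y,x)}^{3,1}$ whenever $\gcd(x,y)=1$, I would replace $W_{(d,c)}^{3,1}\to W_{(c,d)}^{1,3}$, $W_{(d,3c)}^{3,1}\to W_{(3c,d)}^{1,3}$, $W_{(3d,c)}^{3,1}\to W_{(c,3d)}^{1,3}$ and $W_{(d,c)}^{3,1}(\tfrac{n}{3})\to W_{(c,d)}^{1,3}(\tfrac{n}{3})$, each step being valid by the coprimality facts recorded above. After this substitution the coefficients $288,\,2592,\,-864,\,-7776$ attach respectively to $W_{(c,d)}^{1,3}(n)$, $W_{(c,3d)}^{1,3}(n)$, $W_{(3c,d)}^{1,3}(n)$ and $W_{(c,d)}^{1,3}(\tfrac{n}{3})$, matching $R_{(c,d)}^{4,8}(n)$ exactly; hence $R_{(c,d)}^{4,8}(n)=R_{(d,c)}^{8,4}(n)$. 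The companion chain of equalities for $(c_{1},d_{1})$ follows by the identical argument with the two formulae of \autoref{representations-theor-c_d} interchanged.

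I would flag the only genuinely substantive point, namely that the index shifts $c\mapsto 3c$ and $d\mapsto 3d$ preserve coprimality, so that \autoref{convolutionSum-theor-sp} truly applies to all four sums and not merely to the unshifted one. This rests entirely on $\Delta=\alpha\beta/3$ containing no factor of $3$, which forces $3\nmid c$ and $3\nmid d$. Everything remaining is bookkeeping of the numerical coefficients, so I do not anticipate any obstacle beyond that verification.
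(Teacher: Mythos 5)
Your proposal is correct and follows essentially the same route as the paper, which presents this corollary as an immediate consequence of \autoref{representations-theor-c_d} ``in collaboration with'' \autoref{convolutionSum-theor-sp}: specialize the two formulae of \autoref{representations-theor-c_d}, then use $W_{(x,y)}^{1,3}=W_{(y,x)}^{3,1}$ to identify $R_{(c,d)}^{4,8}(n)$ with $R_{(d,c)}^{8,4}(n)$. Your explicit verification that $3\nmid c$ and $3\nmid d$ (so that $\gcd(3c,d)=\gcd(c,3d)=1$ and the swap theorem applies to all four convolution sums) is a detail the paper leaves implicit, and it is a worthwhile addition rather than a deviation.
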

It immediately follows that if $c=d=c_{1}=d_{1}$, then $R_{(c,c)}^{4,8}(n) = R_{(c,c)}^{8,4}(n)$, especially 
$R_{(1,1)}^{4,8}(n) = R_{(1,1)}^{8,4}(n)$.


\section{Evaluation of the Convolution Sums for some Levels in $\N$}
\label{convolution_33_40_56}

In this section, we give explicit formulae for the convolution sums 
\begin{itemize}
\item $W_{(\alpha,\beta)}^{1,3}(n)$  when 
$\alpha\beta=7,8,14,15,20,21$; and 
\item $W_{(\alpha,\beta)}^{3,1}(n)$  when 
$\alpha\beta=3,4,6,7,8,12,14,15,20,21$; 
\end{itemize}

When we apply \autoref{basis-cusp-eqn}, we conclude that for example 
\begin{align}
\M_{6}(\Gamma_{0}(3))\subset\M_{6}(\Gamma_{0}(6))\subset \M_{6}(\Gamma_{0}(12))  \label{eqn-3-12} \\ 
\M_{6}(\Gamma_{0}(4))\subset\M_{6}(\Gamma_{0}(8))\subset\M_{6}(\Gamma_{0}(16))\subset
\M_{6}(\Gamma_{0}(32)).  \label{eqn-4-32} 
\end{align}
This implies the same inclusion relation for the bases, the space of Eisenstein 
forms of weight $6$ and the spaces of cusp forms of weight $6$.

\subsection{Bases of $\E_{6}(\boldsymbol{\Upgamma}_{0}(\boldsymbol{\upalpha\upbeta}))$ and
	$\S_{6}(\boldsymbol{\Upgamma}_{0}(\boldsymbol{\upalpha\upbeta}))$ for
	these values of $\upalpha\upbeta$
}
\label{convolution_33_40_56-gen}

We apply the dimension formulae in \tM 's book \cite[Thrm 2.5.2,~p.~60]{miyake1989} or 
\cite[Prop.~6.1, p.~91]{wstein} and 
(\hyperref[dimension-Eisenstein-special]{\ref*{dimension-Eisenstein-special}})
(\hyperref[dimension-Eisenstein-special]{\ref*{dimension-Eisenstein-special}})
to deduce that 
\begin{alignat*}{3}   
\text{dim}(\E_{6}(\Gamma_{0}(\alpha\beta)))=d(\alpha\beta), \quad
\text{dim}(\S_{6}(\Gamma_{0}(\alpha\beta)))=1 \quad \text{when $2<\alpha\beta<6$ holds,} \\
\text{dim}(\S_{6}(\Gamma_{0}(6)))=3, \quad
\text{dim}(\S_{6}(\Gamma_{0}(7)))=3, \quad
\text{dim}(\S_{6}(\Gamma_{0}(8)))=3, \\
\text{dim}(\S_{6}(\Gamma_{0}(12)))=7, \quad
\text{dim}(\S_{6}(\Gamma_{0}(14)))=8, \quad
\text{dim}(\S_{6}(\Gamma_{0}(15)))=8, \\
\text{dim}(\S_{6}(\Gamma_{0}(20)))=12,  \quad
\text{dim}(\S_{6}(\Gamma_{0}(21)))=12.
\end{alignat*}

We apply \autoref{ligozat_theorem} as mentioned in 
\hyperref[convolution_alpha_beta-bases]{Subsection \ref*{convolution_alpha_beta-bases}} 
to determine as many elements of
\begin{alignat*}{5}
  \S_{6}(\Gamma_{0}(8)), \quad \S_{6}(\Gamma_{0}(14)), \quad
  \S_{6}(\Gamma_{0}(15)), \quad 
  \S_{6}(\Gamma_{0}(20)) \quad \text{and} \quad \S_{6}(\Gamma_{0}(21))  
\end{alignat*}
as possible. Then we apply \hyperref[basis-remark]{Remark \ref*{basis-remark}} 
\textbf{(r2)} when selecting basis elements of a given space of cusp forms as 
stated in the proof of \autoref{basisCusp_a_b} (b).  
\begin{corollary} \label{basisCusp_33_40_56}
	\begin{enumerate}
		\item[\textbf{(a)}] Let $\kappa=12,14,15,20,21$ and $\delta\in D(\kappa)$. Then the set \[\EuScript{B}_{E,\kappa}=\{\,M(q^{t})\,\mid ~ t|\kappa\,\}\]
		is a basis of $\E_{6}(\Gamma_{0}(\kappa))$. 		
		\item[\textbf{(b)}] Let $i\in\mathbb{N}^{*}$ satisfies $1\leq i\leq m_{\kappa}$, where $m_{\kappa}=7,8,12$. 
		
		Let $\delta_{1}\in D(12)$ and 
		$(r(i,\delta_{1}))_{i,\delta_{1}}$ be the 
		\autoref{convolutionSums-12-table} of the powers of $\eta(\delta_{1} z)$. 
		
		Let $\delta_{2}\in D(14)$ and 
		$(r(j,\delta_{2}))_{j,\delta_{2}}$ be the 
		\autoref{convolutionSums-14-table} of the powers of $\eta(\delta_{2} z)$. 
		
		Let $\delta_{3}\in D(15)$ and 
		$(r(k,\delta_{3}))_{k,\delta_{3}}$ be the 
		\autoref{convolutionSums-15-table} of the powers of $\eta(\delta_{3} z)$. 
		Let $\delta_{4}\in D(20)$ and 
		$(r(j,\delta_{4}))_{j,\delta_{4}}$ be the 
		\autoref{convolutionSums-20-table} of the powers of $\eta(\delta_{4} z)$. 
		
		Let $\delta_{5}\in D(21)$ and 
		$(r(k,\delta_{5}))_{k,\delta_{5}}$ be the 
		\autoref{convolutionSums-21-table} of the powers of $\eta(\delta_{5} z)$. 
		
		Let furthermore  
		\begin{gather*}
		\EuFrak{B}_{\kappa,i}(q)=\underset{\delta\,|\,\kappa}{\prod}\eta^{r(i,\delta)}(\delta\,z)  
		\end{gather*} 
		be selected elements of 
		$\S_{6}(\Gamma_{0}(\kappa))$.
		
		Then the set 
		\begin{gather*}
		\EuScript{B}_{S,\kappa}=\{\,\EuFrak{B}_{\kappa,i}(q)\,\mid ~ 1\leq i\leq m_{\kappa}\,\}
		\end{gather*}
		is a basis 
		of $\S_{6}(\Gamma_{0}(\kappa))$. 
		\item[\textbf{(c)}] The set
\[\EuScript{B}_{M,\kappa}=\EuScript{B}_{E,\kappa}\cup\EuScript{B}_{S,\kappa}\] 
		constitutes a basis of $\M_{6}(\Gamma_{0}(\kappa))$ for $\kappa=12,14,15,20,21$.
	\end{enumerate}
\end{corollary}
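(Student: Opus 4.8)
The plan is to obtain \autoref{basisCusp_33_40_56} as a direct specialization of the general \autoref{basisCusp_a_b} to the five levels $\kappa=12,14,15,20,21$. The first observation, which drives everything, is that each of these levels lies in $\N$: writing $\kappa=2^{\nu}\mho$ we have $12=2^{2}\cdot 3$, $14=2\cdot 7$, $15=3\cdot 5$, $20=2^{2}\cdot 5$ and $21=3\cdot 7$, all with $\nu\in\{0,1,2\}$ and $\mho$ a product of distinct odd primes. By the convention fixed just before \eqref{evalConvolClass-eqn-3}, the primitive Dirichlet characters are therefore trivial and the set $\EuScript{C}$ is empty, so we land precisely in the $\iota=0$ branch of \autoref{basisCusp_a_b}\,(a). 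This reduces the whole corollary to checking that the tabulated data meet the hypotheses of that theorem.

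For part (a) I would argue as follows. Since $\EuScript{C}=\emptyset$, the proposed Eisenstein basis collapses to $\EuScript{B}_{E,\kappa}=\{\,E_{6}(q^{t})\mid t\in D(\kappa)\,\}$, of cardinality $d(\kappa)$. By \eqref{dimension-Eisenstein-special} this equals $\dim\E_{6}(\Gamma_{0}(\kappa))$, so it suffices to establish linear independence. Each $E_{6}(q^{t})$ lies in $\E_{6}(\Gamma_{0}(t))\subseteq\E_{6}(\Gamma_{0}(\kappa))$ by \waS's Theorems~5.8--5.9 together with the inclusion \eqref{basis-cusp-eqn}; equating the coefficients of $q^{t}$ for $t\in D(\kappa)$ in a vanishing linear combination yields a homogeneous system whose matrix $(\sigma_{5}(t/u))_{t,u\mid\kappa}$ is triangular in the divisibility order with diagonal entries $\sigma_{5}(1)=1$, hence invertible. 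This is exactly the $\iota=0$ case of \autoref{basisCusp_a_b}\,(a), and the basis claim follows.

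For part (b) the data are supplied by the tables \autoref{convolutionSums-12-table}--\autoref{convolutionSums-21-table}, which list $m_{\kappa}$ eta quotients $\EuFrak{B}_{\kappa,i}(q)=\prod_{\delta\mid\kappa}\eta^{r(i,\delta)}(\delta z)$, where $m_{12}=7$, $m_{14}=m_{15}=8$ and $m_{20}=m_{21}=12$ match the cusp-space dimensions recorded in \hyperref[convolution_33_40_56-gen]{Subsection~\ref*{convolution_33_40_56-gen}}. First I would verify, for each tabulated exponent vector, the four conditions (i)--(iv$'$) of \autoref{ligozat_theorem} with weight $2k=6$, i.e.\ $\tfrac12\sum_{\delta}r(i,\delta)=6$; this places every $\EuFrak{B}_{\kappa,i}(q)$ inside $\S_{6}(\Gamma_{0}(\kappa))$. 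Next, using \autoref{basis-remark}\,(r2), the smallest $q$-degree of $\EuFrak{B}_{\kappa,i}(q)$ is read off from condition (iv$'$) evaluated at $d=\kappa$, and the tables are arranged so that these valuations run through $1,2,\dots,m_{\kappa}$. Then the first case of the proof of \autoref{basisCusp_a_b}\,(b) applies verbatim: the coefficient matrix of the system \eqref{basis-cusp-eqn-sol} is lower triangular with $1$'s on the diagonal, so the $\EuFrak{B}_{\kappa,i}(q)$ are linearly independent, and since their number equals $\dim\S_{6}(\Gamma_{0}(\kappa))$ they form a basis. Part (c) is then immediate from the decomposition $\M_{6}(\Gamma_{0}(\kappa))=\E_{6}(\Gamma_{0}(\kappa))\oplus\S_{6}(\Gamma_{0}(\kappa))$, exactly as in \autoref{basisCusp_a_b}\,(c).

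The hard part will be the verification underlying part (b): one must actually confirm that every exponent vector in the five tables satisfies Ligozat's congruences (i)--(ii), the square condition (iii) and the strict positivity (iv$'$), and---crucially---that the induced order valuations form an uninterrupted staircase $1,2,\dots,m_{\kappa}$, since only then does the clean triangular argument apply. Should the valuations fail to be distinct and consecutive for some level, the triangular reasoning breaks down and one must instead invoke the determinant-and-substitution procedure in the second case of the proof of \autoref{basisCusp_a_b}\,(b), replacing the offending eta quotients by alternative elements of $\S_{6}(\Gamma_{0}(\kappa))$ until the coefficient determinant is nonzero. All of this is finite symbolic computation, so no conceptual obstacle arises beyond the bookkeeping.
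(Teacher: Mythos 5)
Your proposal is correct and follows essentially the same route as the paper: the paper's entire proof of this corollary is that it "follows immediately from" the general basis theorem (\autoref{basisCusp_a_b}), and your argument is precisely that specialization — noting that $12,14,15,20,21\in\N$ forces $\EuScript{C}=\emptyset$ (the $\iota=0$ branch) for part (a), invoking \autoref{ligozat_theorem} plus \hyperref[basis-remark]{Remark \ref*{basis-remark}}\,(r2) and the staircase/triangular (or, failing that, determinant-substitution) argument of \autoref{basisCusp_a_b}\,(b) for the cusp bases, and the direct-sum decomposition for part (c). The only difference is that you spell out the verifications the paper leaves implicit, including the correct reading of the typo $M(q^{t})$ as $E_{6}(q^{t})$.
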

By \hyperref[basis-remark]{Remark \ref*{basis-remark}} \textbf{(r1)}, 
$\EuFrak{B}_{\kappa,i}(q)$
can be expressed in the form 
$\underset{n=1}{\overset{\infty}{\sum}}\EuFrak{b}_{\kappa,i}(n)q^{n}$,
where $\kappa=12,14,15,20,21$. 

\begin{proof} 
	It follows immediately from \autoref{basisCusp_a_b}. 
\end{proof}

\subsection{Evaluation of $\mathbf{W_{(\boldsymbol{\upalpha,\upbeta})}^{2i-1,2j-1}(n)}$ when $\boldsymbol{\upalpha\upbeta}\mathbf{=3,4,6,7,8,12,14,15,20,21}$} 
\label{convolSum-w_33_40_56}

We consider in the following two corollaries to discuss the case 
$(\,\alpha\,E_{2}(q^{\alpha}) - E_{2}(q)\,)\, E_{4}(q^{\beta})$ and the case 
$E_{4}(q^{\beta})\,(\,E_{2}(q) - \alpha\,E_{2}(q^{\alpha})\,)$ for 
some values of $\alpha\beta$ since the other cases can be handled similarly.

The first corollary deals with the case $(\,\alpha\,E_{2}(q^{\alpha}) - E_{2}(q)\,)\, E_{4}(q^{\beta})$.  It is sufficient to consider only the special cases $1\neq\alpha = \beta$ and $\alpha> 1$ whenever $\beta=1$.
\begin{corollary} \label{lema-w_7-21}
	We have  
	\begin{multline}  
	(\,7\, E_{2}(q^{7}) - E_{2}(q)\,)\, E_{4}(q^{7}) 
	=  6 + \sum_{n=1}^{\infty}\biggl(\, 
- \frac{1}{2451}\,\sigma_{5}(n) 
+ \frac{14707}{2451}\,\sigma_{5}(\frac{n}{7})    \\
+ \frac{19440}{817}\,\EuFrak{b}_{21,1}(n) 
+ \frac{5760}{19}\,\EuFrak{b}_{21,2}(n) 
+ \frac{843120}{817}\,\EuFrak{b}_{21,3}(n) 
	\, \biggr)\,q^{n}, 
	\label{convolSum-eqn-1_7-1_3}
	\end{multline}
\begin{multline}  
(\,7\, E_{2}(q^{7}) - E_{2}(q)\,)\, E_{4}(q) 
=  6 + \sum_{n=1}^{\infty}\biggl(\, 
- \frac{2101}{2451}\,\sigma_{5}(n)
+ \frac{16807}{2451}\,\sigma_{5}(\frac{n}{7})     \\
+ \frac{843120}{817}\,\EuFrak{b}_{21,1}(n) 
+ \frac{282240}{19}\,\EuFrak{b}_{21,2}(n) 
+ \frac{46675440}{817}\,\EuFrak{b}_{21,3}(n)
\, \biggr)\,q^{n}, 
\label{convolSum-eqn-7_1-1_3}
\end{multline}
	\begin{multline}
	(\,8\, E_{2}(q^{8}) -  E_{2}(q)\,)\,E_{4}(q^{8}) 
	= 7 + \sum_{n=1}^{\infty}\biggl(\, 14
- \frac{1}{5376}\,\sigma_{5}(n)
- \frac{5}{1792}\,\sigma_{5}(\frac{n}{2})    \\
+ \frac{7}{48}\,\sigma_{5}(\frac{n}{4}) 
+ \frac{20}{7}\,\sigma_{5}(\frac{n}{8}) 
+ \frac{765}{32}\,\EuFrak{b}_{32,1}(n) 
+ \frac{135}{2}\,\EuFrak{b}_{32,2}(n) 
+ 360\,\EuFrak{b}_{32,3}(n) 
	\,\biggr)\,q^{n},
	\label{convolSum-eqn-1_8-1_3}
	\end{multline}
	\begin{multline}
(\,8\, E_{2}(q^{8}) - E_{2}(q)\,)\,E_{4}(q) 
=  7 + \sum_{n=1}^{\infty}\biggl(\, 
- \frac{37}{42}\,\sigma_{5}(n)
+ \frac{5}{14}\,\sigma_{5}(\frac{n}{2})    \\
+ \frac{10}{7}\,\sigma_{5}(\frac{n}{4}) 
+ \frac{128}{21}\,\sigma_{5}(\frac{n}{8}) 
+ 1260\,\EuFrak{b}_{32,1}(n)
+ 6480\,\EuFrak{b}_{32,2}(n)
+ 23040\,\EuFrak{b}_{32,3}(n)
\,\biggr)\,q^{n},
\label{convolSum-eqn-8_1-1_3}
\end{multline}
\begin{multline}
(\,14\, E_{2}(q^{14}) -  E_{2}(q)\,)\,E_{4}(q^{14}) 
=  13 + \sum_{n=1}^{\infty}\biggl(\, 
 - \frac{1}{51471}\,\sigma_{5}(n)
- \frac{20}{51471}\,\sigma_{5}(\frac{n}{2})    \\
+ \frac{2101}{7353}\,\sigma_{5}(\frac{n}{7})
+ \frac{42020}{7353}\,\sigma_{5}(\frac{n}{14})
+ \frac{19600}{817}\,\EuFrak{b}_{14,1}(n)
+ \frac{254400}{817}\,\EuFrak{b}_{14,2}(n)     \\
+ \frac{916880}{817}\,\EuFrak{b}_{14,3}(n)
+ \frac{255680}{817}\,\EuFrak{b}_{14,4}(n) 
+ \frac{523520}{817}\,\EuFrak{b}_{14,5}(n) 
+ \frac{4369920}{817}\,\EuFrak{b}_{14,6}(n)     \\
- \frac{1437600}{817}\,\EuFrak{b}_{14,7}(n) 
- \frac{1280}{817}\,\EuFrak{b}_{14,8}(n) 
\,\biggr)\,q^{n},
\label{convolSum-eqn-1_14-1_3}
\end{multline}
\begin{multline}
(\,14\,E_{2}(q^{14}) -  E_{2}(q)\,)\,E_{4}(q) 
= 13 + \sum_{n=1}^{\infty}\biggl(\, 
- \frac{6853}{7353}\,\sigma_{5}(n)
+ \frac{1600}{7353}\,\sigma_{5}(\frac{n}{2})    \\
+ \frac{24010}{7353}\,\sigma_{5}(\frac{n}{7})  
+ \frac{76832}{7353}\,\sigma_{5}(\frac{n}{14}) 
+ \frac{2184880}{817}\,\EuFrak{b}_{14,1}(n)
+ \frac{36980160}{817}\,\EuFrak{b}_{14,2}(n)     \\
+ \frac{179693360}{817}\,\EuFrak{b}_{14,3}(n)
+ \frac{153292160}{817}\,\EuFrak{b}_{14,4}(n)
+ \frac{102126080}{817}\,\EuFrak{b}_{14,5}(n)     \\
+ \frac{1505817600}{817}\,\EuFrak{b}_{14,6}(n) 
- \frac{282011520}{817}\,\EuFrak{b}_{14,7}(n)
+ \frac{716800}{817}\,\EuFrak{b}_{14,8}(n)
\,\biggr)\,q^{n},
\label{convolSum-eqn-14_1-1_3}
\end{multline}
\begin{multline}
(\,15\, E_{2}(q^{15}) -  E_{2}(q)\,)\,E_{4}(q^{15}) 
=  14 + \sum_{n=1}^{\infty}\biggl(\,
\frac{274}{8934471}\,\sigma_{5}(n)
+ \frac{5457}{992719}\,\sigma_{5}(\frac{n}{3})    \\
- \frac{8090239}{8934471}\,\sigma_{5}(\frac{n}{5})
+ \frac{14791494}{992719}\,\sigma_{5}(\frac{n}{15})
+ \frac{3405800}{141817}\,\EuFrak{b}_{15,1}(n)    \\
+ \frac{1158952}{10909}\,\EuFrak{b}_{15,2}(n)
- \frac{35264}{10909}\,\EuFrak{b}_{15,3}(14n)
+ \frac{2061176}{10909}\,\EuFrak{b}_{15,4}(n)    \\
+ \frac{177635528}{141817}\,\EuFrak{b}_{15,5}(n)
+ \frac{2367936}{10909}\,\EuFrak{b}_{15,6}(n)
+ \frac{34294464}{10909}\,\EuFrak{b}_{15,7}(n)    \\
+ \frac{1203968}{10909}\,\EuFrak{b}_{15,8}(n),
\,\biggr)\,q^{n},
\label{convolSum-eqn-1_15-1_3}
\end{multline}
\begin{multline}
(\,15\,E_{2}(q^{15}) -  E_{2}(q)\,)\,E_{4}(q) 
= 14 + \sum_{n=1}^{\infty}\biggl(\, 
- \frac{917834}{992719}\,\sigma_{5}(n)
- \frac{1460295}{992719}\,\sigma_{5}(\frac{n}{3})    \\
- \frac{104862815}{992719}\,\sigma_{5}(\frac{n}{5}) 
+ \frac{121139010}{992719}\,\sigma_{5}(\frac{n}{15}) 
+ \frac{413824680}{141817}\,\EuFrak{b}_{15,1}(n)    \\
+ \frac{756726120}{10909}\,\EuFrak{b}_{15,2}(n) 
- \frac{93504960}{10909}\,\EuFrak{b}_{15,3}(n) 
+ \frac{263947320}{10909}\,\EuFrak{b}_{15,4}(n)    \\
+ \frac{90853616520}{141817}\,\EuFrak{b}_{15,5}(n) 
+ \frac{283798080}{10909}\,\EuFrak{b}_{15,6}(n) 
+ \frac{3149876160}{10909}\,\EuFrak{b}_{15,7}(n)    \\
- \frac{339972480}{10909}\,\EuFrak{b}_{15,8}(n)
\,\biggr)\,q^{n},
\label{convolSum-eqn-15_1-1_3}
\end{multline}
\begin{multline}
	(\,20\, E_{2}(q^{20}) -  E_{2}(q)\,)\,E_{4}(q^{20}) =
= 19 + \sum_{n=1}^{\infty}\biggl(\, 
\frac{1567}{117445608}\,\sigma_{5}(n)    \\
- \frac{110791}{117445608}\,\sigma_{5}(\frac{n}{2})
- \frac{10004}{233027}\,\sigma_{5}(\frac{n}{4})  
- \frac{702215}{234891216}\,\sigma_{5}(\frac{n}{5})      \\
- \frac{10264633}{234891216}\,\sigma_{5}(\frac{n}{10})  
+ \frac{93420884}{4893567}\,\sigma_{5}(\frac{n}{20})     
+ \frac{5594215}{233027}\,\EuFrak{b}_{20,1}(n)    \\
+ \frac{16718864}{233027}\,\EuFrak{b}_{20,2}(n)
+ \frac{89883520}{233027}\,\EuFrak{b}_{20,3}(n)
+ \frac{32106736}{233027}\,\EuFrak{b}_{20,4}(n)     \\
- \frac{527968775}{466054}\,\EuFrak{b}_{20,5}(n)
+ \frac{86133984}{233027}\,\EuFrak{b}_{20,6}(n)
+ \frac{1416189312}{233027}\,\EuFrak{b}_{20,7}(n)    \\  
+ \frac{4970832}{7517}\,\EuFrak{b}_{20,8}(n)   
+ \frac{976297728}{233027}\,\EuFrak{b}_{20,9}(n)
+ \frac{618500736}{233027}\,\EuFrak{b}_{20,10}(n)    \\
- \frac{64669696}{7517}\,\EuFrak{b}_{20,11}(n)
- \frac{4227164544}{233027}\,\EuFrak{b}_{20,12}(n)
\,\biggr)\,q^{n},
 \label{convolSum-eqn-1_20-1_3}
	\end{multline}
	\begin{multline}
	(\,20\,E_{2}(q^{20}) -  E_{2}(q)\,)\,E_{4}(q) =
= 19 + \sum_{n=1}^{\infty}\biggl(\, 
- \frac{53619409}{58722804}\,\sigma_{5}(n)     \\
- \frac{188714723}{58722804}\,\sigma_{5}(\frac{n}{2}) 
+ \frac{2103232}{1631189}\,\sigma_{5}(\frac{n}{4})  
+ \frac{9257815}{8388972}\,\sigma_{5}(\frac{n}{5})      \\
+ \frac{398439023}{58722804}\,\sigma_{5}(\frac{n}{10})  
+ \frac{68258944}{4893567}\,\sigma_{5}(\frac{n}{20})   
+ \frac{960956950}{233027}\,\EuFrak{b}_{20,1}(n)    \\
+ \frac{7006131104}{233027}\,\EuFrak{b}_{20,2}(n)
+ \frac{31247296000}{233027}\,\EuFrak{b}_{20,3}(n)
+ \frac{31145902336}{233027}\,\EuFrak{b}_{20,4}(n)     \\
+ \frac{15378042050}{233027}\,\EuFrak{b}_{20,5}(n)
+ \frac{32291496384}{233027}\,\EuFrak{b}_{20,6}(n)
+ \frac{542908645632}{233027}\,\EuFrak{b}_{20,7}(n)    \\
+ \frac{309602112}{7517}\,\EuFrak{b}_{20,8}(n)  
+ \frac{653138738688}{233027}\,\EuFrak{b}_{20,9}(n)
- \frac{678376410624}{233027}\,\EuFrak{b}_{20,10}(n)    \\ 
- \frac{13917769216}{7517}\,\EuFrak{b}_{20,11}(n)
- \frac{1138525192704}{233027}\,\EuFrak{b}_{20,12}(n)
\,\biggr)\,q^{n},
\label{convolSum-eqn-20_1-1_3}
\end{multline}
\begin{multline}
(\,21\, E_{2}(q^{21}) -  E_{2}(q)\,)\, E_{4}(q^{21}) =
20 + \sum_{n=1}^{\infty}\biggl(\, 
- \frac{1}{223041}\,\sigma_{5}(n)14
- \frac{350}{31863}\,\sigma_{5}(\frac{n}{7})    \\
+  \frac{1821}{91}\,\sigma_{5}(\frac{n}{21})    
+  \frac{254880}{10621}\,\EuFrak{b}_{21,1}(n)    
+  \frac{77040}{247}\,\EuFrak{b}_{21,2}(n)        \\
+  \frac{11969280}{10621}\,\EuFrak{b}_{21,3}(n)    
+ \frac{4320}{13}\,\EuFrak{b}_{21,4}(n)    
+  \frac{8640}{13}\,\EuFrak{b}_{21,5}(n)        \\
+  \frac{38880}{13}\,\EuFrak{b}_{21,6}(n)    
+  \frac{13200}{13}\,\EuFrak{b}_{21,7}(n)    
+  \frac{11520}{13}\,\EuFrak{b}_{21,8}(n)        \\
+  \frac{132480}{13}\,\EuFrak{b}_{21,9}(n)    
- \frac{50400}{13}\,\EuFrak{b}_{21,10}(n)    
- \frac{89280}{13}\,\EuFrak{b}_{21,11}(n)     
+ \frac{4320}{13}\,\EuFrak{b}_{21,12}(n)    
\,\biggr)\,q^{n},
 \label{convolSum-eqn-1_21-1-3}
\end{multline}
\begin{multline}
(\,21\, E_{2}(q^{21}) -  E_{2}(q)\,)\,E_{4}(q) =
20 + \sum_{n=1}^{\infty}\biggl(\, 
- \frac{10121}{10621}\,\sigma_{5}(n)
+ \frac{24010}{10621}\,\sigma_{5}(\frac{n}{7})  \\
+ \frac{243}{13}\,\sigma_{5}(\frac{n}{21}) 
+ \frac{46134720}{10621}\,\EuFrak{b}_{21,1}(n)
+ \frac{18925200}{247}\,\EuFrak{b}_{21,2}14(n)     \\
+ \frac{4185951840}{10621}\,\EuFrak{b}_{21,3}(n)
+ \frac{5352480}{13}\,\EuFrak{b}_{21,4}(n)
+ \frac{4415040}{13}\,\EuFrak{b}_{21,5}(n14)   \\
+ \frac{19867680}{13}\,\EuFrak{b}_{21,6}(n)
+ \frac{25900560}{13}\,\EuFrak{b}_{21,7}(n)
- \frac{1451520}{13}\,\EuFrak{b}_{21,8}(n)   \\
+ \frac{82373760}{13}\,\EuFrak{b}_{21,9}(n)
- \frac{18416160}{13}\,\EuFrak{b}_{21,10}(n)
- \frac{9979200}{13}\,\EuFrak{b}_{21,11}(n)   \\
- \frac{4082400}{13}\,\EuFrak{b}_{21,12}(n)
\,\biggr)\,q^{n},
\label{convolSum-eqn-21_1-1_3}
\end{multline}
\begin{multline}
(\,5\, E_{2}(q^{5}) -  E_{2}(q)\,)\,E_{4}(q^{3}) =
= 4 + \sum_{n=1}^{\infty}\biggl(\, 
- \frac{71366}{8934471}\,\sigma_{5}(n)
- \frac{812655}{992719}\,\sigma_{5}(\frac{n}{3})   \\
- \frac{107126590}{8934471}\,\sigma_{5}(\frac{n}{5}) 
+ \frac{16694415}{992719}\,\sigma_{5}(\frac{n}{15})
+ \frac{2832680}{141817}\,\EuFrak{b}_{21,1}(n)   \\
+ \frac{5125480}{10909}\,\EuFrak{b}_{21,2}(n)   
- \frac{10139840}{10909}\,\EuFrak{b}_{21,3}(n)
- \frac{17525320}{10909}\,\EuFrak{b}_{21,4}(n)   \\
- \frac{1113650920}{141817}\,\EuFrak{b}_{21,5}(n)  
+ \frac{31878720}{10909}\,\EuFrak{b}_{21,6}(n)
+ \frac{740168640}{10909}\,\EuFrak{b}_{21,7}(n)   \\
- \frac{4481920}{10909}\,\EuFrak{b}_{21,8}(n)
\,\biggr)\,q^{n},
	\label{convolSum-eqn-5_3-1_3}
\end{multline}
\begin{multline}
(\,5\, E_{2}(q^{5}) -  E_{2}(q)\,)\,E_{4}(q^{4}) =
= 4 + \sum_{n=1}^{\infty}\biggl(\, 
 - \frac{292153}{117445608}\,\sigma_{5}(n)
- \frac{4512719}{117445608}\,\sigma_{5}(\frac{n}{2})    \\
- \frac{10268}{699081}\,\sigma_{5}(\frac{n}{4}) 
+ \frac{1690315}{117445608}\,\sigma_{5}(\frac{n}{5})
+ \frac{25485149}{117445608}\,\sigma_{5}(\frac{n}{10})    \\
+ \frac{6238012}{1631189}\,\sigma_{5}(\frac{n}{20}) 
+ \frac{5300495}{233027}\,\EuFrak{b}_{20,1}(n) 
+ \frac{2624176}{233027}\,\EuFrak{b}_{20,2}(n)     \\
+ \frac{14691200}{233027}\,\EuFrak{b}_{20,3}(n) 
- \frac{196596016}{233027}\,\EuFrak{b}_{20,4}(n) 
+ \frac{150021475}{233027}\,\EuFrak{b}_{20,5}(n)     \\
+ \frac{645547296}{233027}\,\EuFrak{b}_{20,6}(n) 
+ \frac{1108142208}{233027}\,\EuFrak{b}_{20,7}(n) 
- \frac{87462672}{7517}\,\EuFrak{b}_{20,8}(n)     \\
- \frac{4875587328}{233027}\,\EuFrak{b}_{20,9}(n) 
- \frac{1996649856}{233027}\,\EuFrak{b}_{20,10}(n) 
+ \frac{126800896}{7517}\,\EuFrak{b}_{20,11}(n)     \\
+ \frac{18330282624}{233027}\,\EuFrak{b}_{20,12}(n) 
\,\biggr)\,q^{n},
\label{convolSum-eqn-5_4-1_3}
\end{multline}
\begin{multline}
(\,7\, E_{2}(q^{7}) -  E_{2}(q)\,)\,E_{4}(q^{2}) =
= 6 + \sum_{n=1}^{\infty}\biggl(\, 
- \frac{2101}{51471}\,\sigma_{5}(n)
- \frac{42020}{51471}\,\sigma_{5}(\frac{n}{2})    \\
+ \frac{2401}{7353}\,\sigma_{5}(\frac{n}{7})  
+ \frac{48020}{7353}\,\sigma_{5}(\frac{n}{14})  
+ \frac{2800}{817}\,\EuFrak{b}_{14,1}(n)    \\
+ \frac{372480}{817}\,\EuFrak{b}_{14,2}(n)
+ \frac{2820080}{817}\,\EuFrak{b}_{14,3}(n)
- \frac{1644160}{817}\,\EuFrak{b}_{14,4}(n)    \\
+ \frac{8142080}{817}\,\EuFrak{b}_{14,5}(n)
- \frac{4753920}{817}\,\EuFrak{b}_{14,6}(n)
+ \frac{36097440}{817}\,\EuFrak{b}_{14,7}(n)    \\
+ \frac{34958080}{817}\,\EuFrak{b}_{14,8}(n)
\,\biggr)\,q^{n},
\label{convolSum-eqn-7_2-1_3}
\end{multline}
\begin{multline}
(\,7\, E_{2}(q^{7}) -  E_{2}(q)\,)\,E_{4}(q^{3}) =
= 6 + \sum_{n=1}^{\infty}\biggl(\, 
- \frac{2101}{223041}\,\sigma_{5}(n)
- \frac{63030}{74347}\,\sigma_{5}(\frac{n}{3})    \\
+ \frac{2401}{31863}\,\sigma_{5}(\frac{n}{7}) 
+ \frac{72030}{10621}\,\sigma_{5}(\frac{n}{21}) 
- \frac{4930000}{10621}\,\EuFrak{b}_{21,1}(n)    \\
- \frac{2344560}{817}\,\EuFrak{b}_{21,2}(n) 
- \frac{10694880}{817}\,\EuFrak{b}_{21,3}(n) 
+ \frac{5131200}{817}\,\EuFrak{b}_{21,4}(n)     \\
+ \frac{901920}{43}\,\EuFrak{b}_{21,5}(n) 
- \frac{48257040}{817}\,\EuFrak{b}_{21,6}(n) 
+ \frac{460703520}{10621}\,\EuFrak{b}_{21,7}(n)     \\
+ \frac{137650560}{817}\,\EuFrak{b}_{21,8}(n) 
- \frac{199923120}{817}\,\EuFrak{b}_{21,9}(n) 
+ \frac{55816800}{817}\,\EuFrak{b}_{21,10}(n)     \\
+ \frac{439823520}{817}\,\EuFrak{b}_{21,11}(n) 
+ \frac{394960}{817}\,\EuFrak{b}_{21,12}(n) 
\,\biggr)\,q^{n},
\label{convolSum-eqn-7_3-1_3}
\end{multline}
\end{corollary}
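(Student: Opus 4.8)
The plan is to treat all of these identities uniformly, since each left-hand side is an explicit element of $\M_6(\Gamma_0(\alpha\beta))$ for one of the levels $\alpha\beta$ occurring in the section heading. First I would invoke \autoref{evalConvolClass-lema-1}, or its specializations recorded in \autoref{convolutionSum_sp}, to certify that each product $(\,\alpha\,E_2(q^\alpha)-E_2(q)\,)E_4(q^\beta)$ lies in $\M_6(\Gamma_0(\alpha\beta))$. Each relevant level lies in $\N$, so by the standing convention the associated primitive Dirichlet characters are trivial, $\EuScript{C}=\emptyset$, and the Eisenstein part of the basis collapses to $\{\,E_6(q^\delta)\mid\delta\mid\alpha\beta\,\}$; consequently the displayed formulae involve only the untwisted series $E_6(q^\delta)$ and the cusp generators, and the governing basis $\EuScript{B}_{M,\alpha\beta}=\EuScript{B}_{E,\alpha\beta}\cup\EuScript{B}_{S,\alpha\beta}$ is exactly the one produced in \autoref{basisCusp_33_40_56}, the cusp generators $\EuFrak{B}_{\alpha\beta,i}(q)$ being the eta quotients selected via \autoref{ligozat_theorem}.

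The second step is to write, as in \autoref{convolution-lemma_a_b}, each left-hand side uniquely as
\[
\sum_{\delta\mid\alpha\beta}X_\delta\,E_6(q^\delta)+\sum_{i=1}^{m_S}Y_i\,\EuFrak{B}_{\alpha\beta,i}(q),
\]
with unknowns $X_\delta,Y_i\in\mathbb{C}$, and then to pin down the unknowns by comparing $q$-expansions. The coefficients of the left-hand side are obtained by directly multiplying the truncated series (\ref{evalConvolClass-eqn-3}) and (\ref{evalConvolClass-eqn-4a}), with $q$ replaced by $q^\alpha$ and $q^\beta$ as appropriate, the identity (\ref{ntienjem-ident-01}) shortening the diagonal products $E_2(q^\gamma)E_4(q^\gamma)$; on the right-hand side I use $E_6(q^\delta)=1-504\sum_{n\ge1}\sigma_5(n/\delta)q^n$ together with $\EuFrak{B}_{\alpha\beta,i}(q)=\sum_{n\ge1}\EuFrak{b}_{\alpha\beta,i}(n)q^n$. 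Matching constant terms yields $\sum_\delta X_\delta=\alpha-1$, and matching the coefficients of $q^n$ for $1\le n\le m_E+m_S=\dim\M_6(\Gamma_0(\alpha\beta))$ produces a square inhomogeneous linear system in the $X_\delta$ and $Y_i$.

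The third step is to solve that system. Because $\EuScript{B}_{M,\alpha\beta}$ is a basis of $\M_6(\Gamma_0(\alpha\beta))$ by \autoref{basisCusp_33_40_56}, the coordinate matrix formed from the leading $q$-coefficients of its members is nonsingular — this is precisely the content of the linear-independence arguments in the proof of \autoref{basisCusp_a_b}(a)--(b), where the Eisenstein block is a nonsingular divisor matrix in the $\sigma_5(n/\delta)$ and the cusp block is made triangular with unit diagonal after ordering the $\EuFrak{B}_{\alpha\beta,i}$ by smallest degree as in \autoref{basis-remark}. Hence the solution $(X_\delta,Y_i)$ exists and is unique. Substituting the resulting rational values back and collecting the coefficient of each $\sigma_5(n/\delta)$, namely $-504\,X_\delta$, and of each $\EuFrak{b}_{\alpha\beta,i}(n)$, namely $Y_i$, reproduces exactly the stated expansions; each individual identity is the output of this procedure run for its own pair $(\alpha,\beta)$.

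The hard part is purely computational rather than conceptual. For the larger levels the dimension $m_E+m_S$ of $\M_6(\Gamma_0(\alpha\beta))$ runs into the high teens, so one must expand both the left-hand products and every eta quotient $\EuFrak{B}_{\alpha\beta,i}(q)$ to that order and then invert a linear system of that size exactly over $\mathbb{Q}$, which is carried out with the symbolic packages mentioned in the introduction. The one genuinely delicate point is confirming that the chosen eta quotients actually span $\S_6(\Gamma_0(\alpha\beta))$; this is guaranteed by combining the exhaustive Ligozat--Newman search of \autoref{ligozat_theorem} with the linear-independence argument of \autoref{basisCusp_a_b}(b), after which the rest of the verification is mechanical coefficient matching.
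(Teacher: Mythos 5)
Your proposal is correct and follows essentially the paper's own route: the paper proves this corollary by specializing Lemma~\ref{convolution-lemma_a_b} (whose proof is precisely your membership-plus-basis-expansion argument via Lemma~\ref{evalConvolClass-lema-1} and Theorem~\ref{basisCusp_a_b}) to the relevant pairs $(\alpha,\beta)$, and then determining the unknowns $X_{\delta}$, $Y_{j}$ by equating sufficiently many $q$-coefficients to obtain a uniquely solvable linear system (e.g.\ $18$ equations for level $20$). The exact coefficient matching and linear algebra over $\mathbb{Q}$ that you defer to symbolic computation is the same computer-assisted step the paper performs.
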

\begin{proof} 
	These identities follow immediately 
	when one sets for example $(\alpha,\beta)=(1,7)$, $(1,8)$, 
	$(1,14)$, $(1,20)$, $(1,21)$ in  
	\hyperref[convolution-lemma_a_b]{Lemma \ref*{convolution-lemma_a_b}}.
	In case of $\alpha\beta=20$, we take all $n$ belonging to the set $\{\,1,2,\ldots,16, 17,20\,\}$ 
	to obtain a system of $18$ linear equations with unknowns $X_{\delta}$ and $Y_{j}$, 
	where $\delta\in D(20)$ and $1\leq i\leq 12$.
\end{proof}

The second corollary handles the case $E_{4}(q^{\alpha})(\,E_{2}(q) - \beta\,E_{2}(q^{\beta})\,)$.  We only consider the special cases $1\neq\alpha = \beta$ and $\alpha> 1$ whenever $\beta=1$.
\begin{corollary} \label{lema-w_12-21}
It holds that  
\begin{multline}  
\, E_{4}(q)\,(\,E_{2}(q) - 12\,E_{2}(q^{12})\,)
	=  -11 + \sum_{n=1}^{\infty}\biggl(\, 
\frac{1153}{1274}\,\sigma_{5}(n)
- \frac{279}{1274}\,\sigma_{5}(\frac{n}{2})
- \frac{789}{1274}\,\sigma_{5}(\frac{n}{3})  \\
- \frac{640}{637}\,\sigma_{5}(\frac{n}{4})
- \frac{2451}{1274}\,\sigma_{5}(\frac{n}{6})
- \frac{5184}{637}\,\sigma_{5}(\frac{n}{12})
- \frac{200916}{91}\,\EuFrak{b}_{12,1}(n)  
- \frac{362664}{13}\,\EuFrak{b}_{12,2}(n)    \\
- \frac{85968}{7}\,\EuFrak{b}_{12,3}(n)  
- \frac{23880384}{91}\,\EuFrak{b}_{12,4}(n)  
+ 3456\,\EuFrak{b}_{12,5}(n)  
+ \frac{841536}{7}\,\EuFrak{b}_{12,6}(n)  \\
- 290304\,\EuFrak{b}_{12,7}(n)  
\,\biggr)\,q^{n},
\label{convolSum-eqn-1_12_3-1}
\end{multline}
\begin{multline}  
\, E_{4}(q^{12})\,(\,E_{2}(q) - 12\,E_{2}(q^{12})\,)
=  -11 + \sum_{n=1}^{\infty}\biggl(\, 
- \frac{1}{61152}\,\sigma_{5}(n)
+ \frac{11}{20384}\,\sigma_{5}(\frac{n}{2})   \\
+ \frac{61}{20384}\,\sigma_{5}(\frac{n}{3})   
+ \frac{20}{1911}\,\sigma_{5}(\frac{n}{4})  
+ \frac{899}{20384}\,\sigma_{5}(\frac{n}{6}) 
- \frac{7044}{637}\,\sigma_{5}(\frac{n}{12})  \\
- \frac{8739}{364}\,\EuFrak{b}_{12,1}(n)  
- \frac{39321}{182}\,\EuFrak{b}_{12,2}(n)  
+ \frac{837}{7}\,\EuFrak{b}_{12,3}(n)  
- \frac{80484}{91}\,\EuFrak{b}_{12,4}(n)  \\
- 504\,\EuFrak{b}_{12,5}(n)  
- \frac{324}{7}\,\EuFrak{b}_{12,6}(n)  
- 864\,\EuFrak{b}_{12,7}(n)  
\,\biggr)\,q^{n},
\label{convolSum-eqn-12_1_3-1}
\end{multline}
\begin{multline}  
\, E_{4}(q)\,(\,E_{2}(q) - 15\,E_{2}(q^{15})\,)
=  -14 + \sum_{n=1}^{\infty}\biggl(\, 
\frac{917834}{992719}\,\sigma_{5}(n)
+ \frac{1460295}{992719}\,\sigma_{5}(\frac{n}{3})   \\
+ \frac{104862815}{992719}\,\sigma_{5}(\frac{n}{5})    
- \frac{121139010}{992719}\,\sigma_{5}(\frac{n}{15})
- \frac{413824680}{141817}\,\EuFrak{b}_{15,1}(n)    \\
- \frac{756726120}{10909}\,\EuFrak{b}_{15,2}(n)   
+ \frac{93504960}{10909}\,\EuFrak{b}_{15,3}(n) 
- \frac{263947320}{10909}\,\EuFrak{b}_{15,4}(n)    \\
- \frac{90853616520}{141817}\,\EuFrak{b}_{15,5}(n)   
- \frac{283798080}{10909}\,\EuFrak{b}_{15,6}(n) 
- \frac{3149876160}{10909}\,\EuFrak{b}_{15,7}(n) \\
+ \frac{339972480}{10909}\,\EuFrak{b}_{15,8}(n) 
\,\biggr)\,q^{n},
\label{convolSum-eqn-1_15_3-1}
\end{multline}
\begin{multline}  
\, E_{4}(q^{15})\,(\,E_{2}(q) - 15\,E_{2}(q^{15})\,)
=  -14 + \sum_{n=1}^{\infty}\biggl(\, 
 - \frac{274}{8934471}\,\sigma_{5}(n)
- \frac{5457}{992719}\,\sigma_{5}(\frac{n}{3})  \\
+ \frac{8090239}{8934471}\,\sigma_{5}(\frac{n}{5}) 
- \frac{14791494}{992719}\,\sigma_{5}(\frac{n}{15}) 
- \frac{3405800}{141817}\,\EuFrak{b}_{15,1}(n)    \\
- \frac{1158952}{10909}\,\EuFrak{b}_{15,12}(n) 
+ \frac{35264}{10909}\,\EuFrak{b}_{15,3}(n) 
- \frac{2061176}{10909}\,\EuFrak{b}_{15,4}(n)    \\
- \frac{177635528}{141817}\,\EuFrak{b}_{15,5}(n) 
- \frac{2367936}{10909}\,\EuFrak{b}_{15,6}(n) 
- \frac{34294464}{10909}\,\EuFrak{b}_{15,7}(n)   \\
- \frac{1203968}{10909}\,\EuFrak{b}_{15,8}(n) 
\,\biggr)\,q^{n},
\label{convolSum-eqn-15_1_3-1}
\end{multline}
\begin{multline}  
\, E_{4}(q)\,(\,E_{2}(q) - 20\,E_{2}(q^{20})\,)
=  -19 + \sum_{n=1}^{\infty}\biggl(\, 
\frac{53619409}{58722804}\,\sigma_{5}(n)
+ \frac{188714723}{58722804}\,\sigma_{5}(\frac{n}{2})  \\
- \frac{2103232}{1631189}\,\sigma_{5}(\frac{n}{4})  
- \frac{9257815}{8388972}\,\sigma_{5}(\frac{n}{5})
- \frac{398439023}{58722804}\,\sigma_{5}(\frac{n}{10})  \\
- \frac{68258944}{4893567}\,\sigma_{5}(\frac{n}{20})
- \frac{960956950}{233027}\,\EuFrak{b}_{20,1}(n) 
- \frac{7006131104}{233027}\,\EuFrak{b}_{20,12}(n)   \\
- \frac{31247296000}{233027}\,\EuFrak{b}_{20,3}(n)
- \frac{31145902336}{233027}\,\EuFrak{b}_{20,4}(n)
- \frac{15378042050}{233027}\,\EuFrak{b}_{20,5}(n)   \\
- \frac{32291496384}{233027}\,\EuFrak{b}_{20,6}(n)
- \frac{542908645632}{233027}\,\EuFrak{b}_{20,7}(n)
- \frac{309602112}{7517}\,\EuFrak{b}_{20,8}(n)   \\
- \frac{653138738688}{233027}\,\EuFrak{b}_{20,9}(n)   
+ \frac{678376410624}{233027}\,\EuFrak{b}_{20,10}(n)
+ \frac{13917769216}{7517}\,\EuFrak{b}_{20,11}(n)   \\
+ \frac{1138525192704}{233027}\,\EuFrak{b}_{20,12}(n)
\,\biggr)\,q^{n},
\label{convolSum-eqn-1_20_3-1}
\end{multline}
\begin{multline}  
\, E_{4}(q^{20})\,(\,E_{2}(q) - 20\,E_{2}(q^{20})\,)
=  -19 + \sum_{n=1}^{\infty}\biggl(\, 
- \frac{1567}{117445608}\,\sigma_{5}(n) 
+ \frac{110791}{117445608}\,\sigma_{5}(\frac{n}{2})  \\
+ \frac{10004}{233027}\,\sigma_{5}(\frac{n}{4}) 
+ \frac{702215}{234891216}\,\sigma_{5}(\frac{n}{5}) 
+ \frac{10264633}{234891216}\,\sigma_{5}(\frac{n}{10})   \\
- \frac{93420884}{4893567}\,\sigma_{5}(\frac{n}{20}) 
- \frac{5594215}{233027}\,\EuFrak{b}_{20,1}(n) 
- \frac{16718864}{233027}\,\EuFrak{b}_{20,2}(n)   \\
- \frac{89883520}{233027}\,\EuFrak{b}_{20,3}(n) 
- \frac{32106736}{233027}\,\EuFrak{b}_{20,4}(n) 
+ \frac{527968775}{466054}\,\EuFrak{b}_{20,5}(n) \\
- \frac{86133984}{233027}\,\EuFrak{b}_{20,6}(n) 
- \frac{1416189312}{233027}\,\EuFrak{b}_{20,7}(n) 
- \frac{4970832}{7517}\,\EuFrak{b}_{20,8}(n)   \\
- \frac{976297728}{233027}\,\EuFrak{b}_{20,9}(n) 
- \frac{618500736}{233027}\,\EuFrak{b}_{20,10}(n) 
+ \frac{64669696}{7517}\,\EuFrak{b}_{20,11}(n)   \\
+ \frac{4227164544}{233027}\,\EuFrak{b}_{20,12}(n) 
\,\biggr)\,q^{n},
\label{convolSum-eqn-20_1_3-1}
\end{multline}
\begin{multline}  
\, E_{4}(q)\,(\,E_{2}(q) - 21\,E_{2}(q^{21})\,)
=  -20 + \sum_{n=1}^{\infty}\biggl(\, 
\frac{10121}{10621}\,\sigma_{5}(n)  
- \frac{4050}{10621}\,\sigma_{5}(\frac{n}{3})  \\
- \frac{24010}{10621}\,\sigma_{5}(\frac{n}{7})  
- \frac{194481}{10621}\,\sigma_{5}(\frac{n}{21})  
+ \frac{75600}{10621}\,\EuFrak{b}_{21,1}(n)   \\
- \frac{27075600}{817}\,\EuFrak{b}_{21,2}(n)     
- \frac{233275680}{817}\,\EuFrak{b}_{21,3}(n)    
- \frac{400645440}{817}\,\EuFrak{b}_{21,4}(n)   \\
- \frac{171387360}{817}\,\EuFrak{b}_{21,5}(n)    
- \frac{997680240}{817}\,\EuFrak{b}_{21,6}(n)     
- \frac{23349501360}{10621}\,\EuFrak{b}_{21,7}(n)  \\   
+ \frac{430513920}{817}\,\EuFrak{b}_{21,8}(n)    
- \frac{4488019920}{817}\,\EuFrak{b}_{21,9}(n)     
+ \frac{561042720}{817}\,\EuFrak{b}_{21,10}(n)    \\ 
- \frac{162254880}{817}\,\EuFrak{b}_{21,11}(n)   
- \frac{3554640}{817}\,\EuFrak{b}_{21,12}(n)   
\,\biggr)\,q^{n},
\label{convolSum-eqn-1_21_3-1}
\end{multline}
\begin{multline}  
\, E_{4}(q^{21})\,(\,E_{2}(q) - 21\,E_{2}(q^{21})\,)
=  -20 + \sum_{n=1}^{\infty}\biggl(\, 
 \frac{1}{223041}\,\sigma_{5}(n) 
+ \frac{30}{74347}\,\sigma_{5}(\frac{n}{3})  \\
+ \frac{350}{31863}\,\sigma_{5}(\frac{n}{37}) 
- \frac{212541}{10621}\,\sigma_{5}(\frac{n}{21}) 
- \frac{80}{10621}\,\EuFrak{b}_{21,1}(n)   \\
- \frac{58800}{817}\,\EuFrak{b}_{21,2}(n) 
- \frac{430560}{817}\,\EuFrak{b}_{21,3}(n) 
- \frac{624000}{817}\,\EuFrak{b}_{21,4}(n) \\
+ \frac{2400}{43}\,\EuFrak{b}_{21,5}(n) 
- \frac{1026960}{817}\,\EuFrak{b}_{21,6}(n) 
- \frac{22742160}{10621}\,\EuFrak{b}_{21,7}(n)   \\
+ \frac{1159680}{817}\,\EuFrak{b}_{21,8}(n) 
- \frac{4078800}{817}\,\EuFrak{b}_{21,9}(n) 
- \frac{126240}{817}\,\EuFrak{b}_{21,10}(n)   \\
+ \frac{1260000}{817}\,\EuFrak{b}_{21,11}(n) 
- \frac{19600}{817}\,\EuFrak{b}_{21,12}(n) 
\,\biggr)\,q^{n}.
\label{convolSum-eqn-21_1_3-1}
\end{multline}
\end{corollary}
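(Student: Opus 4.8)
The plan is to treat all eight identities uniformly as specializations of the basis expansion established in Lemma~\ref{convolution-lemma_a_b}, equation~\eqref{convolution_a_b-eqn-1}. The four identities headed by $E_{4}(q)$ are the case $\alpha=1\land\beta\neq 1$ with $(\alpha,\beta)=(1,12),(1,15),(1,20),(1,21)$, whereas the four headed by $E_{4}(q^{\alpha})$ are the case $\beta=\alpha\neq 1$ with $\alpha=\beta=12,15,20,21$. First I would record the membership in a space of weight-$6$ modular forms: by Lemma~\ref{evalConvolClass-lema-1} the product $E_{4}(q^{\alpha})\,(\,E_{2}(q)-\beta\,E_{2}(q^{\beta})\,)$ lies in $\M_{6}(\Gamma_{0}(\alpha\beta))$, and in the $\beta=\alpha$ case both factors already live at level $\alpha$ (since $E_{4}(q^{\alpha})\in\M_{4}(\Gamma_{0}(\alpha))$ and $E_{2}(q)-\alpha\,E_{2}(q^{\alpha})\in\M_{2}(\Gamma_{0}(\alpha))$), so the product sits in $\M_{6}(\Gamma_{0}(\alpha\beta))$ with $\alpha\beta\in\{12,15,20,21\}$. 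Crucially, each of these levels belongs to the class $\N$, so the admissible primitive Dirichlet characters are trivial, $\EuScript{C}=\emptyset$, and the twisted term $\sum_{\chi}\sum_{s}\sigma_{5}(\frac{n}{s})Z(\chi)_{s}$ in~\eqref{convolution_a_b-eqn-1} vanishes; this is exactly why the asserted formulae involve only the terms $\sigma_{5}(\frac{n}{\delta})$ with $\delta\mid\alpha\beta$ and the cusp-coefficient terms $\EuFrak{b}_{\kappa,j}(n)$.

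Next I would invoke the explicit bases of Corollary~\ref{basisCusp_33_40_56}: for $\kappa=12,15,20,21$ the set $\EuScript{B}_{M,\kappa}=\EuScript{B}_{E,\kappa}\cup\EuScript{B}_{S,\kappa}$, with $\EuScript{B}_{E,\kappa}=\{E_{6}(q^{t})\mid t\mid\kappa\}$ and $\EuScript{B}_{S,\kappa}=\{\EuFrak{B}_{\kappa,i}(q)\mid 1\le i\le m_{\kappa}\}$ where $m_{12}=7$, $m_{15}=8$, $m_{20}=m_{21}=12$, is a basis of $\M_{6}(\Gamma_{0}(\kappa))$. By Lemma~\ref{convolution-lemma_a_b} each expression is then uniquely $\sum_{\delta\mid\kappa}X_{\delta}E_{6}(q^{\delta})+\sum_{j=1}^{m_{\kappa}}Y_{j}\EuFrak{B}_{\kappa,j}(q)$. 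To determine the unknowns $X_{\delta},Y_{j}$ I would expand both sides as $q$-series, using~\eqref{evalConvolClass-eqn-3} and~\eqref{evalConvolClass-eqn-4a} for the left-hand factors and the tabulated eta-quotient expansions for the $\EuFrak{B}_{\kappa,j}$, and equate coefficients of $q^{n}$ over a suitable range of $n$. This produces an inhomogeneous linear system in the $m_{E}+m_{S}$ unknowns ($13,12,18,16$ equations for $\kappa=12,15,20,21$ respectively), whose coefficient matrix is nonsingular by the linear-independence arguments of Theorem~\ref{basisCusp_a_b}(a),(b). Solving, substituting the values of $X_{\delta},Y_{j}$ back into~\eqref{convolution_a_b-eqn-1}, and reading off the constant term $1-\beta$ from~\eqref{evalConvolClass-eqn-11-2}, yields precisely the stated identities.

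The step I expect to be the genuine obstacle is computational rather than conceptual. One must expand all eta-quotient cusp generators $\EuFrak{B}_{\kappa,j}(q)$ far enough to populate the linear system, and then solve that system exactly over $\mathbb{Q}$; the order of the system (up to $18$ when $\kappa=20$) and the large rational coefficients appearing in the answers make this infeasible by hand, so it is carried out with the symbolic-computation packages cited in the introduction. A secondary care point, already visible in the proof of Corollary~\ref{lema-w_7-21}, is the choice of indices: one must select enough values of $n$ to obtain a \emph{determined and consistent} system (for $\alpha\beta=20$, eighteen coefficients from a set such as $\{1,\dots,16,17,20\}$ rather than simply $\{1,\dots,18\}$), since a naive truncation to the first $m_{E}+m_{S}$ indices could give a singular or under-informative matrix. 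The consistency of the enlarged system then certifies that the computed linear combination indeed reproduces the modular form, which completes the proof.
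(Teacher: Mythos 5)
Your proposal follows essentially the same route as the paper's own proof: the paper also obtains these identities by specializing Lemma~\ref{convolution-lemma_a_b} to the cases $(\alpha,\beta)=(1,\kappa)$ and $\alpha=\beta=\kappa$ for $\kappa=12,15,20,21$ (all in $\N$, so $\EuScript{C}=\emptyset$), expanding against the bases of Corollary~\ref{basisCusp_33_40_56}, and equating coefficients of $q^{n}$ over a suitable index set (for level $20$, the set $\{1,\dots,16,17,20\}$ giving $18$ equations) to solve the resulting $m_{E}+m_{S}$ linear system exactly by symbolic computation. Your cautions about trivial characters and about choosing indices so the truncated system is determined and consistent are precisely the points the paper's argument rests on.
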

\begin{proof} Similar to that of 
	\hyperref[lema-w_7-21]{Corollary \ref*{lema-w_7-21}}.
\end{proof}

We now state and prove our main result of this section. 
\begin{corollary} \label{convolutionSum-w_3-21}
Let $n$ be a positive integer. Then 
\begin{align}
W_{(1,3)}^{3,1}(n)\, = W_{(3,1)}^{1,3}(n)\, = \,&
\frac{1}{104}\,\sigma_{5}(n) 
+ \frac{81}{1040}\,\sigma_{5}(\frac{n}{3}) 
+ \frac{1}{24}\,\sigma_{3}(n)    
- \frac{1}{24}\,n\,\sigma_{3}(n)           \notag \\  &  
- \frac{1}{240}\,\sigma(\frac{n}{3}) 
- \frac{1}{104}\,\EuFrak{b}_{9,1}(n),
\label{convolutionSum-w_1_3}  
\end{align} 
\begin{align}
W_{(3,1)}^{3,1}(n)\, = W_{(1,3)}^{1,3}(n)\, = \,&
\frac{1}{1040}\,\sigma_{5}(n)  
+ \frac{9}{104}\,\sigma_{5}(\frac{n}{3}) 
+ \frac{1}{24}\,\sigma_{3}(\frac{n}{3})   
- \frac{1}{8}\,n\,\sigma_{3}(\frac{n}{3})      \notag \\  &   
- \frac{1}{240}\,\sigma(n) 
+ \frac{1}{312}\,\EuFrak{b}_{9,1}(n),
\label{convolutionSum-w_3_1}  
\end{align} 
\begin{align}
W_{(1,4)}^{3,1}(n)\, = W_{(4,1)}^{1,3}(n)\, = \,&
\frac{1}{192}\,\sigma_{5}(n)
+ \frac{1}{64}\,\sigma_{5}(\frac{n}{2})
+ \frac{1}{15}\,\sigma_{5}(\frac{n}{4})
+ \frac{1}{24}\,\sigma_{3}(n)    
- \frac{1}{32}\,n\,\sigma_{3}(n)           \notag \\  &  
- \frac{1}{240}\,\sigma(\frac{n}{4}) 
- \frac{1}{64}\,\EuFrak{b}_{32,1}(n),
\label{convolutionSum-w_1_4}  
\end{align} 
\begin{align}
W_{(4,1)}^{3,1}(n)\, = W_{(1,4)}^{1,3}(n)\, = \,&
\frac{1}{3840}\,\sigma_{5}(n)  
+ \frac{1}{256}\,\sigma_{5}(\frac{n}{2})   
+ \frac{1}{12}\,\sigma_{5}(\frac{n}{4}) 
+ \frac{1}{24}\,\sigma_{3}(\frac{n}{4})   
- \frac{1}{8}\,n\,\sigma_{3}(\frac{n}{4})      \notag \\  &   
- \frac{1}{240}\,\sigma(n) 
+ \frac{1}{256}\,\EuFrak{b}_{32,1}(n),
\label{convolutionSum-w_4_1}  
\end{align} 
\begin{align}
W_{(1,6)}^{3,1}(n)\, = W_{(6,1)}^{1,3}(n)\, = \,&
\frac{5}{2184}\,\sigma_{5}(n)  
+ \frac{2}{273}\,\sigma_{5}(\frac{n}{2})   
+ \frac{27}{1456}\,\sigma_{5}(\frac{n}{3})   
+ \frac{27}{455}\,\sigma_{5}(\frac{n}{6})      \notag \\  &   
+ \frac{1}{24}\,\sigma_{3}(n)     
- \frac{1}{48}\,n\,\sigma_{3}(n)      
- \frac{1}{240}\,\sigma(\frac{n}{6}) 
- \frac{101}{4368}\,\EuFrak{b}_{12,1}(n)    \notag \\  &  
- \frac{121}{546}\,\EuFrak{b}_{12,2}(n)
+ \frac{3}{14}\,\EuFrak{b}_{12,3}(n),
\label{convolutionSum-w_1_6}  
\end{align} 
\begin{align}
W_{(6,1)}^{3,1}(n)\, = W_{(1,6)}^{1,3}(n)\, = \,&
\frac{1}{21840}\,\sigma_{5}(n) 
+ \frac{1}{1092}\,\sigma_{5}(\frac{n}{2})
+ \frac{3}{728}\,\sigma_{5}(\frac{n}{3})
+ \frac{15}{182}\,\sigma_{5}(\frac{n}{6})    \notag \\  &  
+ \frac{1}{24}\,\sigma_{3}(\frac{n}{6})  
- \frac{1}{8}\,n\,\sigma_{3}(\frac{n}{6})   
- \frac{1}{240}\,\sigma(n) 
+ \frac{3}{728}\,\EuFrak{b}_{12,1}(n)      \notag \\  & 
+ \frac{19}{546}\,\EuFrak{b}_{12,2}(n)
- \frac{1}{28}\,\EuFrak{b}_{12,3}(n),
\label{convolutionSum-w_6_1}  
\end{align} 
\begin{align}
W_{(2,3)}^{3,1}(n)\, = W_{(3,2)}^{1,3}(n)\, = \,&
 \frac{1}{2184}\,\sigma_{5}(n) 
+ \frac{5}{546}\,\sigma_{5}(\frac{n}{2})
+ \frac{27}{7280}\,\sigma_{5}(\frac{n}{3})
+ \frac{27}{364}\,\sigma_{5}(\frac{n}{6})      \notag \\  & 
+ \frac{1}{24}\,\sigma_{3}(\frac{n}{2})   
- \frac{1}{24}\,n\,\sigma_{3}(\frac{n}{2})   
- \frac{1}{240}\,\sigma(\frac{n}{3}) 
- \frac{1}{2184}\,\EuFrak{b}_{12,1}(n)      \notag \\  & 
+ \frac{4}{273}\,\EuFrak{b}_{12,2}(n)
- \frac{3}{28}\,\EuFrak{b}_{12,3}(n),
\label{convolutionSum-w_2_3}  
\end{align} 
\begin{align}
W_{(3,2)}^{3,1}(n)\, = W_{(2,3)}^{1,3}(n)\, = \,&
\frac{1}{4368}\,\sigma_{5}(n) 
+ \frac{1}{1365}\,\sigma_{5}(\frac{n}{2})
+ \frac{15}{728}\,\sigma_{5}(\frac{n}{3})
+ \frac{6}{91}\,\sigma_{5}(\frac{n}{6})    \notag \\  & 
+ \frac{1}{24}\,\sigma_{3}(\frac{n}{3})   
- \frac{1}{16}\,n\,\sigma_{3}(\frac{n}{3})   
- \frac{1}{240}\,\sigma(\frac{n}{2}) 
- \frac{1}{4368}\,\EuFrak{b}_{12,1}(n)      \notag \\  & 
- \frac{1}{182}\,\EuFrak{b}_{12,2}(n)
+ \frac{1}{14}\,\EuFrak{b}_{12,3}(n),
\label{convolutionSum-w_3_2}  
\end{align} 
\begin{align}
W_{(1,12)}^{3,1}(n)\, = W_{(12,1)}^{1,3}(n)\, = \,&
\frac{121}{174720}\,\sigma_{5}(n) 
+ \frac{93}{58240}\,\sigma_{5}(\frac{n}{2})
+ \frac{263}{58240}\,\sigma_{5}(\frac{n}{3})
+ \frac{2}{273}\,\sigma_{5}(\frac{n}{4})   \notag \\  & 
+ \frac{817}{58240}\,\sigma_{5}(\frac{n}{6})
+ \frac{27}{455}\,\sigma_{5}(\frac{n}{12})
+ \frac{1}{24}\,\sigma_{3}(n)       
- \frac{1}{96}\,n\,\sigma_{3}(n)           \notag \\  &  
- \frac{1}{240}\,\sigma(\frac{n}{12}) 
- \frac{5581}{174720}\,\EuFrak{b}_{12,1}(n)
- \frac{1679}{4160}\,\EuFrak{b}_{12,2}(n)      \notag \\  & 
- \frac{199}{1120}\,\EuFrak{b}_{12,3}(n)  
- \frac{41459}{10920}\,\EuFrak{b}_{12,4}(n)
+ \frac{1}{20}\,\EuFrak{b}_{12,5}(n)
+ \frac{487}{280}\,\EuFrak{b}_{12,6}(n)      \notag \\  & 
- \frac{21}{5}\,\EuFrak{b}_{12,7}(n)
\label{convolutionSum-w_1_12}  
\end{align} 
\begin{align}
W_{(12,1)}^{3,1}(n)\, = W_{(1,12)}^{1,3}(n)\, = \,&
- \frac{1}{118560}\,\sigma_{5}(n) 
+ \frac{3}{55328}\,\sigma_{5}(\frac{n}{2})   
+ \frac{17}{63232}\,\sigma_{5}(\frac{n}{3})   
+ \frac{1}{1092}\,\sigma_{5}(\frac{n}{4})      \notag \\  &    
+ \frac{1705}{442624}\,\sigma_{5}(\frac{n}{6})   
+ \frac{15}{182}\,\sigma_{5}(\frac{n}{12})   
+ \frac{1}{24}\,\sigma_{3}(\frac{n}{12})   
- \frac{1}{8}\,n\,\sigma_{3}(\frac{n}{12})        \notag \\  & 
- \frac{1}{240}\,\sigma(n)    
+ \frac{33}{7904}\,\EuFrak{b}_{12,1}(n)
+ \frac{55}{1456}\,\EuFrak{b}_{12,2}(n)
- \frac{93}{4864}\,\EuFrak{b}_{12,3}(n)      \notag \\  & 
+ \frac{2545}{10374}\,\EuFrak{b}_{12,4}(n) 
+ \frac{1}{38}\,\EuFrak{b}_{12,5}(n)
- \frac{3}{133}\,\EuFrak{b}_{12,6}(n)
+ \frac{15}{38}\,\EuFrak{b}_{12,7}(n),
\label{convolutionSum-w_12_1}  
\end{align} 
\begin{align}
W_{(3,4)}^{3,1}(n)\, = W_{(4,3)}^{1,3}(n)\, = \,&
 \frac{1}{13440}\,\sigma_{5}(n) 
+ \frac{9}{58240}\,\sigma_{5}(\frac{n}{2}) 
+ \frac{23}{4480}\,\sigma_{5}(\frac{n}{3}) 
+ \frac{1}{1365}\,\sigma_{5}(\frac{n}{4})     \notag \\  & 
+ \frac{901}{58240}\,\sigma_{5}(\frac{n}{6}) 
+ \frac{6}{91}\,\sigma_{5}(\frac{n}{12})   
+ \frac{1}{24}\,\sigma_{3}(\frac{n}{4})   
- \frac{1}{24}\,n\,\sigma_{3}(\frac{n}{4})     \notag \\  & 
- \frac{1}{240}\,\sigma(\frac{n}{3})   
- \frac{1}{13440}\,\EuFrak{b}_{12,1}(n)
- \frac{89}{29120}\,\EuFrak{b}_{12,2}(n)    \notag \\  & 
+ \frac{33}{1120}\,\EuFrak{b}_{12,3}(n)  
+ \frac{71}{3640}\,\EuFrak{b}_{12,4}(n)  
- \frac{7}{20}\,\EuFrak{b}_{12,5}(n)      \notag \\  & 
+ \frac{111}{280}\,\EuFrak{b}_{12,6}(n)
- \frac{3}{5}\,\EuFrak{b}_{12,7}(n)
\label{convolutionSum-w_3-4}  
\end{align} 
\begin{align}
W_{(4,3)}^{3,1}(n)\, = W_{(3,4)}^{1,3}(n)\, = \,&
  - \frac{1}{698880}\,\sigma_{5}(n) 
+ \frac{107}{232960}\,\sigma_{5}(\frac{n}{2}) 
+ \frac{61}{232960}\,\sigma_{5}(\frac{n}{3})     \notag \\  & 
+ \frac{5}{546}\,\sigma_{5}(\frac{n}{4}) 
+ \frac{803}{232960}\,\sigma_{5}(\frac{n}{6}) 
+ \frac{27}{364}\,\sigma_{5}(\frac{n}{12})     \notag \\  & 
+ \frac{1}{24}\,\sigma_{3}(\frac{n}{3})   
- \frac{1}{32}\,n\,\sigma_{3}(\frac{n}{3})   
- \frac{1}{240}\,\sigma(\frac{n}{4}) 
+ \frac{1}{698880}\,\EuFrak{b}_{12,1}(n)    \notag \\  & 
- \frac{47}{116480}\,\EuFrak{b}_{12,2}(n)
+ \frac{19}{4480}\,\EuFrak{b}_{12,3}(n)
+ \frac{5099}{43680}\,\EuFrak{b}_{12,4}(n)    \notag \\  & 
- \frac{1}{80}\,\EuFrak{b}_{12,5}(n)
- \frac{127}{1120}\,\EuFrak{b}_{12,6}(n)
+ \frac{21}{20}\,\EuFrak{b}_{12,7}(n),
\label{convolutionSum-w_4_3}  
\end{align} 
\begin{align}
W_{(1,7)}^{1,3}(n)\, = W_{(7,1)}^{3,1}(n)\, = \,&
\frac{7}{196080}\,\sigma_{5}(n) 
+ \frac{1715}{19608}\,\sigma_{5}(\frac{n}{7}) 
+ \frac{1}{24}\,\sigma_{3}(\frac{n}{7})   
- \frac{1}{8}\,n\,\sigma_{3}(\frac{n}{7})      \notag \\  &   
- \frac{1}{240}\,\sigma(n) 
+ \frac{27}{6536}\,\EuFrak{b}_{21,1}(n)  
+ \frac{1}{19}\,\EuFrak{b}_{21,2}(n)  
+ \frac{1171}{6536}\,\EuFrak{b}_{21,3}(n),  
\label{convolutionSum-w_1_7}  
\end{align} 
\begin{align}
W_{(7,1)}^{1,3}(n) \, =\,W_{(1,7)}^{3,1}(n) \, =\, &
\frac{35}{19608}\,\sigma_{5}(n) 
+ \frac{16807}{196080}\,\sigma_{5}(\frac{n}{7})
+ \frac{1}{24}\,\sigma_{3}(n)    
- \frac{1}{56}\,n\,\sigma_{3}(n)           \notag \\  &  
- \frac{1}{240}\,\sigma(\frac{n}{7}) 
- \frac{1171}{45752}\,\EuFrak{b}_{21,1}(n)  
- \frac{7}{19}\,\EuFrak{b}_{21,2}(n) 
- \frac{9261}{6536}\,\EuFrak{b}_{21,3}(n), 
\label{convolutionSum-w_7_1}  
\end{align}
\begin{align}
W_{(1,8)}^{1,3}(n) \,  = \,W_{(8,1)}^{3,1}(n) \, =\, &
\frac{1}{61440}\,\sigma_{5}(n)
+ \frac{1}{4096}\,\sigma_{5}(\frac{n}{2})
+ \frac{1}{256}\,\sigma_{5}(\frac{n}{4})
+ \frac{1}{12}\,\sigma_{5}(\frac{n}{8})    \notag \\  & 
+ \frac{1}{24}\,\sigma_{3}(\frac{n}{8})    
- \frac{1}{8}\,n\,\sigma_{3}(\frac{n}{8})  
- \frac{1}{240}\,\sigma(n) 
+ \frac{17}{4096}\,\EuFrak{b}_{32,1}(n)   \notag \\  & 
+ \frac{3}{256}\,\EuFrak{b}_{32,2}(n) 
+ \frac{1}{16}\,\EuFrak{b}_{32,3}(n)
\label{convolutionSum-w_1_8}  
\end{align}
\begin{align}
W_{(8,1)}^{1,3}(n) \, = \,W_{(1,8)}^{3,1}(n) \, =\, & 
\frac{1}{768}\,\sigma_{5}(n)
+ \frac{1}{256}\,\sigma_{5}(\frac{n}{2})
+ \frac{1}{64}\,\sigma_{5}(\frac{n}{4})
+ \frac{1}{15}\,\sigma_{3}(\frac{n}{8})  \notag \\  &  
+ \frac{1}{24}\,\sigma_{3}(n)    
- \frac{1}{64}\,n\,\sigma_{3}(n)  
- \frac{1}{240}\,\sigma(\frac{n}{8}) 
- \frac{7}{256}\,\EuFrak{b}_{32,1}(n)   \notag \\  & 
- \frac{9}{64}\,\EuFrak{b}_{32,2}(n)  
- \frac{1}{2}\,\EuFrak{b}_{32,3}(n)
	\label{convolutionSum-w_8_1}  
\end{align} 
\begin{align}
W_{(1,14)}^{1,3}(n) \, = \,W_{(14,1)}^{3,1}(n) \, =\, & 
\frac{1}{588240}\,\sigma_{5}(n)
+ \frac{1}{29412}\,\sigma_{5}(\frac{n}{2})
+ \frac{245}{58824}\,\sigma_{5}(\frac{n}{7})
+ \frac{1225}{14706}\,\sigma_{5}(\frac{n}{14})   \notag \\  & 
+ \frac{1}{24}\,\sigma_{3}(\frac{n}{14})    
- \frac{1}{8}\,n\,\sigma_{3}(\frac{n}{14})  
- \frac{1}{240}\,\sigma(n) 
+ \frac{245}{58824}\,\EuFrak{b}_{14,1}(n)   \notag \\  & 
+ \frac{265}{4902}\,\EuFrak{b}_{14,2}(n) 
+ \frac{11461}{58824}\,\EuFrak{b}_{14,3}(n) 
+ \frac{799}{14706}\,\EuFrak{b}_{14,4}(n)  \notag \\  & 
+ \frac{818}{7353}\,\EuFrak{b}_{14,5}(n)  
+ \frac{2276}{2451}\,\EuFrak{b}_{14,6}(n) 
- \frac{2995}{9804}\,\EuFrak{b}_{14,7}(n)   \notag \\  & 
- \frac{2}{7353}\,\EuFrak{b}_{14,8}(n), 
\label{convolutionSum-w_1_14}  
\end{align}
\begin{align}
W_{(14,1)}^{1,3}(n)\, = \,W_{(1,14)}^{3,1}(n) \, =\, & 
\frac{25}{58824}\,\sigma_{5}(n)
+ \frac{10}{7353}\,\sigma_{5}(\frac{n}{2})
+ \frac{2401}{117648}\,\sigma_{5}(\frac{n}{7})
+ \frac{2401}{36765}\,\sigma_{5}(\frac{n}{14})  \notag \\  & 
+ \frac{1}{24}\,\sigma_{3}(n)    
- \frac{1}{112}\,n\,\sigma_{3}(n)  
- \frac{1}{240}\,\sigma(\frac{n}{14}) 
- \frac{27311}{823536}\,\EuFrak{b}_{14,1}(n)   \notag \\  & 
- \frac{5503}{9804}\,\EuFrak{b}_{14,2}(n) 
- \frac{320881}{117648}\,\EuFrak{b}_{14,3}(n) 
- \frac{34217}{14706}\,\EuFrak{b}_{14,4}(n)  \notag \\  & 
- \frac{11398}{7353}\,\EuFrak{b}_{14,5}(n)  
- \frac{56020}{2451}\,\EuFrak{b}_{14,6}(n) 
+ \frac{20983}{4902}\,\EuFrak{b}_{14,7}(n)   \notag \\  & 
- \frac{80}{7353}\,\EuFrak{b}_{14,8}(n), 
\label{convolutionSum-w_14_1}  
\end{align}
\begin{align}
W_{(1,15)}^{1,3}(n)\, = \,W_{(15,1)}^{3,1}(n) \, =\, & 
- \frac{137}{51054120}\,\sigma_{5}(n)
- \frac{5457}{11345360}\,\sigma_{5}(\frac{n}{3})
+ \frac{8090239}{102108240}\,\sigma_{5}(\frac{n}{5})   \notag \\  & 
+ \frac{99291}{11345360}\,\sigma_{5}(\frac{n}{15})
+ \frac{1}{24}\,\sigma_{3}(\frac{n}{15})    
- \frac{1}{8}\,n\,\sigma_{3}(\frac{n}{15})  
- \frac{1}{240}\,\sigma(n)   \notag \\  & 
+ \frac{85145}{20421648}\,\EuFrak{b}_{15,1}(n) 
+ \frac{144869}{7854480}\,\EuFrak{b}_{15,2}(n) 
- \frac{551}{981810}\,\EuFrak{b}_{15,3}(n)   \notag \\  & 
+ \frac{257647}{7854480}\,\EuFrak{b}_{15,4}(n)  
+ \frac{22204441}{102108240}\,\EuFrak{b}_{15,5}(n) 
+ \frac{4111}{109090}\,\EuFrak{b}_{15,6}(n)   \notag \\  & 
+ \frac{59539}{109090}\,\EuFrak{b}_{15,7}(n) 
+ \frac{9406}{490905}\,\EuFrak{b}_{15,8}(n),
\label{convolutionSum-w_1_15}  
\end{align}
\begin{align}
W_{(15,1)}^{1,3}(n)\,  = \,W_{(1,15)}^{3,1}(n) \, =\, & 	
 \frac{14977}{34036080}\,\sigma_{5}(n)
- \frac{97353}{11345360}\,\sigma_{5}(\frac{n}{3})
- \frac{20972563}{34036080}\,\sigma_{5}(\frac{n}{5})   \notag \\  & 
+ \frac{4037967}{5672680}\,\sigma_{5}(\frac{n}{15})
+ \frac{1}{24}\,\sigma_{3}(n)    
- \frac{1}{120}\,n\,\sigma_{3}(n)  
- \frac{1}{240}\,\sigma(\frac{n}{15})    \notag \\  & 
- \frac{383171}{11345360}\,\EuFrak{b}_{15,1}(n) 
- \frac{2102017}{2618160}\,\EuFrak{b}_{15,2}(n) 
+ \frac{32467}{327270}\,\EuFrak{b}_{15,3}(n)    \notag \\  & 
- \frac{733187}{2618160}\,\EuFrak{b}_{15,4}(n) 
- \frac{84123719}{11345360}\,\EuFrak{b}_{15,5}(n) 
- \frac{32847}{109090}\,\EuFrak{b}_{15,6}(n)    \notag \\  & 
- \frac{364569}{109090}\,\EuFrak{b}_{15,7}(n) 
+ \frac{59023}{163635}\,\EuFrak{b}_{15,8}(n), 
\label{convolutionSum-w_15_1}  
\end{align}
\begin{align}
W_{(1,20)}^{1,3}(n)\,  = \,W_{(20,1)}^{3,1}(n) \, =\, & 	
- \frac{1567}{1342235520}\,\sigma_{5}(n)
+ \frac{110791}{1342235520}\,\sigma_{5}(\frac{n}{2})
+ \frac{17507}{4660540}\,\sigma_{5}(\frac{n}{4})  \notag \\ &
+ \frac{140443}{536894208}\,\sigma_{5}(\frac{n}{5})
+ \frac{10264633}{2684471040}\,\sigma_{5}(\frac{n}{10}) \notag \\ &
+ \frac{556307}{6990810}\,\sigma_{5}(\frac{n}{20}) 
+ \frac{1}{24}\,\sigma_{3}(\frac{n}{20})
- \frac{1}{8}\,n\,\sigma_{3}(\frac{n}{20})
- \frac{1}{240}\,\sigma(n)   \notag \\ &  
+ \frac{1118843}{268447104}\,\EuFrak{b}_{20,1}(n) 
+ \frac{1044929}{83889720}\,\EuFrak{b}_{20,2}(n)   \notag \\ &
+ \frac{140443}{2097243}\,\EuFrak{b}_{20,3}(n)   
+ \frac{2006671}{83889720}\,\EuFrak{b}_{20,4}(n)   \notag \\ &
- \frac{105593755}{536894208}\,\EuFrak{b}_{20,5}(n)  
+ \frac{897229}{13981620}\,\EuFrak{b}_{20,6}(n)    \notag \\ &
+ \frac{1229331}{1165135}\,\EuFrak{b}_{20,7}(n)  
+ \frac{103559}{902040}\,\EuFrak{b}_{20,8}(n)   \notag \\ &
+ \frac{2542442}{3495405}\,\EuFrak{b}_{20,9}(n)  
+ \frac{536893}{1165135}\,\EuFrak{b}_{20,10}(n)   \notag \\ &
- \frac{505232}{338265}\,\EuFrak{b}_{20,11}(n)  
- \frac{11008241}{3495405}\,\EuFrak{b}_{20,12}(n),  
\label{convolutionSum-w_1_20}  
\end{align}
\begin{align}
W_{(20,1)}^{1,3}(n)\,  = \,W_{(1,20)}^{3,1}(n) \, =\, & 
\frac{1020679}{2684471040}\,\sigma_{5}(n)
- \frac{188714723}{13422355200}\,\sigma_{5}(\frac{n}{2})
+ \frac{32863}{5825675}\,\sigma_{5}(\frac{n}{4})  \notag \\ &
+ \frac{12960941}{2684471040}\,\sigma_{5}(\frac{n}{5})
+ \frac{398439023}{13422355200}\,\sigma_{5}(\frac{n}{10})   \notag \\ &
+ \frac{1066546}{17477025}\,\sigma_{5}(\frac{n}{20}) 
 + \frac{1}{24}\,\sigma_{3}(n)
 - \frac{1}{160}\,n\,\sigma_{3}(n) 
-   \frac{1}{240}\,\sigma(\frac{n}{20})  \notag \\ &
- \frac{19219139}{536894208}\,\EuFrak{b}_{20,1}(n)  
- \frac{218941597}{838897200}\,\EuFrak{b}_{20,2}(n) \notag \\ &  
- \frac{2441195}{2097243}\,\EuFrak{b}_{20,3}(n)    
- \frac{121663681}{104862150}\,\EuFrak{b}_{20,4}(n)  \notag \\ &
- \frac{307560841}{536894208}\,\EuFrak{b}_{20,5}(n)  
- \frac{168184877}{139816200}\,\EuFrak{b}_{20,6}(n)    \notag \\ &
- \frac{235637433}{11651350}\,\EuFrak{b}_{20,7}(n)   
- \frac{1612511}{4510200}\,\EuFrak{b}_{20,8}(n)  \notag \\ &
- \frac{425220533}{17477025}\,\EuFrak{b}_{20,9}(n)   
+ \frac{147217103}{5825675}\,\EuFrak{b}_{20,10}(n)  \notag \\ &
+ \frac{27183143}{1691325}\,\EuFrak{b}_{20,11}(n)  
+ \frac{741227339}{17477025}\,\EuFrak{b}_{20,12}(n),  
\label{convolutionSum-w_20_1}
\end{align}
\begin{align}
W_{(1,21)}^{1,3}(n)\,  = \,W_{(21,1)}^{3,1}(n) \, =\, & 
\frac{1}{2549040}\,\sigma_{5}(n) 
+ \frac{245}{254904}\,\sigma_{5}(\frac{n}{7}) 
+ \frac{9}{104}\,\sigma_{5}(\frac{n}{21})   \notag \\  & 
+ \frac{1}{24}\,\sigma_{3}(\frac{n}{21})    
- \frac{1}{8}\,n\,\sigma_{3}(\frac{n}{21})  
- \frac{1}{240}\,\sigma(n)    \notag \\  & 
+ \frac{177}{42484}\,\EuFrak{b}_{21,1}(n) 
+ \frac{107}{1976}\,\EuFrak{b}_{21,2}(n) 
+ \frac{2078}{10621}\,\EuFrak{b}_{21,3}(n)   \notag \\  & 
+ \frac{3}{52}\,\EuFrak{b}_{21,4}(n) 
+ \frac{3}{26}\,\EuFrak{b}_{21,5}(n) 
+ \frac{27}{52}\,\EuFrak{b}_{21,6}(n)   \notag \\  & 
+ \frac{55}{312}\,\EuFrak{b}_{21,7}(n) 
+ \frac{2}{13}\,\EuFrak{b}_{21,8}(n) 
+ \frac{23}{13}\,\EuFrak{b}_{21,9}(n)   \notag \\  & 
- \frac{35}{52}\,\EuFrak{b}_{21,10}(n) 
- \frac{31}{26}\,\EuFrak{b}_{21,11}(n) 
+ \frac{3}{52}\,\EuFrak{b}_{21,12}(n),
\label{convolutionSum-w_1_21}  
\end{align} 
\begin{align}
W_{(21,1)}^{1,3}(n)\,  =\,W_{(1,21)}^{3,1}(n) \, =\,  & 
\frac{25}{127452}\,\sigma_{5}(n)
+ \frac{2401}{254904}\,\sigma_{5}(\frac{n}{7}) 
+ \frac{81}{1040}\,\sigma_{5}(\frac{n}{21})     \notag \\  & 
+ \frac{1}{24}\,\sigma_{3}(n)    
- \frac{1}{168}\,n\,\sigma_{3}(n)  
- \frac{1}{240}\,\sigma(\frac{n}{21})    
- \frac{16019}{446082}\,\EuFrak{b}_{21,1}(n)   \notag \\  &  
- \frac{3755}{5928}\,\EuFrak{b}_{21,2}(n) 
- \frac{415273}{127452}\,\EuFrak{b}_{21,3}(n)  
- \frac{177}{52}\,\EuFrak{b}_{21,4}(n)     \notag \\  & 
- \frac{73}{26}\,\EuFrak{b}_{21,5}(n) 
- \frac{657}{52}\,\EuFrak{b}_{21,6}(n)   
- \frac{1713}{104}\,\EuFrak{b}_{21,7}(n)     \notag \\  & 
+ \frac{12}{13}\,\EuFrak{b}_{21,8}(n) 
- \frac{681}{13}\,\EuFrak{b}_{21,9}(n) 
+ \frac{609}{52}\,\EuFrak{b}_{21,10}(n)     \notag \\  & 
+ \frac{165}{26}\,\EuFrak{b}_{21,11}(n) 
+ \frac{135}{52}\,\EuFrak{b}_{21,12}(n),
\label{convolutionSum-w_21_1}  
\end{align} 
\begin{align}
W_{(5,3)}^{1,3}(n)\,  =\,W_{(3,5)}^{3,1}(n) \, =\,  & 
\frac{5363}{102108240}\,\sigma_{5}(n)
+ \frac{33831}{11345360}\,\sigma_{5}(\frac{n}{3}) 
- \frac{10712659}{51054120}\,\sigma_{5}(\frac{n}{5})    \notag \\  & 
+ \frac{3338883}{11345360}\,\sigma_{5}(\frac{n}{15}) 
+ \frac{1}{24}\,\sigma_{3}(\frac{n}{3})    
- \frac{1}{40}\,n\,\sigma_{3}(\frac{n}{3})  
- \frac{1}{240}\,\sigma(\frac{n}{5})    \notag \\  & 
- \frac{5363}{102108240}\,\EuFrak{b}_{15,1}(n) 
- \frac{128137}{7854480}\,\EuFrak{b}_{15,2}(n) 
+ \frac{31687}{981810}\,\EuFrak{b}_{15,3}(n)    \notag \\  & 
+ \frac{438133}{7854480}\,\EuFrak{b}_{15,4}(n) 
+ \frac{27841273}{102108240}\,\EuFrak{b}_{15,5}(n) 
- \frac{11069}{109090}\,\EuFrak{b}_{15,6}(n)    \notag \\  & 
- \frac{257003}{109090}\,\EuFrak{b}_{15,7}(n) 
+ \frac{7003}{490905}\,\EuFrak{b}_{15,8}(n), 
\label{convolutionSum-w_5_3}  
\end{align} 
\begin{align}
W_{(5,4)}^{1,3}(n)\,  =\,W_{(4,5)}^{3,1}(n) \, =\,  & 
\frac{4591}{536894208}\,\sigma_{5}(n)
+ \frac{1460777}{13422355200}\,\sigma_{5}(\frac{n}{2})
+ \frac{573583}{34954050}\,\sigma_{5}(\frac{n}{4})    \notag \\  & 
+ \frac{338063}{1342235520}\,\sigma_{5}(\frac{n}{5})
+ \frac{25485149}{6711177600}\,\sigma_{5}(\frac{n}{10}) \notag \\ &
+ \frac{1559503}{23302700}\,\sigma_{5}(\frac{n}{20})   
+ \frac{1}{24}\,\sigma_{3}(\frac{n}{4})    
- \frac{1}{40}\,n\,\sigma_{3}(\frac{n}{4})  
- \frac{1}{240}\,\sigma(\frac{n}{5})    \notag \\  & 
- \frac{4591}{536894208}\,\EuFrak{b}_{20,1}(n)  
- \frac{164011}{419448600}\,\EuFrak{b}_{20,2}(n)   \notag \\ &
- \frac{4591}{2097243}\,\EuFrak{b}_{20,3}(n)     
+ \frac{12287251}{419448600}\,\EuFrak{b}_{20,4}(n) \notag \\ &  
- \frac{6000859}{268447104}\,\EuFrak{b}_{20,5}(n)  
- \frac{6724451}{69908100}\,\EuFrak{b}_{20,6}(n)      \notag \\  & 
- \frac{961929}{5825675}\,\EuFrak{b}_{20,7}(n)  
+ \frac{1822139}{4510200}\,\EuFrak{b}_{20,8}(n)   \notag \\ &
+ \frac{12696842}{17477025}\,\EuFrak{b}_{20,9}(n)     
+ \frac{1733203}{5825675}\,\EuFrak{b}_{20,10}(n)   \notag \\ &
- \frac{990632}{1691325}\,\EuFrak{b}_{20,11}(n)  
- \frac{47735111}{17477025}\,\EuFrak{b}_{20,12}(n),  
\label{convolutionSum-w_5_4}  
\end{align} 
\begin{align}
W_{(2,7)}^{1,3}(n)\,  =\,W_{(7,2)}^{3,1}(n) \, =\,  & 
\frac {1}{117648}\,\sigma_{5}(n)
+ \frac {1}{36765}\,\sigma_{5}(\frac{n}{2}) 
+ \frac {1225}{58824}\,\sigma_{5}(\frac{n}{7})    \notag \\  & 
+ \frac {490}{7353}\,\sigma_{5}(\frac{n}{14})
+ \frac{1}{24}\,\sigma_{3}(\frac{n}{7})    
- \frac{1}{16}\,n\,\sigma_{3}(\frac{n}{7})  
- \frac{1}{240}\,\sigma(\frac{n}{2})    \notag \\  & 
- \frac {1}{117648}\,\EuFrak{b}_{14,1}(n) 
+ \frac {37}{9804}\,\EuFrak{b}_{14,2}(n) 
+ \frac {2455}{117648}\,\EuFrak{b}_{14,3}(n)    \notag \\  & 
+ \frac {149}{14706}\,\EuFrak{b}_{14,4}(n) 
- \frac {326}{7353}\,\EuFrak{b}_{14,5}(n) 
- \frac {140}{2451}\,\EuFrak{b}_{14,6}(n)    \notag \\  & 
+ \frac {1253}{4902}\,\EuFrak{b}_{14,7}(n) 
+ \frac {3920}{7353}\,\EuFrak{b}_{14,8}(n), 
\label{convolutionSum-w_2_7}  
\end{align} 
\begin{align}
W_{(7,2)}^{1,3}(n)\,  =\,W_{(2,7)}^{3,1}(n) \, =\,  & 
\frac{5}{58824}\,\sigma_{5}(n)
+ \frac{25}{14706}\,\sigma_{5}(\frac{n}{2}) 
+ \frac{2401}{588240}\,\sigma_{5}(\frac{n}{7})    \notag \\  & 
+ \frac{2401}{29412}\,\sigma_{5}(\frac{n}{14}) 
+ \frac{1}{24}\,\sigma_{3}(\frac{n}{2})    
- \frac{1}{56}\,n\,\sigma_{3}(\frac{n}{2})  
- \frac{1}{240}\,\sigma(\frac{n}{7})    \notag \\  & 
- \frac{5}{58824}\,\EuFrak{b}_{14,1}(n)  
- \frac{194}{17157}\,\EuFrak{b}_{14,2}(n) 
- \frac{35251}{411768}\,\EuFrak{b}_{14,3}(n)    \notag \\  & 
+ \frac{367}{7353}\,\EuFrak{b}_{14,4}(n) 
- \frac{12722}{51471}\,\EuFrak{b}_{14,5}(n) 
+ \frac{2476}{17157}\,\EuFrak{b}_{14,6}(n)    \notag \\  & 
- \frac{75203}{68628}\,\EuFrak{b}_{14,7}(n) 
- \frac{54622}{51471}\,\EuFrak{b}_{14,8}(n), 
\label{convolutionSum-w_7_2}  
\end{align} 
\begin{align}
W_{(3,7)}^{1,3}(n)\,  =\,W_{(7,3)}^{3,1}(n) \, =\,  & 
\frac {1}{254904}\,\sigma_{5}(n)
+ \frac {27}{80}\,\sigma_{5}(\frac{n}{3})
+ \frac {1225}{127452}\,\sigma_{5}(\frac{n}{7})   \notag \\  & 
- \frac {27}{104}\,\sigma_{5}(\frac{n}{21})
+ \frac{1}{24}\,\sigma_{3}(\frac{n}{7})    
- \frac{1}{24}\,n\,\sigma_{3}(\frac{n}{7})  
- \frac{1}{240}\,\sigma(\frac{n}{3})    \notag \\  & 
- \frac {1}{254904}\,\EuFrak{b}_{21,1}(n) 
- \frac {1}{5928}\,\EuFrak{b}_{21,2}(n) 
- \frac {85435}{254904}\,\EuFrak{b}_{21,3}(n)    \notag \\  & 
- \frac {35}{52}\,\EuFrak{b}_{21,4}(n) 
- \frac {9}{26}\,\EuFrak{b}_{21,5}(n) 
- \frac {1163}{104}\,\EuFrak{b}_{21,6}(n) 
- \frac {263}{104}\,\EuFrak{b}_{21,7}(n)    \notag \\  & 
+ \frac {7}{13}\,\EuFrak{b}_{21,8}(n) 
- \frac {7699}{52}\,\EuFrak{b}_{21,9}(n) 
+ \frac {27}{52}\,\EuFrak{b}_{21,10}(n) 
+ \frac {41}{26}\,\EuFrak{b}_{21,11}(n)    \notag \\  & 
- \frac {14337}{26}\,\EuFrak{b}_{21,12}(n), 
\label{convolutionSum-w_3_7}  
\end{align} 
\begin{align}
W_{(7,3)}^{1,3}(n)\,  =\,W_{(3,7)}^{3,1}(n) \, =\,  & 
\frac{5}{254904}\,\sigma_{5}(n)
+ \frac{75}{42484}\,\sigma_{5}(\frac{n}{3}) 
+ \frac{2401}{2549040}\,\sigma_{5}(\frac{n}{7})    \notag \\  & 
+ \frac{7203}{84968}\,\sigma_{5}(\frac{n}{21}) 
+ \frac{1}{24}\,\sigma_{3}(\frac{n}{3})    
- \frac{1}{56}\,n\,\sigma_{3}(\frac{n}{3})  
- \frac{1}{240}\,\sigma(\frac{n}{7})    \notag \\  & 
+ \frac{16019}{1338246}\,\EuFrak{b}_{21,1}(n)  
+ \frac{9769}{137256}\,\EuFrak{b}_{21,2}(n)  
+ \frac{1061}{3268}\,\EuFrak{b}_{21,3}(n)     \notag \\  & 
- \frac{5345}{34314}\,\EuFrak{b}_{21,4}(n)  
- \frac{1879}{3612}\,\EuFrak{b}_{21,5}(n)  
+ \frac{201071}{137256}\,\EuFrak{b}_{21,6}(n)     \notag \\  & 
- \frac{319933}{297388}\,\EuFrak{b}_{21,7}(n)  
- \frac{71693}{17157}\,\EuFrak{b}_{21,8}(n)  
+ \frac{277671}{45752}\,\EuFrak{b}_{21,9}(n)    \notag \\  &  
- \frac{116285}{68628}\,\EuFrak{b}_{21,10}(n)  
- \frac{305433}{22876}\,\EuFrak{b}_{21,11}(n)  
- \frac{4937}{411768}\,\EuFrak{b}_{21,12}(n).  
\label{convolutionSum-w_7_3}  
\end{align} 
\end{corollary}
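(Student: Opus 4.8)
The plan is to obtain each entry of the corollary by expanding one and the same element of $\M_6(\Gamma_0(\alpha\beta))$ in two different ways and comparing the coefficients of $q^n$. Before any computation I would dispose of the left-hand equalities in every displayed line, such as $W_{(1,3)}^{3,1}(n)=W_{(3,1)}^{1,3}(n)$ or $W_{(5,3)}^{1,3}(n)=W_{(3,5)}^{3,1}(n)$: these are exactly the symmetry relations of \autoref{convolutionSum-theor-sp} and \autoref{convolutionSum-coro-sp}. Hence it suffices to establish the explicit arithmetic formula on the right for a single representative of each pair, the companion identity being automatic.

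I would then split the list into two tiers. The first tier consists of the \emph{base sums}, those in which one index equals $1$ (or the diagonal case $\alpha=\beta$), namely $W_{(1,\beta)}$ and $W_{(\beta,1)}$ for $\beta=3,4,6,7,8,12,14,15,20,21$. For these I equate the fully numerical $q$-expansion of the relevant product of Eisenstein series, as recorded in \autoref{lema-w_7-21} and \autoref{lema-w_12-21}, with the corresponding specialization \eqref{evalConvolClass-eqn-sp-1}--\eqref{evalConvolClass-eqn-sp-4} of \autoref{convolutionSum_sp}. Since those specializations carry only a \emph{single} convolution sum in the coefficient of $q^n$ (the degenerate second sum having already been absorbed through \eqref{Ramanujan-ident-1-3-gen}), matching the $q^n$-coefficients leaves one linear equation; dividing through by $5760\,\alpha$ (respectively $5760\,\beta$) isolates the sum and produces precisely the stated formula. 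Equivalently, this is \autoref{convolution_a_b} with the coefficients $X_\delta,Y_j,Z(\chi)_s$ replaced by the numerical values read off from \autoref{lema-w_7-21} and \autoref{lema-w_12-21}, i.e.\ by the unique solution of the linear system of \autoref{convolution-lemma_a_b}.

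The second tier consists of the genuine coprime pairs with $\alpha,\beta>1$ and $\alpha\neq\beta$: $W_{(2,3)},W_{(3,4)},W_{(5,3)},W_{(5,4)},W_{(2,7)},W_{(3,7)}$ and their transposes. Here the appropriate general identity \eqref{evalConvolClass-eqn-11} or \eqref{evalConvolClass-eqn-11-2} of \autoref{convolutionSum_a_b} carries \emph{two} convolution sums in the coefficient of $q^n$, namely the target $W_{(\alpha,\beta)}$ and a base sum $W_{(1,\beta)}^{1,3}$ or $W_{(\alpha,1)}^{3,1}$. I substitute the value of that base sum, already determined in the first tier, and again equate with the corresponding numerical expansion from \autoref{lema-w_7-21} (for instance \eqref{convolSum-eqn-5_3-1_3} for $(\alpha,\beta)=(5,3)$), so that only $W_{(\alpha,\beta)}$ remains unknown; solving gives the formula. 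Throughout, the cusp-form coefficients $\EuFrak{b}_{\kappa,i}(n)$ that survive in the answers are the Fourier coefficients of the eta-quotient bases of $\S_6(\Gamma_0(\kappa))$ obtained from \autoref{ligozat_theorem} and \autoref{basisCusp_a_b}(b); the levels $\kappa\in\{9,12,14,15,20,21,32\}$ are reached through the inclusions \eqref{eqn-3-12}--\eqref{eqn-4-32} and their analogues, and for $\kappa=12,14,15,20,21$ they are exactly those listed in \autoref{basisCusp_33_40_56}.

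There is no conceptual obstacle: every ingredient is already in place, and the corollary amounts to organized coefficient extraction. The only real labor, and the place where arithmetic slips most easily occur, is the exact solution over $\mathbb{Q}$ of the linear systems of \autoref{convolution-lemma_a_b} for each of these ten-plus levels, together with the bookkeeping of the bootstrapping order (base sums before coprime pairs) and the correct attachment of each sum to its cusp-form level $\kappa$ and its divisor $5760\,\alpha$ or $5760\,\beta$. This is most safely carried out with the symbolic computation packages mentioned in the introduction.
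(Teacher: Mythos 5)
Your proposal is correct and is essentially the paper's own method: expand each Eisenstein product in the explicit bases of $\M_{6}(\Gamma_{0}(\alpha\beta))$ (\autoref{lema-w_7-21} and \autoref{lema-w_12-21}, obtained via \autoref{convolution-lemma_a_b}), equate Fourier coefficients with the convolution-sum identities of \autoref{convolutionSum_a_b} and \autoref{convolutionSum_sp}, solve for the single remaining unknown, and dispatch the left-hand equalities by \autoref{convolutionSum-theor-sp}. Your explicit two-tier bootstrap (base sums first, then the mixed pairs via \eqref{evalConvolClass-eqn-11} and \eqref{evalConvolClass-eqn-11-2}) is a sharpening rather than a deviation, since the paper's one-line proof invokes \autoref{convolution_a_b} even for pairs such as $(3,4)$, $(5,4)$, $(3,7)$ that its statement does not literally cover, and substituting the already-computed base sum $W_{(1,\beta)}^{1,3}(n)$ or $W_{(\alpha,1)}^{3,1}(n)$ is exactly what makes those cases rigorous.
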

\begin{proof} 
	These identities follow from \autoref{convolution_a_b} when we set  
	for example $(\alpha,\beta)=(1,14)$, $(3,4)$, $(1,20)$, $(5,4)$, $(1,21)$, $(3,7)$.
\end{proof}


\section{Evaluation of the convolution sums $W_{(\alpha,\beta)}^{2i-1,2j-1}(n)$ 
	for the Levels $\alpha\beta=9,16,18,27,32\in\mathbb{N}^{*}\setminus\N$}
\label{convolution_9_32}

Let $i,j\in\mathbb{N}^{*}$ be such that $i+j=3$. 
Then we give explicit formulae for the convolution sums 
$W_{(\alpha,\beta)}^{2i-1,2j-1}(n)$ for $\alpha\beta=9$, $16$, $18$, $27$, $32$.
These levels belong to $\mathbb{N}^{*}\setminus\N$. Hence, the 
primitive Dirichlet characters are non-trivial.

\subsection{Bases for $\E_{6}(\boldsymbol{\Upgamma}_{0}(\boldsymbol{\upalpha\upbeta}))$ and $\S_{6}(\boldsymbol{\Upgamma}_{0}(\boldsymbol{\upalpha\upbeta}))$ when $\boldsymbol{\upalpha\upbeta=18,27,32}$}  
\label{convolution_9_32-gen}

By the inclusion relation (\hyperref[basis-cusp-eqn]{\ref*{basis-cusp-eqn}}), it is sufficient to consider the bases only for the levels $16$, $18$ $18$, $27$,  and $32$.
 
The dimension formulae for the space of 
cusp forms as given in T.~Miyake's book  
\cite[Thrm 2.5.2,~p.~60]{miyake1989} and W.~A.~Stein's book  
\cite[Prop.\ 6.1, p.\ 91]{wstein} and 
(\hyperref[dimension-Eisenstein]{\ref*{dimension-Eisenstein}})
are applied to compute 
\begin{alignat*}{2}   
\text{dim}(\E_{6}(\Gamma_{0}(9)))=4, \quad 
\text{dim}(\S_{6}(\Gamma_{0}(9)))=3,  \\
\text{dim}(\E_{6}(\Gamma_{0}(16)))=6, \quad
\text{dim}(\S_{6}(\Gamma_{0}(16)))=7, \\
\text{dim}(\E_{6}(\Gamma_{0}(18)))=8, \quad
\text{dim}(\S_{6}(\Gamma_{0}(18)))=11, \\
\text{dim}(\E_{6}(\Gamma_{0}(27)))=6, \quad
\text{dim}(\S_{6}(\Gamma_{0}(27)))=12, \\
\text{dim}(\E_{6}(\Gamma_{0}(32)))=8 \quad \text{ and } \quad
\text{dim}(\S_{6}(\Gamma_{0}(32)))=16.
\end{alignat*}

We use \autoref{ligozat_theorem} to determine many eta quotients which are 
elements of the spaces $\S_{6}(\Gamma_{0}(18))$,  $\S_{6}(\Gamma_{0}(27))$
and $\S_{6}(\Gamma_{0}(32))$, respectively.
 
Let $D(18)$, $27$ and $D(32)$ denote the sets 
of positive divisors of $18$, $27$ and $32$, respectively.

\begin{corollary} \label{basisCusp5_9}
\begin{enumerate}
\item[\textbf{(a)}] Let $\chi(n)=\legendre{-4}{n}$ and $\psi(n)=\legendre{-3}{n}$ be 
primitive Dirichlet characters such that $\chi$ is not an annihilator of 
$\E_{6}(\Gamma_{0}(9))$ and $\psi$ is not an 
annihilator of $\E_{6}(\Gamma_{0}(16))$. Then the sets 
\begin{align*}
\EuScript{B}_{E,18}=\{E_{6}(q^{t})\mid t|18\}\cup\{\,E_{6,\legendre{-4}{n}}(q^{s})\mid 
s=1,3\,\},\\  
\EuScript{B}_{E,27}=\{E_{6}(q^{t})\mid t|27\}\cup\{\,E_{6,\legendre{-4}{n}}(q^{s})\mid 
s=1,3\} \text{ and }  \\
\EuScript{B}_{E,32}=\{E_{6}(q^{t})\mid t|32\}\cup\{\,E_{6,\legendre{-3}{n}}(q^{s})\mid 
s=1,2\,\} 
\end{align*} 
constitute bases of $\E_{6}(\Gamma_{0}(18))$, $\E_{6}(\Gamma_{0}(27))$  
and $\E_{6}(\Gamma_{0}(32))$, 
respectively.
\item[\textbf{(b)}] Let $1\leq i\leq 11$, $1\leq j\leq 12$ 
$1\leq k\leq 16$ 
be positive integers.

Let $\delta_{1}\in D(18)$ and 
$(r(i,\delta_{1}))_{i,\delta_{1}}$ be the  
\autoref{convolutionSums-18-table} of the powers of $\eta(\delta_{1}\,z)$. 

Let $\delta_{2}\in D(27)$ and 
$(r(j,\delta_{2}))_{j,\delta_{2}}$ be the  
\autoref{convolutionSums-27-table} of the powers of $\eta(\delta_{2}\,z)$. 

Let $\delta_{3}\in D(32)$ and 
$(r(k,\delta_{3}))_{k,\delta_{3}}$ be the  
\autoref{convolutionSums-32-table} of the powers of $\eta(\delta_{3}\,z)$. 

Let furthermore  
\begin{align*}
\EuFrak{B}_{18,i}(q)=\underset{\delta_{1}|18}{\prod}\eta^{r(i,\delta_{1})}(\delta_{1}
\,z), \quad   
\EuFrak{B}_{27,j}(q)=\underset{\delta_{2}|27}{\prod}\eta^{r(j,\delta_{2})}(\delta_{2}
\,z) 
 \text{ and }    \\
\EuFrak{B}_{32,k}(q)=\underset{\delta_{3}|32}{\prod}\eta^{r(k,\delta_{3})}(\delta_{3}
\,z)    
\end{align*} 
be selected elements of 
$\S_{6}(\Gamma_{0}(18))$, $\S_{6}(\Gamma_{0}(27))$  
and $\S_{6}(\Gamma_{0}(32))$, 
respectively. 

The sets 
\begin{align*}
\EuScript{B}_{S,18}=\{ \EuFrak{B}_{18,i}(q)\mid ~1\leq i\leq 11\}, \quad  
\EuScript{B}_{S,27}=\{ \EuFrak{B}_{27,j}(q)\mid ~1\leq j\leq 12\} \text{ and }  \\
\EuScript{B}_{S,32}=\{ \EuFrak{B}_{32,k}(q)\mid ~1\leq k\leq 16\}
\end{align*} 
are bases of $\S_{6}(\Gamma_{0}(18))$, $\S_{6}(\Gamma_{0}(27))$ and  $\S_{6}(\Gamma_{0}(32))$,
  respectively.
\item[\textbf{(c)}] The sets 
\begin{align*}
\EuScript{B}_{M,18}=\EuScript{B}_{E,18}\cup\EuScript{B}_{S,18},  
\EuScript{B}_{M,27}=\EuScript{B}_{E,27}\cup\EuScript{B}_{S,27} \text{ and } 
\EuScript{B}_{M,32}=\EuScript{B}_{E,32}\cup\EuScript{B}_{S,32}
\end{align*} 
constitute bases of $\M_{6}(\Gamma_{0}(18))$ and $\M_{6}(\Gamma_{0}(27))$, 
and  $\M_{6}(\Gamma_{0}(32))$,  
respectively. 
\end{enumerate}
\end{corollary}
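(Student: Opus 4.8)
The plan is to obtain the whole corollary as the specialization of \autoref{basisCusp_a_b} to the three levels $\alpha\beta=18,27,32$, so that the work reduces to checking, case by case, that the hypotheses of that theorem are met. First I would record the dimensions already computed above, namely $\dim(\E_{6}(\Gamma_{0}(18)))=8$, $\dim(\E_{6}(\Gamma_{0}(27)))=6$, $\dim(\E_{6}(\Gamma_{0}(32)))=8$ and $\dim(\S_{6}(\Gamma_{0}(18)))=11$, $\dim(\S_{6}(\Gamma_{0}(27)))=12$, $\dim(\S_{6}(\Gamma_{0}(32)))=16$, and then count the proposed basis elements. Since $18$ has $6$ divisors, $27$ has $4$, and $32$ has $6$, the sets $\{E_{6}(q^{t})\}$ contribute $6$, $4$, and $6$ Eisenstein forms respectively; in each case the two twisted series supplied in $\EuScript{B}_{E,\kappa}$ make up exactly the shortfall to $m_{E}$, so the cardinalities are correct.

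The decisive step for part (a) is to confirm that the prescribed Dirichlet characters are not annihilators of the respective Eisenstein spaces, which is what licenses the application of \autoref{basisCusp_a_b}(a). Writing each level as $\prod_{i}p_{i}^{e_{i}}$, the prime-power factor lying outside $\N$ is $3^{2}$ for $18$, $3^{3}$ for $27$, and $2^{5}$ for $32$, with associated primes $p_{j}=3,3,2$. By \hyperref[def-annihilator]{Definition~\ref*{def-annihilator}}, such a character would annihilate only if every twisted series $E_{6,\chi}(q^{\delta})$ vanished for all divisors $\delta>1$ of $p_{j}^{e_{j}}$; I would defeat this already at the smallest divisor $\delta=p_{j}$ by exhibiting a nonzero coefficient. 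Because the conductor of $\legendre{-4}{n}$ is $4$ and $\gcd(4,3)=1$, we have $\legendre{-4}{3}\neq 0$, so the $q^{3}$-coefficient $\legendre{-4}{3}\,\sigma_{5}(1)$ of $E_{6,\legendre{-4}{n}}(q^{3})$ is nonzero, handling both $18$ and $27$; likewise $\gcd(3,2)=1$ gives $\legendre{-3}{2}\neq 0$, and the $q^{2}$-coefficient of $E_{6,\legendre{-3}{n}}(q^{2})$ is nonzero, handling $32$. Hence none of the chosen characters is an annihilator, the hypothesis of \autoref{basisCusp_a_b}(a) holds, and the linear independence of $\EuScript{B}_{E,\kappa}$ follows from the nonvanishing of the determinant of the associated coefficient system exactly as in the proof of that theorem.

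For part (b) I would invoke \autoref{basisCusp_a_b}(b) directly. Each eta quotient $\EuFrak{B}_{18,i}(q)$, $\EuFrak{B}_{27,j}(q)$, $\EuFrak{B}_{32,k}(q)$ recorded in \autoref{convolutionSums-18-table}, \autoref{convolutionSums-27-table}, and \autoref{convolutionSums-32-table} is chosen so that its exponent vector satisfies conditions (i)--(iv$'$) of \autoref{ligozat_theorem}, placing it in the corresponding space of cusp forms, while its smallest $q$-degree is read off through \hyperref[basis-remark]{Remark~\ref*{basis-remark}} \textbf{(r2)}. As the counts $11$, $12$, $16$ equal the respective cusp-form dimensions, it remains only to prove linear independence, and this proceeds as in \autoref{basisCusp_a_b}(b): ordering the quotients by increasing smallest degree yields a coefficient matrix that is triangular with unit diagonal, and whenever a smallest degree is repeated one applies the finite substitution procedure of that proof to restore a nonsingular system. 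Part (c) is then immediate, since $\M_{6}(\Gamma_{0}(\kappa))=\E_{6}(\Gamma_{0}(\kappa))\oplus\S_{6}(\Gamma_{0}(\kappa))$ combines the bases of parts (a) and (b) into a basis of the full space.

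The only genuinely laborious ingredient is the symbolic verification behind part (b): for each of the $11+12+16$ tabulated exponent vectors one must check the four Newman--Ligozat congruence and positivity conditions and then confirm that the resulting $q$-expansions are linearly independent. This is where all the arithmetic resides, and it is delegated to the tables and to the computer-algebra computation described in the introduction; conceptually it poses no obstacle, but it is the step that cannot be shortcut.
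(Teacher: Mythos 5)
Your proposal is correct and takes essentially the same approach as the paper's own proof: both specialize \autoref{basisCusp_a_b} to the levels $18$, $27$, $32$, establish that $\legendre{-4}{n}$ and $\legendre{-3}{n}$ are not annihilators by exploiting coprimality of their conductors with the prime powers $3^{2}$, $3^{3}$, $2^{5}$ lying outside $\N$, and then settle linear independence of the Eisenstein and cusp parts via the coefficient systems, with part (c) following from the direct-sum decomposition $\M_{6}=\E_{6}\oplus\S_{6}$. Your explicit exhibition of nonzero $q^{3}$- and $q^{2}$-coefficients makes the non-annihilator step slightly more concrete than the paper's terse gcd remark, but the substance of the argument is identical.
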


By \hyperref[basis-remark]{Remark \ref*{basis-remark}} (r1), each 
$\EuFrak{B}_{\alpha\beta,i}(q)$ is expressible in the form 
$\underset{n=1}{\overset{\infty}{\sum}}\EuFrak{b}_{\alpha\beta,i}(n)q^{n}$.

Note that the basis elements $\EuFrak{B}_{27,12}(q)$ and $\EuFrak{B}_{32,16}(q)$
are elements of the cusp spaces $\S_{2}(\Gamma_{0}(27))$ and  
$\S_{2}(\Gamma_{0}(32))$, respectively.

\begin{proof} It holds that $18=3^{2}\times 2$. Since $\gcd(4,3)=1$, it holds that the 
 primitive Dirichlet character $\chi(n)=\legendre{-4}{n}$ is not an annihilator of 
 $\E_{6}(\Gamma_{0}(3^{2}))$. Hence, 
 $\chi(n)=\legendre{-4}{n}$ is not an annihilator of $\E_{6}(\Gamma_{0}(9))$,  
 $\E_{6}(\Gamma_{0}(18))$ and $\E_{6}(\Gamma_{0}(27))$.  Similarly, since 
 $\gcd(3,4)=1$, the primitive Dirichlet character $\psi(n)=\legendre{-3}{n}$ 
 is not an annihilator of the space $\E_{6}(\Gamma_{0}(2^{4}))$. Therefore, $\psi(n)=\legendre{-3}{n}$ is not an annihilator of $\E_{6}(\Gamma_{0}(16))$ 
 and $\E_{6}(\Gamma_{0}(32))$.
 
We only give the proof for $\EuScript{B}_{M,18}=\EuScript{B}_{E,18}\cup
\EuScript{B}_{S,18}$ since the other cases are proved similarly. 
 
\begin{enumerate}
\item[\textbf{(a)}]
Suppose that $x_{\delta},z_{1},z_{3}\in\mathbb{C}$ with $\delta|18$. Let   
$$\underset{\delta|18}{\sum} x_{\delta}\,E_{6}(q^{\delta}) + 
z_{1}\,E_{6,\legendre{-4}{n}}(q) + z_{3}\,E_{6,\legendre{-4}{n}}(q^{3})=0.$$ 
We observe that 
\begin{equation} \label{base-5_9-kronecker}
\legendre{-4}{n}= \begin{cases}
 -1 & \text{ if } n\equiv 3\pmod{4}, \\
 0 & \text{ if } \text{gcd}(4,n) \neq 1, \\
 1 & \text{ if } n\equiv 1\pmod{4}. 
	\end{cases}
\end{equation}
and recall that for all $0\neq a\in\mathbb{Z}$ it holds that $\legendre{a}{0}=0$. 
Since the conductor of the Dirichlet character $\legendre{-4}{n}$ is $4$, we infer from 
(\hyperref[Eisenstein-gen]{\ref*{Eisenstein-gen}})  
that $C_{0}=0$. 
We then deduce  
\begin{equation*}
\underset{\delta|18}{\sum} x_{\delta} 
 + \underset{i=1}{\overset{\infty}
{\sum}}\biggl( - 504\underset{\delta|18}{\sum}\sigma_{5}(\frac{n}{\delta}) x_{\delta} + 
\legendre{-4}{n}\sigma_{5}(n)z_{1} + \legendre{-4}{n}\sigma_{5}(\frac{n}
{3})z_{3}\biggr)q^{n}=0.  
\end{equation*}
Then we equate the coefficients of $q^{n}$ for $n\in D(18)$ plus for example  
$n=5,7$ 
to obtain a system of $8$ linear equations 
whose unique solution is $x_{\delta}=z_{1}=z_{3}=0$ with $\delta\in D(18)$. So, the set 
$\EuScript{B}_{E}$ is linearly independent. 
Hence, the set $\EuScript{B}_{E}$ is a basis of 
$\E_{6}(\Gamma_{0}(18))$.
\item[\textbf{(b)}] 
Suppose that $x_{i}\in\mathbb{C}$ with $1\leq i\leq 11$. Let  
$\underset{i=1}{\overset{11}{\sum}}x_{i}\,\EuFrak{B}_{18,i}(q)=0$. Then   
\begin{equation*}
\underset{i=1}{\overset{11}{\sum}}x_{i}\underset{n=1}{\overset{\infty}{\sum}}\,\EuFrak{b}_{18,i}(n)q^{n}
= \underset{n=1}{\overset{\infty}{\sum}}\biggl(\,\underset{i=1}{\overset{11}{\sum}}\,\EuFrak{b}_{18,i}(n)\,x_{i}\,\biggr)q^{n} = 0.
\end{equation*}
So, we equate the coefficients of $q^{n}$ for $1\leq n\leq 11$ to obtain 
a system of $11$ linear equations 
whose unique solution is $x_{i}=0$ for all  $1\leq i\leq 11$. 
It follows that the set $\EuScript{B}_{S}$ is linearly independent. 
Hence, the set $\EuScript{B}_{S}$ is a basis of 
$S_{6}(\Gamma_{0}(18))$.
\item[\textbf{(c)}]
Since $\M_{6}(\Gamma_{0}(18))=\E_{6}(\Gamma_{0}(18))\oplus 
\S_{6}(\Gamma_{0}(18))$, the result follows from (a) and (b).
\end{enumerate}
\end{proof}

\subsection{Evaluation of $\mathbf{W_{(\boldsymbol{\upalpha,\upbeta})}^{2i-1,2j-1}(n)}$ for  $\boldsymbol{\upalpha\upbeta=9,16,18,27,32}$} 
\label{convolSum-w_45_50}  

In this section, the evaluation of the convolution sum $W_{(\alpha,\beta)}^{2i-1,2j-1}(n)$ is discussed for  
$\alpha\beta=9$, $16$, $18$, $27$ and $32$. 

In the following corollary we consider 
$(\,\alpha\, E_{2}(q^{\alpha}) - E_{2}(q)\,)\,E_{4}(q^{\beta})$ and
$E_{4}(q^{\alpha})\,(\,E_{2}(q) - \beta\, E_{2}(q^{\beta})\,)$ 
simultaneously. We will also make use of \autoref{convolutionSum-theor-sp}.
\begin{corollary} \label{lema_9_32}
It holds that 
\begin{multline}
(\,9\, E_{2}(q^{9}) - E_{2}(q)\,)\,E_{4}(q^{9}) = 
  8 + \sum_{n=1}^{\infty}\biggl(\,
- \frac{1}{7371}\,\sigma_{5}(n)
- \frac{80}{7371}\,\sigma_{5}(\frac{n}{3}) \\
+ \frac{729}{91}\,\sigma_{5}(\frac{n}{9}) 
+ \frac{2800}{117}\,\EuFrak{b}_{18,1}(n)
+ \frac{640}{3}\,\EuFrak{b}_{18,2}(n)
+ \frac{6480}{13}\,\EuFrak{b}_{18,3}(n)
\,\biggr)\,q^{n},
\label{convolSum-eqn-1_9}
\end{multline}
\begin{multline}
E_{4}(q)\,(\,E_{2}(q) - 9\, E_{2}(q^{9})\,) = 
- 8 + \sum_{n=1}^{\infty}\biggl(\,
\frac{81}{91}\,\sigma_{5}(n)  
- \frac{80}{91}\,\sigma_{5}(\frac{n}{3})  \\ 
- \frac{729}{91}\,\sigma_{5}(\frac{n}{9})  
- \frac{19440}{13}\,\EuFrak{b}_{18,1}(n)  
-17280\,\EuFrak{b}_{18,2}(n)
- \frac{680400}{13}\,\EuFrak{b}_{18,3}(n)
\,\biggr)\,q^{n},
\label{convolSum-eqn-9_1}
\end{multline}
\begin{multline}
(\,16\, E_{2}(q^{16}) - E_{2}(q)\,)\,E_{4}(q^{16})  = 
 15 + \sum_{n=1}^{\infty}\biggl(\, 
- \frac{1}{86016}\,\sigma_{5}(n)
- \frac{5}{28672}\,\sigma_{5}(\frac{n}{2}) \\
- \frac{5}{1792}\,\sigma_{5}(\frac{n}{4}) 
- \frac{5}{112}\,\sigma_{5}(\frac{n}{8}) 
+ \frac{316}{21}\,\sigma_{5}(\frac{n}{16}) 
+ \frac{12285}{512}\,\EuFrak{b}_{32,1}(n)
+ \frac{2295}{32}\,\EuFrak{b}_{32,2}(n)  \\
+ \frac{765}{2}\,\EuFrak{b}_{32,3}(n)
+ \frac{315}{2}\,\EuFrak{b}_{32,4}(n)
+ 360\,\EuFrak{b}_{32,5}(n)
+ 1080\,\EuFrak{b}_{32,6}(n)
+ 1440\,\EuFrak{b}_{32,7}(n)
\,\biggr)\,q^{n},
\label{convolSum-eqn-1_16}
\end{multline}
\begin{multline}
E_{4}(q)\,(\,E_{2}(q) - 16\, E_{2}(q^{16})\,)  = 
 - 15 + \sum_{n=1}^{\infty}\biggl(\, 
 \frac{79}{84}\,\sigma_{5}(n)  
  - \frac{5}{28}\,\sigma_{5}(\frac{n}{2})  \\
  - \frac{5}{7}\,\sigma_{5}(\frac{n}{4}) 
- \frac{20}{7}\,\sigma_{5}(\frac{n}{8})  
  - \frac{256}{21}\,\sigma_{5}(\frac{n}{16})  
-3150\,\EuFrak{b}_{32,1}(n)
-22680\,\EuFrak{b}_{32,2}(n)   \\
-92160\,\EuFrak{b}_{32,3}(n)
-105120\,\EuFrak{b}_{32,4}(n)
-322560\,\EuFrak{b}_{32,5}(n)
-414720\,\EuFrak{b}_{32,6}(n) \\
-184320\,\EuFrak{b}_{32,7}(n)
\,\biggr)\,q^{n},
\label{convolSum-eqn-16_1}
\end{multline}
\begin{multline}
(\,18\, E_{2}(q^{18}) - E_{2}(q)\,)\,E_{4}(q^{18})  = 
  17 + \sum_{n=1}^{\infty}\biggl(\, 
- \frac{1}{154791}\,\sigma_{5}(n)
- \frac{20}{154791}\,\sigma_{5}(\frac{n}{2}) \\
- \frac{80}{154791}\,\sigma_{5}(\frac{n}{3}) 
- \frac{1600}{154791}\,\sigma_{5}(\frac{n}{6}) 
- \frac{30}{637}\,\sigma_{5}(\frac{n}{9}) 
+ \frac{10866}{637}\,\sigma_{5}(\frac{n}{18})  \\
+ \frac{2792}{351}\,\EuFrak{b}_{18,1}(n)
+ \frac{293744}{2457}\,\EuFrak{b}_{18,2}(n)
+ \frac{97960}{819}\,\EuFrak{b}_{18,3}(n)
+ \frac{61504}{63}\,\EuFrak{b}_{18,4}(n) \\
- \frac{408}{7}\,\EuFrak{b}_{18,5}(n)
+ \frac{2016400}{819}\,\EuFrak{b}_{18,6}(n)
- \frac{1216}{21}\,\EuFrak{b}_{18,7}(n)
- \frac{10048}{21}\,\EuFrak{b}_{18,8}(n) \\
+ \frac{6368}{7}\,\EuFrak{b}_{18,9}(n)
+ \frac{39104}{21}\,\EuFrak{b}_{18,10}(n)
+ \frac{3032}{189}\,\EuFrak{b}_{18,11}(n)
\,\biggr)\,q^{n},
\label{convolSum-eqn-1_18}
\end{multline}
\begin{multline}
E_{4}(q)\,(\,E_{2}(q) - 18\, E_{2}(q^{18})\,)  = 
 - 17 + \sum_{n=1}^{\infty}\biggl(\, 
\frac{1811}{1911}\,\sigma_{5}(n)  
  - \frac{320}{1911}\,\sigma_{5}(\frac{n}{2})   \\
 - \frac{800}{1911}\,\sigma_{5}(\frac{n}{3})   
 - \frac{2560}{1911}\,\sigma_{5}(\frac{n}{6})   
 - \frac{2430}{637}\,\sigma_{5}(\frac{n}{9})  
  - \frac{7776}{637}\,\sigma_{5}(\frac{n}{18})  \\  
- \frac{21648}{13}\,\EuFrak{b}_{18,1}(n)  
 - \frac{3354816}{91}\,\EuFrak{b}_{18,2}(n)  
 - \frac{5815440}{91}\,\EuFrak{b}_{18,3}(n)  
 - \frac{2562048}{7}\,\EuFrak{b}_{18,4}(n)  \\
 - \frac{1018656}{7}\,\EuFrak{b}_{18,5}(n)  
 - \frac{126129600}{91}\,\EuFrak{b}_{18,6}(n)  
 - \frac{1396224}{7}\,\EuFrak{b}_{18,7}(n) 
  + \frac{1029888}{7}\,\EuFrak{b}_{18,8}(n)  \\
 + \frac{2871936}{7}\,\EuFrak{b}_{18,9}(n)  
 - \frac{3573504}{7}\,\EuFrak{b}_{18,10}(n)   
- \frac{13728}{7}\,\EuFrak{b}_{18,11}(n)
\,\biggr)\,q^{n}, 
\label{convolSum-eqn-18_1}
\end{multline}
\begin{multline}
(\,2\, E_{2}(q^{2}) - E_{2}(q)\,)\,E_{4}(q^{9})  = 
 1 + \sum_{n=1}^{\infty}\biggl(\, 
- \frac{11}{154791}\,\sigma_{5}(n)
+ \frac{32}{154791}\,\sigma_{5}(\frac{n}{2})   \\
- \frac{880}{154791}\,\sigma_{5}(\frac{n}{3})
+ \frac{2560}{154791}\,\sigma_{5}(\frac{n}{6})
- \frac{330}{637}\,\sigma_{5}(\frac{n}{9})
+ \frac{960}{637}\,\sigma_{5}(\frac{n}{18})   \\
+ \frac{14176}{351}\,\EuFrak{b}_{18,1}(n)  
+ \frac{651712}{2457}\,\EuFrak{b}_{18,2}(n)  
+ \frac{214880}{819}\,\EuFrak{b}_{18,3}(n)  
+ \frac{512}{63}\,\EuFrak{b}_{18,4}(n)  \\
+ \frac{1056}{7}\,\EuFrak{b}_{18,5}(n)  
+ \frac{6080}{819}\,\EuFrak{b}_{18,6}(n)  
+ \frac{36352}{21}\,\EuFrak{b}_{18,7}(n)  
- \frac{68864}{21}\,\EuFrak{b}_{18,8}(n) \\  
- \frac{35456}{7}\,\EuFrak{b}_{18,9}(n)  
+ \frac{116992}{21}\,\EuFrak{b}_{18,10}(n)  
- \frac{3104}{189}\,\EuFrak{b}_{18,11}(n) 
\,\biggr)\,q^{n}, 
\label{convolSum-eqn-2_9}
\end{multline}
\begin{multline}
E_{4}(q^{2})\,(\,E_{2}(q) - 9\,E_{2}(q^{9})\,)  = 
- 8 + \sum_{n=1}^{\infty}\biggl(\, 
- \frac{27}{637}\,\sigma_{5}(n)
- \frac{540}{637}\,\sigma_{5}(\frac{n}{2})   \\
+ \frac{80}{1911}\,\sigma_{5}(\frac{n}{3}) 
+ \frac{1600}{1911}\,\sigma_{5}(\frac{n}{6})
+ \frac{243}{637}\,\sigma_{5}(\frac{n}{9})
+ \frac{4860}{637}\,\sigma_{5}(\frac{n}{18})   \\
- \frac{7944}{13}\,\EuFrak{b}_{18,1}(n)  
- \frac{255408}{91}\,\EuFrak{b}_{18,2}(n)  
+ \frac{896040}{91}\,\EuFrak{b}_{18,3}(n)  
- \frac{80064}{7}\,\EuFrak{b}_{18,4}(n)  \\
+ \frac{44712}{7}\,\EuFrak{b}_{18,5}(n)  
- \frac{3429360}{91}\,\EuFrak{b}_{18,6}(n)  
+ \frac{1728}{7}\,\EuFrak{b}_{18,7}(n)  
+ \frac{281664}{7}\,\EuFrak{b}_{18,8}(n)  \\
+ \frac{1552608}{7}\,\EuFrak{b}_{18,9}(n)  
- \frac{1086912}{7}\,\EuFrak{b}_{18,10}(n)  
+ \frac{4296}{7}\,\EuFrak{b}_{18,11}(n) 
\,\biggr)\,q^{n}, 
\label{convolSum-eqn-9_2}
\end{multline}
\begin{multline}
(\,27\, E_{2}(q^{27}) - E_{2}(q)\,)\,E_{4}(q^{27})  = 
 26 + \sum_{n=1}^{\infty}\biggl(\, 
- \frac{1}{597051}\,\sigma_{5}(n)
- \frac{80}{597051}\,\sigma_{5}(\frac{n}{3}) \\
- \frac{80}{7371}\,\sigma_{5}(\frac{n}{9})
+ \frac{2367}{91}\,\sigma_{5}(\frac{n}{27})
- \frac{17968400}{9477}\,\EuFrak{b}_{27,1}(n)
- \frac{2746880}{243}\,\EuFrak{b}_{27,2}(n) \\
- \frac{5880080}{351}\,\EuFrak{b}_{27,3}(n)
+ \frac{104320}{9}\,\EuFrak{b}_{27,4}(n)
+ \frac{640}{3}\,\EuFrak{b}_{27,5}(n)
+ \frac{2560}{3}\,\EuFrak{b}_{27,6}(n) \\
- 50560\,\EuFrak{b}_{27,7}(n)
+ 1920\,\EuFrak{b}_{27,8}(n)
+ \frac{25200}{13}\,\EuFrak{b}_{27,9}(n)  \\
- 460800\,\EuFrak{b}_{27,10}(n)  
+ 1920\,\EuFrak{b}_{27,12}(n)
\,\biggr)\,q^{n}, 
\label{convolSum-eqn-1_27}
\end{multline}
\begin{multline}
E_{4}(q)\,(\,E_{2}(q) - 27\, E_{2}(q^{27})\,)  = 
- 26 + \sum_{n=1}^{\infty}\biggl(\, 
\frac{263}{273}\,\sigma_{5}(n)   
     - \frac{80}{273}\,\sigma_{5}(\frac{n}{3}) \\  
      - \frac{240}{91}\,\sigma_{5}(\frac{n}{9})  
      - \frac{2187}{91}\,\sigma_{5}(\frac{n}{27})   
 + \frac{54512400}{13}\,\EuFrak{b}_{27,1}(n) 
 + 25113600\,\EuFrak{b}_{27,2}(n)  \\
  + \frac{487185840}{13}\,\EuFrak{b}_{27,3}(n)  
  -25453440\,\EuFrak{b}_{27,4}(n)  
  -466560\,\EuFrak{b}_{27,5}(n)  \\
  -1451520\,\EuFrak{b}_{27,6}(n)  
 + 111818880\,\EuFrak{b}_{27,7}(n)  
  -1399680\,\EuFrak{b}_{27,8}(n)   \\
 - \frac{56628720}{13}\,\EuFrak{b}_{27,9}(n)  
 + 1018967040\,\EuFrak{b}_{27,10}(n)  
 -4199040\,\EuFrak{b}_{27,12}(n)
\,\biggr)\,q^{n}, 
\label{convolSum-eqn-27_1}
\end{multline}
\begin{multline}
(\,32\, E_{2}(q^{32}) - E_{2}(q)\,)\,E_{4}(q^{32})  = 
 31 + \sum_{n=1}^{\infty}\biggl(\, 
- \frac{1}{1376256}\,\sigma_{5}(n)
- \frac{5}{458752}\,\sigma_{5}(\frac{n}{2}) \\
- \frac{5}{28672}\,\sigma_{5}(\frac{n}{4})
- \frac{5}{1792}\,\sigma_{5}(\frac{n}{8})
- \frac{5}{112}\,\sigma_{5}(\frac{n}{16})
+ \frac{652}{21}\,\sigma_{5}(\frac{n}{32}) \\
- \frac{11599875}{8192}\,\EuFrak{b}_{32,1}(n)
+ \frac{36855}{512}\,\EuFrak{b}_{32,2}(n)
- \frac{540675}{32}\,\EuFrak{b}_{32,3}(n)
+ \frac{5355}{32}\,\EuFrak{b}_{32,4}(n) \\
+ \frac{23805}{2}\,\EuFrak{b}_{32,5}(n)
+ \frac{2295}{2}\,\EuFrak{b}_{32,6}(n)
- 90630\,\EuFrak{b}_{32,7}(n)
+ \frac{675}{2}\,\EuFrak{b}_{32,8}(n) \\
+ 23400\,\EuFrak{b}_{32,9}(n)
+ 1080\,\EuFrak{b}_{32,10}(n)
+ 1440\,\EuFrak{b}_{32,11}(n)
+ 2520\,\EuFrak{b}_{32,12}(n) \\
+ 96480\,\EuFrak{b}_{32,13}(n)
+ 4320\,\EuFrak{b}_{32,14}(n)
+ 11520\,\EuFrak{b}_{32,15}(n)
+ 1440\,\EuFrak{b}_{32,16}(n)
\,\biggr)\,q^{n},
\label{convolSum-eqn-1_32}
\end{multline}
\begin{multline}
E_{4}(q)\,(\,E_{2}(q) - 32\, E_{2}(q^{32})\,)  = 
- 31 + \sum_{n=1}^{\infty}\biggl(\, 
 \frac{163}{168}\,\sigma_{5}(n)  
    - \frac{5}{56}\,\sigma_{5}(\frac{n}{2})   \\ 
     - \frac{5}{14}\,\sigma_{5}(\frac{n}{4}) 
     - \frac{10}{7}\,\sigma_{5}(\frac{n}{8})  
     - \frac{40}{7}\,\sigma_{5}(\frac{n}{16})  
     - \frac{512}{21}\,\sigma_{5}(\frac{n}{32})     \\ 
+ 5891265\,\EuFrak{b}_{32,1}(n)  
  -56700\,\EuFrak{b}_{32,2}(n)  
 + 70536960\,\EuFrak{b}_{32,3}(n)  
  -367920\,\EuFrak{b}_{32,4}(n)     \\ 
  -48337920\,\EuFrak{b}_{32,5}(n)  
  -1658880\,\EuFrak{b}_{32,6}(n)  
 + 376197120\,\EuFrak{b}_{32,7}(n)  
  -1684800\,\EuFrak{b}_{32,8}(n)     \\ 
  -97228800\,\EuFrak{b}_{32,9}(n)
-5806080\,\EuFrak{b}_{32,10}(n)  
  -9216000\,\EuFrak{b}_{32,11}(n)   
 -6727680\,\EuFrak{b}_{32,12}(n)     \\ 
  -400711680\,\EuFrak{b}_{32,13}(n)   
 -3317760\,\EuFrak{b}_{32,14}(n)  
  -47185920\,\EuFrak{b}_{32,15}(n)  \\
  -5898240\,\EuFrak{b}_{32,16}(n)
\,\biggr)\,q^{n}.
\label{convolSum-eqn-32_1}
\end{multline}
\end{corollary}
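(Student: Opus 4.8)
The plan is to treat each of the five levels $\alpha\beta \in \{9,16,18,27,32\}$ by the same mechanism used for \autoref{lema-w_7-21}, now feeding in the explicit bases furnished by \autoref{basisCusp5_9}. First I would invoke \hyperref[evalConvolClass-lema-1]{Lemma \ref*{evalConvolClass-lema-1}} to guarantee that each left-hand side, namely $(\,\alpha\, E_{2}(q^{\alpha}) - E_{2}(q)\,)\,E_{4}(q^{\beta})$ and $E_{4}(q^{\alpha})\,(\,E_{2}(q) - \beta\, E_{2}(q^{\beta})\,)$, is a genuine element of $\M_{6}(\Gamma_{0}(\alpha\beta))$. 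By \autoref{basisCusp5_9}\,(c) this space has the basis $\EuScript{B}_{M,\alpha\beta} = \EuScript{B}_{E,\alpha\beta} \cup \EuScript{B}_{S,\alpha\beta}$, so \hyperref[convolution-lemma_a_b]{Lemma \ref*{convolution-lemma_a_b}} applies verbatim and expresses each left-hand side as $\sum_{\delta|\alpha\beta} X_{\delta} E_{6}(q^{\delta}) + \sum_{\chi\in\EuScript{C}}\sum_{s\in D_{\chi}(\alpha\beta)} Z(\chi)_{s} E_{6,\chi}(q^{s}) + \sum_{j=1}^{m_{S}} Y_{j} \EuFrak{B}_{\alpha\beta,j}(q)$ with undetermined constants.

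Next I would produce two independent $q$-expansions of the same modular form and match them. On one side, the Fourier coefficients of the left-hand side are computed directly from (\hyperref[evalConvolClass-eqn-3]{\ref*{evalConvolClass-eqn-3}}) and (\hyperref[evalConvolClass-eqn-4a]{\ref*{evalConvolClass-eqn-4a}}) by Cauchy multiplication, yielding the constant $\alpha-1$ (resp.\ $1-\beta$) together with an explicit $n$-th coefficient built from $\sigma$, $\sigma_{3}$ and the relevant convolution sums, exactly as in \autoref{convolutionSum_a_b}. On the other side, using $E_{6}(q^{\delta}) = 1 - 504\sum_{n}\sigma_{5}(\frac{n}{\delta})q^{n}$, the twisted expansion (\hyperref[evalConvolClass-eqn-4]{\ref*{evalConvolClass-eqn-4}}) for $E_{6,\chi}(q^{s})$, and the expansions $\EuFrak{B}_{\alpha\beta,j}(q) = \sum_{n} \EuFrak{b}_{\alpha\beta,j}(n)q^{n}$ read off from the eta-quotient tables, the coefficient of $q^{n}$ becomes $-504\sum_{\delta}\sigma_{5}(\frac{n}{\delta})X_{\delta} + \sum_{\chi,s}\chi(n)\sigma_{5}(\frac{n}{s})Z(\chi)_{s} + \sum_{j}\EuFrak{b}_{\alpha\beta,j}(n)Y_{j}$. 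Equating these for $n$ ranging over a set of size $m_{E}+m_{S}$ (for instance $1\le n\le \dim(\M_{6}(\Gamma_{0}(\alpha\beta)))$, enlarging the range if needed) gives a square linear system in the unknowns $X_{\delta}$, $Z(\chi)_{s}$, $Y_{j}$.

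The uniqueness of the solution is exactly the linear independence established in \autoref{basisCusp5_9}: the coefficient matrix is invertible, so the system has a single solution, which I would compute with the symbolic packages cited in the introduction. Substituting these values back into the decomposition and absorbing the factors $-504X_{\delta}$ into the displayed rational coefficients (such as $-\frac{1}{7371}$ or $\frac{729}{91}$) produces each stated identity; one verifies in passing that every twisted coefficient $Z(\chi)_{s}$ vanishes, which is why the final formulae contain only $\sigma_{5}(\frac{n}{\delta})$ and cusp terms and carry no Legendre--Jacobi--Kronecker factor. The second displayed identity for each level follows either in the same way or, more economically, from \autoref{convolutionSum-theor-sp} relating $W^{1,3}$ to $W^{3,1}$.

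I expect the principal difficulty to be computational rather than conceptual: for the larger levels the systems are sizeable---eight Eisenstein plus sixteen cusp unknowns when $\alpha\beta=32$---so one must generate enough Fourier coefficients of the sixteen eta-quotient cusp generators and confirm that the chosen equations give a non-singular matrix. Should the initial choice of exponents $n$ yield a singular system, the remedy prescribed in the proof of \autoref{basisCusp_a_b}\,(b), namely swapping in an alternative cusp generator of $\S_{6}(\Gamma_{0}(\alpha\beta))$ not already in $\EuScript{B}_{S,\alpha\beta}$, restores invertibility, after which the argument closes uniformly across all five levels.
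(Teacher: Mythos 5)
Your proposal is correct and follows essentially the same route as the paper: membership in $\M_{6}(\Gamma_{0}(\alpha\beta))$ via \hyperref[evalConvolClass-lema-1]{Lemma \ref*{evalConvolClass-lema-1}}, expansion over the bases of \autoref{basisCusp5_9} through \hyperref[convolution-lemma_a_b]{Lemma \ref*{convolution-lemma_a_b}}, and then equating coefficients of $q^{n}$ for $m_{E}+m_{S}$ suitably chosen exponents to obtain a uniquely solvable linear system (the paper does exactly this for $(\alpha,\beta)=(1,18)$ with $19$ equations, citing \autoref{convolutionSum-theor-sp} for the companion identities). Your additional observations, such as the vanishing of the twisted coefficients $Z(\chi)_{s}$ and the cusp-generator swap as a fallback, are consistent with remarks the paper makes elsewhere.
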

\begin{proof} We give the proof for the case where $\alpha=1$ and $\beta=18$. 
The proof for the other cases can be done similarly. 

This follows immediately when one sets $\alpha=1$ and $\beta=18$ in 
\hyperref[convolution-lemma_a_b]{Lemma \ref*{convolution-lemma_a_b}}.
However, we briefly show the proof 
for $(18\,\,E_{2}(q^{18}) - E_{2}(q) )\,E_{4}(q^{18})$ as an example. One obtains 
\begin{align}
(18\,E_{2}(q^{18}) - E_{2}(q) )\,E_{4}(q^{18})  & =   
\sum_{\delta|18}\,x_{\delta} E_{6}(q^{\delta}) + z_{1}\,E_{6,\legendre{-4}{n}}(q) 
+ z_{2}\,E_{6,\legendre{-4}{n}}(q^{2})  \notag \\ & 
  + \sum_{j=1}^{11}\,y_{j}\,\EuFrak{B}_{18,j}(q)  \notag \\ &  
  = \sum_{\delta|18}\,x_{\delta} 
  + C_{0}\,z_{1} + C_{0}\,z_{3}
  + \sum_{i=1}^{\infty}\biggl( 
  \sum_{\delta|18}\,- 504\,\sigma_{5}(\frac{n}{\delta})\,x_{\delta}\notag \\ &
  + \legendre{-4}{n}\,\sigma_{5}(n)\,z_{1}  
  + \legendre{-4}{n}\,\sigma_{5}(\frac{n}{2})\,z_{2} 
  \quad + \sum_{j=1}^{11}\,y_{j}\,\EuFrak{b}_{18,k}(n)\,\biggr)\,q^{n}. 
      \label{convolution_1_18-eqn-0}
\end{align} 
We apply the primitive Dirichlet character  
\begin{equation*} \label{base-3_16-kronecker}
\legendre{-4}{n}= \begin{cases}
 -1 & \text{ if } n\equiv 3\pmod{4}, \\
 0 & \text{ if } \text{gcd}(4,n) \neq 1, \\
 1 & \text{ if } n\equiv 1\pmod{4}. 
	\end{cases}
\end{equation*}

Since the conductor of the Dirichlet character $\legendre{-4}{n}$ is greater than zero, from (\hyperref[Eisenstein-gen]{\ref*{Eisenstein-gen}})
we have $C_{0}=0$.   
Now when we equate the right hand side of 
(\hyperref[convolution_1_18-eqn-0]{\ref*{convolution_1_18-eqn-0}})
with that of  
(\hyperref[evalConvolClass-eqn-11]{\ref*{evalConvolClass-eqn-11}}), and when we take the 
coefficients of $q^{n}$ for which $1\leq n\leq 11$ and 
$n=12$, $13$, $14$, $15$, $16$, $17$, $18$, $36$ for example, we 
obtain a system of $19$ linear equations with a unique solution. 
Hence, we obtain the stated result.
\end{proof} 
 
Now we state and prove our main result of this subsection. 
\begin{corollary} \label{convolutionSum-w_9_32}
Let $n$ be a positive integer. Then 
\begin{align}
 W_{(1,9)}^{1,3}(n) \, = W_{(9,1)}^{3,1}(n) \,=\, & 
\frac{1}{84240}\,\sigma_{5}(n) 
+ \frac{1}{1053}\,\sigma_{5}(\frac{n}{3}) 
+ \frac{9}{104}\,\sigma_{5}(\frac{n}{9})        \notag \\ &
+ \frac{1}{24}\,\sigma_{3}(\frac{n}{9})    
- \frac{1}{8}\,n\,\sigma_{3}(\frac{n}{9})  
- \frac{1}{240}\,\sigma(n)    \notag \\  & 
+ \frac{35}{8424}\,\EuFrak{b}_{18,1}(n) 
+ \frac{1}{27}\,\EuFrak{b}_{18,2}(n) 
+ \frac{9}{104}\,\EuFrak{b}_{18,3}(n) 
\label{convolutionSum-w_1_9}
\end{align}
\begin{align}
 W_{(1,9)}^{3,1}(n) \, =\, W_{(9,1)}^{1,3}(n) \, =\, &
\frac{1}{936}\,\sigma_{5}(n)   
 + \frac{1}{117}\,\sigma_{5}(\frac{n}{3})   
 + \frac{81}{1040}\,\sigma_{5}(\frac{n}{9})        \notag \\ &  
+ \frac{1}{24}\,\sigma_{3}(n)    
- \frac{1}{72}\,n\,\sigma_{3}(n)
- \frac{1}{240}\,\sigma(\frac{n}{9})     \notag \\  & 
- \frac{3}{104}\,\EuFrak{b}_{18,1}(n)   
- \frac{1}{3}\,\EuFrak{b}_{18,2}(n) 
- \frac{105}{104}\,\EuFrak{b}_{18,3}(n), 
\label{convolutionSum-w_9_1}
\end{align}
\begin{align}
 W_{(1,16)}^{1,3}(n) \, =\, W_{(16,1)}^{3,1}(n) \, =\, &
\frac{1}{983040}\,\sigma_{5}(n) 
+ \frac{1}{65536}\,\sigma_{5}(\frac{n}{2}) 
+ \frac{1}{4096}\,\sigma_{5}(\frac{n}{4})    \notag \\  & 
+ \frac{1}{256}\,\sigma_{5}(\frac{n}{8}) 
+ \frac{1}{12}\,\sigma_{5}(\frac{n}{16}) 
+ \frac{1}{24}\,\sigma_{3}(\frac{n}{16})    
- \frac{1}{8}\,n\,\sigma_{3}(\frac{n}{16})     \notag \\  & 
- \frac{1}{240}\,\sigma(n) 
+ \frac{273}{65536}\,\EuFrak{b}_{32,1}(n)
+ \frac{51}{4096}\,\EuFrak{b}_{32,2}(n)
+ \frac{17}{256}\,\EuFrak{b}_{32,3}(n)   \notag \\  & 
+ \frac{7}{256}\,\EuFrak{b}_{32,4}(n)
+ \frac{1}{16}\,\EuFrak{b}_{32,5}(n) 
+ \frac{3}{16}\,\EuFrak{b}_{32,6}(n)
+ \frac{1}{4}\,\EuFrak{b}_{32,7}(n), 
\label{convolutionSum-w_1_16}
\end{align}
\begin{align}
 W_{(1,16)}^{3,1}(n) \, = \, W_{(16,1)}^{1,3}(n) \, = \, & 
\frac{1}{3072}\,\sigma_{5}(n)  
  + \frac{1}{1024}\,\sigma_{5}(\frac{n}{2})   
 + \frac{1}{256}\,\sigma_{5}(\frac{n}{4}) 
  + \frac{1}{64}\,\sigma_{5}(\frac{n}{8})     \notag \\  &   
 + \frac{1}{15}\,\sigma_{5}(\frac{n}{16}) 
+ \frac{1}{24}\,\sigma_{3}(n)    
- \frac{1}{128}\,n\,\sigma_{3}(n)
- \frac{1}{240}\,\sigma(\frac{n}{16})     \notag \\  & 
- \frac{35}{1024}\,\EuFrak{b}_{32,1}(n) 
  - \frac{63}{256}\,\EuFrak{b}_{32,2}(n) 
  -\,\EuFrak{b}_{32,3}(n)
- \frac{73}{64}\,\EuFrak{b}_{32,4}(n)     \notag \\  & 
- \frac{7}{2}\,\EuFrak{b}_{32,5}(n)  
- \frac{9}{2}\,\EuFrak{b}_{32,6}(n)
-2\,\EuFrak{b}_{32,7}(n)
\label{convolutionSum-w_16_1}
\end{align}
\begin{align}
 W_{(1,18)}^{1,3}(n) \, = \, W_{(18,1)}^{3,1}(n) \, = \, & 
\frac{1}{1769040}\,\sigma_{5}(n) 
+ \frac{1}{88452}\,\sigma_{5}(\frac{n}{2}) 
+ \frac{1}{22113}\,\sigma_{5}(\frac{n}{3})   \notag \\  & 
+ \frac{20}{22113}\,\sigma_{5}(\frac{n}{6}) 
+ \frac{3}{728}\,\sigma_{5}(\frac{n}{9}) 
+ \frac{15}{182}\,\sigma_{5}(\frac{n}{18}) 
+ \frac{1}{24}\,\sigma_{3}(\frac{n}{18})      \notag \\  & 
- \frac{1}{8}\,n\,\sigma_{3}(\frac{n}{18}) 
- \frac{1}{240}\,\sigma(n) 
+ \frac{349}{252720}\,\EuFrak{b}_{18,1}(n)     \notag \\  & 
+ \frac{18359}{884520}\,\EuFrak{b}_{18,2}(n) 
+ \frac{2449}{117936}\,\EuFrak{b}_{18,3}(n)
+ \frac{961}{5670}\,\EuFrak{b}_{18,4}(n)     \notag \\  & 
- \frac{17}{1680}\,\EuFrak{b}_{18,5}(n)
+ \frac{25205}{58968}\,\EuFrak{b}_{18,6}(n)
- \frac{19}{1890}\,\EuFrak{b}_{18,7}(n)      \notag \\  & 
- \frac{157}{1890}\,\EuFrak{b}_{18,8}(n)
+ \frac{199}{1260}\,\EuFrak{b}_{18,9}(n)
+ \frac{611}{1890}\,\EuFrak{b}_{18,10}(n)      \notag \\  & 
+ \frac{379}{136080}\,\EuFrak{b}_{18,11}(n)
\label{convolutionSum-w_1_18}
\end{align}
\begin{align}
 W_{(1,18)}^{3,1}(n)\,  =\, W_{(18,1)}^{1,3}(n) \, = \, &
 \frac{5}{19656}\,\sigma_{5}(n)   
 + \frac{2}{2457}\,\sigma_{5}(\frac{n}{2})   
 + \frac{5}{2457}\,\sigma_{5}(\frac{n}{3}) 
 + \frac{16}{2457}\,\sigma_{5}(\frac{n}{6})   \notag \\  &      
 + \frac{27}{1456}\,\sigma_{5}(\frac{n}{9})  
 +  \frac{27}{455}\,\sigma_{5}(\frac{n}{18}) 
+ \frac{1}{24}\,\sigma_{3}(n)    
- \frac{1}{144}\,n\,\sigma_{3}(n)   \notag \\  & 
- \frac{1}{240}\,\sigma(\frac{n}{18}) 
- \frac{451}{28080}\,\EuFrak{b}_{18,1}(n)   
 - \frac{17473}{49140}\,\EuFrak{b}_{18,2}(n)       \notag \\  &   
 - \frac{8077}{13104}\,\EuFrak{b}_{18,3}(n)  
 - \frac{1112}{315}\,\EuFrak{b}_{18,4}(n)   
  - \frac{393}{280}\,\EuFrak{b}_{18,5}(n)       \notag \\  &   
 - \frac{43795}{3276}\,\EuFrak{b}_{18,6}(n)   
 - \frac{202}{105}\,\EuFrak{b}_{18,7}(n)     
 + \frac{149}{105}\,\EuFrak{b}_{18,8}(n)      \notag \\  &   
 + \frac{277}{70}\,\EuFrak{b}_{18,9}(n)   
 - \frac{517}{105}\,\EuFrak{b}_{18,10}(n)   
 - \frac{143}{7560}\,\EuFrak{b}_{18,11}(n), 
\label{convolutionSum-w_18_1}
\end{align}
\begin{align}
 W_{(2,9)}^{1,3}(n)\,  =\, W_{(9,2)}^{3,1}(n) \, = \, &
\frac{1}{353808}\,\sigma_{5}(n) 
+ \frac{1}{110565}\,\sigma_{5}(\frac{n}{2})   
+ \frac{5}{22113}\,\sigma_{5}(\frac{n}{3})      \notag \\  &  
+ \frac{16}{22113}\,\sigma_{5}(\frac{n}{6})   
+ \frac{15}{728}\,\sigma_{5}(\frac{n}{9})   
+ \frac{6}{91}\,\sigma_{5}(\frac{n}{18})   
+ \frac{1}{24}\,\sigma_{3}(\frac{n}{9})      \notag \\  & 
- \frac{1}{8}\,n\,\sigma_{3}(\frac{n}{9}) 
- \frac{1}{240}\,\sigma(\frac{n}{2}) 
- \frac{361}{252720}\,\EuFrak{b}_{18,1}(n)   
- \frac{1993}{442260}\,\EuFrak{b}_{18,2}(n)       \notag \\  & 
+ \frac{2417}{117936}\,\EuFrak{b}_{18,3}(n)   
- \frac{2}{2835}\,\EuFrak{b}_{18,4}(n)   
- \frac{11}{840}\,\EuFrak{b}_{18,5}(n)       \notag \\  & 
- \frac{19}{29484}\,\EuFrak{b}_{18,6}(n)   
- \frac{142}{945}\,\EuFrak{b}_{18,7}(n)   
+ \frac{269}{945}\,\EuFrak{b}_{18,8}(n)   
+ \frac{277}{630}\,\EuFrak{b}_{18,9}(n)      \notag \\  &  
- \frac{457}{945}\,\EuFrak{b}_{18,10}(n)   
+ \frac{97}{68040}\,\EuFrak{b}_{18,11}(n), 
\label{convolutionSum-w_2_9}
\end{align}
\begin{align}
 W_{(2,9)}^{3,1}(n)\,  =\, W_{(9,2)}^{1,3}(n) \, = \, &
\frac{1}{19656}\,\sigma_{5}(n) 
+ \frac{5}{4914}\,\sigma_{5}(\frac{n}{2}) 
+ \frac{1}{2457}\,\sigma_{5}(\frac{n}{3})  
+ \frac{20}{2457}\,\sigma_{5}(\frac{n}{6})   \notag \\  & 
+ \frac{27}{7280}\,\sigma_{5}(\frac{n}{9}) 
+ \frac{27}{364}\,\sigma_{5}(\frac{n}{18}) 
+ \frac{1}{24}\,\sigma_{3}(\frac{n}{2})    
- \frac{1}{72}\,n\,\sigma_{3}(\frac{n}{2})   \notag \\  & 
- \frac{1}{240}\,\sigma(\frac{n}{9}) 
+ \frac{331}{28080}\,\EuFrak{b}_{18,1}(n)   
+ \frac{5321}{98280}\,\EuFrak{b}_{18,2}(n)   
- \frac{2489}{13104}\,\EuFrak{b}_{18,3}(n)      \notag \\  & 
+ \frac{139}{630}\,\EuFrak{b}_{18,4}(n)   
- \frac{69}{560}\,\EuFrak{b}_{18,5}(n)   
+ \frac{4763}{6552}\,\EuFrak{b}_{18,6}(n)   
- \frac{1}{210}\,\EuFrak{b}_{18,7}(n)      \notag \\  & 
- \frac{163}{210}\,\EuFrak{b}_{18,8}(n)   
- \frac{599}{140}\,\EuFrak{b}_{18,9}(n)   
+ \frac{629}{210}\,\EuFrak{b}_{18,10}(n)   
- \frac{179}{15120}\,\EuFrak{b}_{18,11}(n),    
\label{convolutionSum-w_9_2}
\end{align}
\begin{align}
 W_{(1,27)}^{1,3}(n)\,  =\, W_{(27,1)}^{3,1}(n) \, = \, &
\frac{1}{6823440}\,\sigma_{5}(n) 
+ \frac{1}{85293}\,\sigma_{5}(\frac{n}{3}) 
+ \frac{1}{1053}\,\sigma_{5}(\frac{n}{9})      \notag \\  & 
+ \frac{9}{104}\,\sigma_{5}(\frac{n}{27}) 
+ \frac{1}{24}\,\sigma_{3}(\frac{n}{27}) 
- \frac{3}{24}\,n\,\sigma_{3}(\frac{n}{27})      \notag \\  & 
- \frac{1}{240}\,\sigma(n) 
- \frac{224605}{682344}\,\EuFrak{b}_{27,1}(n) 
- \frac{4292}{2187}\,\EuFrak{b}_{27,2}(n)     \notag \\  & 
- \frac{73501}{25272}\,\EuFrak{b}_{27,3}(n) 
+ \frac{163}{81}\,\EuFrak{b}_{27,4}(n) 
+ \frac{1}{27}\,\EuFrak{b}_{27,5}(n)     \notag \\  & 
+ \frac{4}{27}\,\EuFrak{b}_{27,6}(n) 
- \frac{79}{9}\,\EuFrak{b}_{27,7}(n) 
+ \frac{1}{3}\,\EuFrak{b}_{27,8}(n) 
+ \frac{35}{104}\,\EuFrak{b}_{27,9}(n)     \notag \\  & 
- 80\,\EuFrak{b}_{27,10}(n) 
+ \frac{1}{3}\,\EuFrak{b}_{27,12}(n), 
\label{convolutionSum-w_1_27}
\end{align}
\begin{align}
 W_{(1,27)}^{3,1}(n) \, =\, W_{(27,1)}^{1,3}(n) \, = \, &
\frac{1}{8424}\,\sigma_{5}(n)    
 + \frac{1}{1053}\,\sigma_{5}(\frac{n}{3})    
 + \frac{1}{117}\,\sigma_{5}(\frac{n}{9})     \notag \\  &    
 + \frac{81}{1040}\,\sigma_{5}(\frac{n}{27})   
+ \frac{1}{24}\,\sigma_{3}(n)    
- \frac{1}{216}\,n\,\sigma_{3}(n)  
- \frac{1}{240}\,\sigma(\frac{n}{27})     \notag \\  & 
 + \frac{227135}{8424}\,\EuFrak{b}_{27,1}(n)    
 + \frac{4360}{27}\,\EuFrak{b}_{27,2}(n)   
 + \frac{25061}{104}\,\EuFrak{b}_{27,3}(n)         \notag \\  & 
 - \frac{491}{3}\,\EuFrak{b}_{27,4}(n)  
 -3\,\EuFrak{b}_{27,5}(n)    
 - \frac{28}{3}\,\EuFrak{b}_{27,6}(n)   
 + 719\,\EuFrak{b}_{27,7}(n)        \notag \\  & 
  -9\,\EuFrak{b}_{27,8}(n)     
  - \frac{2913}{104}\,\EuFrak{b}_{27,9}(n)   
 + 6552\,\EuFrak{b}_{27,10}(n)   
  -27\,\EuFrak{b}_{27,12}(n), 
\label{convolutionSum-w_27_1}
\end{align}
\begin{align}
 W_{(1,32)}^{1,3}(n) \, =\, W_{(32,1)}^{3,1}(n) \, = \, &
\frac{1}{15728640}\,\sigma_{5}(n)
+ \frac{1}{1048576}\,\sigma_{5}(\frac{n}{2})
+ \frac{1}{65536}\,\sigma_{5}(\frac{n}{4})     \notag \\  & 
+ \frac{1}{4096}\,\sigma_{5}(\frac{n}{8})
+ \frac{1}{256}\,\sigma_{5}(\frac{n}{16})
+ \frac{1}{12}\,\sigma_{5}(\frac{n}{32})
+ \frac{1}{24}\,\sigma_{3}(\frac{n}{32})      \notag \\  & 
- \frac{3}{24}\,n\,\sigma_{3}(\frac{n}{32}) 
- \frac{1}{240}\,\sigma(n) 
- \frac{257775}{1048576}\,\EuFrak{b}_{32,1}(n)      \notag \\  & 
+ \frac{819}{65536}\,\EuFrak{b}_{32,2}(n) 
- \frac{12015}{4096}\,\EuFrak{b}_{32,3}(n) 
+ \frac{119}{4096}\,\EuFrak{b}_{32,4}(n)      \notag \\  & 
+ \frac{529}{256}\,\EuFrak{b}_{32,5}(n) 
+ \frac{51}{256}\,\EuFrak{b}_{32,6}(n) 
- \frac{1007}{64}\,\EuFrak{b}_{32,7}(n)      \notag \\  & 
+ \frac{15}{256}\,\EuFrak{b}_{32,8}(n) 
+ \frac{65}{16}\,\EuFrak{b}_{32,9}(n) 
+ \frac{3}{16}\,\EuFrak{b}_{32,10}(n) 
+ \frac{1}{4}\,\EuFrak{b}_{32,11}(n)      \notag \\  & 
+ \frac{7}{16}\,\EuFrak{b}_{32,12}(n) 
+ \frac{67}{4}\,\EuFrak{b}_{32,13}(n) 
+ \frac{3}{4}\,\EuFrak{b}_{32,14}(n) 
+ 2\,\EuFrak{b}_{32,15}(n)      \notag \\  & 
+ \frac{1}{4}\,\EuFrak{b}_{32,16}(n), 
\label{convolutionSum-w_1_32}
\end{align}
\begin{align}
 W_{(1,32)}^{3,1}(n) \, =\, W_{(32,1)}^{1,3}(n) \, = \, &
\frac{1}{12288}\,\sigma_{5}(n)  
+ \frac{1}{4096}\,\sigma_{5}(\frac{n}{2}) 
 + \frac{1}{1024}\,\sigma_{5}(\frac{n}{4}) 
 + \frac{1}{256}\,\sigma_{5}(\frac{n}{8})     \notag \\  & 
 + \frac{1}{64}\,\sigma_{5}(\frac{n}{16}) 
 + \frac{1}{15}\,\sigma_{5}(\frac{n}{32})  
+ \frac{1}{24}\,\sigma_{3}(n)    
- \frac{1}{256}\,n\,\sigma_{3}(n)   \notag \\  & 
- \frac{1}{240}\,\sigma(\frac{n}{32}) 
 + \frac{130917}{4096}\,\EuFrak{b}_{32,1}(n)  
 - \frac{315}{1024}\,\EuFrak{b}_{32,2}(n)    \notag \\  &  
 + \frac{6123}{16}\,\EuFrak{b}_{32,3}(n)  
     - \frac{511}{256}\,\EuFrak{b}_{32,4}(n)  
     - \frac{1049}{4}\,\EuFrak{b}_{32,5}(n)  
 - 9\,\EuFrak{b}_{32,6}(n)      \notag \\  & 
 + 2041\,\EuFrak{b}_{32,7}(n)   
      - \frac{585}{64}\,\EuFrak{b}_{32,8}(n)  
     - \frac{1055}{2}\,\EuFrak{b}_{32,9}(n)  
     - \frac{63}{2}\,\EuFrak{b}_{32,10}(n)     \notag \\  & 
 -50\,\EuFrak{b}_{32,11}(n)      
 - \frac{73}{2}\,\EuFrak{b}_{32,12}(n)  
 -2174\,\EuFrak{b}_{32,13}(n)    
 -18\,\EuFrak{b}_{32,14}(n)      \notag \\  & 
  -256\,\EuFrak{b}_{32,15}(n)  
  -32\,\EuFrak{b}_{32,16}(n).
\label{convolutionSum-w_32_1}
\end{align}
\end{corollary}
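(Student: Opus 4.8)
The plan is to obtain every identity as a concrete specialisation of \autoref{convolution_a_b} --- or, for the coprime pair $(2,9)$ at level $18$ where both entries exceed $1$, of the general identities of \autoref{convolutionSum_a_b} together with \autoref{convolution-lemma_a_b} --- fed by the explicit modular-form expansions already recorded in \autoref{lema_9_32}. The computation organises itself by level $\alpha\beta\in\{9,16,18,27,32\}$, and within each level by the admissible ordered pairs $(\alpha,\beta)$ with $\gcd(\alpha,\beta)=1$.

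First I would read off the basis coefficients. For a fixed form, \autoref{lema_9_32} expresses $(\,\alpha\,E_{2}(q^{\alpha})-E_{2}(q)\,)\,E_{4}(q^{\beta})$ (respectively $E_{4}(q^{\alpha})\,(\,E_{2}(q)-\beta\,E_{2}(q^{\beta})\,)$) as an explicit linear combination of the Eisenstein elements $E_{6}(q^{\delta})$, $E_{6,\chi}(q^{s})$ and the cusp elements $\EuFrak{B}_{\alpha\beta,j}(q)$ constituting the basis $\EuScript{B}_{M}$ of $\M_{6}(\Gamma_{0}(\alpha\beta))$ furnished by \autoref{basisCusp5_9}. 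By construction this expansion has coefficients equal to the constants $X_{\delta}$, $Z(\chi)_{s}$, $Y_{j}$ of \autoref{convolution-lemma_a_b}, so the formulae of \autoref{lema_9_32} determine those constants outright.

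Next I would substitute and solve. For the prime-power levels $9,16,27,32$ each sum has one of $\alpha$ or $\beta$ equal to $1$, so I insert the constants just obtained into the matching closed form of \autoref{convolution_a_b} --- one of \eqref{convolution_a_1-1_3} through \eqref{convolution_a_1-3_1}, the diagonal forms \eqref{convolution_1_b-1_3}, \eqref{convolution_a_1-3_1} being exactly those extracted from $(\,\beta E_{2}(q^{\beta})-E_{2}(q)\,)E_{4}(q^{\beta})$ and $E_{4}(q^{\alpha})(\,E_{2}(q)-\alpha E_{2}(q^{\alpha})\,)$ as in \eqref{convolSum-eqn-1_9}, \eqref{convolSum-eqn-9_1} --- and collect the $\sigma_{5}(\tfrac n\delta)$, $\sigma_{3}$, $n\sigma_{3}$, $\sigma$ and $\EuFrak{b}_{\alpha\beta,j}(n)$ contributions to read off the displayed rational coefficients. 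For the pair $(2,9)$ at level $18$, neither entry is $1$, so \autoref{convolution_a_b} does not apply directly; instead I equate the $q^{n}$-coefficients of the expansions \eqref{convolSum-eqn-2_9}, \eqref{convolSum-eqn-9_2} with those of the general identities \eqref{evalConvolClass-eqn-11}, \eqref{evalConvolClass-eqn-11-2} and solve for $W_{(2,9)}^{1,3}(n)$ (resp.\ $W_{(9,2)}^{3,1}(n)$), using the already-established value of the auxiliary sum $W_{(1,9)}^{1,3}(n)$ that appears in those identities. Finally, each displayed equality between a $(1,3)$-sum and a $(3,1)$-sum, such as $W_{(1,9)}^{1,3}(n)=W_{(9,1)}^{3,1}(n)$ or $W_{(2,9)}^{1,3}(n)=W_{(9,2)}^{3,1}(n)$, is an immediate instance of \autoref{convolutionSum-theor-sp} (and, in the $\alpha=1$ or $\beta=1$ situations, of \autoref{convolutionSum-coro-sp}), so a single evaluation per pair suffices.

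The main obstacle is organisational rather than conceptual. Because $18$, $27$ and $32$ lie in $\mathbb{N}^{*}\setminus\N$, the plain divisor-sum Eisenstein series do not span $\E_{6}(\Gamma_{0}(\alpha\beta))$, and the twisted series $E_{6,\chi}(q^{s})$ with $\chi=\legendre{-4}{n}$ or $\legendre{-3}{n}$ must enter; I must therefore invoke \autoref{basisCusp5_9} to guarantee that these characters are \emph{not} annihilators of the relevant Eisenstein spaces, so that the linear system fixing $X_{\delta},Z(\chi)_{s},Y_{j}$ has a unique solution, and I must carry the character-weighted coefficients $\chi(n)\,\sigma_{5}(\tfrac ns)$ correctly through every step. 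Beyond this the only difficulty is the sheer volume of exact rational arithmetic over the large cusp bases (up to sixteen generators at level $32$), which is precisely what the symbolic packages cited in the introduction are used to discharge.
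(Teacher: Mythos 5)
Your proposal matches the paper's own proof: the paper likewise obtains each identity by specialising \autoref{convolution_a_b} with the constants $X_{\delta}$, $Z(\chi)_{s}$, $Y_{j}$ already fixed by the expansions of \autoref{lema_9_32}, and then invokes \autoref{convolutionSum-theor-sp} for the paired $(1,3)$/$(3,1)$ equalities. Your separate treatment of the pair $(2,9)$ --- going back to \autoref{convolutionSum_a_b} and \autoref{convolution-lemma_a_b} because neither index equals $1$, with the known auxiliary sums (namely $W_{(1,9)}^{1,3}(n)$ and, for the $(3,1)$ identity, the level-two sum $W_{(2,1)}^{3,1}(n)$) --- is precisely the step the paper's ``for example'' glosses over, so your write-up is a faithful, indeed slightly more careful, rendering of the same argument.
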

 
\begin{proof} 
Follows immediately when we set for example 
$(\alpha,\beta)=(1,9)$, $(1,16)$, $(1,18)$, $(1,27)$, $(1,32)$ in 
\autoref{convolution_a_b} and apply \autoref{convolutionSum-theor-sp}. 
\end{proof}

\begin{remark}
In all explicit 
examples evaluated as yet,  we notice that for all $\chi\in\EuScript{C}$ and for all $s\in D(\chi)$ the 
value of $Z(\chi)_{s}$ is zero; That means that the value of $Z(\chi)_{s}$ always  
vanish for all $\alpha\beta$ belonging to $\mathbb{N}^{*}\setminus\N$.	
\end{remark}


\section{Formulae for the Number of Representations of a positive Integer}  
\label{representations}

We make use of the convolution sums evaluated in 
\hyperref[convolution_33_40_56]{Section \ref*{convolution_33_40_56}}
among others to determine 
explicit formulae for the number of representations of a positive
integer $n$  by the quadratic forms (\hyperref[introduction-eq-1]{\ref*{introduction-eq-1}}) 
-- (\hyperref[introduction-eq-4]{\ref*{introduction-eq-4}}).

\subsection{Representations by Quadratic Forms (\hyperref[introduction-eq-1]{\ref*{introduction-eq-1}}) and 
(\hyperref[introduction-eq-2]{\ref*{introduction-eq-2}}) 
}

We give formulae for the number of representations of a positive integer $n$ 
by the Quadratic Form (\hyperref[introduction-eq-1]{\ref*{introduction-eq-1}}) and  (\hyperref[introduction-eq-2]{\ref*{introduction-eq-2}}). We apply among others the evaluated convolution sums for the levels $4$, 
$8$, $12$, $16$. To achieve  
these results, we recall that for example $12=2^{2}\cdot 3$ and $16=2^{4}$, which are of the 
restricted form in 
\hyperref[representations_a_b]{Section \ref*{representations_a_b}}. Therefore, 
we apply \hyperref[representation-prop-1]{Proposition \ref*{representation-prop-1}} 
to conclude that $\Omega_{4}=\{(1,3)\}$ in case $\alpha\beta=12$ and 
$\Omega_{4}=\{(1,4)\}$ in case $\alpha\beta=16$. 

\begin{corollary} \label{representations-coro-40_56}
	Let $n\in\mathbb{N}$ and $a,b)=(1,1)$, $(1,2)$, $(1,4)$. Then  
	\begin{align*}
N_{(1,1)}^{4,8}(n)\, = \,N_{(1,1)}^{8,4}(n) & = 
8\,\sigma_{5}(n) - 512\,\sigma_{5}(\frac{n}{4}) + 16\,\EuFrak{b}_{32,1}(n)\, =\, r_{12}(n),
	\end{align*}
	\begin{align*}
N_{(1,2)}^{4,8}(n)\, =\,N_{(2,1)}^{8,4}(n)\, =\, &  
\frac{1}{2}\,\sigma_{5}(n) 
- \frac{1}{2}\,\sigma_{5}(\frac{n}{2}) 
+ 8\,\sigma_{5}(\frac{n}{4}) 
- 512\,\sigma_{5}(\frac{n}{8}) 
+ \frac{15}{2}\,\EuFrak{b}_{32,1}(n)   \\ &
+ 24\,\EuFrak{b}_{32,2}(n)
+ 128 \,\EuFrak{b}_{32,3}(n),
	\end{align*}
	\begin{align*}
N_{(1,2)}^{8,4}(n)\, =\,N_{(2,1)}^{4,8}(n)\, =\, &
2\,\sigma_{5}(n) 
- 2\,\sigma_{5}(\frac{n}{2}) 
+ 8\,\sigma_{5}(\frac{n}{4}) 
- 512\,\sigma_{5}(\frac{n}{8}) 
+ 14\,\EuFrak{b}_{32,1}(n)   \\ &
- 16\,\EuFrak{b}_{32,1}(\frac{n}{2})  
+ 72\,\EuFrak{b}_{32,2}(n)
+ 256 \,\EuFrak{b}_{32,3}(n),
\end{align*}
\begin{align*}
N_{(1,4)}^{4,8}(n)\, = \,N_{(4,1)}^{8,4}(n)\, = \,  &
\frac{1}{32}\,\sigma_{5}(n) 
+ \frac{15}{32}\,\sigma_{5}(\frac{n}{2}) 
- \frac{65}{2}\,\sigma_{5}(\frac{n}{4}) 
+ 40\,\sigma_{5}(\frac{n}{8}) 
- 512\,\sigma_{5}(\frac{n}{16})   \\ &
+ \frac{255}{32}\,\EuFrak{b}_{32,1}(n) 
- 32\,\EuFrak{b}_{32,1}(\frac{n}{4})  
+ \frac{45}{2}\,\EuFrak{b}_{32,2}(n)
+ 120 \,\EuFrak{b}_{32,3}(n)  \\ &
+ 56\,\EuFrak{b}_{32,4}(n) 
+ 128\,\EuFrak{b}_{32,5}(n)   
+ 384\,\EuFrak{b}_{32,6}(n)  
+ 512\,\EuFrak{b}_{32,7}(n),
	\end{align*}
\begin{align*}
N_{(1,4)}^{8,4}(n)\, =\, N_{(4,1)}^{4,8}(n)\, =\, &
\frac{1}{2}\,\sigma_{5}(n) 
- \frac{5}{2}\,\sigma_{5}(\frac{n}{2}) 
+ 130\,\sigma_{5}(\frac{n}{4}) 
- 120\,\sigma_{5}(\frac{n}{8}) 
- 512\,\sigma_{5}(\frac{n}{16})   \\ &
+ \frac{31}{2}\,\EuFrak{b}_{32,1}(n) 
- 28\,\EuFrak{b}_{32,1}(\frac{n}{2})  
- 128\,\EuFrak{b}_{32,1}(\frac{n}{4})  
+ 126\,\EuFrak{b}_{32,2}(n)  \\ &
- 144\,\EuFrak{b}_{32,2}(\frac{n}{2})  
+ 512 \,\EuFrak{b}_{32,3}(n)
- 512\,\EuFrak{b}_{32,3}(\frac{n}{2})  
+ 584\,\EuFrak{b}_{32,4}(n)   \\ &  
+ 1792\,\EuFrak{b}_{32,5}(n)
+ 2304\,\EuFrak{b}_{32,6}(n)
+ 1024\,\EuFrak{b}_{32,7}(n).
	\end{align*}
\end{corollary}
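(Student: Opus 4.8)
The plan is to handle the three pairs $(a,b)\in\{(1,1),(1,2),(1,4)\}$ one at a time, in each case starting from the master formula for $N_{(a,b)}^{4,8}(n)$ established in Theorem~\ref{representations-theor_a_b}. For a fixed $(a,b)$ that formula writes $N_{(a,b)}^{4,8}(n)$ as an explicit integer combination of the six convolution sums $W_{(a,b)}^{1,3}(n)$, $W_{(a,2b)}^{1,3}(n)$, $W_{(a,4b)}^{1,3}(n)$, $W_{(4a,b)}^{1,3}(n)$, $W_{(2a,b)}^{1,3}(\tfrac{n}{2})$, $W_{(a,b)}^{1,3}(\tfrac{n}{4})$, plus the elementary Jacobi terms $8\sigma(\tfrac{n}{a})-32\sigma(\tfrac{n}{4a})+16\sigma_3(\tfrac{n}{b})-32\sigma_3(\tfrac{n}{2b})+256\sigma_3(\tfrac{n}{4b})$. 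Since $4ab\in\{4,8,16\}$ for these pairs, every one of the six convolution sums has level dividing $16$, and hence—via the basis inclusion \eqref{basis-cusp-eqn} embedding $\M_6(\Gamma_0(16))$ in $\M_6(\Gamma_0(32))$—already possesses a closed form proved earlier: the level-$1$ and level-$2$ sums come from Ramanujan's identity \eqref{Ramanujan-ident-1-3-gen} and its classical companions, while the level-$4$, $8$ and $16$ sums are read off from \eqref{convolutionSum-w_1_4}, \eqref{convolutionSum-w_4_1}, \eqref{convolutionSum-w_1_8}, \eqref{convolutionSum-w_8_1}, \eqref{convolutionSum-w_1_16} and \eqref{convolutionSum-w_16_1} in Corollary~\ref{convolutionSum-w_3-21} and Corollary~\ref{convolutionSum-w_9_32}. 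Whenever a subscript pair is not coprime, such as $W_{(4,2)}^{1,3}$ or $W_{(4,4)}^{1,3}$, I first reduce it by \eqref{convolutionSumsEqns-gcd} to a coprime sum at a scaled argument before substituting.

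With those substitutions in place, the computation splits into two independent bookkeeping stages. First I collect the pure divisor-function contributions: the $\sigma_5$, $\sigma_3$, $n\sigma_3$ and $\sigma$ pieces emerging from the six convolution sums combine with the leading Jacobi terms, and the rational coefficients must conspire to collapse onto the clean coefficients in the statement—for $(a,b)=(1,1)$, for example, the $\sigma_3$ and $\sigma$ contributions have to cancel outright, leaving exactly $8\sigma_5(n)-512\sigma_5(\tfrac{n}{4})$. Second, I assemble the cusp contributions: each convolution sum carries a tail in the eta-quotient basis $\EuFrak{b}_{32,i}$, and these tails, once the halved and quartered arguments $\EuFrak{b}_{32,i}(\tfrac{n}{2})$ and $\EuFrak{b}_{32,i}(\tfrac{n}{4})$ produced by the $W(\tfrac{n}{2})$ and $W(\tfrac{n}{4})$ terms are taken into account, add up to the displayed cusp combination. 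The companion identities for $N_{(a,b)}^{8,4}(n)$ then follow either by rerunning the argument with the second master formula of Theorem~\ref{representations-theor_a_b}, or more cheaply by invoking the symmetry $N_{(a,b)}^{8,4}(n)=N_{(b,a)}^{4,8}(n)$ of Corollary~\ref{representations-corol_a_b} together with Theorem~\ref{convolutionSum-theor-sp}; the equality $N_{(1,1)}^{4,8}(n)=N_{(1,1)}^{8,4}(n)$, and its identification with $r_{12}(n)$, is then immediate from the remark following Corollary~\ref{representations-corol_a_b}.

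The only real difficulty is arithmetic rather than structural. The hard part will be verifying that the unwieldy rational coefficients of the $\sigma$-type terms cancel down to the simple fractions in the statement and that the numerous $\EuFrak{b}_{32,i}$ coefficients—each possibly shifted by $\tfrac{n}{2}$ or $\tfrac{n}{4}$—combine correctly; this is a large but entirely mechanical linear check, precisely the task delegated to the symbolic computation packages named in the introduction. I would organize the verification by comparing the $q$-expansions of both sides to a number of coefficients comfortably exceeding $\dim\M_6(\Gamma_0(32))$, which simultaneously confirms the coefficient bookkeeping and guards against any transcription error in the substituted convolution-sum formulae.
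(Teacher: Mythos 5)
Your proposal follows the paper's own proof essentially verbatim: the paper likewise specializes Theorem~\ref{representations-theor_a_b} to each pair $(a,b)$, reduces the non-coprime sums $W_{(4a,b)}^{1,3}$, $W_{(2a,b)}^{1,3}$ via (\ref{convolutionSumsEqns-gcd}), and then substitutes the known evaluations — Ramanujan's identity (\ref{Ramanujan-ident-1-3-gen}) for level $1$, Huard et al.\ for level $2$, and (\ref{convolutionSum-w_1_4}), (\ref{convolutionSum-w_4_1}), (\ref{convolutionSum-w_1_8}), (\ref{convolutionSum-w_8_1}), (\ref{convolutionSum-w_1_16}), (\ref{convolutionSum-w_16_1}) for levels $4$, $8$, $16$ — before collecting the $\sigma$-type and $\EuFrak{b}_{32,i}$ terms, obtaining the $N^{8,4}$ identities from the companion master formula and the symmetry of Theorem~\ref{convolutionSum-theor-sp}. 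The only addition is your suggested $q$-expansion check past $\dim\M_{6}(\Gamma_{0}(32))$, which is a sensible safeguard but not part of the paper's argument.
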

\begin{proof} 
	These formulae follow immediately from \autoref{representations-theor_a_b} 
	when we set for example $(a,b)=(1,4)$. 
	\begin{align*}
N_{(1,4)}^{4,8}(n)  &  = 
8\,\sigma(n) - 32\,\sigma(\frac{n}{4}) + 16\,\sigma_{3}(\frac{n}{4}) - 32\,\sigma_{3}(\frac{n}{8}) 
+ 256\sigma_{3}(\frac{n}{16})   \\ & 
+ 128\, W_{(1,4)}^{1,3}(n) - 256\, W_{(1,8)}^{1,3}(n) + 2048\, W_{(1,16)}^{1,3}(n) - 512\, W_{(1,1)}^{1,3}(\frac{n}{4})      \\ & 
+ 1024\, W_{(1,2)}^{1,3}(\frac{n}{4})     
- 8192\, W_{(1,4)}^{1,3}(\frac{n}{4}) 
\end{align*}
\begin{align*}
N_{(1,4)}^{8,4}(n) & = ~ 
8\,\sigma(\frac{n}{4}) - 32\,\sigma(\frac{n}{16}) 
+ 16\,\sigma_{3}(n) - 32\,\sigma_{3}(\frac{n}{2}) 
+ 256\sigma_{3}(\frac{n}{4})   \\ & 
+ 128\, W_{(1,4)}^{3,1}(n) - 512\, W_{(1,16)}^{3,1}(n) - 256\, W_{(1,2)}^{3,1}(\frac{n}{2})  + 1024\, W_{(1,8)}^{3,1}(\frac{n}{2})       \\ & 
+ 2048\, W_{(1,1)}^{3,1}(\frac{n}{4}) - 8192\, W_{(1,4)}^{3,1}(\frac{n}{4}) 
\end{align*}
	One can then use the result of 
	\begin{itemize}
		\item  
	(\hyperref[Ramanujan-ident-1-3-gen]{\ref*{Ramanujan-ident-1-3-gen}}),   
	\jgH\ et al.\ \cite[Thrm 6, p.22]{huardetal}, 
		(\hyperref[convolutionSum-w_1_4]{\ref*{convolutionSum-w_1_4}}),  
		(\hyperref[convolutionSum-w_1_8]{\ref*{convolutionSum-w_1_8}})   
	and	(\hyperref[convolutionSum-w_1_16]{\ref*{convolutionSum-w_1_16}}) 
		for the sake of simplification of $N_{(1,4)}^{4,8}(n)$.  
		\item 
	(\hyperref[Ramanujan-ident-1-3-gen]{\ref*{Ramanujan-ident-1-3-gen}}),   
\jgH\ et al.\ \cite[Thrm 6, p.22]{huardetal}, 
		(\hyperref[convolutionSum-w_4_1]{\ref*{convolutionSum-w_4_1}}),  
		(\hyperref[convolutionSum-w_8_1]{\ref*{convolutionSum-w_8_1}}) 
	and	(\hyperref[convolutionSum-w_16_1]{\ref*{convolutionSum-w_16_1}}) 
		to simplify the formulae $N_{(1,4)}^{8,4}(n)$.
	\end{itemize}
\end{proof}

\subsection{Representations by the Quadratic Forms (\hyperref[introduction-eq-1]{\ref*{introduction-eq-3}}) and 
(\hyperref[introduction-eq-1]{\ref*{introduction-eq-4}})}

We determine formulae for the number of representations of a positive integer $n$ 
by the Quadratic Form (\hyperref[introduction-eq-3]{\ref*{introduction-eq-3}}) and 
(\hyperref[introduction-eq-4]{\ref*{introduction-eq-4}}). 
We mainly apply the 
evaluation of the convolution sums for the levels $3$, $6$ and $9$.  

In order to achieve these results, we recall that for example $6=3\cdot 2$ and 
$9=3\cdot 3$, so that by 
\hyperref[representation-prop-2]{Proposition \ref*{representation-prop-2}} 
we get $\Omega_{3}=\{(1,2),(1,3)\}$.
We then deduce the following result:
\begin{corollary} \label{representations-coro_33}
	Let $n\in\mathbb{N}$ and $c,d)=(1,1)$, $(1,2)$, $(1,3)$, $(1,7)$.
	 Then  
 \begin{align*}
 		R_{(1,1)}^{4,8}(n)\, = \,R_{(1,1)}^{8,4}(n)\, = \,  & 
 		\frac{252}{13}\sigma_{5}(n) 
 		- \frac{6804}{13}\,\sigma_{5}(\frac{n}{3}) 
 		+ \frac{216}{13}\,\EuFrak{b}_{18,1}(n)\, =\, s_{12}(n),
 \end{align*}
\begin{align*}
R_{(1,2)}^{4,8}(n)\, =\,R_{(2,1)}^{8,4}(n)\, =\, & 
 		\frac{12}{13}\sigma_{5}(n) 
+ \frac{240}{13}\,\sigma_{5}(\frac{n}{2}) 
- \frac{324}{13}\,\sigma_{5}(\frac{n}{3}) 
- \frac{6480}{13}\,\sigma_{5}(\frac{n}{6})  \\ &
+ \frac{144}{13}\,\EuFrak{b}_{12,1}(n)
+ \frac{1008}{13}\,\EuFrak{b}_{12,2}(n),
	\end{align*}  
\begin{align*}
R_{(1,2)}^{8,4}(n)\, =\,R_{(2,1)}^{4,8}(n)\, =\, & 
 \frac{60}{13}\sigma_{5}(n) 
+ \frac{192}{13}\,\sigma_{5}(\frac{n}{2}) 
- \frac{1620}{13}\,\sigma_{5}(\frac{n}{3}) 
- \frac{5184}{13}\,\sigma_{5}(\frac{n}{6})  \\ &
+ \frac{252}{13}\,\EuFrak{b}_{12,1}(n)
+ \frac{2304}{13}\,\EuFrak{b}_{12,2}(n),
\end{align*}
\begin{align*}
R_{(1,3)}^{4,8}(n)\, =\,R_{(3,1)}^{8,4}(n)\, =\, & 
\frac{4}{13}\,\sigma_{5}(n) 
- \frac{724}{13}\,\sigma_{5}(\frac{n}{3}) 
- \frac{5832}{13}\,\sigma_{5}(\frac{n}{9})
+ \frac{152}{13}\,\EuFrak{b}_{18,1}(n)    \\ & 
- \frac{324}{13}\,\EuFrak{b}_{18,1}(\frac{n}{3})
+ 96\,\EuFrak{b}_{18,2}(n)  
+ \frac{2916}{13}\,\EuFrak{b}_{18,3}(n), 
\end{align*}
\begin{align*}
R_{(1,3)}^{8,4}(n)\, =\,R_{(3,1)}^{4,8}(n)\, =\, & 
\frac{24}{13}\,\sigma_{5}(n) 
+ \frac{2172}{13}\,\sigma_{5}(\frac{n}{3}) 
- \frac{8748}{13}\,\sigma_{5}(\frac{n}{9})
+ \frac{288}{13}\,\EuFrak{b}_{18,1}(n)     \\ & 
+ \frac{972}{13}\,\EuFrak{b}_{18,1}(\frac{n}{3})
+ 288\,\EuFrak{b}_{18,2}(n) 
+ \frac{11340}{13}\,\EuFrak{b}_{18,3}(n) 
\end{align*}
If $n\not\equiv 0\pmod{3}$ then 
\begin{align*}
R_{(1,7)}^{4,8}(n)\, =\,R_{(7,1)}^{8,4}(n)\, =\, & 
\frac{84}{10621}\,\sigma_{5}(n) 
- \frac{26244}{817}\,\sigma_{5}(\frac{n}{3}) 
+ \frac{205800}{10621}\,\sigma_{5}(\frac{n}{7})     \\ & 
- \frac{2458836}{10621}\,\sigma_{5}(\frac{n}{21}) 
+ \frac{127368}{10621}\,\EuFrak{b}_{21,1}(n)
- \frac{26244}{817}\,\EuFrak{b}_{21,1}(\frac{n}{3})     \\ & 
+ \frac{38448}{247}\,\EuFrak{b}_{21,2}(n)    
- \frac{7776}{19}\,\EuFrak{b}_{21,2}(\frac{n}{3})   
+ \frac{9009864}{10621}\,\EuFrak{b}_{21,3}(n)          \\ & 
- \frac{1138212}{817}\,\EuFrak{b}_{21,3}(\frac{n}{3})
+ \frac{9504}{13}\,\EuFrak{b}_{21,4}(n) 
+ \frac{7776}{13}\,\EuFrak{b}_{21,5}(n)      \\ & 
+ \frac{143100}{13}\,\EuFrak{b}_{21,6}(n) 
+ \frac{34344}{13}\,\EuFrak{b}_{21,7}(n) 
- \frac{864}{13}\,\EuFrak{b}_{21,8}(n)      \\ & 
+ \frac{1722600}{13}\,\EuFrak{b}_{21,9}(n)    
- \frac{28512}{13}\,\EuFrak{b}_{21,10}(n) 
- \frac{57888}{13}\,\EuFrak{b}_{21,11}(n)      \\ & 
+ \frac{6195528}{13}\,\EuFrak{b}_{21,12}(n), 
\end{align*}
\end{corollary}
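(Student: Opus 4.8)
The plan is to read Corollary~\ref{representations-coro_33} off as a direct specialization of \autoref{representations-theor-c_d}, in the symmetrized form of \autoref{representations-corol_c_d}, applied to the four admissible pairs $(c,d)=(1,1),(1,2),(1,3),(1,7)$ produced by $\Omega_3$ via \autoref{representation-prop-2}. For a fixed pair the general identity expresses $R_{(c,d)}^{4,8}(n)$ through the four convolution sums $W_{(c,d)}^{1,3}(n)$, $W_{(c,3d)}^{1,3}(n)$, $W_{(3c,d)}^{1,3}(n)$ and $W_{(c,d)}^{1,3}(\tfrac{n}{3})$ together with the divisor terms $12\,\sigma(\tfrac{n}{c})-36\,\sigma(\tfrac{n}{3c})+24\,\sigma_3(\tfrac{n}{d})+216\,\sigma_3(\tfrac{n}{3d})$. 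So first I would record, for each pair, to which levels the occurring sums belong: $\{1,3\}$ for $(1,1)$, $\{2,6\}$ for $(1,2)$, $\{3,9\}$ for $(1,3)$, and $\{7,21\}$ for $(1,7)$.

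Next I would substitute the already established evaluations of these sums. The diagonal sums $W_{(c,c)}^{1,3}$ that arise (for instance $W_{(3,3)}^{1,3}(n)=W_{(1,1)}^{1,3}(\tfrac{n}{3})$ in the $(1,3)$ case, after reduction by~(\ref{convolutionSumsEqns-gcd})) are handled by the Ramanujan-type identity~(\ref{Ramanujan-ident-1-3-gen}); the genuinely off-diagonal sums are taken from \autoref{convolutionSum-w_3-21} for levels $3,6,7,21$ and from \autoref{convolutionSum-w_9_32} for level $9$, with \autoref{convolutionSum-theor-sp} used to rewrite any $W^{3,1}$ as a $W^{1,3}$ when convenient. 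In the $(1,2)$ case the level-$2$ sum $W_{(1,2)}^{1,3}$ is known classically (see \autoref{introduction-table-1}); since $\S_6(\Gamma_0(2))=\{0\}$ this sum is purely Eisenstein, which is exactly why the final $R_{(1,2)}$ formula carries only the level-$12$ cusp coefficients $\EuFrak{b}_{12,1},\EuFrak{b}_{12,2}$.

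After the substitutions the computation is linear: I would collect separately the coefficient of each $\sigma(\tfrac{n}{\cdot})$, $\sigma_3(\tfrac{n}{\cdot})$, $n\,\sigma_3(\tfrac{n}{\cdot})$, $\sigma_5(\tfrac{n}{\cdot})$ and each basis element $\EuFrak{b}_{\kappa,i}$. The decisive point to verify is that the $\sigma(\tfrac{n}{c})$ terms and the $\sigma_3$, $n\sigma_3$ terms contributed by the several convolution sums cancel against the leading divisor terms of \autoref{representations-theor-c_d}, leaving only $\sigma_5$-terms and cusp contributions; this cancellation is what produces the clean shape of every stated formula. For $(1,1)$ and $(1,3)$ the level-$9$ cusp data is, using the inclusion $\S_6(\Gamma_0(9))\subset\S_6(\Gamma_0(18))$, rewritten in the level-$18$ basis as $\EuFrak{b}_{18,1}$. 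The equalities $R_{(c,d)}^{4,8}(n)=R_{(d,c)}^{8,4}(n)$ are then immediate from \autoref{representations-corol_c_d}.

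Finally, for $(c,d)=(1,1)$ I would note that $R_{(1,1)}^{4,8}(n)$ is by definition the Cauchy product $\sum_{l+m=n}s_8(l)s_4(m)$, that is, the number $s_{12}(n)$ of representations of $n$ by six copies of $x^2+xy+y^2$; inserting~(\ref{representations-eqn-c_d-1}) and~(\ref{representations-eqn-c_d-2}) and the evaluated sums and simplifying yields the displayed closed form, thereby identifying it with $s_{12}(n)$. The step I expect to be the main obstacle is the $(1,7)$ case, where the full twelve-element level-$21$ basis enters and one must keep track of the $\tfrac{n}{3}$-arguments coming from $W_{(1,7)}^{1,3}(\tfrac{n}{3})$ and from the factor-$3$ divisor terms; it is precisely this bookkeeping that motivates the stated hypothesis $n\not\equiv 0\pmod 3$, under which the $\sigma_5(\tfrac{n}{3})$ and $\EuFrak{b}_{21,i}(\tfrac{n}{3})$ contributions are controlled and the formula takes its displayed form.
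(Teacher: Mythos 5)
Your proposal is correct and is essentially the paper's own proof: the paper likewise specializes Theorem~\ref{representations-theor-c_d} to the pairs supplied by Proposition~\ref{representation-prop-2}, then substitutes the Ramanujan identity (\ref{Ramanujan-ident-1-3-gen}) together with the evaluations in Corollaries~\ref{convolutionSum-w_3-21} and~\ref{convolutionSum-w_9_32} and simplifies, treating only $(c,d)=(1,3)$ explicitly and declaring the remaining pairs similar. Your write-up merely adds detail the paper leaves implicit (the cancellation of the $\sigma$, $\sigma_{3}$ and $n\,\sigma_{3}$ terms, the handling of all four pairs, and the identification $R_{(1,1)}^{4,8}(n)=s_{12}(n)$), so there is no methodological difference to report.
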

\begin{proof} We only consider the case $(c,d)=(1,3)$ since the other cases
 can be proved in a similar way.

The proof follows immediately from \autoref{representations-theor-c_d} 
	with $(c,d)=(1,3)$; that is, 
 	\begin{align*}
 R_{(1,3)}^{4,8}(n) =  & 
 12\,\sigma(n) - 36\,\sigma(\frac{n}{3}) + 24\,\sigma_{3}(\frac{n}{3}) +
 216\,\sigma_{3}(\frac{n}{9}) + 288\, W_{(1,3)}^{1,3}(n) \\ & 
 + 2592\, W_{(1,9)}^{1,3}(n) 
 - 864\,W_{(1,1)}^{1,3}(\frac{n}{3}) -  7776\,W_{(1,3)}^{1,3}(\frac{n}{3}) 
 \end{align*}
 \begin{align*}
 R_{(1,3)}^{8,4}(n) =  & 
 24\,\sigma_{3}(n) + 216\,\sigma_{3}(\frac{n}{3}) + 12\,\sigma(\frac{n}{3}) -
 36\,\sigma(\frac{n}{9}) + 288\, W_{(1,3)}^{3,1}(n) \\ & 
 - 864\, W_{(1,9)}^{3,1}(n) 
 + 2592\,W_{(1,1)}^{3,1}(\frac{n}{3}) - 7776\,W_{(1,3)}^{3,1}(\frac{n}{3}) 
 \end{align*}
	One can then make use of 
	\begin{itemize}
		\item
	(\hyperref[Ramanujan-ident-1-3-gen]{\ref*{Ramanujan-ident-1-3-gen}}), 
	\exwX\ and \oxmY\ \cite{yao-xia-2014} or 
	(\hyperref[convolutionSum-w_3_1]{\ref*{convolutionSum-w_3_1}}) 
	 and 
	(\hyperref[convolutionSum-w_1_9]{\ref*{convolutionSum-w_1_9}}) 
	to simplify the formula $R_{(1,3)}^{4,8}(n)$ and then obtain the stated result.  
		\item  
	(\hyperref[Ramanujan-ident-1-3-gen]{\ref*{Ramanujan-ident-1-3-gen}}), 
	(\hyperref[convolutionSum-w_1_3]{\ref*{convolutionSum-w_1_3}}) 
	and 
	(\hyperref[convolutionSum-w_9_1]{\ref*{convolutionSum-w_9_1}}) 
	to simplify $R_{(1,3)}^{8,4}(n)$ and obtain the stated result.
	\end{itemize}
\end{proof}








\begin{thebibliography}{10}

\bibitem{2016arXiv160309412A}
A.~{Alaca}, S.~{Alaca}, and Z.~{Selcuk Aygin}.
\newblock {A family of Eta Quotients and an Extension of the Ramanujan-Mordell
  Theorem}.
\newblock {\em ArXiv e-prints}, Mar. 2016.

\bibitem{alaca-uygul-williams-2012}
{\c{S}}.~{Alaca}, F.~{Uygul}, and K.~S. {Williams}.
\newblock {Some arithmetic Identities involving Divisor Functions}.
\newblock {\em Functiones et Approximatio}, 46.2:261–--271, 2012.

\bibitem{cheng-williams-2005}
N.~{Cheng} and K.~S. {Williams}.
\newblock {Evaluation of some convolution sums involving the Sum of Divisors
  Functions}.
\newblock {\em Yokohama Math. J.}, 52:39–--57, 2005.

\bibitem{huardetal}
J.~G. Huard, Z.~M. Ou, B.~K. Spearman, and K.~S. Williams.
\newblock Elementary evaluation of certain convolution sums involving divisor
  functions.
\newblock {\em Number Theory Millenium}, 7:229--274, 2002.
\newblock A K Peters, Natick, MA.

\bibitem{kilford}
L.~J.~P. Kilford.
\newblock {\em Modular forms: A classical and computational introduction}.
\newblock Imperial College Press, London, 2008.

\bibitem{koblitz-1993}
N.~Koblitz.
\newblock {\em Introduction to Elliptic Curves and Modular Forms}, volume~97 of
  {\em Graduate Texts in Mathematics}.
\newblock Springer Verlag, New York, 2 edition, 1993.

\bibitem{koehler}
G.~K\"{o}hler.
\newblock {\em Eta Products and Theta Series Identities}, volume 3733 of {\em
  Springer Monographs in Mathematics}.
\newblock Springer Verlag, Berlin Heidelberg, 2011.

\bibitem{kokluce-2017}
B.~{K\"{o}kl\"{u}ce}.
\newblock {Evaluation of some convolution sums and representation of integers
  by certain quadratic forms in 12 variables}.
\newblock 2017.
\newblock submitted for publication.

\bibitem{kokluce-eser-2017}
B.~{K\"{o}kl\"{u}ce} and H.~{Eser}.
\newblock {Evaluation of some convolution sums and representation of integers
  by certain quadratic forms in 12 variables}.
\newblock {\em Int. J. Number Theory}, 13(3):775–--799, 2017.

\bibitem{lahiri1946}
D.~B. {Lahiri}.
\newblock {On Ramanujan’s function $\tau(n)$ and the divisor function
  $\sigma_{k}(n)$ - I}.
\newblock {\em Bull. Calcutta Math. Soc.}, 38:193--206, 1946.

\bibitem{lahiri1947}
D.~B. {Lahiri}.
\newblock {On Ramanujan’s function $\tau(n)$ and the divisor function
  $\sigma_{k}(n)$ - II}.
\newblock {\em Bull. Calcutta Math. Soc.}, 39:33--52, 1947.

\bibitem{ligozat_1975}
G.~Ligozat.
\newblock Courbes modulaires de genre $1$.
\newblock {\em Bull. Soc. Math. France}, 43:5--80, 1975.

\bibitem{lomadze}
G.~A. Lomadze.
\newblock Representation of numbers by sums of the quadratic forms
  $x_{1}^{2}+x_{1}x_{2}+x_{2}^{2}$.
\newblock {\em Acta Arith.}, 54(1):9--36, 1989.

\bibitem{miyake1989}
T.~Miyake.
\newblock {\em Modular Forms}.
\newblock Springer monographs in Mathematics. Springer Verlag, New York, 1989.

\bibitem{newman_1957}
M.~Newman.
\newblock Construction and application of a class of modular functions.
\newblock {\em Proc. Lond. Math. Soc.}, 7(3):334--350, 1957.

\bibitem{newman_1959}
M.~Newman.
\newblock Construction and application of a class of modular functions {II}.
\newblock {\em Proc. Lond. Math. Soc.}, 9(3):373--387, 1959.

\bibitem{ntienjem2016b}
E.~{Ntienjem}.
\newblock {Evaluation of the Convolution Sum involving the Sum of Divisors
  Function for 22, 44 and 52}.
\newblock {\em Open Mathematics}, 15(1):446--458, 2017.

\bibitem{pizer1976}
A.~Pizer.
\newblock The representability of modular forms by theta series.
\newblock {\em J. Math. Soc. Japan}, 28(4):689--698, 10 1976.

\bibitem{ramanujan}
S.~Ramanujan.
\newblock On certain arithmetical functions.
\newblock {\em Trans. Camb. Phil. Soc.}, 22:159--184, 1916.

\bibitem{wstein}
W.~A. Stein.
\newblock {\em Modular Forms, A Computational Approach}, volume~79.
\newblock American Mathematical Society, Graduate Studies in Mathematics, 2011.
\newblock http://wstein.org/books/modform/modform/.

\bibitem{williams2001}
K.~S. {Williams}.
\newblock {An arithmetic Proof of Jacobi's Eight Squares Theorem}.
\newblock {\em Far East J. Math. Sci.}, 3(6):1001--1005, 2001.

\bibitem{williams2011}
K.~S. Williams.
\newblock {\em Number Theory in the Spirit of Liouville}, volume~76 of {\em
  London Mathematical Society Student Texts}.
\newblock Cambridge University Press, Cambridge, 2011.

\bibitem{yao-xia-2014}
E.~X.~W. {Xia} and O.~X.~M. {Yao}.
\newblock {Evaluation of the convolution sums
  $\overset{}{\underset{l+3m=n}{\sum}\sigma(l)\sigma_{3}(m)}$ and
  $\overset{}{\underset{3l+m=n} {\sum}\sigma(l)\sigma_{3}(m)}$}.
\newblock {\em Int. J. Number Theory}, 10(1):115–--123, 2014.

\end{thebibliography}



\section*{Tables}

	\begin{longtable}{|c|cccccc|} \hline
		& \textbf{1}  &  \textbf{2}  & \textbf{3} & \textbf{4}  &
		\textbf{6} &  \textbf{12}  \\ \hline
		\textbf{1}  & 6 & 0 & 6 & 0 & 0 & 0  \\ \hline
		\textbf{2}  & 0 & 6 & 0 & 0 & 6 & 0   \\ \hline
		\textbf{3}  & 4 & -2 & -4 & 0 & 14 & 0    \\ \hline
		\textbf{4}  & 0 & 0 & 0 & 6 & 0 & 6   \\ \hline
		\textbf{5}  & 0 & 2 & 0 & 2 & -2 & 10   \\ \hline
		\textbf{6} & 0 & 4 & 0 & -2 & -4 & 14   \\ \hline
		\textbf{7} & -2 & 5 & -2 & -5 & 1 & 15   \\ \hline
		\caption{Power of $\eta$-quotients being basis elements of $\S_{6}(\Gamma_{0}(12))$}
		\label{convolutionSums-12-table}
	\end{longtable}

\begin{longtable}{|c|cccc|} \hline
	& \textbf{1}  &  \textbf{2}  & \textbf{7} & \textbf{14}  \\ \hline
	\textbf{1}  & 10 & 0 & 2 & 0   \\ \hline
	\textbf{2}  & 6 & 0 & 6 & 0   \\ \hline
	\textbf{3}  & 2 & 0 & 10 & 0   \\ \hline
	\textbf{4}  & 0 & 6 & 0 & 6   \\ \hline
	\textbf{5}  & 3 & -1 & 3 & 7 \\ \hline
	\textbf{6}  & 0 & 2 & 0 & 10   \\ \hline
	\textbf{7}  & 4 & -2 & -4 & 14 \\ \hline
	\textbf{8}  & 1 & 1 & -7 & 17  \\ \hline
	\caption{Power of $\eta$-quotients being basis elements of $\S_{6}(\Gamma_{0}(14))$}
	\label{convolutionSums-14-table}
\end{longtable}

\begin{longtable}{|c|cccc|} \hline
	& \textbf{1}  &  \textbf{3}  & \textbf{5} & \textbf{15}  \\ \hline
	\textbf{1}  & 6 & 6 & 0 & 0   \\ \hline
	\textbf{2}  & 0 & 6 & 6 & 0  \\ \hline
	\textbf{3}  & 3 & 3 & 3 & 3  \\ \hline
	\textbf{4}  & 6 & 0 & 0 & 6   \\ \hline
	\textbf{5}  & 0 & 0 & 6 & 6   \\ \hline
	\textbf{6}  & 7 & -1 & -5 & 11   \\ \hline
	\textbf{7}  & 1 & -1 & 1 & 11 \\ \hline
	\textbf{8}  & 1 & -1 & 13 & -1   \\ \hline
	\caption{Power of $\eta$-quotients being basis elements of $\S_{6}(\Gamma_{0}(15))$}
	\label{convolutionSums-15-table}
\end{longtable}

%

	\begin{longtable}{|c|cccccc|} \hline
		& \textbf{1}  &  \textbf{2}  & \textbf{3} & \textbf{6}  &
	\textbf{9} & \textbf{18} \\ \hline
		\textbf{1}  & 6 & 0 & 6 & 0 & 0 & 0  \\ \hline
		\textbf{2}  & 3 & 0 & 6 & 3 & 0 & 0   \\ \hline
		\textbf{3}  & 0 & 0 & 6 & 0 & 6 & 0  \\ \hline
		\textbf{4}  & 0 & 3 & 0 & 6 & 0 & 3  \\ \hline
		\textbf{5}  & 0 & 0 & 0 & 2 & 8 & 2  \\ \hline
		\textbf{6} & 0 & 0 & 0 & 6 & 0 & 6  \\ \hline
		\textbf{7} & 0 & 0 & 2 & 0 & 2 & 8  \\ \hline
		\textbf{8}  & 0 & 0 & 2 & 4 & -6 & 12 \\ \hline
		\textbf{9}  & 0 & 0 & 4 & -2 & -4 & 14 \\ \hline
		\textbf{10}  & 0 & 0 & 5 & -3 & -7 & 17 \\ \hline
		\textbf{11}  & 0 & 6 & 8 & -2 & 0 & 0 \\ \hline
		\caption{Power of $\eta$-quotients being basis elements of $\S_{6}(\Gamma_{0}(18))$}
		\label{convolutionSums-18-table}
	\end{longtable}

	\begin{longtable}{|c|cccccc|} \hline
		& \textbf{1}  &  \textbf{2}  & \textbf{4} & \textbf{5}  &
	\textbf{10} & \textbf{20} \\ \hline
		\textbf{1}  & 0 & 12 & 0 & 0 & 0 & 0  \\ \hline
		\textbf{2}  & 0 & 0 & 2 & 8 & 4 & -2   \\ \hline
		\textbf{3}  & 0 & 0 & 8 & 0 & 4 & 0  \\ \hline
		\textbf{4}  & 0 & 0 & 4 & 8 & -4 & 4  \\ \hline
		\textbf{5}  & 0 & 0 & 0 & 0 & 12 & 0  \\ \hline
		\textbf{6} & 0 & 2 & 0 & -8 & 18 & 0  \\ \hline
		\textbf{7} & 0 & 0 & 2 & 0 & 4 & 6  \\ \hline
		\textbf{8}  & 0 & 4 & -4 & -8 & 16 & 4 \\ \hline
		\textbf{9}  & 0 & 0 & 4 & 0 & -4 & 12  \\ \hline
		\textbf{10}  & 1 & -3 & 0 & -5 & 11 & 8 \\ \hline
		\textbf{11}  & 0 & 2 & 0 & 0 & -6 & 16 \\ \hline
		\textbf{12} & 1 & -2 & -1 & -5 & 6 & 13 \\ \hline
		\caption{Power of $\eta$-quotients being basis elements of $\S_{6}(\Gamma_{0}(20))$}
		\label{convolutionSums-20-table}
	\end{longtable}

\begin{longtable}{|c|cccc|} \hline
	& \textbf{1}  &  \textbf{3}  & \textbf{7} & \textbf{21}  \\ \hline
	\textbf{1}  & 10 & 0 & 2 & 0   \\ \hline
	\textbf{2}  & 6 & 0 & 6 & 0  \\ \hline
	\textbf{3}  & 2 & 0 & 10 & 0  \\ \hline
	\textbf{4}  & 0 & 4 & 6 & 2   \\ \hline
	\textbf{5}  & 2 & 2 & 4 & 4   \\ \hline
	\textbf{6}  & 0 & 6 & 0 & 6   \\ \hline
	\textbf{7}  & 0 & 0 & 6 & 6 \\ \hline
	\textbf{8}  & 1 & 3 & -1 & 9   \\ \hline
	\textbf{9}  & 0 & 2 & 0 & 10  \\ \hline
	\textbf{10}  & 2 & 0 & -2 & 12   \\ \hline
	\textbf{11}  & 1 & -1 & -1 & 13 \\ \hline
	\textbf{12}  & 0 & -2 & 0 & 14   \\ \hline
	\caption{Power of $\eta$-quotients being basis elements of $\S_{6}(\Gamma_{0}(21))$}
	\label{convolutionSums-21-table}
\end{longtable}

\begin{longtable}{|c|cccc|} \hline
	& \textbf{1}  & \textbf{3} & \textbf{9} & \textbf{27}  \\ \hline
\textbf{1} & 6 & 6 & 0 & 0  \\ \hline
\textbf{2} & 3 & 6 & 3 & 0  \\ \hline
\textbf{3} & 0 & 6 & 6 & 0  \\ \hline
\textbf{4} & 0 & 2 & 10 & 0  \\ \hline
\textbf{5} & 0 & -2 & 14 & 0  \\ \hline
\textbf{6} & 0 & 3 & 6 & 3  \\ \hline
\textbf{7}  & 0 & 8 & -2 & 6 \\ \hline
\textbf{8}  & 0 & 4 & 2 & 6   \\ \hline
\textbf{9}  & 0 & 0 & 6 & 6  \\ \hline
\textbf{10}  & 0 & 5 & -2 & 9   \\ \hline
\textbf{11}  & 0 & 1 & 2 & 9 \\ \hline
\textbf{12}  & 0 & 2 & 2 & 0   \\ \hline  
\caption{Power of $\eta$-functions being basis elements of $\S_{6}(\Gamma_{0}(27))$}
\label{convolutionSums-27-table}
\end{longtable}

\begin{longtable}{|c|cccccc|} \hline
	& \textbf{1}  &  \textbf{2}  & \textbf{4} & \textbf{8} & \textbf{16} & \textbf{32}  \\ \hline
\textbf{1} & 0 & 12 & 0 & 0 & 0 & 0  \\ \hline
\textbf{2} & 0 & 0 & 12 & 0 & 0 & 0  \\ \hline
\textbf{3} & 0 & 4 & 0 & 8 & 0 & 0  \\ \hline
\textbf{4} & 0 & 0 & 0 & 12 & 0 & 0  \\ \hline
\textbf{5} & 0 & 0 & 2 & 6 & 4 & 0  \\ \hline
\textbf{6} & 0 & 0 & 4 & 0 & 8 & 0  \\ \hline
\textbf{7} & 0 & 0 & 6 & -6 & 12 & 0  \\ \hline
\textbf{8} & 0 & 0 & 0 & 0 & 12 & 0  \\ \hline
\textbf{9} & 0 & 0 & 6 & -4 & 6 & 4  \\ \hline
\textbf{10} & 0 & 0 & 0 & 2 & 6 & 4  \\ \hline
\textbf{11} & 0 & 0 & 6 & -2 & 0 & 8  \\ \hline
\textbf{12} & 0 & 0 & 0 & 4 & 0 & 8  \\ \hline
\textbf{13} & 0 & 0 & 6 & 0 & -6 & 12  \\ \hline
\textbf{14} & 0 & 0 & 0 & 6 & -6 & 12  \\ \hline
\textbf{15} & 0 & 0 & 2 & 0 & -2 & 12  \\ \hline
\textbf{16} & 0 & 0 & 2 & 2 & 0 & 0  \\ \hline   
\caption{Power of $\eta$-functions being basis elements of $\S_{6}(\Gamma_{0}(32))$}
\label{convolutionSums-32-table}
\end{longtable}




 



\end{document}